\title{Grounded persistent path homology: a stable, topological descriptor for weighted digraphs}
\author[1,*]{Thomas Chaplin}
\author[1]{Heather A. Harrington}
\author[1]{Ulrike Tillmann}
\affil[1]{Mathematical Institute, University of Oxford}
\affil[*]{%
  Corresponding author: {%
  \normalfont\texttt{%
  \href{mailto:thomas.chaplin@maths.ox.ac.uk}{thomas.chaplin@maths.ox.ac.uk}%
}}}
\begin{document}
\maketitle
\begin{abstract}
Weighted digraphs are  used to model a variety of natural systems and can exhibit interesting structure across a range of scales.
In order to understand and compare these systems, we require stable, interpretable, multiscale descriptors.
To this end, we propose grounded persistent path homology (\grpph) -- a new, functorial, topological descriptor that describes the structure of an edge-weighted digraph via a persistence barcode.
We show there is a choice of circuit basis for the graph which yields geometrically interpretable representatives for the features in the barcode.
Moreover, we show the barcode is stable, in bottleneck distance, to both numerical and structural perturbations.
\grpph\ arises from a flexible framework, parametrised by a choice of digraph chain complex and a choice of filtration; for completeness, we also investigate replacing the path homology complex, used in \grpph, by the directed flag complex.
\end{abstract}

\section{Introduction}\label{sec:intro}

Directed graphs with positive edge-weights arise both as natural objects of mathematical study and as useful models of real-world systems (e.g.~\cite{bittner2018comparing,Brown2022,Medaglia2017,Sweeney2019}).
A common task is to distinguish between weighted digraphs.
Frequently, this is achieved by defining an invariant, i.e. a map $\invariant:\WDgr\to X$ from weighted digraphs into some set $X$, together with a metric $d$ on $X$.
The metric allows us to quantitatively measure to what extent a pair of weighted digraphs differ.
When $\invariant$ is well understood we may be able to explain \emph{why} the weighted digraphs differ.
In order to determine desirable characteristics of such an $\invariant$, consider the examples shown in Figure~\ref{fig:intro_wdgrs}, in which edge weights correspond to length as drawn.

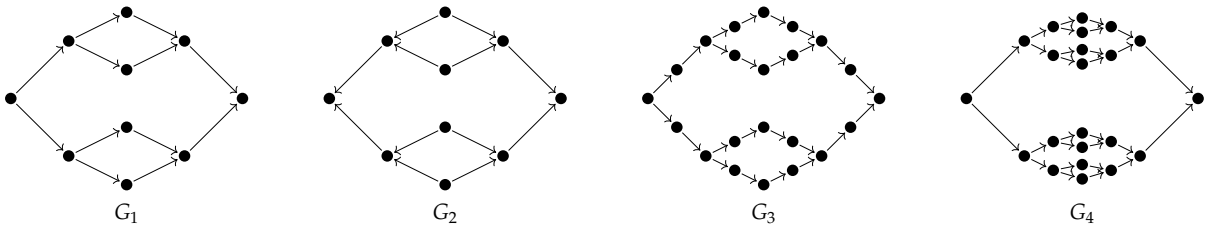
\begin{figure}[htbp]
  \centering
  \resizebox{\textwidth}{!}{%
    \begin{tikzpicture}[
  roundnode/.style={circle, fill=black, minimum size=4pt},
  bluenode/.style={circle, fill=blue, minimum size=1pt},
	squarenode/.style={fill=black, minimum size=4pt},
	inner sep=2pt,
	outer sep=1pt
  ]
  \node (a) at (0, 0) [roundnode] {};
  \node (b) at (1, 1) [roundnode] {};
  \node (c) at (1, -1) [roundnode] {};
  \node (d) at (2, 1.5) [roundnode] {};
  \node (e) at (2, 0.5) [roundnode] {};
  \node (f) at (2, -0.5) [roundnode] {};
  \node (g) at (2, -1.5) [roundnode] {};
  \node (h) at (3, 1) [roundnode] {};
  \node (i) at (3, -1) [roundnode] {};
  \node (j) at (4, 0) [roundnode] {};
  \draw[->] (a)--(b);
  \draw[->] (a)--(c);
  \draw[->] (b)--(d);
  \draw[->] (b)--(e);
  \draw[->] (c)--(f);
  \draw[->] (c)--(g);
  \draw[->] (d)--(h);
  \draw[->] (e)--(h);
  \draw[->] (f)--(i);
  \draw[->] (g)--(i);
  \draw[->] (h)--(j);
  \draw[->] (i)--(j);

  \node[] at (2, -2) {$G_1$};

  \node (a2) at (5.5, 0) [roundnode] {};
  \node (b2) at (6.5, 1) [roundnode] {};
  \node (c2) at (6.5, -1) [roundnode] {};
  \node (d2) at (7.5, 1.5) [roundnode] {};
  \node (e2) at (7.5, 0.5) [roundnode] {};
  \node (f2) at (7.5, -0.5) [roundnode] {};
  \node (g2) at (7.5, -1.5) [roundnode] {};
  \node (h2) at (8.5, 1) [roundnode] {};
  \node (i2) at (8.5, -1) [roundnode] {};
  \node (j2) at (9.5, 0) [roundnode] {};
  \draw[->] (b2)--(a2);
  \draw[->] (c2)--(a2);

  \draw[->] (d2)--(b2);
  \draw[->] (e2)--(b2);
  \draw[->] (f2)--(c2);
  \draw[->] (g2)--(c2);
  \draw[->] (d2)--(h2);
  \draw[->] (e2)--(h2);
  \draw[->] (f2)--(i2);
  \draw[->] (g2)--(i2);

  \draw[->] (h2)--(j2);
  \draw[->] (i2)--(j2);

  \node[] at (7.5, -2) {$G_2$};

  \node (a3) at (11, 0) [roundnode] {};
  \node (abmid3) at (11.5, 0.5) [roundnode] {};
  \node (acmid3) at (11.5, -0.5) [roundnode] {};
  \node (b3) at (12, 1) [roundnode] {};
  \node (bdmid3) at (12.5, 1.25) [roundnode] {};
  \node (bemid3) at (12.5, 0.75) [roundnode] {};
  \node (c3) at (12, -1) [roundnode] {};
  \node (cfmid3) at (12.5, -0.75) [roundnode] {};
  \node (cgmid3) at (12.5, -1.25) [roundnode] {};
  \node (d3) at (13, 1.5) [roundnode] {};
  \node (dhmid3) at (13.5, 1.25) [roundnode] {};
  \node (e3) at (13, 0.5) [roundnode] {};
  \node (ehmid3) at (13.5, 0.75) [roundnode] {};
  \node (f3) at (13, -0.5) [roundnode] {};
  \node (fimid3) at (13.5, -0.75) [roundnode] {};
  \node (g3) at (13, -1.5) [roundnode] {};
  \node (gimid3) at (13.5, -1.25) [roundnode] {};
  \node (h3) at (14, 1) [roundnode] {};
  \node (hjmid3) at (14.5, 0.5) [roundnode] {};
  \node (i3) at (14, -1) [roundnode] {};
  \node (ijmid3) at (14.5, -0.5) [roundnode] {};
  \node (j3) at (15, 0) [roundnode] {};
  \draw[->] (a3)--(abmid3);
  \draw[->] (abmid3)--(b3);
  \draw[->] (a3)--(acmid3);
  \draw[->] (acmid3)--(c3);
  \draw[->] (b3)--(bdmid3);
  \draw[->] (bdmid3)--(d3);
  \draw[->] (b3)--(bemid3);
  \draw[->] (bemid3)--(e3);
  \draw[->] (c3)--(cfmid3);
  \draw[->] (cfmid3)--(f3);
  \draw[->] (c3)--(cgmid3);
  \draw[->] (cgmid3)--(g3);
  \draw[->] (d3)--(dhmid3);
  \draw[->] (dhmid3)--(h3);
  \draw[->] (e3)--(ehmid3);
  \draw[->] (ehmid3)--(h3);
  \draw[->] (f3)--(fimid3);
  \draw[->] (fimid3)--(i3);
  \draw[->] (g3)--(gimid3);
  \draw[->] (gimid3)--(i3);
  \draw[->] (h3)--(hjmid3);
  \draw[->] (hjmid3)--(j3);
  \draw[->] (i3)--(ijmid3);
  \draw[->] (ijmid3)--(j3);

  \node[] at (13, -2) {$G_3$};

  \node (a4) at (16.5, 0) [roundnode] {};
  \node (b4) at (17.5, 1) [roundnode] {};
  \node (bdmid4) at (18, 1.25) [roundnode] {};
  \node (bemid4) at (18, 0.75) [roundnode] {};
  \node (c4) at (17.5, -1) [roundnode] {};
  \node (cfmid4) at (18, -0.75) [roundnode] {};
  \node (cgmid4) at (18, -1.25) [roundnode] {};
  \node (d4) at (18.5, 1.4) [roundnode] {};
  \node (dhmid4) at (19, 1.25) [roundnode] {};
  \node (e4) at (18.5, 0.6) [roundnode] {};
  \node (ehmid4) at (19, 0.75) [roundnode] {};
  \node (f4) at (18.5, -0.6) [roundnode] {};
  \node (fimid4) at (19, -0.75) [roundnode] {};
  \node (g4) at (18.5, -1.4) [roundnode] {};
  \node (gimid4) at (19, -1.25) [roundnode] {};
  \node (h4) at (19.5, 1) [roundnode] {};
  \node (i4) at (19.5, -1) [roundnode] {};
  \node (j4) at (20.5, 0) [roundnode] {};
  \node(new14) at (18.5, 1.15) [roundnode] {};
  \node(new24) at (18.5, 0.85) [roundnode] {};
  \node(new34) at (18.5, -0.85) [roundnode] {};
  \node(new44) at (18.5, -1.15) [roundnode] {};
  \draw[->] (a4)--(b4);
  \draw[->] (a4)--(c4);
  \draw[->] (b4)--(bdmid4);
  \draw[->] (bdmid4)--(d4);
  \draw[->] (b4)--(bemid4);
  \draw[->] (bemid4)--(e4);
  \draw[->] (c4)--(cfmid4);
  \draw[->] (cfmid4)--(f4);
  \draw[->] (c4)--(cgmid4);
  \draw[->] (cgmid4)--(g4);
  \draw[->] (d4)--(dhmid4);
  \draw[->] (dhmid4)--(h4);
  \draw[->] (e4)--(ehmid4);
  \draw[->] (ehmid4)--(h4);
  \draw[->] (f4)--(fimid4);
  \draw[->] (fimid4)--(i4);
  \draw[->] (g4)--(gimid4);
  \draw[->] (gimid4)--(i4);
  \draw[->] (h4)--(j4);
  \draw[->] (i4)--(j4);
  \draw[->] (bdmid4)--(new14);
  \draw[->] (bemid4)--(new24);
  \draw[->] (new14)--(dhmid4);
  \draw[->] (new24)--(ehmid4);
  \draw[->] (cfmid4)--(new34);
  \draw[->] (cgmid4)--(new44);
  \draw[->] (new34)--(fimid4);
  \draw[->] (new44)--(gimid4);

  \node[] at (18.5, -2) {$G_4$};

\end{tikzpicture}
  }
  \caption{Four example weighted digraphs; weights correspond to length as drawn.}%
  \label{fig:intro_wdgrs}
\end{figure}

Consider each $G_i$ from the perspective of a particle flowing through the digraph, such that the particle may only traverse an edge in the direction specified and the time it takes corresponds to the weight.
To the particle, loops (or circuits) in the graph are significant features.
However, loops can vary greatly based on the orientation and weight of constituent edges.

Despite sharing the same underlying undirected graph, $G_1$ and $G_2$ support very different flows since $G_1$ has a single source and a single sink whereas $G_2$ has 4 sources and 2 sinks.
To reflect this, $d(\invariant(G_1), \invariant(G_2))$ should be large.
In contrast, $G_3$ has a different undirected graph but can be obtained from $G_1$ by simply subdividing each edge.
In applications, this may arise from a finer resolution image of the same system.
A suitable invariant should be relatively stable to such subdivisions, ideally converging to a limiting value upon iterated subdivision.
Finally, $G_4$ has a higher circuit rank but the new loops are on a small scale, whilst the large scale organisation is mostly similar to $G_1$.
Therefore, $d(\invariant(G_1), \invariant(G_4))$ should be small and the difference between $\invariant(G_1)$ and $\invariant(G_4)$ should reflect this multiscale comparison.

For successful application, any invariant should be stable to a reasonable noise model.
A typical requirement is that $\invariant$ is continuous (or better yet Lipschitz), with respect to a choice of metric on $\WDgr$.
Designing metrics for graphs is an active area of research but a common choice is the graph edit distance~\cite{Gao2010}. 
For this metric, costs are assigned to operations such as deleting an edge or modifying a weight, then the distance between two graphs is the minimal cumulative cost of modifying one into the other.
Since assigning costs to graph operations is somewhat arbitrary, it is reasonable instead to require a bound on $d(\invariant(G), (\invariant(\wdop{}{}{G}))$, over a range of graph operations, $\wdop{}{}{}:\WDgr\to\WDgr$.

Finally, in many applications (particularly in biology), it is important that any invariant $\invariant$ is interpretable.
That is, one must be able to explain \emph{why} the invariant has the value it does.
Typically this is achieved through the identification of key contributing subgraphs.

In summary, we seek an invariant for weighted digraphs which
\begin{enumerate}[label=(\alph*)]
  \item distinguishes graphs with different flow profiles due to directionality;
  \item can detect and describe features (i.e. loops) across a range of scales;
  \item is stable to reasonable perturbations and converges under iterated subdivision; and
  \item is interpretable, e.g.\ through the identification of important subgraphs.
\end{enumerate}

\vskip .2in

Pursuant to these goals, we employ two tools from topological data analysis (TDA) -- path homology and persistent homology.
A number of homology theories for digraphs have been developed (see summary in Section 2 of~\cite{caputi2022hochschild}).
Path homology is one such theory~\cite{Grigoryan2012}, which is sensitive to directionality and has useful functorial properties~\cite{ChowdhurySIAM, Grigoryan2014}.
Persistent homology is a tool for developing stable descriptors~\cite{Bauer2013a}
that extract relevant information in multiscale scenarios.
As such, persistent homology has seen successful applications to fields including
neuroscience~\cite{Caputi2021, Goodbrake2022, Govc2021, Xing2022},
vasculature~\cite{Nardini2021, Stolz2020}
and financial networks~\cite{Ismail2022},
to name but a few~\cite{giuntuTDA-app}.
A theory of persistent path homology (PPH) was proposed by \citeauthor{ChowdhurySIAM}~\cite{ChowdhurySIAM} and is a stable descriptor for directed networks.
In a search to develop an interpretable invariant for weighted digraphs, which respects the inert topology of the underlying digraph, we are lead to an alteration of PPH which we prove meets goals (a)-(d).

\subsection{Contributions and outline}

In Section~\ref{sec:background} we give an overview of path homology and persistent homology, and set up the categorical framework for the rest of the paper.
In particular, we define a category of weighted digraphs $\Cont\WDgr$ in which morphisms are digraph maps of the underlying digraphs as well as contractions of the natural, shortest-path quasimetric.

In Section~\ref{sec:intro_subdivision} we review a standard pipeline for extracting a topological invariant of a weighted digraph, via PPH.
Evaluating this invariant against our stated goals, motivates an alteration of this pipeline, which we call the `grounded pipeline'.
We define this new pipeline and describe categories upon which the resulting invariant is functorial in Section~\ref{sec:descriptor_defin}.

Arising from the grounded pipeline, our main contribution is Definition/Theorem~\ref{defin:grpph}, wherein we define grounded persistent path homology (\grpph).
This new invariant is a functor
\begin{equation}
\zbpershommap : \Cont\WDgr \to \PersVec
\end{equation}
where $\PersVec$ is the category of persistent vector spaces.
Couching this definition in category theory yields a strong framework for comparing weighted digraphs through the invariant.
Indeed, we use this functoriality later in the paper to aid the proof of decomposition and stability results.

Section~\ref{sec:properties} is devoted to developing an interpretation of \grpph.
The early subsections are dedicated to understanding the features detected by $\zbpershom{G}$; the following theorem summarises our findings.
\begin{theorem}\label{thm:intro_representatives}
Given a weighted digraph $G\in\WDgr$, denote the underlying undirected graph by $\underlying{G}$.
\begin{enumerate}[label=(\alph*)]
  \item All features in $\zbpershom{G}$ are born at $t=0$;
  \item $\zbpershom{G}$ at time $t=0$ is the (real) cycle space of $\underlying{G}$; and moreover
  \item there is a choice of circuits in $\underlying{G}$ whose homology classes generate $\zbpershom{G}$.
\end{enumerate}
\end{theorem}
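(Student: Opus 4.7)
The plan is to handle the three parts in the stated order: (a) and (b) should follow quickly by unpacking the grounded pipeline, while (c) requires a delicate construction of a cycle basis compatible with the death-time filtration.

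For (a) and (b), the key observation is that the grounded pipeline fixes the degree-1 chain space --- the free space on the directed edges of $G$, with boundary $\partial_1[v_i,v_j] = v_j - v_i$ --- throughout the filtration, and only adds higher-dimensional invariant paths as $t$ grows. Consequently the 1-cycle subspace $Z_1 := \ker \partial_1$ is constant in $t$, so every homology class in $\zbpershom{G}$ is represented by a cycle already present at $t=0$, giving (a). Moreover, at $t=0$ no 2-chains are yet available, so $B_1(0) = 0$ and $H_1(0) = Z_1$. Identifying $Z_1$ with the real cycle space of $\underlying{G}$ is then standard: orient each undirected edge of $\underlying{G}$ in agreement with its direction in $G$; the vertex-arc boundary coincides with $\partial_1$, so the cycle space of $\underlying{G}$ is precisely $Z_1$, establishing (b).

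For (c), parts (a) and (b) reduce the claim to an algebraic question about the increasing filtration $0 = B_1(0) \subseteq \cdots \subseteq B_1(t) \subseteq \cdots \subseteq Z_1$: produce a basis of $Z_1$ consisting of (chain representatives of) simple circuits of $\underlying{G}$ whose death times reproduce the barcode. I would approach this by a greedy exchange argument, processing distinct death values $d$ in increasing order: for each new dimension appearing in $B_1(d)/B_1(d^-)$, exhibit a simple circuit whose class represents it. The crucial inputs are the classical fact that simple circuits span the cycle space of an undirected graph, combined with the observation that if a linear combination of simple circuits first becomes a boundary at time $d$, then at least one simple circuit summand already has death time $\geq d$, permitting an exchange.

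The main obstacle I anticipate is making this exchange argument precise: at each stage one must simultaneously (i) maintain linear independence modulo earlier boundaries, and (ii) hit the correct death time using a circuit that is simple in $\underlying{G}$, not merely a general cycle. A cleaner alternative I would also explore is to appeal to a minimum cycle basis of $\underlying{G}$ weighted by the death-time function $c \mapsto \min\{t : c \in B_1(t)\}$: minimum cycle bases are known to consist of simple circuits, and their weight profile is pinned down by the filtration, so such a basis ought to realize the barcode in one stroke. Either route produces a family of simple circuits in $\underlying{G}$ whose classes generate $\zbpershom{G}$ as a persistence vector space, completing (c).
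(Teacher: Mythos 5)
Your part (b) is essentially the paper's argument (Corollary~\ref{cor:decreasing_curves}): at $t=0$ the graph $G^0$ has no edges, the grounded complex degenerates to $0 \to C_1(G) \to C_0(G)$, and its first homology is the real cycle space of $\underlying{G}$. Parts (a) and (c), however, each contain a genuine gap.

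For (a), your premise is false: the grounded pipeline does \emph{not} fix the degree-$1$ chain space. By Definition~\ref{defin:zb_chain_complex}, the degree-$1$ chain group at time $t$ is $C_1(G \cup G^t)$, freely generated by the edges of $G$ \emph{together with} the shortcut edges of $G^t$; ``grounding'' means $G$ is unioned in for all $t$, not that the filtration is discarded in degree $1$. Hence $\ker\partial_1$ grows with $t$ and new $1$-cycles genuinely appear. The paper's proof of (a) (Proposition~\ref{prop:cycles_born_at_0}) therefore has real content: Lemma~\ref{lem:paths_homologous} shows that any shortcut edge $(a,b)\in E(G^t)$ is homologous, modulo boundaries of directed triangles already present in $C_2(G^t)$, to the sum of the edges of a path $a\leadsto b$ in $G$ of length at most $t$, so every cycle at time $t$ is homologous to one pushed forward from $t=0$. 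Your argument skips exactly this step.

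For (c), you correctly reduce to finding a basis of $\zbhom{G}{0}$ of circuit classes compatible with the filtration by the subspaces $\ker\zbinclinduc{hom}{0}{t}$, and your observation that a sum of circuits first bounding at time $d$ must contain a summand with death time at least $d$ is correct. But both of your proposed routes (greedy exchange and minimum cycle basis) silently assume the key fact: that $\ker\zbinclinduc{hom}{0}{t}$ is \emph{spanned} by classes of circuits of $G$ with death time at most $t$. This is the actual hard content. The natural spanning set of the boundary space at time $t$ consists of boundaries of double edges, directed triangles and long squares, which trace circuits in $G\cup G^t$ involving shortcut edges; converting these to circuits of $G$ while keeping control of the span, and doing so compatibly at every critical value, is precisely what the paper's Lemma~\ref{lem:submap_circuit_basis} accomplishes, by decomposing each step $\zbinclinduc{ch}{t_{i-1}}{t_i}$ into an initial isomorphism on $H_1$ (adjoining each new edge with one triangle, inverted by an explicit chain homotopy) followed by adjoining the remaining $2$-dimensional generators one at a time and extracting a circuit of $G$, via Lemma~\ref{lem:paths_homologous}, for each dimension drop. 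Example~\ref{ex:not_any_reps} shows this cannot be sidestepped: an arbitrary circuit basis need not be a persistence basis. Without an equivalent of Lemma~\ref{lem:submap_circuit_basis}, your greedy procedure may run out of circuits with the required death times. Finally, you aim for \emph{simple} circuits; the paper proves the result only for general undirected circuits and explicitly leaves simplicity as a conjecture, so that strengthening is likely out of reach by these methods.
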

These results demonstrate how \grpph\ is sensitive to circuits in the digraph at all scales, meeting goal (b), and can be interpreted through a persistence basis of such circuits, meeting goal (d).
We also discuss how to use $\zbpershom{G}$ to assign a `scale' to any circuit in $\underlying{G}$ in Section~\ref{sec:comparison}.
In Section~\ref{sec:decomposition}, we prove that if $G$ can be decomposed into smaller parts, then \grpph\ also decomposes.
\begin{theorem}\label{thm:intro_decomposition}
Given a weighted digraph $G\in\WDgr$, if $G$ decomposes as a wedge decomposition $G=G_1 \vee_{\hat{v}} G_2$ or a disjoint union $G = G_1 \sqcup G_2$ then
\begin{equation}
\zbpershom{G} \cong \zbpershom{G_1} \oplus \zbpershom{G_2}. 
\end{equation}
\end{theorem}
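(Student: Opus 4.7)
The plan is to exploit the functoriality of $\zbpershommap : \Cont\WDgr \to \PersVec$. In both settings there are natural inclusion digraph maps $\iota_i : G_i \hookrightarrow G$, and my first task is to check each is a morphism in $\Cont\WDgr$, i.e.\ a contraction for the shortest-path quasimetric. This is immediate for disjoint unions. For the wedge at $\hat{v}$, any walk in $G$ between two vertices of $G_i$ that crosses into $G_j$ must return through $\hat{v}$; replacing the $G_j$-excursion by the constant walk at $\hat{v}$ produces a walk in $G_i$ of no greater length, so $\iota_i$ is in fact an isometric embedding. Applying $\zbpershommap$ and combining via the universal property of direct sum yields a comparison map $\Phi : \zbpershom{G_1} \oplus \zbpershom{G_2} \to \zbpershom{G}$ in $\PersVec$.

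To show $\Phi$ is an isomorphism it suffices to verify this pointwise at every scale $t$, since compatibility with the structure maps is automatic from functoriality. At $t=0$, Theorem~\ref{thm:intro_representatives}(b) identifies $\zbpershom{G}(0)$ with the cycle space of $\underlying{G}$, and similarly for each piece; a classical graph-theoretic fact is that this cycle space splits as a direct sum over the two pieces in both the disjoint and wedge cases (since $\hat{v}$ is a cut vertex, any cycle in the wedge is confined to one side). For $t > 0$, I would invoke Theorem~\ref{thm:intro_representatives}(c): pick circuit persistence bases $\{c^{(i)}_k\}$ for each $\underlying{G_i}$ and show their union is a persistence basis for $\zbpershom{G}$. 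This reduces the question to matching the death time of each $c^{(i)}_k$ computed in $G_i$ with its death time in $G$.

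The main obstacle is precisely this matching of death times. The death time of a circuit is controlled by the grounded pipeline's filtration, which in principle depends on chains in the ambient digraph; I must argue that no strictly cheaper bounding chain for $c^{(i)}_k$ appears in $G$ but not in $G_i$. The isometric embedding $\iota_i$ is suggestive but not conclusive here: one must descend to the chain complex and exhibit a restriction operator on chains that preserves filtration value and sends bounding chains of $c^{(i)}_k$ in $G$ to bounding chains in $G_i$. For the wedge this amounts to a careful analysis at the cut vertex $\hat{v}$, where one chops a chain into its $G_1$- and $G_2$-supported parts and checks that $\partial$ respects this splitting; for the disjoint union it is automatic since chains split as a direct sum across the components. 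Combined with the $t=0$ identification, this promotes $\Phi$ to an isomorphism of persistence modules.
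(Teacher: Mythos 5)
Your treatment of the disjoint union is fine and matches the paper (every chain group, boundary map and structure map splits as a direct sum), and your observation that the wedge inclusions $\iota_i:G_i\to G$ are isometric embeddings, hence morphisms in $\Cont\WDgr$, is correct and does give the comparison morphism $\Phi$. But the heart of the wedge case --- showing $\Phi$ is an isomorphism at every $t$ --- is left as a plan rather than carried out, and the specific device you sketch for it would not work as stated. The obstruction is that $G^t$ is in general \emph{not} $G_1^t\vee_{\hat{v}}G_2^t$: a path $a\leadsto b$ through $\hat{v}$ with $a$ and $b$ separated creates a shortcut edge $(a,b)\in E(G^t)$, so $\zb{C}_1(G,t)=C_1(G\cup G^t)$ has generators supported on neither side, and $C_2(G^t)$ likewise contains directed triangles and long squares straddling the cut vertex. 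Hence ``chopping a chain into its $G_1$- and $G_2$-supported parts'' is not defined on these generators, and no naive assignment of them to one side commutes with $\partial$: the boundary of a straddling triangle is not the sum of a $G_1$-chain and a $G_2$-chain.

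The missing idea, which is how the paper proceeds, is to use the retractions $f_i:G\to G_i$ that collapse all of $V_{3-i}$ to $\hat{v}$. These are contractions, hence induce chain maps; assembling the inclusions gives $J:\zb{C}(G_1)\oplus\zb{C}(G_2)\to\zb{C}(G)$ and the retractions give $F$ in the other direction, with $F\circ J=\id$ in degree $1$. The composite $J\circ F$ is \emph{not} the identity on chains: it sends a separated edge $(a,b)$ to $a\hat{v}+\hat{v}b$. One then observes that any such edge of $G^t$ arises from a path through $\hat{v}$ whose two halves each have length at most $t$, so the directed triangle $a\hat{v}b$ lies in $C_2(G^t)$, and the prescription $P(ab)=a\hat{v}b$ on separated edges (and $0$ otherwise) is a chain homotopy with $J\circ F-\id=\bd_2\circ P$. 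This homotopy is exactly what certifies your claim that ``no strictly cheaper bounding chain for $c^{(i)}_k$ appears in $G$ but not in $G_i$''; without it (or an equivalent argument) the death-time matching, and hence the assertion that the union of persistence bases of the $G_i$ is a persistence basis for $G$, remains unproven.
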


In Section~\ref{sec:stability} we investigate the stability of $\zbpershommap$.
We prove a number of bounds on the bottleneck distance of the barcode upon perturbing the input weighted digraph.
We find local stability to operations such as weight perturbation, edge subdivision and certain classes of edge collapses and edge deletions.
In particular, edge subdivision stability automatically implies that our invariant converges under iterated subdivision (Corollary~\ref{cor:gen_ims_convergence}).
In contrast, the descriptor is unstable to generic edge collapses and edge deletions, which we demonstrate through a number of counter-examples.
We argue that these stability properties suffice to meet goal (c) and indeed stability to larger classes of edge collapses and edge deletion would be undesirable.
For a summary of all stability results obtained, please consult Table~\ref{tbl:stability_result_summary}.

In order to build intuition for what is measured by \grpph, we compute a number of illustrative examples in Section~\ref{sec:examples}.
In particular, in Section~\ref{sec:asymptotic}, we consider a simple, cycle graph and determine the limiting value of \grpph\ under iterated edge subdivision.
In Section~\ref{sec:square_motifs}, we compute the invariant for a number of small square digraphs with varying edge orientations, illustrating sensitivity to directionality, as required by goal (a).

\grpph\ arises from the grounded pipeline after fixing a number of choices, including path homology as a functor from digraphs to chain complexes.
Another popular method for producing chain complexes from digraphs is the directed flag complex~\cite{Masulli2016}.
Replacing path homology with directed flag complex homology yields an alternative descriptor, called grounded persistent directed flag complex homology (\grpdflh).
For completeness, in Appendix~\ref{appdx:complex}, we revisit each of the results obtained in the main paper, replacing \grpph\ with \grpdflh.
Of particular note is Example~\ref{ex:dflag_appendage_instability}, in which we show that \grpdflh\ is sensitive to a particularly simple class of edge deletions.
This stands in contrast to \grpph, which is unaffected by these deletions.

\subsection{Computations}
The software package \texttt{Flagser} allows the user to flexibly define filtrations of directed flag complexes and subsequently compute persistent homology~\cite{Luetgehetmann2020}.
We use an alteration of \texttt{Flagser} to compute grounded persistent directed flag complex homology (available at~\cite{flagser-fork}).
An algorithm for computing PPH in arbitrary degrees was proposed by \citeauthor{ChowdhurySIAM}~\cite{ChowdhurySIAM};
a more efficient algorithm for computing PPH in degree $1$ was later proposed by \citeauthor{Dey2020a}~\cite{Dey2020a}.
A slight modification of the latter algorithm can be used to compute \grpph.
A software package for computing both grounded homologies, as well as persistence bases, in collaboration with G. Henselman-Petrusek, is forthcoming.

\subsection{Acknowledgements}
The first author would like to thank H.~Byrne, A.~Goriely, A.~\'O~hEachteirn and T.~Thompson for valuable discussions which motivated and aided the early stages of this work. HAH gratefully acknowledges funding from a Royal Society University Research Fellowship.
The authors are members of the Centre for Topological Data Analysis, which is funded by the EPSRC grant `New Approaches to Data Science: Application Driven Topological Data Analysis' \href{https://gow.epsrc.ukri.org/NGBOViewGrant.aspx?GrantRef=EP/R018472/1}{\texttt{EP/R018472/1}}.
For the purpose of Open Access, the authors have applied a CC BY public copyright licence to any  Author Accepted Manuscript (AAM) version arising from this submission. 

\section{Background}\label{sec:background}
\subsection{Basic notation and category theory}

We introduce some basic language for graphs and categories, and then recall the definitions of  path homology and persistent homology, the two theories we combine later  in a new way to define our  invariant.

\begin{notation}
  For $d\in \N$, the \mdf{standard $d$-simplex} is
    \begin{equation}
      \mdf{\Delta^d}\defeq \left\{ (x_0, \dots x_d) \in \R^{d+1} \rmv \sum x_i=1, \text{ and }x_i\geq 0 \; \forall i\right\}.
    \end{equation}
\end{notation}

\begin{notation}
  Given a category $\C$, we denote the collection of objects \mdf{$\Obj(\C)$} and the collection of morphisms \mdf{$\Mor(\C)$}.
  For two objects $X, Y\in\Obj(\C)$, we denote the collection of morphisms $X \to Y$ by \mdf{$\MorXY{\C}{X}{Y}$}.
  Where it is clear from object whether $\alpha$ is an object or a morphism, we simply write $\alpha \in \C$.
\end{notation}

\begin{defin}
  Given two categories $\C$ and $\D$ we denote the category of functors $\C\to\D$, where morphisms are natural transformations, by \mdf{$\Funct{\C}{\D}$}.
  For a morphism $f\in\MorXY{\Funct{\C}{\D}}{M}{N}$, we denote the components of the natural transformation by $f_x: M(x) \to N(x)$ for each $x\in\C$.
\end{defin}

\begin{lemma}
Given categories $\C$, $\D$ and $\D'$ and a functor $\mu: \D \to \D'$, there is a functor $\Funct{\C}{\mu}:\Funct{\C}{\D}\to\Funct{\C}{\D'}$.
\end{lemma}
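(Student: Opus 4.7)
The plan is to define $\Funct{\C}{\mu}$ by post-composition with $\mu$, and then verify the two layers of functoriality (of the constituent pieces, and of the new functor itself).

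First, on objects, I would send a functor $F \in \Funct{\C}{\D}$ to the composite $\mu \circ F: \C \to \D'$. To check this is a well-defined object of $\Funct{\C}{\D'}$, I would verify that $\mu \circ F$ preserves identities and composition; both properties follow immediately from the corresponding properties of $\mu$ and $F$, applied in turn. On morphisms, given a natural transformation $f: M \Rightarrow N$ in $\Funct{\C}{\D}$, I would define $\Funct{\C}{\mu}(f)$ componentwise by $(\Funct{\C}{\mu}(f))_x \defeq \mu(f_x)$ for each $x \in \C$. The naturality square for this new transformation — between $\mu \circ M$ and $\mu \circ N$ — is obtained by applying $\mu$ to the naturality square for $f$, which commutes by hypothesis; since $\mu$ is a functor and thus preserves commuting squares, the image commutes as well.

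Next, I would check that the assignment $\Funct{\C}{\mu}$ is itself functorial. For the identity natural transformation $\id_M: M \Rightarrow M$, the component $(\id_M)_x = \id_{M(x)}$ is sent to $\mu(\id_{M(x)}) = \id_{\mu M(x)}$, which is the $x$-component of $\id_{\mu \circ M}$, so identities are preserved. For composition of natural transformations $g \circ f$, the component $(g \circ f)_x = g_x \circ f_x$ in $\D$ is sent to $\mu(g_x \circ f_x) = \mu(g_x) \circ \mu(f_x)$, again by functoriality of $\mu$, which is precisely the $x$-component of $\Funct{\C}{\mu}(g) \circ \Funct{\C}{\mu}(f)$.

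There is no real obstacle here: every verification reduces, component-by-component, to a functoriality property of $\mu$ (or to naturality of the input transformation). The only thing to be careful about is distinguishing the two levels — functoriality of each $\mu \circ F$ as a functor $\C \to \D'$, versus functoriality of $\Funct{\C}{\mu}$ itself as a map between functor categories — and ensuring that each is discharged separately.
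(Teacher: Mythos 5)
Your proposal is correct and follows exactly the same route as the paper's proof: define $\Funct{\C}{\mu}$ by post-composition with $\mu$ on objects and componentwise application of $\mu$ on natural transformations, then check the functorial axioms. You simply spell out the routine verifications that the paper leaves as ``satisfies the usual functorial axioms.''
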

\begin{proof}
Given $F\in\Obj(\Funct{\C}{\D})$, we map $\Funct{\C}{\mu}(F)\defeq \mu \circ F$.
Given a natural transformation $\nu: F \Rightarrow F'$ between functors $F, F'\in\Funct{C}{D}$, $\mu \circ \nu$ is a natural transformation $\mu \circ F \Rightarrow \mu \circ F'$.
This construction satisfies the usual functorial axioms.
\end{proof}

\begin{notation}
  \begin{enumerate}[label=(\alph*)]
    \item We let \mdf{$\Rposet$} denote the poset of $\R$ equipped with the $\leq$ relation, viewed as a category. 
    \item We let \mdf{$\VecCat$} denote the category of $\R$-vector spaces and $\vecCat$ denote the subcategory of finite-dimensional $\R$-vector spaces.
    \item We let \mdf{$\Ch$} denote the category of chain complexes over $\R$.
  \end{enumerate}
\end{notation}

\begin{defin}
Given a chain complex $C_\bullet\in \Ch$, we denote the $k^{th}$ chain group by $C_k$ and the boundary map by $\bd_k : C_k \to C_{k-1}$.
For each $k\in \N$, the \mdf{$k^{th}$ homology group}, \mdf{$H_k(C)$} is the quotient
\begin{equation}
  H_k(C) \defeq 
  \frac{\ker\bd_k}{\im\bd_{k+1}}.
\end{equation}
We can view $H_k$ as a functor $\Ch\to\VecCat$.
\end{defin}

\subsection{Directed graphs}

\begin{defin}
  \begin{enumerate}[label=(\alph*)]
    \item A \mdf{(simple) digraph} is a tuple $G=(V, E)$ where $V$, the set of \mdf{vertices}, is a finite set and $E$, the set of \mdf{edges}, is a subset of $V\times V \setminus \Delta_V$ where
\begin{equation}
  \Delta_V \defeq \left\{ (i, i) \in V\times V \rmv i \in V \right\}.
\end{equation}
We call elements of $\Delta_V$ \mdf{self-loops}.
  \item A \mdf{directed acyclic graph (DAG)} is a simple digraph $G=(V, E)$ such that there is a linear ordering on the nodes $V=\{ v_1 , \dots , v_n \}$ such that $(v_i, v_j)\in E \implies i < j$. 
  \item An \mdf{oriented graph} is a simple digraph $G=(V, E)$ with no double edges, i.e. $(i, j) \in E \implies (j, i)\not \in E$.
  \item A \mdf{weighted (digraph/DAG/oriented graph)} is a triple $G=(V, E, w)$ such that $(V, E)$ is a (simple digraph/DAG/oriented graph) and $w: E\to \Rpos$ is a positively-valued function on the edges.
  \item An \mdf{undirected graph} is a tuple $G=(V, E)$ where $E$ is a multiset of 2-element subsets of $V$.
  \item Given a digraph $G=(V, E)$, the \mdf{underlying undirected graph} is $\mdf{\underlying{G}} \defeq (V, \underlying{E})$ where 
    \begin{align}
      (i, j)\in E ,(j, i)\not\in E &\implies \{i, j\} \in \underlying{E} \text{ with multiplicity }1,\\
      (i, j), (j, i)\in E &\implies \{i, j\} \in \underlying{E} \text{ with multiplicity }2.
    \end{align}
  \item Given two digraphs $G_1=(V_1, E_1)$, $G_2=(V_2, E_2)$ with $V_1, V_2 \subseteq V$, their union is $\mdf{G_1 \cup G_2} \defeq (V_1 \cup V_2, E_1 \cup E_2)$.
    If $V_1$, $V_2$ are disjoint, then we denote this $\mdf{G_1\sqcup G_2}$.
  \end{enumerate}
\end{defin}

\begin{notation}
Fix a weighted digraph $G=(V, E, w)$.
  \begin{enumerate}[label=(\alph*)]
  \item We denote $\mdf{V(G)}\defeq V$, $\mdf{E(G)}\defeq E$, $\mdf{w(G)}\defeq w$.
  \item For an edge $e=(i, j)\in E$, we write $\mdf{\st(e)}\defeq i$ and $\mdf{\fn(e)}\defeq j$, and say that $i$ and $j$ are \mdf{incident} to $e$.
  \item For an edge $e=(i, j)\in E$, we write $\mdf{w(i, j)}\defeq w(e)$.
  \item We write $\mdf{i\to j}$ to mean there is an edge $(i, j) \in E$.
  \end{enumerate}
\end{notation}

\begin{defin}
Fix a weighted digraph $G=(V, E, w)$.
  \begin{enumerate}[label=(\alph*)]
  \item Given $V'\subseteq V$, the \mdf{induced subgraph} on $V'$ is $(V', E', w')$, where $E' = E \cap (V' \times V')$ and $w'$ is $w$ restricted to $E'$.
  \item Given $E'\subseteq E$, the \mdf{induced subgraph} on $E'$ is $(V', E', w')$, where $V'$ is the set of all vertices incident to some edges in $E'$ and $w'$ is $w$ restricted to $E'$.
  \item For $v\in V$,
    \begin{align}
      \mdf{\nbhd_{in}(v;G)} &\defeq \left\{ a\in V \rmv a \to v \right\},\\
      \mdf{\nbhd_{out}(v;G)} &\defeq \left\{ b\in V \rmv v\to b \right\},\\
      \mdf{\nbhd(v;G)} & \defeq \nbhd_{in}(v) \cup \nbhd_{out}(v)
    \end{align}
    and the \mdf{neighbourhood graph, $\nbhdgraph(v;G)$,} is the induced subgraph on $\nbhd(v)$.
    Where $G$ is clear from context, we omit it from notation.
  \item For $F\subseteq E$, 
    \begin{equation}
\mdf{\nbhd(F; G)}\defeq\left\{\tau \in E \rmv \exists\, e\in F \text{ such that } \tau, e \text{ are incident to a common vertex} \right\}
    \end{equation}
    and the \mdf{neighbourhood graph, $\nbhdgraph(F; G)$,} is the induced subgraph on $\nbhd(F; G)$.
    Where $G$ is clear from context, we omit it from notation.
  \item For two vertices $a,b \in V$, a \mdf{(directed) trail} from $a$ to $b$ is an alternating sequence of vertices, $v_i \in V$, and forward edges, $e_i \in E$,
    \begin{equation}
      p = (v_0 = a , e_1, v_1, e_2, \dots, e_{k}, v_k=b)
    \end{equation}
    such that $e_i = (v_{i-1}, v_i)$.
    We write that \mdf{$p$ is a trail $a\leadsto b$}.
    In a simple digraph, the $e_i$ uniquely determine the $v_i$ (and vice versa) so we occasionally omit one from the notation.
  \item A trail which does not repeat vertices is a \mdf{path}.
  \item An \mdf{undirected circuit} is an alternating sequence of vertices, $v_i \in V$, and edges, $e_i \in E$,
    \begin{equation}
      p = (v_0 = a , e_1, v_1, e_2, \dots, e_{k}, v_k=a)
    \end{equation}
    such that $\{\st(e_i), \fn(e_i)\} = \{v_{i-1}, v_i\}$ and $v_0 = v_k$.
  \item Given an undirected circuit $p$, as above, if $v_0, \dots, v_{k-1}$ are all distinct then we say $p$ is \mdf{simple}.
  \end{enumerate}
\end{defin}

\begin{notation}
Fix a weighted digraph $G=(V, E, w)$.
  \begin{enumerate}[label=(\alph*)]
    \item Given a trail $p$, the \mdf{length of $p$} is defined as
      \begin{equation}
        \pathlen(p)\defeq\sum_{i=1}^{k}w(e_i). 
      \end{equation}
    \item We denote the set of all paths $i\leadsto j$ by $\mdf{\mathcal{P}(i, j)}$.
  \end{enumerate}
\end{notation}

\begin{defin}
For a weighted digraph $G$, the \mdf{shortest-path quasimetric} $d:V(G)\times V(G) \to \R\sqcup\{\infty\}$ is defined by 
\begin{equation}
  d(i, j) \defeq 
  \begin{cases}
    \min_{p \in \mathcal{P}(i, j)} \pathlen(p) & \text{if }\mathcal{P}(i, j)\neq\emptyset, \\
    \infty & \text{otherwise}.
  \end{cases}
\end{equation}
\end{defin}

There is a notion of morphisms between (weighted) digraphs.

\begin{defin}
  \begin{enumerate}[label=(\alph*)]
    \item Given two simple digraphs $G, H$, a \mdf{digraph map} (or simply \mdf{map}), $f:G \to H$, is a map on vertices,
      $f:V(G) \to V(H)$,
    such that
    \begin{equation}
      i\to j \implies f(i) \to f(j)\text{ or }f(i) = f(j).\label{eq:dig_map_cond}
    \end{equation}
    Given a vertex map $f: V(G) \to V(H)$ satisfying condition~(\ref{eq:dig_map_cond}), we say $f$ \mdf{induces} a digraph map $G \to H$.
  \item A digraph map $f:G \to H$ is called an \mdf{inclusion} if $V(G) \subseteq V(H)$ and $f$ is induced by the inclusion vertex map.
  \item Given two weighted digraphs $G, H$, a digraph map $f: G \to H$ is called a \mdf{contraction} if for all nodes $i, j \in V(G)$, we have
  \begin{equation}
 d_H(f(i), f(j)) \leq d_G(i, j)
  \end{equation}
  where $d_G$ and $d_H$ are the shortest-path quasimetrics on $G$ and $H$ respectively.
  \end{enumerate}
\end{defin}

\begin{notation}
\begin{enumerate}[label=(\alph*)]
  \item For $e=(i, j)\in E(G)$, we denote $\mdf{f(e)}\defeq (f(i), f(j))$.
    Note that $f(e) \in E(H)\sqcup \Delta_{V(H)}$.
\item Given a path $p=(v_0, e_1, \dots, e_k, v_k)$ and a digraph map $f:G \to H$, the \mdf{image of $p$} is the path, $\mdf{f(p)}$, obtained from
  \begin{equation}
     (f(v_0), f(e_1), f(v_1), \dots , f(e_k), f(v_k))
  \end{equation}
  by removing $f(e_i)$ and $f(v_i)$ from the sequence if $f(e_i)$ is a self-loop.
  \end{enumerate}
\end{notation}

\begin{rem}
  Suppose $f:G \to H$ is a weighted digraph such that $w(H)(f(e)) \leq w(G)(e)$ for every edge $e\in E(G)$ with $f(e) \not\in\Delta_{V(H)}$.
  Then for any path $p:i \leadsto j$ in $G$, $f(p)$ is a path $f(i)\leadsto f(j)$ in $H$ and $\pathlen(f(p))\leq\pathlen(p)$.
  Hence, $f$ is certainly a contraction.
  However, this is not a necessary contraction since the shortest path joining $f(i)\leadsto f(j)$ need not be the image of the shortest path joining $i \leadsto j$.
\end{rem}

Given these ways of mapping between (weighted) digraphs, a number of categories naturally arise.

\begin{defin}\label{defin:dgr_cats}
\begin{enumerate}[label=(\alph*)]
  \item We denote the category of simple digraphs, directed acyclic graphs and oriented graphs, where the morphisms are all digraph maps, by $\mdf{\Dgr}$, $\mdf{\Dag}$ and $\mdf{\Dor}$ respectively.
  \item We use the prefix $\mdf{\bm{\mathrm{W}}}$ to denote the corresponding categories of \emph{weighted} digraphs where a morphism is any digraph map of the underlying, unweighted digraphs.
    For example, $\WDgr$ is a category of weighted simple digraphs.
  \item For a category of weighted digraphs, we use the prefix $\mdf{\Cont}$ to denote the subcategory, containing all objects, with the additional restriction that morphisms must be contractions.
  \item For a category of weighted or unweighted digraphs, we use the prefix $\mdf{\Incl}$ to denote the wide subcategory, containing all objects, with the additional restriction that morphisms must be inclusions.
\end{enumerate}
\end{defin}

\subsection{Path homology}
Path homology is a homology theory for directed graph, which was first introduced by \citeauthor{Grigoryan2012}~\cite{Grigoryan2012}.
Subsequent papers prove K\"unneth theorems for Cartesian products and joins~\cite{Grigoryan2020}, and invariance under an appropriate notation of digraph homotopy~\cite{Grigoryan2014}
A directed network gives rise to a natural filtration of digraphs which leads to a stable theory of persistent path homology~\cite{ChowdhurySIAM}.
(Persistent) path homology has also been extended to vertex-weighted digraphs~\cite{Lin2019}.
Path homology can be defined for an arbitrary \mdf{path complex}; here we present the definition for a digraph.

Fix a ring $\Ring$ and a simple directed graph $G=(V, E)$.
\begin{defin}
  The following definitions classify sequences of vertices in $V$:
  \begin{enumerate}[label=(\alph*)]
    \item An \mdf{elementary $p$-path} is any sequence $v_0 \dots v_p$ of $(p+1)$ vertices, $v_i \in V$.
    \item An elementary $p$-path, $v_0 \dots v_p$, is \mdf{regular} if $v_i \neq v_{i+1}$ for every $i$.
      Otherwise, we say it is \mdf{non-regular}.
    \item An elementary $p$-path, $v_0 \dots v_p$, is \mdf{allowed} if $(v_i, v_{i+1})\in E$ for every $i$.
  \end{enumerate}
\end{defin}

\begin{defin}
  We freely generate $\Ring$-modules from these sequences of vertices, for each $p\geq 0$.
  \begin{align}
      \Lambda_p\defeq\Lambda_p(G; \Ring) 
      &\defeq
      \Ring\big\langle\left\{ 
      v_0 \dots v_p 
      \text{ elementary }p\text{-path on }V
      \right\}\big\rangle \\
      \mathcal{R}_p\defeq\mathcal{R}_p(G; \Ring) 
      &\defeq
      \Ring\big\langle\left\{ 
      v_0 \dots v_p 
      \text{ regular }p\text{-path on }V
      \right\}\big\rangle \\
      \mathcal{A}_p\defeq\mathcal{A}_p(G; \Ring) 
      &\defeq
      \Ring\big\langle\left\{ 
      v_0 \dots v_p 
      \text{ allowed }p\text{-path in }G
      \right\}\big\rangle 
  \end{align}
  For $p=-1$, we let $\Lambda_{-1}\defeq\mathcal{R}_{-1}\defeq\mathcal{A}_{-1}\defeq\Ring$.
\end{defin}

\begin{defin}
  Given $p\geq 0$, the \mdf{non-regular boundary map} $\bd[nreg]_p:\Lambda_p\to\Lambda_{p-1}$ is given on the standard basis by
  \begin{equation}\label{eq:bd_formula}
    \bd[nreg]_p(v_0 \dots v_p) \defeq \sum_{i=0}^p (-1)^i v_0 \dots \hat{v_i}\dots v_p
  \end{equation}
  where $v_0 \dots \hat{v_i} \dots v_p$ is the $(p-1)$-path obtained by removing $v_i$ from $v_0 \dots v_p$.
\end{defin}

\begin{defin}
  Since $\mathcal{R}_p \subseteq \Lambda_p$, let $\pi:\Lambda_p \to \mathcal{R}_p$ denote the projection map onto $\mathcal{R}_p$.
  The \mdf{regular boundary map} $\bd[reg]_p : \mathcal{R}_p \to \mathcal{R}_{p-1}$ is given by
  \begin{equation}
    \bd[reg]_p \defeq \pi \circ \bd[nreg]_p.
  \end{equation}
\end{defin}

This boundary operator does not pass down to a boundary operator between the $\left\{ \mathcal{A}_p\right\}$ so we must define the following sub-modules.

\begin{defin}
  The \mdf{space of $\bd[reg]$-invariant $p$-paths} is
  \begin{equation}
    \biv[reg]_p\defeq\biv[reg]_p(G; \Ring)\defeq
    \left\{ v \in \mathcal{A}_p \rmv \bd[reg]_pv \in \mathcal{A}_{p-1} \right\}.
  \end{equation}
\end{defin}

\begin{rem}
  Note that $\bd[reg]_p$ restricts to a homomorphism $\biv[reg]_p \to \biv[reg]_{p-1}$ and
  a standard check confirms that $\bd[reg]_{p-1} \circ \bd[reg]_p = 0$ \cite[Lemma~2.9]{Grigoryan2012} 
\end{rem}

\begin{defin}
  The \mdf{regular path (chain) complex} is
  \begin{equation}
    \begin{tikzcd}
      \dots \arrow[r, "\partial_3"] &
      \biv[reg]_2 \arrow[r, "\partial_2"] &
      \biv[reg]_1 \arrow[r, "\partial_1"] &
      \biv[reg]_0 \arrow[r, "\partial_0"] &
      \Ring \arrow[r, "\partial_{-1}"] &
      0
    \end{tikzcd}
  \end{equation}
  The homology of the regular path complex is the \mdf{regular path homology} of $G$, the $k^{th}$ homology group is
  \begin{equation}
    \homgr[reg]_k \defeq \homgr[reg]_k(G; \Ring) \defeq \frac{\ker \bd[reg]_k}{\im \bd[reg]_{k+1}}.
  \end{equation}
  The \mdf{$k^{th}$ Betti number} is $\betti_k \defeq \rank \homgr[reg]_k$.
\end{defin}

\begin{defin}
  Given a digraph map $f: G \to H$, the \mdf{induced map} $\inducedch{f}:\mathcal{R}_p(G) \to \mathcal{R}_p(H)$ is given on the standard basis by
  \begin{equation}
    \inducedch{f}(v_0 \dots v_p) \defeq
    \begin{cases}\label{eq:induced_ch}
      f(v_0) \dots f(v_p) & \text{if }f(v_0)\dots f(v_p)\text{ is regular} \\
      0 & \text{otherwise}
    \end{cases}
  \end{equation}
\end{defin}

\begin{lemma}[{\cite[Theorem~2.10]{Grigoryan2014},~\cite[Proposition~A.2]{ChowdhurySIAM}}]
The induced maps restrict to maps $\inducedch{f}:\biv[reg]_p(G) \to \biv[reg]_p(H)$ which commute with $\partial_p$ and hence form chain maps between the regular path complexes.
Moreover these chain maps are functorial, i.e. $\inducedch{(f\circ g)} = \inducedch{f} \circ \inducedch{g}$ and $\inducedch{(\id)}=\id$.
Hence $\biv[reg]$ is a functor $\Dgr\to\Ch$.
\end{lemma}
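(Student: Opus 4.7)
The plan is to lift everything to the non-regular chain complex $\Lambda_\bullet$ and then push down via the projection $\pi: \Lambda_p \to \mathcal{R}_p$. I would first extend $\inducedch{f}$ to a map $\tilde{f}_*: \Lambda_p(G) \to \Lambda_p(H)$ by applying the formula $v_0 \dots v_p \mapsto f(v_0) \dots f(v_p)$ without the regularity gate from~(\ref{eq:induced_ch}). By construction $\inducedch{f} = \pi \circ \tilde{f}_*$ on $\mathcal{R}_p(G)$, and $\tilde{f}_* \circ \bd[nreg] = \bd[nreg] \circ \tilde{f}_*$ holds on all of $\Lambda_p$ by direct inspection of the alternating sum~(\ref{eq:bd_formula}).

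The two technical inputs I would isolate are the identities
\begin{equation*}
\pi \circ \tilde{f}_* = \pi \circ \tilde{f}_* \circ \pi \quad \text{and} \quad \pi \circ \bd[nreg] = \pi \circ \bd[nreg] \circ \pi
\end{equation*}
on all of $\Lambda_p$. The first is immediate: $v_i = v_{i+1}$ forces $f(v_i) = f(v_{i+1})$, so $\tilde{f}_*$ preserves $\ker \pi$. The second is the heart of the argument: given a non-regular path $w_0 \dots w_p$ with $w_j = w_{j+1}$, the two summands in $\bd[nreg](w_0 \dots w_p)$ obtained by omitting $w_j$ and by omitting $w_{j+1}$ coincide as sequences of vertices but carry opposite signs $(-1)^j$ and $(-1)^{j+1}$, hence cancel; every remaining summand retains the adjacency $w_j w_{j+1}$ and is therefore non-regular, so $\pi$ annihilates it.

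With these in hand, on $\mathcal{R}_p(G)$ we obtain
\begin{equation*}
\bd[reg] \circ \inducedch{f} = \pi \bd[nreg] \pi \tilde{f}_* = \pi \bd[nreg] \tilde{f}_* = \pi \tilde{f}_* \bd[nreg] = \pi \tilde{f}_* \pi \bd[nreg] = \inducedch{f} \circ \bd[reg].
\end{equation*}
Moreover, $\inducedch{f}$ carries $\mathcal{A}_p(G)$ into $\mathcal{A}_p(H)$, since an allowed (and hence automatically regular) path $v_0 \dots v_p$ is sent either to zero or, via condition~(\ref{eq:dig_map_cond}), to an allowed path in $H$. Hence for $\sigma \in \biv[reg]_p(G)$ we have $\bd[reg] \inducedch{f}(\sigma) = \inducedch{f} \bd[reg](\sigma) \in \mathcal{A}_{p-1}(H)$, showing $\inducedch{f}(\sigma) \in \biv[reg]_p(H)$.

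Functoriality is then formal: $\widetilde{(f \circ g)}_* = \tilde{f}_* \circ \tilde{g}_*$ on elementary paths by direct substitution, and on $\mathcal{R}_p(G)$ the first identity gives $\inducedch{(f \circ g)} = \pi \tilde{f}_* \tilde{g}_* = \pi \tilde{f}_* \pi \tilde{g}_* = \inducedch{f} \circ \inducedch{g}$; the identity case is trivial. The main obstacle is really the second identity $\pi \bd[nreg] = \pi \bd[nreg] \pi$, which depends crucially on the pairwise sign cancellation of the two omission summands; once that is in place the rest is bookkeeping around the projection.
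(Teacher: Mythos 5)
The paper does not prove this lemma itself — it cites it to \cite[Theorem~2.10]{Grigoryan2014} and \cite[Proposition~A.2]{ChowdhurySIAM} — so there is no in-paper argument to compare against. Your proof is correct and complete, and it is essentially the standard argument from those references: lifting to $\Lambda_\bullet$, the two projection identities you isolate (with the sign-cancellation of the two omission terms at a repeated vertex being the genuine content), the preservation of allowedness via condition~(\ref{eq:dig_map_cond}), and the resulting restriction to $\biv[reg]_p$ are all sound, and the functoriality bookkeeping via $\pi \tilde{f}_* \pi = \pi \tilde{f}_*$ closes the argument correctly.
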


We will primarily be interested in the $1^{st}$ homology group $\homgr[reg]_1$.
Hence the following characterisation of the low-dimensional chain groups will be of use.

\begin{prop}[{\cite[\S~3.3]{Grigoryan2012}}]
For any simple digraph $G=(V, E)$, 
$\biv[reg]_0(G)$ is isomorphic to the $\Ring$-module freely generated by the vertices and
$\biv[reg]_1(G)$ is isomorphic to the $\Ring$-module freely generated by the edges, i.e.
\begin{equation}
  \biv[reg]_0(G) \cong \Rspan{V}
  \quad\text{and}\quad
  \biv[reg]_1(G) \cong \Rspan{E}.
\end{equation}
\end{prop}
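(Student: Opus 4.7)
The plan is to verify the two isomorphisms by inspecting the definitions of $\mathcal{A}_p$ and $\biv[reg]_p$ in the very low-degree cases $p=0$ and $p=1$, showing that in these degrees the condition $\bd[reg]_p v \in \mathcal{A}_{p-1}$ is automatically satisfied for every $v \in \mathcal{A}_p$.

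For $p=0$, I would first observe that an elementary $0$-path is just a single vertex $v_0 \in V$, and the allowed condition is vacuous (no edges need to exist), so $\mathcal{A}_0(G;\Ring)$ is the free $\Ring$-module on $V$. Then I would compute $\bd[reg]_0(v_0) = 1 \in \Ring = \mathcal{A}_{-1}$, which lies in $\mathcal{A}_{-1}$ tautologically. Extending $\Ring$-linearly, $\bd[reg]_0(\mathcal{A}_0) \subseteq \mathcal{A}_{-1}$, so $\biv[reg]_0(G) = \mathcal{A}_0(G;\Ring) \cong \Rspan{V}$.

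For $p=1$, an elementary $1$-path $v_0 v_1$ is allowed precisely when $(v_0, v_1) \in E$. Because $G$ is simple, $E \subseteq V\times V \setminus \Delta_V$, so $v_0 \neq v_1$ whenever $v_0 v_1$ is allowed; hence every allowed $1$-path is automatically regular, and distinct edges correspond to distinct basis elements. This gives $\mathcal{A}_1(G;\Ring) \cong \Rspan{E}$. Applying the boundary formula, $\bd[reg]_1(v_0 v_1) = v_1 - v_0 \in \mathcal{A}_0$, since both $v_0$ and $v_1$ are vertices of $G$ and thus allowed $0$-paths. Linear extension shows $\bd[reg]_1(\mathcal{A}_1) \subseteq \mathcal{A}_0$, so $\biv[reg]_1(G) = \mathcal{A}_1(G;\Ring) \cong \Rspan{E}$.

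There is no genuine obstacle here; the only subtlety worth making explicit is that the simplicity hypothesis (no self-loops) is exactly what forces allowed $1$-paths to be regular, which is why the regularisation projection $\pi$ does nothing in degree $1$ and the identification with $\Rspan{E}$ is clean. For $p\geq 2$ the analogous statement fails in general because $\bd[reg]_p(v_0\dots v_p)$ can contain terms $v_0\dots\hat{v_i}\dots v_p$ corresponding to non-allowed paths, but this does not arise at $p\leq 1$.
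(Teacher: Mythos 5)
Your proof is correct: the paper cites this result from Grigoryan et al.\ without reproducing an argument, and your direct verification that the $\bd[reg]$-invariance condition is automatic in degrees $0$ and $1$ is exactly the standard reasoning behind the cited statement. The observation that simplicity (no self-loops) forces allowed $1$-paths to be regular is the right subtlety to flag, and nothing further is needed.
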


\begin{notation}
Note that an edge $e=(a, b)\in E$ gives rise to an allowed 1-path $ab\in\mathcal{A}_1(G)$.
Moreover, $ab\in\biv[reg]_1(G)$.
For ease of notation, given an edge $e\in E$, we will also use $e=ab\in\biv[reg]_1(G)$ to refer to the generator in $\biv[reg]_1(G)$.
\end{notation}

Since $\biv_1$ is generated by edges in $G$, any trail $p$ has a representative.

\begin{notation}
  Given a directed trail $p=(e_1, \dots, e_k)$ in a digraph $G$, the \mdf{representative} of $p$ is 
  \begin{equation}
    \repres{p}\defeq \sum_{i=1}^k e_i \in \biv_1.
  \end{equation}
\end{notation}

Likewise, there is a representative for any undirected circuit.

\begin{notation}
  Given an undirected circuit $p=(v_0=a, e_1, v_1, \dots, e_k, v_k=a)$ in a digraph $G$, the \mdf{representative} of $p$ is 
  \begin{equation}
    \repres{p}\defeq \sum_{i=1}^k \alpha_i e_i \in \biv_1.
  \end{equation}
  where $\alpha_i = 1$ if $e_i = (v_{i-1}, v_i)$, else $\alpha_i = -1$.
\end{notation}

\begin{rem}
The representative of a circuit $p$, does not depend on the starting point, but if $p'$ traverses the circuit in the opposite direction then $\repres{p'} = - \repres{p}$.
\end{rem}

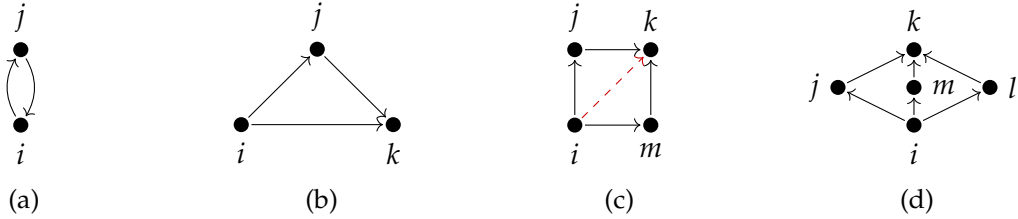
\begin{figure}[hbt]
	\centering
  \begin{subfigure}[t]{0.24\textwidth}
   \centering
   \begin{tikzpicture}[
	roundnode/.style={circle, fill=black, minimum size=4pt},
	inner sep=2pt,
	outer sep=1pt
	]
	\node (i) at (0, 0) [roundnode, label=below:$i$] {};
	\node (j) at (0, 1) [roundnode, label=above:$j$] {};

  \draw [->, bend left] (i) to (j);
  \draw [->, bend left] (j) to (i);
\end{tikzpicture}
   \caption{}
  \end{subfigure}
  \begin{subfigure}[t]{0.24\textwidth}
   \centering
   \begin{tikzpicture}[
	roundnode/.style={circle, fill=black, minimum size=4pt},
	inner sep=2pt,
	outer sep=1pt
	]
	\node (i) at (0, 0) [roundnode, label=below:$i$] {};
	\node (j) at (1, 1) [roundnode, label=above:$j$] {};
	\node (k) at (2, 0) [roundnode, label=below:$k$] {};

  \draw [->] (i) to (j);
  \draw [->] (j) to (k);
  \draw [->] (i) to (k);
\end{tikzpicture}
   \caption{}
  \end{subfigure}
  \begin{subfigure}[t]{0.24\textwidth}
   \centering
   \begin{tikzpicture}[
	roundnode/.style={circle, fill=black, minimum size=4pt},
	inner sep=2pt,
	outer sep=1pt
	]
	\node (i) at (0, 0) [roundnode, label=below:$i$] {};
	\node (j) at (0, 1) [roundnode, label=above:$j$] {};
	\node (k) at (1, 1) [roundnode, label=above:$k$] {};
	\node (m) at (1, 0) [roundnode, label=below:$m$] {};

  \draw [->] (i) to (j);
  \draw [->] (j) to (k);
  \draw [->] (i) to (m);
  \draw [->] (m) to (k);
  \draw [->, dashed, red!80!black] (i) to (k);

\end{tikzpicture}
   \caption{}
  \end{subfigure}
  \begin{subfigure}[t]{0.24\textwidth}
   \centering
   \begin{tikzpicture}[
	roundnode/.style={circle, fill=black, minimum size=4pt},
	inner sep=2pt,
	outer sep=1pt
	]
	\node (i) at (0, 0) [roundnode, label=below:$i$] {};
	\node (j) at (-1, 0.5) [roundnode, label=left:$j$] {};
	\node (m) at (0, 0.5) [roundnode, label=right:$m$] {};
	\node (l) at (1, 0.5) [roundnode, label=right:$l$] {};
	\node (k) at (0, 1) [roundnode, label=above:$k$] {};

	\draw [->] (i)--(j);
	\draw [->] (i)--(m);
	\draw [->] (i)--(l);
	\draw [->] (j)--(k);
	\draw [->] (m)--(k);
	\draw [->] (l)--(k);
\end{tikzpicture}
   \caption{}\label{fig:ld_squares}
  \end{subfigure}
	\caption{The three types of generators for $\biv[reg]_2(G; \Z)$ and $\biv[reg]_2(G; \R)$:
    (a) A double edge.
    (b) A directed triangle.
    (c) A long square (the dashed red edge must not be present).
    Finally (d) shows a linear dependency between long squares.
  }\label{fig:omega2_structure}
\end{figure}

\begin{prop}[{\cite[Proposition~2.9]{Grigoryan2014}}, {\cite[Theorem~3]{Dey2020a}}]\label{prop:biv2_gens}
Let G be a finite, simple digraph and $\Ring=\R\text{ or }\Z$.
Any $\omega\in\biv[reg]_2(G; \Ring)$ can be written as a linear combination of $\bd$-invariant $2$-paths of the following three types:
\begin{enumerate}[label=(\alph*)]
  \item $(iji)$ where $i\to j \to i$ (double edge);
  \item $(ijk)$ where $i \to j \to k$, $i\to k$  and $i\neq k$ (directed triangle); and
  \item $(ijk - imk)$ where $i\to j \to k$, $i \to m \to k$, $i\not\to k$ and $i\neq k$ (long square).
\end{enumerate}
\end{prop}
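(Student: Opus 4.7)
The plan is to decompose $\omega$ according to the combinatorial type of the allowed regular $2$-paths that appear, and then reduce the $\bd$-invariance condition on the residual term to a standard sum-to-zero observation.

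First I would partition the allowed regular 2-paths into three disjoint classes according to the endpoint pair $(i,k)$: Class~I with $i=k$ (so $i\to j\to i$); Class~II with $i\neq k$ and $i\to k$; and Class~III with $i\neq k$ and $i\not\to k$. A direct application of $\bd[reg]=\pi\circ\bd[nreg]$ gives $\bd[reg](iji)=(ji)+(ij)$ for Class~I (the $(ii)$-term is killed by $\pi$) and $\bd[reg](ijk)=(jk)-(ik)+(ij)$ for Classes~II and~III. So a single 2-path of Class~I or~II is already $\bd$-invariant and is exactly a generator of type (a) or type (b). Splitting $\omega = \omega_{\mathrm{I}}+\omega_{\mathrm{II}}+\omega_{\mathrm{III}}$ by class, the summands $\omega_{\mathrm{I}}$ and $\omega_{\mathrm{II}}$ are manifestly combinations of type (a) and (b) generators, and $\omega_{\mathrm{III}}=\omega-\omega_{\mathrm{I}}-\omega_{\mathrm{II}}\in\biv[reg]_2$.

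Next I would unpack the $\bd$-invariance of the residual $\omega_{\mathrm{III}}$. Writing
\begin{equation}
\omega_{\mathrm{III}}=\sum_{(i,k):\,i\neq k,\,i\not\to k}\ \sum_{j:\,i\to j\to k}\alpha_{ijk}(ijk),
\end{equation}
its boundary expands as
\begin{equation}
\bd[reg]\omega_{\mathrm{III}}=\sum_{(ijk)}\alpha_{ijk}\bigl((jk)+(ij)\bigr)-\sum_{(i,k)}\Bigl(\sum_{j}\alpha_{ijk}\Bigr)(ik).
\end{equation}
In the first sum every $(ij)$ and $(jk)$ is a genuine edge of $G$, while in the second sum each $(ik)$ is by the Class~III hypothesis \emph{not} an edge. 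For $\bd[reg]\omega_{\mathrm{III}}$ to lie in $\mathcal{A}_1$, these non-edge coefficients must all vanish, forcing $\sum_{j:\,i\to j\to k}\alpha_{ijk}=0$ for every Class~III pair $(i,k)$.

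To conclude, for each such pair $(i,k)$ with nonempty index set $J_{i,k}=\{j:i\to j\to k\}$, I would pick any $m_0\in J_{i,k}$ and use the sum-to-zero relation to rewrite
\begin{equation}
\sum_{j\in J_{i,k}}\alpha_{ijk}(ijk)=\sum_{j\in J_{i,k}}\alpha_{ijk}\bigl((ijk)-(im_0k)\bigr),
\end{equation}
each summand of the right-hand side being a long square of type (c), since the required conditions $i\to j\to k$, $i\to m_0\to k$, $i\neq k$, $i\not\to k$ are all satisfied. Summing over $(i,k)$ exhibits $\omega_{\mathrm{III}}$ as a combination of type (c) generators. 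The main subtlety — and the only real content of the statement — is recognising that Class~III 2-paths are individually not $\bd$-invariant, so that invariance \emph{must} be exploited through the sum-to-zero constraint to pair them into long squares; no property of $\Ring$ beyond being a commutative ring is used, so the argument works identically for $\Ring=\R$ and $\Ring=\Z$.
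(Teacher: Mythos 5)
Your argument is correct and complete: the partition of allowed regular $2$-paths by whether the endpoints coincide, are joined by an edge, or are not, the observation that the first two classes are individually $\bd$-invariant, and the extraction of the sum-to-zero constraint from the coefficients of the non-allowed $1$-paths $(ik)$ is exactly the standard proof of this result. The paper itself does not prove this proposition but cites it from \cite{Grigoryan2014} and \cite{Dey2020a}, and your argument is essentially the one found there, valid over $\R$ or $\Z$ as you note.
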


First note that all of the elements identified in Proposition~\ref{prop:biv2_gens} are elements of $\biv[reg]_2(G; \Ring)$ and hence they form a generating set.
However, the generators corresponding to long squares are not necessarily linearly independent.
For example, in Figure~\ref{fig:ld_squares}, we see
\begin{equation}
  (ijk - ilk) = (ijk - imk) + (imk -ilk).
\end{equation}
Removing some long squares to account for these linear relations, we can obtain a basis of $\biv_2(G; \R)$.

\subsection{Persistent homology}

Topological data analysis (TDA) is a field of applied mathematics which employs the powerful, discriminative tools of algebraic topology to study complex datasets .
The cornerstone of the field is persistent homology (PH) which yields a stable, discrete, topological invariant, called a barcode (see~\cite{Bubenik2012, Chazal2021, Otter2015} for an overview).
The barcode summarises topological features in the data (e.g.\ connected components and loops) and measures the range of scales across which they persist.

\begin{defin}\label{defin:persvec}
  \begin{enumerate}[label=(\alph*)]
    \item A \mdf{persistent chain complex} is a functor $\Rposet\to\Ch$.
    \item A \mdf{persistent vector space} is a functor $\Rposet\to\VecCat$.
      We denote the category of such functors $\mdf{\PersVec}\defeq\Funct{\Rposet}{\VecCat}$.
    \item A persistent vector space $M$ is \mdf{pointwise finite-dimensional (p.f.d)} if $M(t)$ is finite dimensional for all $t\in\R$, that is $M\in\Funct{\Rposet}{\vecCat}$.
      We denote the category of p.f.d persistent vector spaces $\mdf{\Persvec}\defeq\Funct{\Rposet}{\vecCat}$.
    \item A persistent vector space $M:\Rposet\to\VecCat$ is \mdf{tame}~\cite{nanda2021computational} if 
      \begin{enumerate}[label=(\roman*)]
        \item $M(t)$ is finite dimensional for all $t \in \R$, and
        \item there are finitely many $t\in \R$ such that there is no $\epsilon > 0$ such that $M(t- \epsilon \leq t+ \epsilon)$ is an isomorphism.
      \end{enumerate}
    \item For an interval $I\subseteq R$, we define the \mdf{corresponding interval}, $\mdf{P(I)}\in\PersVec$, in which the vector spaces are
      \begin{equation}
        P(I)(t) \defeq
        \begin{cases}
          \R &\text{if } t \in I,\\
          0 &\text{otherwise},
        \end{cases}
      \end{equation}
      and $P(I)(s\leq t)$ is the identity if $s, t\in I$ and the trivial map otherwise.
    \item Given $M, N\in\PersVec$, their direct sum $\mdf{M\oplus N}\in\PersVec$ is given pointwise by
      \begin{align}
        (M\oplus N)(t) &\defeq M(t) \oplus N(t),  \\
        (M\oplus N)(s \leq t) &\defeq M(s\leq t) \oplus N(s\leq t).
      \end{align}
    \item A \mdf{morphism of persistent vector spaces} is a morphism in the category $\Funct{\Rposet}{\VecCat}$. That is, for $M, N \in \Funct{\Rposet}{\VecCat}$ a morphism $\phi: M \to N$ is a family of linear maps $\left\{ \phi_t : M(t) \to N(t) \right\}$ such that
      \begin{equation}
        N(s\leq t) \circ \phi_s = \phi_t \circ M(s \leq t)
      \end{equation}
      whenever $s\leq t$.
      We say $\phi$ is an \mdf{isomorphism} if each $\phi_t$ is an isomorphism of vector spaces.
      If an isomorphism $M \to N$ exists, we write $M\cong N$.
  \end{enumerate}
\end{defin}

A (p.f.d) persistent vector space can be decomposed as a direct sum of interval modules, the \emph{indecomposable} persistent vector spaces. 
Moreover, this decomposition is unique, discrete and finite in all practical applications.

\begin{theorem}[Structure Theorem for p.f.d persistent vector spaces,%
                  {\cite[Theorem~1.1]{CrawleyBoevey2012},%
                  \cite[Theorem~2.8]{chazal2016structure}}]%
                  \label{thm:pvs_decompose}
Given $M\in\Persvec$, there is a multiset $\Barcode{M}$ of intervals of $\R$ such that
\begin{equation}
  M \cong \bigoplus_{I\in\Barcode{M}} P(I)
\end{equation}
and any such decomposition is unique, up to reordering.
We call $\Barcode{M}$ the \mdf{barcode} of $M$.
\end{theorem}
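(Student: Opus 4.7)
The plan is to follow Crawley-Boevey's approach and split the argument into an existence claim and a uniqueness claim.

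For existence, I would proceed in two steps. First, I would classify the indecomposable objects of $\Persvec$: given an indecomposable $N$, set $I_N \defeq \{t \in \R : N(t) \neq 0\}$ and, using the pointwise-finite-dimensional hypothesis together with indecomposability and the total ordering of $\R$, show that $I_N$ must be an interval, constructing along the way a compatible family of isomorphisms $N(t) \to \R$ for $t \in I_N$ which commute with the structure maps $N(s \leq t)$. This yields $N \cong P(I_N)$. Second, I would show that every $M \in \Persvec$ admits at least one decomposition into indecomposable summands; this is the technical heart of the theorem and proceeds via a transfinite/Zorn induction argument that genuinely exploits the pfd hypothesis to control limits of chains of direct summands.

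For uniqueness, I would invoke a Krull--Schmidt--Azumaya theorem. The endomorphism ring of an interval module satisfies $\mathrm{End}_{\PersVec}(P(I)) \cong \R$ (any endomorphism is pointwise multiplication by a scalar, and commutativity with the structure maps forces this scalar to be constant on $I$), which is a field and therefore local. Azumaya's theorem then implies that any two decompositions of $M$ into summands with local endomorphism rings must agree as multisets, up to reordering, giving uniqueness of $\Barcode{M}$.

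The main obstacle is the existence of a decomposition into indecomposables. Unlike in the Noetherian or finite-dimensional setting, this step is not formal: the result genuinely fails for persistent vector spaces that are not pfd, so the argument must exploit both the total order on $\R$ and finite-dimensionality at each point in a nontrivial way. For this reason, rather than reproving the decomposition from scratch, I would simply cite the original theorem of Crawley-Boevey and the exposition by Chazal et al., as is standard.
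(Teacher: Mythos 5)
The paper does not give its own proof of this statement: it is quoted directly from the literature, with the result attributed to Crawley-Boevey and to Chazal et al. Your outline accurately summarises the structure of the cited argument (classification of pointwise finite-dimensional indecomposables as interval modules, Crawley-Boevey's existence argument for the decomposition, and uniqueness via Azumaya's theorem using the locality of $\mathrm{End}(P(I)) \cong \R$), and since you too ultimately defer to the same references for the genuinely hard existence step, your treatment matches the paper's.
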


\begin{defin}
  \begin{enumerate}[label=(\alph*)]
    \item A multiset of intervals of $\R$ is called a \mdf{barcode}.
    \item We call an interval $I$ in a barcode a \mdf{feature}.
      If $I$ starts at $a$ and ends at $b$, we say the feature is \mdf{born} at time $a$ and \mdf{dies} at time $b$.
    \item Given a barcode $\mathcal{B}$, the \mdf{diagram} of $\mathcal{B}$ is the multiset of endpoints
  \begin{equation}
    \mdf{\Dgm(B)} \defeq
    \ms{
      (a_k, b_k) 
      \rmv
      I_k \in \mathcal{B}\text{ has endpoints }a_k \leq b_k
    }.
  \end{equation}
  \item Given $M\in\Persvec$, the \mdf{persistence diagram} of $M$ is $\mdf{\Dgm(M)}\defeq\Dgm(\Barcode{M})$.
  \end{enumerate}
\end{defin}

The barcode can be used as a summary of the persistent vector space.
When arising as the homology of a filtration of topological spaces, this summary captures how topological features are born and killed throughout the filtration.
In order to use this summary for further statistics, it is desirable that this summary is \emph{stable} to noise and perturbations in the input data.
To quantify this stability, we require metrics on persistent vector spaces and the resulting barcodes.

\begin{defin}[{\cite{edelsbrunner2008persistent}}]
  Given two multisets $D_1, D_2 \subseteq \R^2$, the \mdf{bottleneck distance} is
  \begin{equation}
    \mdf{d_B(D_1, D_2)} \defeq \inf_\gamma \sup_{x \in D_1 \cup \Delta_\R} \norm{x-\gamma(x)}_\infty
  \end{equation}
  where $\gamma$ is over all multi-bijections $D_1 \cup \Delta_\R \to D_2 \cup \Delta_\R$ and $\Delta_\R=\left\{(x, x) \rmv x \in \R\right\}$ is the diagonal with multiplicity $1$.
  Given barcodes $\mathcal{B}_1$, $\mathcal{B}_2$, we define the bottleneck distance between them to be the bottleneck distance between their diagrams
  \begin{equation}
    \mdf{d_B(\mathcal{B}_1, \mathcal{B}_2)} \defeq d_B(\Dgm(\mathcal{B}_1), \Dgm(\mathcal{B}_2)).
  \end{equation}
\end{defin}

\begin{defin}[{\cite{cohen2010lipschitz}}]
  Given $p\geq 1$ and two multisets $D_1, D_2 \subseteq \R^2$, the \mdf{$p$-Wasserstein distance}~\cite{cohen2010lipschitz} is
  \begin{equation}
    \mdf{d_{W_p}(D_1, D_2)} \defeq \inf_\gamma \left(\sum_{x \in D_1 \cup \Delta_\R} \norm{x-\gamma(x)}_\infty^p\right)^{1/p}
  \end{equation}
  where $\gamma$ is over all multi-bijections $D_1 \cup \Delta_\R \to D_2 \cup \Delta_\R$ and $\Delta_\R=\left\{(x, x) \rmv x \in \R\right\}$ is the diagonal with multiplicity $1$.
  The $p$-Wasserstein between two barcodes $\mathcal{B}_1$, $\mathcal{B}_2$, is
  \begin{equation}
   \mdf{d_{W_p}(\mathcal{B}_1, \mathcal{B}_2)} \defeq d_B(\Dgm(\mathcal{B}_1), \Dgm(\mathcal{B}_2)).
  \end{equation}
\end{defin}

\begin{defin}[{\cite{Bauer2013a}}]
    Given a category $\C$, fix $M, N\in\Funct{\Rposet}{\C}$ and $\delta \geq 0$.
  \begin{enumerate}[label=(\alph*)]
    \item The \mdf{$\delta$-shift} of $M$ is $\shifted{M}{\delta}\in\Funct{\Rposet}{\C}$ where
      \begin{equation}
        \shifted{M}{\delta}(t) \defeq M(t + \delta)
        \quad\text{ and }\quad
        \shifted{M}{\delta}(s \leq t) \defeq M(s + \delta \leq t + \delta).
      \end{equation}
    \item Given a morphism $f\in\MorXY{\Funct{\Rposet}{\C}}{M}{N}$ the \mdf{$\delta$-shift} of $f$ is $\shifted{f}{\delta}:\shifted{M}{\delta}\to\shifted{N}{\delta}$ in which $f_t \defeq f_{t+\delta}$.
      When clear from context we often denote $\shifted{f}{\delta}=f$.
    \item The \mdf{$\delta$-transition morphism} is a morphism $\mdf{\transmorph{M}{\delta}}:M\to\shifted{M}{\delta}$ which at $t\geq 0$ is given by $M(t \leq t+\delta)$.
    \item A \mdf{$\delta$-interleaving} is a pair of morphisms $\phi: M \to \shifted{N}{\delta}$ and $\psi: N \to \shifted{M}{\delta}$ such that
      \begin{equation}
        \shifted{\psi}{\delta} \circ \phi = \transmorph{M}{2\delta}
        \quad\text{ and }\quad
        \shifted{\phi}{\delta} \circ \psi = \transmorph{N}{2\delta}.
      \end{equation}
      \item The \mdf{interleaving distance} of $M$ and $N$ is
    \begin{equation}
    \mdf{d_I(M, N)} \defeq \inf \left\{ \delta \geq 0 \rmv \exists\;\delta\text{-interleaving} \right\}.
    \end{equation}
  \end{enumerate}
\end{defin}

\begin{rem}
Recall that, given a $\delta$-interleaving $\phi$ and $\psi$, in order to constitute morphisms $M\to \shifted{N}{\delta}$ and $N\to\shifted{M}{\delta}$, they must satisfy relations
\begin{equation}
 \phi_t \circ M(s \leq t) = N(s+\delta \leq t+ \delta) \circ \psi_s
 \quad\text{ and }\quad
 \psi_t \circ N(s \leq t) = M(s+\delta \leq t+ \delta) \circ \phi_s
\end{equation}
for each $s\leq t$.
\end{rem}

Now that we have metrics on persistent vector spaces and their barcode summaries, we can state the isometry theorem.
This guarantees that the barcode is a stable summary of the input persistent vector space.

\begin{theorem}[{Isometry~Theorem~\cite[Theorem~3.5]{Bauer2013a}}]\label{thm:isometry}
  Given p.f.d persistent vector spaces $M,N\in\Persvec$, 
  \begin{equation}
    d_B(\Barcode{M}, \Barcode{N}) = d_I(M, N).
  \end{equation}
\end{theorem}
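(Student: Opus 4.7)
The plan is to prove the two inequalities $d_B(\Barcode{M},\Barcode{N}) \leq d_I(M,N)$ and $d_I(M,N) \leq d_B(\Barcode{M},\Barcode{N})$ separately, exploiting the fact that by Theorem~\ref{thm:pvs_decompose}, both $M$ and $N$ split uniquely as direct sums of interval modules indexed by their barcodes.

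For the \emph{easy direction} $d_I \leq d_B$: Suppose $\delta > d_B(\Barcode{M},\Barcode{N})$, so there is a multi-bijection $\gamma$ between $\Dgm(M)\cup\Delta_\R$ and $\Dgm(N)\cup\Delta_\R$ moving every point by at most $\delta$ in $\ell^\infty$. I would construct the interleaving summand by summand. For each matched pair $(I,J)$ with endpoints within $\delta$, there is a canonical $\delta$-interleaving $P(I) \leftrightarrows \shifted{P(J)}{\delta}$ whose components are the identity where both modules are nonzero and zero elsewhere; one checks directly that composing with the $\delta$-shifted partner recovers $\transmorph{P(I)}{2\delta}$, which is already zero wherever the identity fails. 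For an unmatched interval $I$ paired to the diagonal, $I$ has length at most $2\delta$, so $\transmorph{P(I)}{2\delta} = 0$ and $P(I)$ is $\delta$-interleaved with the zero module. Direct summing these component interleavings yields the required $\delta$-interleaving $M \leftrightarrows \shifted{N}{\delta}$, so $d_I(M,N) \leq \delta$.

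For the \emph{hard direction} $d_B \leq d_I$ (the algebraic stability theorem): Suppose $(\phi,\psi)$ is a $\delta$-interleaving. First, I would reduce to the tame case (finite barcodes), so that the matching problem becomes combinatorial, and handle the general p.f.d.\ case by a limiting argument on finite truncations. Then I would follow the induced matching approach of \citeauthor{Bauer2013a}: every monomorphism $U\hookrightarrow V$ of tame persistent vector spaces induces a canonical injective matching $\Barcode{U}\to\Barcode{V}$ such that each matched interval of $U$ is sent to an interval of $V$ whose right endpoint is at least as large and whose left endpoint is at most as large. Factoring the transition morphism $\transmorph{M}{2\delta} = \shifted{\psi}{\delta}\circ\phi$ through $\shifted{N}{\delta}$ produces a composable pair, from which the induced matchings of the image/kernel can be glued into a matching between $\Barcode{M}$ and $\Barcode{\shifted{M}{2\delta}}$ that factors through $\Barcode{N}$ up to a $\delta$-shift. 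Combined with the analogous factorisation of $\transmorph{N}{2\delta}$, a short diagram chase produces the desired $\delta$-matching between $\Barcode{M}$ and $\Barcode{N}$.

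The main obstacle is the second direction: it is not at all obvious a priori how to extract a combinatorial matching of bars from the purely algebraic data of a pair of interleaving maps. The key technical lemma is the \emph{box lemma}, which guarantees that if a bar of $M$ survives long enough under repeated application of $\phi$ and $\psi$, then it must be matched to a nearby bar of $N$ rather than slipping off the diagonal. Since the paper invokes this result as a black box (citing \cite[Theorem~3.5]{Bauer2013a}), a full reproduction of the induced matching construction and the box lemma is out of scope here, and the proof can simply appeal to that reference.
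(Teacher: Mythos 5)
The paper does not prove this statement at all: it is imported verbatim as an external result, with the proof delegated entirely to \cite[Theorem~3.5]{Bauer2013a}. Your proposal therefore necessarily goes beyond what the paper does. Your sketch of the converse direction $d_I \leq d_B$ (building a $\delta$-interleaving summand-by-summand from a $\delta$-matching, using that unmatched bars have length at most $2\delta$ so that $\transmorph{P(I)}{2\delta}=0$) is correct and complete enough to be checked; the naturality of the componentwise identity-or-zero maps does hold under the endpoint conditions $|a-c|\leq\delta$, $|b-d|\leq\delta$. For the hard direction you correctly identify the induced-matching strategy of \citeauthor{Bauer2013a} and, like the paper, ultimately defer to that reference, which is acceptable. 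One attribution quibble: the ``box lemma'' you name as the key technical input belongs to the original geometric proof of bottleneck stability by Cohen-Steiner, Edelsbrunner and Harer, not to the induced-matching proof; in the latter the crucial inputs are that a monomorphism (resp.\ epimorphism) of p.f.d.\ modules induces a matching preserving death times exactly and only decreasing births (resp.\ dually), together with the factorisation of $\transmorph{M}{2\delta}$ through the image of $\phi$ in $\shifted{N}{\delta}$. This does not affect the validity of your outline, since you invoke the reference as a black box exactly where the paper does.
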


Finally, when a persistent vector space is tame, there are finitely many critical values $t_0 < \dots < t_k$ such that if $t_{i-1} < s \leq t < t_i$ then $M(s \leq t)$ is an isomorphism~\cite{chazal2016structure}.
Hence, all information of the persistent vector space is contained within the maps $M(t_{i-1} \leq t_i)$ for $i=1, \dots, k$.
In particular, any interval in the barcode must have its endpoints at one of the critical values (or $\pm\infty$).
In these scenarios, it suffices to consider $M$ as a functor $\textbf{[}\bm{k}\textbf{]} \to \VecCat$, where $\textbf{[}\bm{k}\textbf{]}$ is the sub-poset of $\Rposet$ consisting of the integers $0, \dots, k$~\cite{nanda2021computational}.

\section{Motivation and definition of GrPPH}\label{sec:new_descriptor}
Firstly, in Section~\ref{sec:intro_subdivision}, we describe a standard pipeline for extracting a topological summary from a weighted digraph, and illustrate a number of issues that naturally arise.
Motivated by this in Section~\ref{sec:descriptor_defin}, we alter the standard pipeline in order to define a `grounded pipeline'.
We prove that this new pipeline is functorial in an appropriate sense, which we will later exploit for stability results.
The pipeline is parameterised by two choices; in Section~\ref{sec:grpph_defin} we fix these choices in order to define our proposed descriptor.

\subsection{Standard pipeline}\label{sec:intro_subdivision}
A typical TDA pipeline for weighted digraphs consists of three ingredients:
\begin{enumerate}
  \item a map $F:\Obj(\WDgr)\to\Obj(\Funct{\Rposet}{\Dgr})$, which assigns a filtration of digraphs to every weighted digraph;
  \item a chain complex functor $C:\Dgr \to \Ch$ which maps each digraph to a chain complex and induces chain map for every digraph map; and finally
  \item a choice of homology functor $H_k: \Ch \to \VecCat$ in some degree $k$.
\end{enumerate}
These components can then be combined into the following pipeline.
\begin{figure}[H]
  \centering
  \begin{tikzcd}[column sep=large]
\mathcal{H}_k : \WDgr \arrow[r, "F"]
& \Funct{\Rposet}{\Dgr} \arrow[r, "\Funct{\Rposet}{C}"]
& \Funct{\Rposet}{\Ch} \arrow[r, "\Funct{\Rposet}{H_k}"]
& \Funct{\Rposet}{\VecCat}
  \end{tikzcd}
\end{figure}

We obtain a map $\mathcal{H}_k : \Obj(\WDgr) \to \Obj(\Funct{\Rposet}{\VecCat})$ given by
$\mathcal{H}_k \defeq \Funct{\Rposet}{H_k} \circ \Funct{\Rposet}{C} \circ F$.
Since $F$ is not a priori functorial, neither is $\mathcal{H}_k$.
Under mild assumptions on $F$ and $C$, it is possible to define a subcategory of $\WDgr$ which makes this pipeline functorial.

\begin{notation}
Given $F:\Obj(\WDgr)\to\Obj(\Funct{\Rposet}{\Dgr})$, $G \in \WDgr$ and $s\leq t$ we write
\begin{enumerate}[label=(\alph*)]
  \item $\mdf{F^t G}\defeq F(G)(t)$, the image of $t$ under the functor $F(G)$; and
  \item $\mdf{\filtincl{s}{t}}\defeq F(G)(s\leq t)$, the image of $s\leq t$ under the functor $F(G)$.
\end{enumerate}
\end{notation}

\begin{defin}
  \begin{enumerate}[label=(\alph*)]
    \item A \mdf{filtration map} is any map $F:\Obj(\WDgr) \to \Obj(\Funct{\Rposet}{\Incl\Dgr})$
    such that $V(F^t G) \subseteq V(G)$ for all $t\in \R$.
    In particular, $\filtincl{s}{t}$ must always be an inclusion.

    \item Given a filtration map $F$, 
    a morphism of weighted digraphs $f\in\MorXY{\WDgr}{G}{H}$ is called \mdf{$F$-compatible} if for every $t\in \R$
    the underlying vertex map $f: V(G) \to V(H)$ restricts to a vertex map $V(F^t G) \to V(F^t H)$ which in turn yields a digraph map $F^t G \to F^t H$.

    \item Given a filtration map $F$, the \mdf{$F$-compatible category of weighted digraphs}, \mdf{$\WDgrF$}, is the subcategory of $\WDgr$ such that $\Obj(\WDgrF)=\Obj(\WDgr)$ and
    \begin{equation}
      \Mor(\WDgrF) = \left\{ f \in \Mor(\WDgr) \rmv f\text{ is }F\text{-compatible} \right\}. 
    \end{equation}
  \end{enumerate}
\end{defin}

\begin{lemma}\label{lem:filtration_functor}
Any filtration map $F: \Obj(\WDgr)\to\Obj(\Funct{\Rposet}{\Incl\Dgr})$ induces a functor $F: \WDgrF \to \Funct{\Rposet}{\Dgr}$, which we call a \mdf{filtration functor}.
\end{lemma}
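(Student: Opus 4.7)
The plan is straightforward: since $F$ is already defined on objects, I need only extend it to morphisms and verify the two functoriality axioms.

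First I would define $F$ on morphisms. Given an $F$-compatible morphism $f\in\MorXY{\WDgrF}{G}{H}$, the $F$-compatibility hypothesis provides, for each $t\in\R$, a digraph map $F^t f: F^t G \to F^t H$ obtained by restricting the underlying vertex map of $f$. I would collect these into a candidate natural transformation $F(f): F(G) \Rightarrow F(H)$ in $\Funct{\Rposet}{\Dgr}$, with component at $t$ equal to $F^t f$.

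The main (and only nontrivial) step is checking naturality, i.e.\ that for every $s\leq t$ the square
\begin{equation}
\begin{tikzcd}
F^s G \arrow[r, "\filtincl{s}{t}_G"] \arrow[d, "F^s f"'] & F^t G \arrow[d, "F^t f"] \\
F^s H \arrow[r, "\filtincl{s}{t}_H"'] & F^t H
\end{tikzcd}
\end{equation}
commutes. Because $F$ takes values in $\Funct{\Rposet}{\Incl\Dgr}$, the horizontal arrows are inclusion digraph maps. A digraph map is determined by its vertex map, so it suffices to check commutativity on a vertex $v\in V(F^s G)$. Going right-then-down gives $F^t f(v) = f(v)$, since $F^t f$ is by definition the restriction of $f$; going down-then-right gives $\filtincl{s}{t}_H(f(v)) = f(v)$, since $\filtincl{s}{t}_H$ is an inclusion and $f(v)\in V(F^s H) \subseteq V(F^t H)$ by $F$-compatibility at $s$. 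Hence the square commutes.

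Finally I would verify the functor axioms. For the identity $\id_G\in\MorXY{\WDgrF}{G}{G}$, each component $F^t(\id_G)$ is the restriction of the identity vertex map, which is $\id_{F^t G}$; so $F(\id_G) = \id_{F(G)}$. For composition, given $g\in\MorXY{\WDgrF}{G}{H}$ and $h\in\MorXY{\WDgrF}{H}{K}$, the component at $t$ of $F(h\circ g)$ is the restriction of the vertex map $h\circ g$, which equals the composition of the restrictions, i.e.\ $F^t h \circ F^t g$; thus $F(h\circ g) = F(h)\circ F(g)$ as natural transformations. This completes the verification that $F$ is a functor $\WDgrF\to\Funct{\Rposet}{\Dgr}$. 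I do not anticipate any real obstacle; the proof is essentially a bookkeeping argument that repackages the defining property of $F$-compatibility into the language of natural transformations.
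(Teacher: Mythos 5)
Your proof is correct and follows essentially the same route as the paper's: define each component $F^t f$ by restricting the underlying vertex map, observe that naturality is automatic because the transition maps $\filtincl{s}{t}$ are induced by inclusion vertex maps, and note that functoriality follows since everything is determined by the vertex map. You simply spell out the vertex-level check and the identity/composition axioms in more detail than the paper does.
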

\begin{proof}
  Given $f\in\MorXY{\WDgrF}{G}{H}$, since $f$ is $F$-compatible, the underlying vertex map induces digraph maps $f:F^t G \to F^t H$ for every $t \in \R$.
  Given $s\leq t$, both $F(G)(s\leq t)$ and $F(H)(s\leq t)$ are digraph maps induced by the inclusion vertex map.
  Hence the following square  of morphisms in $\Dgr$ commutes.
  \begin{figure}[H]
    \centering
    \begin{tikzcd}
      F^s G \arrow[d, "F(G)(s\leq t)"'] \arrow[r, "f"] & F^s H \arrow[d, "F(H)(s\leq t)"] \\
      F^t G \arrow[r, "f"'] & F^t H
    \end{tikzcd}
  \end{figure}\noindent
  Hence $f$ induces a natural transformation between $F(G)$ and $F(H)$.
  Moreover, since each $f:F^t G \to F^t H$ is fully determined by the underlying vertex map $V(G) \to V(H)$, this construction is certainly functorial.
\end{proof}

\begin{rem}
Since any filtration map induces a filtration functor, it suffices to define a filtration functor only as a map on objects. 
\end{rem}

When $F$ is a filtration map, it induces a functor $\WDgrF\to\Funct{\Rposet}{\Dgr}$ and hence $\mathcal{H}_k$ is a functor $\WDgrF \to \Funct{\Rposet}{\VecCat}$, as desired.
We will now consider an illustrative example of this pipeline.
Assuming the weight of an edge corresponds to a distance between its endpoints (e.g. the time it takes for a particle to flow down the edge), a natural choice of filtration functor is the following.

\begin{defin}\label{defin:shortest_path}
The \mdf{shortest-path filtration} is a map
$F_d:\WDgr\to\Funct{\Rposet}{\Dgr}$.
For $G=(V, E, w)\in\WDgr$ and $t\in \R$, we define
\begin{equation}
F_d(G)(t) \defeq G^t \defeq (V, E^t)
\quad\text{ where }\quad
E^t \defeq \left\{(i, j)  \rmv d(i, j) \leq t \right\}
\end{equation}
and $d$ is the shortest-path quasimetric on $G$.
For $s\leq t$, the digraph map $G^s \to G^t$ is induced by the identity vertex map $\id_V$.
\end{defin}

\begin{figure}[htbp]
  \centering
  \resizebox{0.9\textwidth}{!}{%
    \begin{tikzpicture}[
  roundnode/.style={circle, fill=black, minimum size=4pt},
  bluenode/.style={circle, fill=blue, minimum size=1pt},
	squarenode/.style={fill=black, minimum size=4pt},
	inner sep=2pt,
	outer sep=1pt
  ]
  \node (a) at (0, 0) [roundnode] {};
  \node (b) at (1, 1) [roundnode] {};
  \node (c) at (1, -1) [roundnode] {};
  \node (d) at (2, 1.5) [roundnode] {};
  \node (e) at (2, 0.5) [roundnode] {};
  \node (f) at (2, -0.5) [roundnode] {};
  \node (g) at (2, -1.5) [roundnode] {};
  \node (h) at (3, 1) [roundnode] {};
  \node (i) at (3, -1) [roundnode] {};
  \node (j) at (4, 0) [roundnode] {};
  \draw[->] (a)--(b);
  \draw[->] (a)--(c);
  \draw[->] (b)--(d);
  \draw[->] (b)--(e);
  \draw[->] (c)--(f);
  \draw[->] (c)--(g);
  \draw[->] (d)--(h);
  \draw[->] (e)--(h);
  \draw[->] (f)--(i);
  \draw[->] (g)--(i);
  \draw[->] (h)--(j);
  \draw[->] (i)--(j);

  \node[] at (2, -2) {$G_1$};

  \draw[->] (5.5, -1.5) -- (5.5, 1.5); 
  \draw[->] (5.5, -1.5) -- (8.5, -1.5);
  \node[rotate=90] at (5, 0) {\scriptsize Death};
  \node at (7, -2) {\scriptsize Birth};
  \draw[dashed, gray] (5.5, -1.5) -- (8.5, 1.5);
  \draw[densely dotted, blue] (6.7, -1.5) -- (6.7, 0.9) -- (5.5, 0.9);
  \node[] at (6.7, -1.65) {\scriptsize $1$};
  \node[] at (5.4, 0.9) {\scriptsize $2$};
  \node at (6.7, 0.9) [bluenode, label={[right]{\scriptsize$\times 1$}}] {};

  \node (a2) at (9.5, 0) [roundnode] {};
  \node (abmid2) at (10, 0.5) [roundnode] {};
  \node (acmid2) at (10, -0.5) [roundnode] {};
  \node (b2) at (10.5, 1) [roundnode] {};
  \node (bdmid2) at (11, 1.25) [roundnode] {};
  \node (bemid2) at (11, 0.75) [roundnode] {};
  \node (c2) at (10.5, -1) [roundnode] {};
  \node (cfmid2) at (11, -0.75) [roundnode] {};
  \node (cgmid2) at (11, -1.25) [roundnode] {};
  \node (d2) at (11.5, 1.5) [roundnode] {};
  \node (dhmid2) at (12, 1.25) [roundnode] {};
  \node (e2) at (11.5, 0.5) [roundnode] {};
  \node (ehmid2) at (12, 0.75) [roundnode] {};
  \node (f2) at (11.5, -0.5) [roundnode] {};
  \node (fimid2) at (12, -0.75) [roundnode] {};
  \node (g2) at (11.5, -1.5) [roundnode] {};
  \node (gimid2) at (12, -1.25) [roundnode] {};
  \node (h2) at (12.5, 1) [roundnode] {};
  \node (hjmid2) at (13, 0.5) [roundnode] {};
  \node (i2) at (12.5, -1) [roundnode] {};
  \node (ijmid2) at (13, -0.5) [roundnode] {};
  \node (j2) at (13.5, 0) [roundnode] {};
  \draw[->] (a2)--(abmid2);
  \draw[->] (abmid2)--(b2);
  \draw[->] (a2)--(acmid2);
  \draw[->] (acmid2)--(c2);
  \draw[->] (b2)--(bdmid2);
  \draw[->] (bdmid2)--(d2);
  \draw[->] (b2)--(bemid2);
  \draw[->] (bemid2)--(e2);
  \draw[->] (c2)--(cfmid2);
  \draw[->] (cfmid2)--(f2);
  \draw[->] (c2)--(cgmid2);
  \draw[->] (cgmid2)--(g2);
  \draw[->] (d2)--(dhmid2);
  \draw[->] (dhmid2)--(h2);
  \draw[->] (e2)--(ehmid2);
  \draw[->] (ehmid2)--(h2);
  \draw[->] (f2)--(fimid2);
  \draw[->] (fimid2)--(i2);
  \draw[->] (g2)--(gimid2);
  \draw[->] (gimid2)--(i2);
  \draw[->] (h2)--(hjmid2);
  \draw[->] (hjmid2)--(j2);
  \draw[->] (i2)--(ijmid2);
  \draw[->] (ijmid2)--(j2);

  \node[] at (11.5, -2) {$G_2$};

  \draw[->] (15, -1.5) -- (15, 1.5);
  \draw[->] (15, -1.5) -- (18, -1.5);
  \node[rotate=90] at (14.5, 0) {\scriptsize Death};
  \node at (16.5, -2) {\scriptsize Birth};
  \draw[dashed, gray] (15, -1.5) -- (18, 1.5);
  \draw[densely dotted, blue] (15.6, -1.5) -- (15.6, 0.9) -- (15, 0.9);
  \draw[densely dotted, blue] (15.6, -0.3) -- (15, -0.3);
  \node[] at (15.6, -1.65) {\scriptsize $0.5$};
  \node[] at (14.9, -0.3) {\scriptsize $1$};
  \node[] at (14.9, 0.9) {\scriptsize $2$};
  \node at (15.6, 0.9) [bluenode, label={[right]{\scriptsize$\times 1$}}] {};
  \node at (15.6, -0.3) [bluenode, label={[right]{\scriptsize$\times 2$}}] {};


\end{tikzpicture}
  }
  \caption{Persistent path homology of the shortest-path filtration of flow through a bifurcation network, before and after edge subdivision. In $G_1$ all edges have unit weight, in $G_2$ all edges have weight $0.5$.}
  \label{fig:edge_subdivision}
\end{figure}
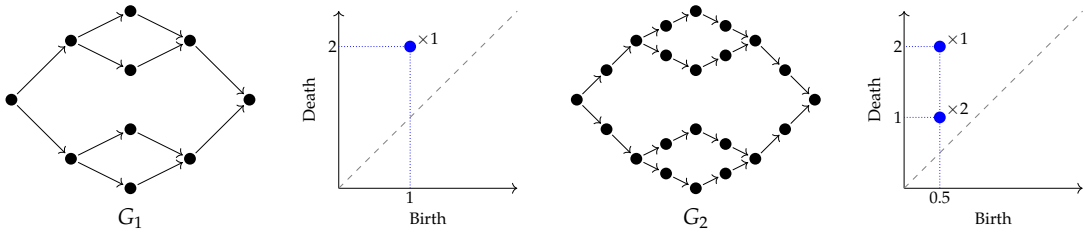

\begin{example}\label{ex:std_pipe}
Choosing $F=F_d$ as above and $C$ to be the regular path complex, we obtain a functor $\mathcal{H}_k:\WDgrF\to\Funct{\Rposet}{\VecCat}$.
The shortest-path quasimetric of a weighted digraph is a directed network and this pipeline measures the persistent path homology of that network.
This pipeline was first considered in~\cite{ChowdhurySIAM} for cycle networks, alongside a stability analysis of persistent path homology for arbitrary directed networks.

In Figure~\ref{fig:edge_subdivision} we apply this pipeline (with $k=1$) to a small bifurcating network $G_1$, a toy model for vasculature networks, in which each edge is given unit weight.
The resulting barcode has a single feature with lifetime $[1, 2)$.
The second network, $G_2$, is obtained by subdividing each edge in $G_1$,  giving all edges weight $0.5$.
The resulting barcode is $\ms{ [0.5, 1), [0.5, 1), [0.5, 2) }$ which has three features.

This example highlights three key issues with this pipeline:
\begin{enumerate}[label=(\alph*)]
  \item the number of features in the barcode changes upon subdivision;
  \item loops bounded by triangles or long squares are `killed' as soon as they are born; and
  \item the birth-time of each feature is an artefact of the `resolution' of the weighted digraph.
\end{enumerate}
A subtler issue arises when we attempt to interpret the diagram.
A feature born at time $t$ is supported on edges of the digraph $G^t$, which may not be edges in the original weighted digraph $G$.
This makes interpretation of features more challenging.
\end{example}

\subsection{Grounded pipeline}\label{sec:descriptor_defin}
We now describe an alteration to the standard pipeline which alleviates these issues by including the underlying digraph $G$ in degree 1 for all $t\in\R$.
The main distinction is that we do not factor through a filtration functor $F$.
Instead, we use $F$ and $C\in\Funct{\Dgr}{\Ch}$ to construct a new functor $\zb{C}_F:\WDgrF\to\Funct{\Rposet}{\Ch}$.
\begin{figure}[H]
  \centering
  \begin{tikzcd}[column sep=large]
\WDgr \arrow[r, "F"]
\arrow[rr, rounded corners, dashed,
       to path={(\tikztostart.north)
                -| ([yshift=4ex]\tikztostart.north)
                -- node[above]{\scriptsize$\zb{C}_F$} ([yshift=4ex]\tikztotarget.north)
                |- (\tikztotarget.north)}]
& \Funct{\Rposet}{\Dgr} \arrow[r, "\Funct{\Rposet}{C}"]
& \Funct{\Rposet}{\Ch} \arrow[r, "\Funct{\Rposet}{H_1}"]
& \Funct{\Rposet}{\VecCat}
  \end{tikzcd}
\end{figure}

In order to define the map on objects, we only need a weaker condition on $C$.

\begin{defin}\label{defin:zb_chain_complex}
  Given a filtration functor $F:\WDgrF\to\Funct{\Rposet}{\Dgr}$, a functor $C:\Incl\Dgr\to\Ch$, $G\in\WDgr$ and $t\in\R$, the chain complex $\zb{C}_\bullet(G, t; F)$ is the top row of the following diagram.
\begin{figure}[H]
    \centering
    \begin{tikzcd}[row sep=small, column sep=small]
        \cdots C_3(F^t G) \arrow[r, "\partial_3"] &
        C_2(F^t G) \arrow[rr, "\inducedch{\iota}\circ\bd_2"] \arrow[rd, "\bd_2"', dotted] & &
        C_1(G\cup F^t G) \arrow[rr, "\bd_1"] & &
        C_0(G\cup F^t G) \cdots \\
       & & C_1(F^t G)\arrow[ru, "\inducedch{\iota}"', dotted, hook]
       \arrow[rr, dotted, "\bd_1"']
       & & C_0(F^t G)\arrow[ru, dotted, "\inducedch{\iota}"', hook] &
    \end{tikzcd}
\end{figure}\noindent
In the above, $\iota:F^t G \hookrightarrow G\cup F^t G$ is the inclusion digraph map, induced by the inclusion vertex map.
Then $\inducedch{\iota}=C(\iota)$ is the image of this map under the functor $C$.
The boundary maps $\bd_k$ are derived either from the chain complex $C_\bullet(F^t G)$ or $C_\bullet(G \cup F^t G)$.

We denote the chain groups as \mdf{$\zb{C}_k(G, t; F)$} and the boundary maps as \mdf{$\zb{\bd}_k^t$}.
When $F$ and $t$ are clear from context, we omit them from notation
\end{defin}

We use the prescript $\zb{\square}$ to denote that this chain complex is \emph{\zbadj};
as we will show in Lemma~\ref{prop:cycles_born_at_0}, after appropriate choices of $F$ and $C$, all degree $1$ homology classes have representatives in the underlying digraph.

\begin{lemma}
For each $t\in \R$ and $G\in\WDgr$, $(\zb{C}_\bullet(G, t; F), \zb{\bd}_\bullet^t)$ is a chain complex.
\end{lemma}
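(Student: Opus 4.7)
The plan is to verify the two defining properties of a chain complex: that each $\zb{\bd}^t_k$ maps into the preceding chain group (clear from the construction), and that consecutive boundary maps compose to zero.

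First I would observe that the complex decomposes into three regions: in degrees $k \geq 2$ the boundary $\zb{\bd}^t_k$ coincides with the usual boundary of $C_\bullet(F^t G)$; in degrees $k \leq 0$ it coincides with the usual boundary of $C_\bullet(G \cup F^t G)$; and in degree $1$ the boundary $\zb{\bd}^t_1 = \bd_1$ is the boundary of $C_\bullet(G \cup F^t G)$, while in degree $2$ the boundary is the composite $\zb{\bd}^t_2 = \inducedch{\iota} \circ \bd_2$. In the first two regions the identity $\zb{\bd}^t_{k-1} \circ \zb{\bd}^t_k = 0$ is inherited verbatim from the ambient chain complexes $C_\bullet(F^t G)$ and $C_\bullet(G \cup F^t G)$, respectively. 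So the only content is to verify the identity at the two junctions where the gluing via $\inducedch{\iota}$ occurs.

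The key tool will be that $\iota: F^t G \hookrightarrow G \cup F^t G$ is a digraph inclusion, and since $C$ is a functor $\Incl\Dgr \to \Ch$, the image $\inducedch{\iota} = C(\iota)$ is a chain map. In particular, in each degree we have
\begin{equation}
\bd_k^{G \cup F^t G} \circ \inducedch{\iota}_k = \inducedch{\iota}_{k-1} \circ \bd_k^{F^t G}.
\end{equation}
With this in hand, the junction between degree $2$ and degree $1$ gives
\begin{equation}
\zb{\bd}^t_1 \circ \zb{\bd}^t_2 = \bd_1 \circ \inducedch{\iota} \circ \bd_2 = \inducedch{\iota} \circ \bd_1 \circ \bd_2 = 0,
\end{equation}
using the chain-map property followed by $\bd_1 \circ \bd_2 = 0$ in $C_\bullet(F^t G)$. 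The junction between degree $3$ and degree $2$ is even simpler:
\begin{equation}
\zb{\bd}^t_2 \circ \zb{\bd}^t_3 = \inducedch{\iota} \circ \bd_2 \circ \bd_3 = \inducedch{\iota} \circ 0 = 0.
\end{equation}

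There is no real obstacle here; the statement is essentially a bookkeeping check, and the only non-trivial ingredient is the functoriality of $C$, which guarantees that $\inducedch{\iota}$ is a chain map and hence respects boundaries at the gluing degree. I would present the proof as the two displayed computations above, preceded by a one-sentence reduction to the junctions.
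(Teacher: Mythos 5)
Your proof is correct and takes exactly the same route as the paper: the only non-trivial composition is $\zb{\bd}^t_1 \circ \zb{\bd}^t_2$, which vanishes because $\inducedch{\iota}$ is a chain map, so $\bd_1 \circ \inducedch{\iota} \circ \bd_2 = \inducedch{\iota} \circ \bd_1 \circ \bd_2 = 0$. Your additional check of the degree $3$ to $2$ junction is a harmless elaboration that the paper leaves implicit.
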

\begin{proof}
Since $\inducedch{\iota}$ is a chain map $C(F^t G)\to C(G\cup F^t G)$ we have
\begin{equation}
    \bd_1 \circ (\inducedch{\iota} \circ \bd_2) = \inducedch{\iota} \circ \bd_1 \circ \bd_2 = 0
\end{equation}
and hence $C_\bullet(G,t;F)$ defines a chain complex.
\end{proof}

\begin{lemma}\label{lem:zb_pers_complex}
Given a filtration functor $F:\WDgrF\to\Funct{\Rposet}{\Dgr}$ and a functor $C:\Incl\Dgr \to \Ch$, the chain complex $\zb{C}_\bullet(G, t; F)$ is functorial in $t$.
\end{lemma}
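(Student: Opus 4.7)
My plan is to exhibit, for each $s \leq t$, a chain map $\zb{C}_\bullet(G, s; F) \to \zb{C}_\bullet(G, t; F)$ and then verify the functorial axioms. The transition map is defined degree by degree as the image under $C$ of the natural inclusion of digraphs: in degrees $k\geq 2$ use the inclusion $\iota_{st}:F^s G\hookrightarrow F^t G$ (which exists because $F$ lands in $\Funct{\Rposet}{\Incl\Dgr}$), and in degrees $0$ and $1$ use the inclusion $\jmath_{st}:G\cup F^s G\hookrightarrow G\cup F^t G$. Since $C$ is a functor on $\Incl\Dgr$, both $\inducedch{\iota_{st}}$ and $\inducedch{\jmath_{st}}$ are well-defined chain maps between the relevant complexes.

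The only nontrivial verification is that these maps assemble into a chain map of the full hybrid complex $\zb{C}_\bullet$; that is, that the squares
\begin{equation}
\begin{tikzcd}[column sep=large]
\zb{C}_2(G,s;F) \arrow[r, "\inducedch{\iota_s}\circ \bd_2"] \arrow[d, "\inducedch{\iota_{st}}"'] & \zb{C}_1(G,s;F) \arrow[d, "\inducedch{\jmath_{st}}"] \\
\zb{C}_2(G,t;F) \arrow[r, "\inducedch{\iota_t}\circ \bd_2"'] & \zb{C}_1(G,t;F)
\end{tikzcd}
\quad\text{and}\quad
\begin{tikzcd}
\zb{C}_1(G,s;F) \arrow[r, "\bd_1"] \arrow[d, "\inducedch{\jmath_{st}}"'] & \zb{C}_0(G,s;F) \arrow[d, "\inducedch{\jmath_{st}}"] \\
\zb{C}_1(G,t;F) \arrow[r, "\bd_1"'] & \zb{C}_0(G,t;F)
\end{tikzcd}
\end{equation}
commute, together with the analogous squares in higher degrees. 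The right-hand square and the higher-degree squares are immediate, since $\inducedch{\jmath_{st}}$ and $\inducedch{\iota_{st}}$ are themselves chain maps. For the left-hand square, I would split it as
\[
\inducedch{\jmath_{st}} \circ \inducedch{\iota_s} \circ \bd_2
= \inducedch{\jmath_{st} \circ \iota_s} \circ \bd_2
= \inducedch{\iota_t \circ \iota_{st}} \circ \bd_2
= \inducedch{\iota_t} \circ \inducedch{\iota_{st}} \circ \bd_2
= \inducedch{\iota_t} \circ \bd_2 \circ \inducedch{\iota_{st}},
\]
where the first and third equalities use functoriality of $C$ on $\Incl\Dgr$, the second is the commutative square of inclusion digraph maps $F^s G \to F^t G \to G\cup F^t G$ and $F^s G \to G\cup F^s G \to G\cup F^t G$ (both induced by inclusion vertex maps and hence equal), and the last is the fact that $\inducedch{\iota_{st}}$ is a chain map.

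Finally, functoriality in $t$ reduces to checking that composition and identities are preserved, which follows from functoriality of $C$ applied to the identities $\iota_{tt}=\id$, $\jmath_{tt}=\id$ and the equalities $\iota_{su}=\iota_{tu}\circ \iota_{st}$, $\jmath_{su}=\jmath_{tu}\circ \jmath_{st}$ coming from the fact that $F(G)$ is itself a functor $\Rposet\to\Incl\Dgr$. I do not anticipate any real obstacle here; the only thing one has to be careful about is that the square of digraph inclusions in the middle of the argument genuinely commutes in $\Incl\Dgr$, which it does because every map is induced by an inclusion of vertex sets and the composites on both sides of the square equal the inclusion $F^s G \hookrightarrow G\cup F^t G$.
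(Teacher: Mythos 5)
Your proposal is correct and follows essentially the same route as the paper: the paper verifies the same mixed-degree square (its square $\color{blue}\boxed{C}$) by observing that the square of digraph inclusions $F^s G \to F^t G \to G\cup F^t G$ and $F^s G \to G\cup F^s G \to G\cup F^t G$ commutes because all maps are induced by inclusion vertex maps, and then applies the functor $C$; your equational chain is just this diagram chase written out. The functoriality check at the end likewise matches the paper's appeal to $F(G)$ and $C$ being functors.
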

\begin{proof}
For each $s\leq t$ we require chain maps $\zbinclinduc{ch}{s}{t}:\zb{C}_\bullet(G, s) \to \zb{C}_\bullet(G, t)$
which satisfy the usual functorial axioms in $t$.
First note that $\filtincl{s}{t}: F^s G \to F^t G$ is induced by the inclusion vertex map, which is a restriction of the identity vertex map $V(G) \to V(G)$.
Hence this identity vertex map also yields a digraph map $G \cup F^s G \to G \cup F^t G$, which we also denote $\filtincl{s}{t}$.
Applying the functor $C$ to these digraph maps, we obtain two chain maps, which can be joined as shown in the following diagram.
\begin{figure}[H]
    \centering
    \begin{tikzcd}[row sep=small]
        \cdots C_3(F^s G) \arrow[r, "\bd_3"] \arrow[ddd, hook, "\inducedch{\filtincl{s}{t}}"]
        \arrow[rddd, phantom, "\color{blue}\boxed{A}", description]&
        C_2(F^s G) \arrow[rr, "\inducedch{\iota}\circ\bd_2"]
        \arrow[rd, dotted, "\bd_2"'] \arrow[ddd, hook, "\inducedch{\filtincl{s}{t}}"]
        \arrow[rddd, phantom, "\color{blue}\boxed{B}", description] & &
        C_1(G\cup F^s G) \arrow[r, "\bd_1"] \arrow[ddd, hook, "\inducedch{\filtincl{s}{t}}"]
        \arrow[lddd, phantom, "\color{blue}\boxed{C}", description]
        \arrow[rddd, phantom, "\color{blue}\boxed{D}", description]  &
        C_0(G\cup F^s G) \arrow[ddd, hook, "\inducedch{\filtincl{s}{t}}"]\cdots \\
& & C_1(F^s G) 
        \arrow["\inducedch{\iota}"', ru, dotted, hook] \arrow[d, dotted, hook, "\inducedch{\filtincl{s}{t}}"] & &
         \\
& & C_1(F^t G) 
        \arrow["\inducedch{\iota}", rd, dotted, hook] &
         & \\
        \cdots C_3(F^t G) \arrow[r, "\bd_3"'] &
        C_2(F^t G) \arrow[rr, "\inducedch{\iota}\circ\bd_2"']
        \arrow[ru, dotted, "\bd_2"] & {\phantom{C_1(G\cup F^t G)}} &
        C_1(G\cup F^t G) \arrow[r, "\bd_1"']  &
        C_0(G\cup F^t G)\cdots \\
    \end{tikzcd}
\end{figure}\noindent
Squares $\color{blue}\boxed{A}$ and $\color{blue}\boxed{B}$ and $\color{blue}\boxed{D}$ commute because all vertical maps are components of the same chain map -- either $\inducedch{\filtincl{s}{t}} : C(F^s G) \to C(F^t G)$ or $\inducedch{\filtincl{s}{t}}:C(G \cup F^s G) \to C(G \cup F^t G)$.
Then, in the following diagram, all vertex maps are induced by an inclusion vertex map so the square of morphisms commute.
\begin{figure}[H]
  \centering
  \begin{tikzcd}
    F^s G \arrow[r, hook, "\iota"] \arrow[d, hook, "\filtincl{s}{t}"'] &
    G \cup F^s G \arrow[d, hook, "\filtincl{s}{t}"] \\
    F^t G \arrow[r, hook, "\iota"'] & G \cup F^t G
  \end{tikzcd}
\end{figure}\noindent
Applying the functor $C$ to this diagram and restricting to degree $1$, we obtain square $\color{blue}\boxed{C}$ which must, therefore, commute.
Hence, we obtain a chain map, as required.
Functoriality follows because the components of this new chain map are themselves components of chain maps induced by functors $F(G)\in\Funct{\Rposet}{\Dgr}$ and  $C\in\Funct{\Incl\Dgr}{\Ch}$.
\end{proof}

\begin{defin}\label{defin:gcf}
  Given a filtration functor $F :\WDgrF\to\Funct{\Rposet}{\Dgr}$ and a functor $C:\Incl\Dgr\to\Ch$,
  the map $\mdf{\zb{C}_F}: \Obj(\WDgr) \to\Obj(\Funct{\Rposet}{\Ch})$ is given on objects by 
  \begin{equation}
  \zb{C}_F(G)(t) \defeq (\zb{C}_\bullet(G, t; F), \zb{\bd}^t_\bullet)
  \end{equation}
  and the morphism $\zb{C}_F(G)(s\leq t)$ is as constructed in the proof of Lemma~\ref{lem:zb_pers_complex}.
\end{defin}

\begin{notation}
  \begin{enumerate}[label=(\alph*)]
    \item We denote the induced map on chain complexes by
      $\mdf{\zbinclinduc{ch}{s}{t}}\defeq \zb{C}_F(G)(s \leq t)$.
    \item In each homology degree $k$, we denote the induced map on homology by
      $\mdf{\zbinclinduc{hom}{s}{t}}\defeq \Funct{\Rposet}{H_k}(\zbinclinduc{ch}{s}{t})$.
  \end{enumerate}
\end{notation}

Thanks to Lemma~\ref{lem:zb_pers_complex}, $\zb{C}_F$ gives us a map $\Obj(\WDgr)\to\Obj(\Funct{\Rposet}{\Ch})$, which we can compose with homology to obtain a persistent vector space.
Under additional functorial assumptions on $C$, when we restrict to the appropriate category $\zb{C}_F$ becomes a functor.

\begin{theorem}\label{thm:grd_functoriality}
Given a filtration functor $F:\WDgrF\to\Funct{\Rposet}{\Dgr}$ and a functor $C:\Dgr\to\Ch$, $\zb{C}_F$ is a functor $\zb{C}_F:\WDgrF \to \Funct{\Rposet}{\Ch}$.
\end{theorem}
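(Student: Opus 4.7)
The plan is to construct $\zb{C}_F(f)$ on morphisms, verify it is a chain map at each time $t$ and natural in $t$, and then read off functoriality from $C$.

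Given $f \in \MorXY{\WDgrF}{G}{H}$, the key input is that $f$ is $F$-compatible, so for each $t\in\R$ the vertex map $f:V(G)\to V(H)$ restricts to $V(F^t G)\to V(F^t H)$ and induces a digraph map $f:F^t G\to F^t H$. Since the filtration map condition forces $V(F^t G)\subseteq V(G)$ (and similarly for $H$), we have $V(G\cup F^t G)=V(G)$ and the same $f$ also induces a digraph map $f:G\cup F^t G\to H\cup F^t H$. Applying $C$, I would define $\zb{C}_F(f)_t$ degreewise as $\inducedch{f}:C_k(F^t G)\to C_k(F^t H)$ for $k\geq 2$ and as $\inducedch{f}:C_k(G\cup F^t G)\to C_k(H\cup F^t H)$ for $k=0,1$.

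To show $\zb{C}_F(f)_t$ is a chain map $\zb{C}_\bullet(G,t;F)\to\zb{C}_\bullet(H,t;F)$, the only non-routine square is the one involving the transition boundary $\inducedch{\iota}\circ\bd_2$. Compatibility with $\bd_k$ within either $C(F^t G)$ or $C(G\cup F^t G)$ is immediate from functoriality of $C$. For the transition square, I would observe that the diagram of digraph maps
\begin{equation}
\begin{tikzcd}
F^t G \arrow[r,hook,"\iota"] \arrow[d,"f"'] & G\cup F^t G \arrow[d,"f"] \\
F^t H \arrow[r,hook,"\iota"'] & H\cup F^t H
\end{tikzcd}
\end{equation}
commutes (both composites are the restriction of the common vertex map $V(G)\to V(H)$), so applying the functor $C$ gives $\inducedch{f}\circ\inducedch{\iota}=\inducedch{\iota}\circ\inducedch{f}$, and combining with $\bd_2\circ\inducedch{f}=\inducedch{f}\circ\bd_2$ yields the required commuting square.

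For naturality in $t$, given $s\leq t$ I would check that the chain map $\zbinclinduc{ch}{s}{t}$ of Lemma~\ref{lem:zb_pers_complex} commutes with $\zb{C}_F(f)$ at both times. In every degree this reduces to a commuting square of digraph maps in which all horizontal arrows are induced by inclusion vertex maps and all vertical arrows are induced by $f$; since both inclusions and $f$ are controlled by a single underlying vertex map $V(G)\to V(H)$, the square commutes, and $C$ turns it into a commuting square of chain maps. Finally, the functor axioms $\zb{C}_F(g\circ f)=\zb{C}_F(g)\circ\zb{C}_F(f)$ and $\zb{C}_F(\id_G)=\id$ follow directly from the corresponding identities for $C$, since each component of $\zb{C}_F(\cdot)$ is of the form $\inducedch{(\cdot)}$ applied to a digraph map built from the underlying vertex map. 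The main technical point is keeping straight which digraph map $f$ induces in each degree, and using $V(F^t G)\subseteq V(G)$ throughout so that a single vertex map governs everything; once this is set up, the commuting squares are immediate from the functoriality of $C$.
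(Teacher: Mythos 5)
Your proposal is correct and follows essentially the same route as the paper's proof: use $F$-compatibility to obtain digraph maps $F^t G \to F^t H$ and $G\cup F^t G \to H\cup F^t H$ from the single underlying vertex map, apply $C$, observe that the only non-routine commuting square is the one involving $\inducedch{\iota}\circ\bd_2$ (which commutes because $\iota$ and $f$ are both governed by vertex maps), and deduce naturality in $t$ and the functor axioms from the functoriality of $C$. The paper's squares $\color{blue}\boxed{A}$, $\color{blue}\boxed{B}$, $\color{blue}\boxed{D}$ versus $\color{blue}\boxed{C}$ correspond exactly to your split between the routine squares and the transition square.
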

\begin{proof}
Given $f\in\MorXY{\WDgrF}{G}{H}$ and $t\in \R$, we need a chain map $\zbdiginduc{ch}{f}:\zb{C}_\bullet(G, t)\to\zb{C}_\bullet(H, t)$.
Moreover, $f$ must satisfy the usual functorial axioms, as well as 
commute with $\zbinclinduc{ch}{s}{t}$, i.e.\ for any $s\leq t$ we need
\begin{equation}
  \zbdiginduc{ch}{f} \circ \zbinclinduc{ch}{s}{t} = \zbinclinduc{ch}{s}{t} \circ \zbdiginduc{ch}{f}.
 \label{eq:func_commute}
\end{equation}
Note that $f$ is given by a vertex map $f: V(G) \to V(H)$ which induces a digraph map $G \to H$.
Moreover, since $f$ is $F$-compatible, it induces digraph maps $F^t G \to F^t H$.
Therefore $f$ must also induce digraph maps $G \cup F^t G \to H \cup F^t H$.
Applying the functor $C$ to these digraph maps, we obtain two chain maps, which can be joined as shown in the following diagram.
\begin{figure}[H]
    \centering
    \begin{tikzcd}[row sep=small]
        \cdots C_3(F^t G) \arrow[r, "\bd_3"] \arrow[ddd, "\inducedch{f}"]
        \arrow[rddd, phantom, "\color{blue}\boxed{A}", description]&
        C_2(F^t G) \arrow[rr, "\inducedch{\iota}\circ\bd_2"]
        \arrow[rddd, phantom, "\color{blue}\boxed{B}", description]
        \arrow[rd, dotted, "\bd_2"'] \arrow[ddd, "\inducedch{f}"] & &
        C_1(G\cup F^t G) \arrow[r, "\bd_1"] \arrow[ddd, "\inducedch{f}"] 
        \arrow[lddd, phantom, "\color{blue}\boxed{C}", description]
        \arrow[rddd, phantom, "\color{blue}\boxed{D}", description]  &
        C_0(G\cup F^t G) \arrow[ddd, "\inducedch{f}"]\cdots \\
& & C_1(F^t G) 
        \arrow["\inducedch{\iota}"', ru, dotted, hook] \arrow[d, dotted, "\inducedch{f}"] &
          & \\
& & C_1(F^t H) 
        \arrow["\inducedch{\iota}", rd, dotted, hook] &
         & \\
        \cdots C_3(F^t H) \arrow[r, "\bd_3"'] &
        C_2(F^t H) \arrow[rr, "\inducedch{\iota}\circ\bd_2"']
        \arrow[ru, dotted, "\bd_2"] & {\phantom{C_1(G\cup F^t G)}} &
        C_1(H\cup F^t H) \arrow[r, "\bd_1"'] &
        C_0(H\cup F^t H)\cdots \\
    \end{tikzcd}
\end{figure}\noindent
Squares $\color{blue}\boxed{A}$, $\color{blue}\boxed{B}$ and $\color{blue}\boxed{D}$ all commute because all vertical maps are parts of the same chain maps -- either $\inducedch{f}:C(F^t G) \to C(F^t H)$ or $\inducedch{f}:C(G \cup F^t G) \to C(H \cup F^t H)$.
Since the inclusion digraph map $\iota: G \to G \cup F^t G$ is induced by an inclusion vertex map, it certainly commutes with $f$ and hence square $\color{blue}\boxed{C}$ commutes.
Hence, the whole diagram commutes and yields a chain map.

This construction is functorial thanks to the functoriality of the underlying induced chain maps.
Finally, since $\filtincl{s}{t}$ is always an inclusion vertex map, $f$ must commutes with each $\filtincl{s}{t}$.
Hence, the two morphisms commute past each other as required by equation (\ref{eq:func_commute}).
\end{proof}

\begin{notation}
Given a morphism $f\in\Mor(\WDgrF)$, we denote the induced map on chain complexes, constructed above, by $\mdf{\zbdiginduc{ch}{f}}\defeq \zb{C}_F(f)$ and the induced map on homology in degree $k$ by $\mdf{\zbdiginduc{hom}{f}}\defeq\Funct{\Rposet}{H_k}(\zbdiginduc{ch}{f})$. 
\end{notation}

\subsection{Definition of \grpph}\label{sec:grpph_defin}

In order to investigate properties and stability of the grounded pipeline, we make choice for both $F$ and $C$.
As we have already discussed, since we interpret edge-weights as a measure of distance, a natural choice for $F$ is the shortest-path filtration.
Choices for $C$ include the regular path complex, non-regular path complex and the directed flag complex.
However, the latter two constructions are \emph{not} functors $\Dgr\to\Ch$.
\textbf{Henceforth, for the rest of the paper, we fix $F$ to be the shortest-path filtration and $C$ to be the regular path complex},
\begin{equation}
 F = F_d
 \quad\text{ and }\quad
 C=\Omega .
\end{equation}
Since $F$ is fixed, we will largely remove it from notation.
We also use $C$ instead of $\biv[reg]$.

\begin{lemma}\label{lem:wdgrfd_is_contwdgr}
The $F_d$-compatible category of weighted digraphs is the contraction category of weighted digraphs (see Definition~\ref{defin:dgr_cats}), $\WDgrFd=\Cont\WDgr$.
\end{lemma}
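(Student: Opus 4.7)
The plan is to prove this by unpacking definitions on both sides, since the two categories agree on objects (both contain all of $\Obj(\WDgr)$). Given a morphism $f: G \to H$ in $\WDgr$, I need to show that $f$ being $F_d$-compatible is equivalent to $f$ being a contraction.

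First I would note that, by Definition~\ref{defin:shortest_path}, $V(F_d^t G) = V(G)$ for every $t \in \R$, so the requirement in the definition of $F$-compatibility that $f$ restrict to a vertex map $V(F_d^t G) \to V(F_d^t H)$ is automatic. Thus the only non-trivial requirement is that, for every $t$, the vertex map $f$ induces a digraph map $F_d^t G \to F_d^t H$, i.e.\ for each $(i,j) \in E(F_d^t G)$ either $(f(i), f(j)) \in E(F_d^t H)$ or $f(i) = f(j)$.

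Next I would translate the edge conditions into distance inequalities using the definition $E(F_d^t G) = \{(i,j) \in V\times V \setminus \Delta_V : d_G(i,j)\leq t\}$. The $F_d$-compatibility requirement then becomes: for every $t\in\R$ and every pair $i \neq j$ with $d_G(i,j)\leq t$, either $f(i) = f(j)$ or $d_H(f(i), f(j)) \leq t$ (with $f(i)\neq f(j)$). For the forward direction, I would specialise to $t = d_G(i,j)$: if $f(i) = f(j)$ the contraction inequality $d_H(f(i), f(j)) = 0 \leq d_G(i,j)$ holds trivially, and otherwise it falls out directly. For the reverse direction, if $f$ is a contraction and $(i,j) \in E(F_d^t G)$, then $d_H(f(i), f(j)) \leq d_G(i,j) \leq t$, and we split into the two cases $f(i) = f(j)$ or $f(i) \neq f(j)$ to conclude.

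The hard part will be essentially non-existent: the whole proof is a definition chase. The only subtlety is careful bookkeeping of the self-loop case $f(i) = f(j)$, which is cleanly handled by the convention that self-loops are excluded from $E(F_d^t H)$ but contribute $d_H = 0$, so the contraction inequality is vacuously satisfied. Since morphisms in $\WDgrFd$ are already morphisms in $\WDgr$ (hence underlying digraph maps), the digraph-map condition carries over for free and does not need to be re-derived from the contraction property.
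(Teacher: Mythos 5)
Your proposal is correct and follows essentially the same route as the paper: both sides reduce the comparison to the observation that a vertex map induces digraph maps $G^t \to H^t$ for all $t$ if and only if $d_H(f(i),f(j)) \leq d_G(i,j)$ for all $i,j$. The paper states this equivalence in one line, whereas you unpack it by specialising to $t = d_G(i,j)$ and tracking the self-loop case explicitly; the content is the same.
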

\begin{proof}
First note $f\in\Mor(\WDgr)$ is precisely a vertex map $f:V(G) \to V(H)$ which induces a digraph map $G\to H$.
A vertex map $f:V(G) \to V(H)$ induces a digraph map $G^t \to H^t$ for every $t\in\R$ if and only if
\begin{equation}
  d(f(i), f(j)) \leq d(i, j)
\end{equation}
for every $i, j \in V(G)$.
Hence, $f\in\Mor(\WDgr)$ is $F_d$-compatible if and only if it is a contraction map.
So a morphism $f\in\Mor(\WDgrFd)$ is precisely a contraction digraph map $G \to H$.
\end{proof}

Now that we understand the category $\WDgrFd$, applying Theorem~\ref{thm:grd_functoriality} yields a functor $\zb{C}:\Cont\WDgr\to\Funct{\Rposet}{\Ch}$.
Taking the first homology yields a persistent vector space in a functorial way; this functor is our proposed invariant for weighted digraphs.

\begin{definthm}\label{defin:grpph}
  \mdf{Grounded persistent path homology (\grpph)} is the functor
  \begin{equation}
 \mdf{\zb{\mathcal{H}}_1} \defeq \Funct{\Rposet}{H_1} \circ \zb{C} : \Cont\WDgr \to \PersVec
    \end{equation}
    from the contraction category of weighted digraphs (see Definition~\ref{defin:dgr_cats}) to the category of persistent vector spaces (see Definition~\ref{defin:persvec}).
\end{definthm}

\begin{notation}\label{notation:grd_notation}
  Given $G\in\WDgr$, in degree $k$ at filtration step $t$, we denote 
  \begin{align}
    &\mdf{\text{the space of \zbadj\ }k\text{-cycles}}\text{ by}   &\mdf{\zb{Z}(G, t)} &\defeq \ker \zb{\bd}_{k}^t;\\
    &\mdf{\text{the space of \zbadj\ }k\text{-boundaries}}\text{ by}   &\mdf{\zb{B}(G, t)} &\defeq \im \zb{\bd}_{k+1}^t \\
    &\mdf{\text{the (degree }k\text{) \zbadj\ homology}}\text{ by}  &\mdf{\zb{H}_k(G, t) } &\defeq \frac{\zb{Z}_k(G, t)}{\zb{B}_k(G, t)}.
  \end{align}
\end{notation}

\begin{rem}
Note that for any $t\in \R$, 
\begin{align*}
  k > 1 &\implies \zb{H}_k(G, t) \cong H_k(G^t), \\
  k < 1 &\implies \zb{H}_k(G, t) \cong H_k(G \cup G^t).
\end{align*}
Therefore, the only new homology occurs in degree $k=1$, since it compares $1$-cycles in $G\cup F^t G$ with $1$-boundaries from $G^t$.
This justifies our focus on degree $1$ homology in Definition/Theorem~\ref{defin:grpph}.
\end{rem}

\begin{figure}[htbp]
  \centering
  \resizebox{0.5\textwidth}{!}{%
    \begin{tikzpicture}[
  roundnode/.style={circle, fill=black, minimum size=4pt},
  bluenode/.style={circle, fill=blue, minimum size=1pt},
	squarenode/.style={fill=black, minimum size=4pt},
	inner sep=2pt,
	outer sep=1pt
  ]
  \node (a) at (0, 0) [roundnode] {};
  \node (b) at (1, 1) [roundnode] {};
  \node (c) at (1, -1) [roundnode] {};
  \node (d) at (2, 1.5) [roundnode] {};
  \node (e) at (2, 0.5) [roundnode] {};
  \node (f) at (2, -0.5) [roundnode] {};
  \node (g) at (2, -1.5) [roundnode] {};
  \node (h) at (3, 1) [roundnode] {};
  \node (i) at (3, -1) [roundnode] {};
  \node (j) at (4, 0) [roundnode] {};
  \draw[->] (a)--(b);
  \draw[->] (a)--(c);
  \draw[->] (b)--(d);
  \draw[->] (b)--(e);
  \draw[->] (c)--(f);
  \draw[->] (c)--(g);
  \draw[->] (d)--(h);
  \draw[->] (e)--(h);
  \draw[->] (f)--(i);
  \draw[->] (g)--(i);
  \draw[->] (h)--(j);
  \draw[->] (i)--(j);

  \draw[blue!50, rounded corners] (1.3, 1) -- (2, 1.3) -- (2.7, 1) -- (2, 0.7) -- cycle;
  \draw[blue!50, rounded corners] (1.3, -1) -- (2, -1.3) -- (2.7, -1) -- (2, -0.7) -- cycle;
  \draw[green!50, rounded corners] (0.3, 0) -- (1, 0.8) -- (2, 0.3) -- (3, 0.8) -- (3.7, 0) -- (3, -0.8) -- (2, -0.3) -- (1, -0.8) -- cycle;
  \draw[red!50, rounded corners] (-0.3, 0) -- (1, 1.2) -- (2, 1.7) -- (3, 1.2) -- (4.3, 0) -- (3, -1.2) -- (2, -1.7) -- (1, -1.2) -- cycle;

  \node[] at (2, -2) {$G_1$};

  \node (a2) at (5.5, 0) [roundnode] {};
  \node (abmid2) at (6, 0.5) [roundnode] {};
  \node (acmid2) at (6, -0.5) [roundnode] {};
  \node (b2) at (6.5, 1) [roundnode] {};
  \node (bdmid2) at (7, 1.25) [roundnode] {};
  \node (bemid2) at (7, 0.75) [roundnode] {};
  \node (c2) at (6.5, -1) [roundnode] {};
  \node (cfmid2) at (7, -0.75) [roundnode] {};
  \node (cgmid2) at (7, -1.25) [roundnode] {};
  \node (d2) at (7.5, 1.5) [roundnode] {};
  \node (dhmid2) at (8, 1.25) [roundnode] {};
  \node (e2) at (7.5, 0.5) [roundnode] {};
  \node (ehmid2) at (8, 0.75) [roundnode] {};
  \node (f2) at (7.5, -0.5) [roundnode] {};
  \node (fimid2) at (8, -0.75) [roundnode] {};
  \node (g2) at (7.5, -1.5) [roundnode] {};
  \node (gimid2) at (8, -1.25) [roundnode] {};
  \node (h2) at (8.5, 1) [roundnode] {};
  \node (hjmid2) at (9, 0.5) [roundnode] {};
  \node (i2) at (8.5, -1) [roundnode] {};
  \node (ijmid2) at (9, -0.5) [roundnode] {};
  \node (j2) at (9.5, 0) [roundnode] {};
  \draw[->] (a2)--(abmid2);
  \draw[->] (abmid2)--(b2);
  \draw[->] (a2)--(acmid2);
  \draw[->] (acmid2)--(c2);
  \draw[->] (b2)--(bdmid2);
  \draw[->] (bdmid2)--(d2);
  \draw[->] (b2)--(bemid2);
  \draw[->] (bemid2)--(e2);
  \draw[->] (c2)--(cfmid2);
  \draw[->] (cfmid2)--(f2);
  \draw[->] (c2)--(cgmid2);
  \draw[->] (cgmid2)--(g2);
  \draw[->] (d2)--(dhmid2);
  \draw[->] (dhmid2)--(h2);
  \draw[->] (e2)--(ehmid2);
  \draw[->] (ehmid2)--(h2);
  \draw[->] (f2)--(fimid2);
  \draw[->] (fimid2)--(i2);
  \draw[->] (g2)--(gimid2);
  \draw[->] (gimid2)--(i2);
  \draw[->] (h2)--(hjmid2);
  \draw[->] (hjmid2)--(j2);
  \draw[->] (i2)--(ijmid2);
  \draw[->] (ijmid2)--(j2);

  \node[] at (7.5, -2) {$G_2$};

  \draw[blue!50, rounded corners] (6.8, 1) -- (7.5, 1.3) -- (8.2, 1) -- (7.5, 0.7) -- cycle;
  \draw[blue!50, rounded corners] (6.8, -1) -- (7.5, -1.3) -- (8.2, -1) -- (7.5, -0.7) -- cycle;
  \draw[green!50, rounded corners] (5.8, 0) -- (6.5, 0.8) -- (7.5, 0.3) -- (8.5, 0.8) -- (9.2, 0) -- (8.5, -0.8) -- (7.5, -0.3) -- (6.5, -0.8) -- cycle;
  \draw[red!50, rounded corners] (5.2, 0) -- (6.5, 1.2) -- (7.5, 1.7) -- (8.5, 1.2) -- (9.8, 0) -- (8.5, -1.2) -- (7.5, -1.7) -- (6.5, -1.2) -- cycle;

\end{tikzpicture}
  }
  \caption{A bifurcation network, before and after subdivision; all edges in $G_1$ have unit weight, all edges in $G_2$ have weight $0.5$ (as in Figure~\ref{fig:edge_subdivision}).
    Highlighted in red, green and blue are circuits whose representatives generate $\zbpershom{G_i}$.}
  \label{fig:edge_subdivision_new}
\end{figure}
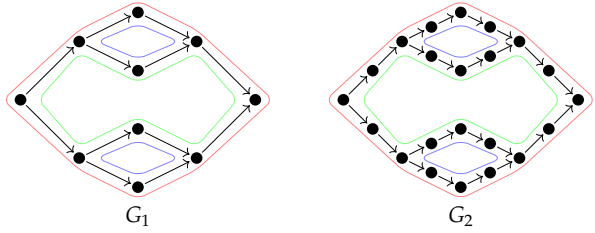

\begin{example}\label{ex:grd_pipe}
  In Figure~\ref{fig:edge_subdivision_new}, we consider again the bifurcating network example of Figure~\ref{fig:edge_subdivision}, in which all edges have weight $1$.
  We see the barcodes of the grounded persistent homology are both
  \begin{equation}
    \thedesc{G_1}=\thedesc{G_2} =
    \ms{
      [0, 1), [0, 1), [0, 2)
    }.
  \end{equation}
  In $G_1$ the two $[0, 1)$ features correspond to the smaller $4$-node circuits in the centre of the network, coloured in blue.
  These circuits birth homological cycles in $G\cup G^t$ at $t=0$, which are then killed by long squares when the edges appear in $G^t$ at $t=1$.

  The $[0, 2)$ feature corresponds to the large inner circuit (coloured in green).
  Again this circuit births a homological cycle at $t=0$ which then becomes null-homologous at $t=2$ when shortcut edges give rise to a new long square.

  The outer red cycle is a linear combination of the inner green and blue cycles, hence it does not give rise to a fourth feature in the barcode.
  Moreover, at $t=1$ the red and green cycles becomes homologous.

  In $G_2$ the features correspond to the same circuits (once subdivided).
\end{example}

\section{Interpretation of GrPPH}\label{sec:properties}
\subsection{Decreasing Betti curves}\label{sec:decreasing_curves}

In the first example we saw (Example~\ref{ex:grd_pipe}) we saw that all features were born at $t=0$.
Indeed, this is always the case and $\zbhom{G}{0}$ is in fact the real cycle space of $\underlying{G}$.

\begin{lemma}\label{lem:paths_homologous}
  Given a digraph $G=(V, E, w)$, two distinct nodes $a, b \in V$ and a trail $p:a\leadsto b$, then for all $t \geq \pathlen(p)$
  \begin{equation}
\repres{p}=\sum_{\tau\in E(p)}\tau = ab \pmod{\zbbdrs{G}{t}}.
  \end{equation}
\end{lemma}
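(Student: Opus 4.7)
I would proceed by strong induction on $k=\text{length}(p)$; write $p=(v_0=a,e_1,v_1,\ldots,e_k,v_k=b)$. The base case $k=1$ is immediate, since $\repres{p}=e_1=ab$. The heart of the inductive step is the ``fan'' 2-chain
\begin{equation*}
\sigma \defeq \sum_{i=2}^{k} v_0 v_{i-1} v_i.
\end{equation*}
A direct telescoping, using $\bd_2(v_0 v_{i-1} v_i) = v_{i-1}v_i - v_0 v_i + v_0 v_{i-1}$ and the identification $v_0 v_{i-1} = v_0 v_{(i-1)}$ between consecutive indices, gives $\bd_2 \sigma = \repres{p} - ab$ in $\mathcal{R}_1(G^t)$. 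Therefore it suffices to show $\sigma \in \biv[reg]_2(G^t)$: then $\zb{\bd}_2^t \sigma = \inducedch{\iota}(\bd_2 \sigma) = \repres{p}-ab$ in $C_1(G\cup G^t)$, which is the claim.

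For each summand $v_0 v_{i-1} v_i$, I need regularity (the condition $v_0 \neq v_{i-1}$, noting $v_{i-1}\neq v_i$ is automatic), allowedness ($(v_0,v_{i-1}),(v_{i-1},v_i)\in E^t$), and $\bd$-invariance (either $(v_0,v_i)\in E^t$ or $v_0=v_i$). Every edge condition follows from the shortest-path bound: any sub-trail of $p$ from $v_0$ to $v_j$ has length at most $\pathlen(p)\le t$, so whenever the endpoints are distinct $d(v_0,v_j)\le t$, placing $(v_0,v_j)\in E^t$.

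The one obstruction is regularity: $v_0 v_{i-1} v_i$ is non-regular exactly when $v_{i-1}=a$, that is, when the trail revisits its start. If $p$ does not revisit $a$ (so $v_j\neq a$ for all $j\in\{1,\ldots,k-1\}$) the fan argument above concludes. Otherwise, pick $j\in\{2,\ldots,k-1\}$ (there must be $j\ge 2$ since $v_1\neq a$) with $v_j=a$, and split $p = p_1 \cdot p_2$ into a closed trail $p_1:a\leadsto a$ of length $j$ and an open trail $p_2:a\leadsto b$ of length $k-j<k$. Stripping the first edge $e_1$ from $p_1$ leaves an open trail $p_1':v_1\leadsto a$ of length $j-1<k$. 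Applying the inductive hypothesis to $p_1'$ and $p_2$ gives $\repres{p_1'}\equiv v_1 a$ and $\repres{p_2}\equiv ab$ modulo $\zbbdrs{G}{t}$. The double-edge 2-path $av_1a$ lies in $\biv[reg]_2(G^t)$ (since $(a,v_1)=e_1\in E$ and $d(v_1,a)\le\pathlen(p)\le t$) with $\bd_2(av_1a) = av_1+v_1 a$, so $\repres{p_1}=e_1+\repres{p_1'}\equiv av_1+v_1 a\equiv 0 \pmod{\zbbdrs{G}{t}}$, and $\repres{p}=\repres{p_1}+\repres{p_2}\equiv ab$.

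The main obstacle I anticipate is not the telescoping identity, which is mechanical, but controlling regularity of the fan 2-chain when $p$ passes back through $a$. The split-and-recurse argument isolates this failure mode and absorbs the closed sub-trail at $a$ using the double-edge boundary between $a$ and its immediate successor $v_1$, exploiting the fact that $v_1\neq a$ and the shortest-path filtration already contains the reverse edge $(v_1,a)$ by filtration time $\pathlen(p)$.
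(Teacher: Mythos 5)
Your proof is correct and follows essentially the same route as the paper: the paper also bridges $\repres{p}$ and $ab$ with a telescoping fan of directed triangles over the trail, just anchored at the endpoint $b$ rather than at $a$. Your explicit handling of the regularity failure when the trail revisits $a$ (splitting at a repeated visit and absorbing the closed piece with the double edge $av_1a$) addresses a degenerate case the paper's proof passes over silently, so it is a welcome refinement rather than a different argument.
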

\begin{proof}
Fix arbitrary $t\geq \pathlen(p)$ and denote the vertices of the path as $a=v_0, \dots, v_m = b$.
Whenever $i<j$ we can truncate $p$ to obtain a path $v_i\leadsto v_j$ of length at most $t$ and so $(v_i , v_j) \in E(G^t)$.
Hence, whenever $i< j < k$, there is a directed triangle $v_i v_j v_k \in C_2(G^t)$ and hence
$v_i v_k = v_i v_j + v_j v_k \pmod{\zbbdrs{G}{t}}$.
Therefore, inductively we can write
\begin{equation}
  ab = v_0 v_m  = v_0 v_1 + v_1 v_m = v_0 v_1 + v_1 v_2 + v_2 v_m = \dots = \sum_{i=1}^m v_{i-1} v_i \pmod{\zbbdrs{G}{t}}
\end{equation}
as required.
\end{proof}

\begin{prop}\label{prop:cycles_born_at_0}
Fix a weighted digraph $(G, w)$ and $t\geq 0$.
For any cycle $v\in\zbcycles{G}{t}$, there is an initial cycle $v'\in\zbcycles{G}{0}$, supported on the edge in $G$, such that $v$ is homologous to $\zbinclinduc{ch}{0}{t} v'$.
\end{prop}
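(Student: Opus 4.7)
The plan is to construct $v'$ by ``lifting'' each edge of $v$ that lies outside $G$ to a trail within $G$. First I would observe that the chain complex at time $0$ is just the chain complex of $G$: since all edge weights are strictly positive and self-loops are excluded, $E(G^0) = \emptyset$, so $G \cup G^0 = G$ and $\zb{C}_\bullet(G, 0)$ agrees with $C_\bullet(G)$ in degrees $\leq 1$. In particular, $\zbcycles{G}{0}$ coincides with the ordinary cycle space $\ker(\bd_1 : C_1(G) \to C_0(G))$. The goal is therefore to produce a $1$-cycle in $G$ whose image in $\zb{C}_1(G, t) = C_1(G \cup G^t)$ differs from $v$ by a $\zb{\bd}_2^t$-boundary.

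Write $v = \sum_e c_e\, e$ with $e$ ranging over $E(G) \cup E(G^t)$. For each edge $e = (a,b) \in E(G^t) \setminus E(G)$ with $c_e \neq 0$, the inequality $d(a,b) \leq t$ yields a trail $p_e : a \leadsto b$ in $G$ with $\pathlen(p_e) \leq t$. Define
\begin{equation*}
v' \defeq \sum_{e \in E(G)} c_e\, e \;+\; \sum_{e \in E(G^t) \setminus E(G)} c_e\, \repres{p_e} \;\in\; C_1(G).
\end{equation*}
Telescoping along $p_e$ gives $\bd_1 \repres{p_e} = b - a = \bd_1 e$, so linearity forces $\bd_1 v' = \bd_1 v = 0$, whence $v' \in \zbcycles{G}{0}$ and is supported on $E(G)$ by construction.

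To conclude, I would note that $\zbinclinduc{ch}{0}{t}$ in degree $1$ is simply the inclusion $C_1(G) \hookrightarrow C_1(G \cup G^t)$, so
\begin{equation*}
v - \zbinclinduc{ch}{0}{t}(v') = \sum_{e \in E(G^t) \setminus E(G)} c_e\, \bigl(e - \repres{p_e}\bigr).
\end{equation*}
Lemma~\ref{lem:paths_homologous}, applied to each $p_e$ (with $\pathlen(p_e) \leq t$), states exactly that $e - \repres{p_e} \in \zbbdrs{G}{t}$. Thus the displayed difference is a $\zb{\bd}_2^t$-boundary and $v$ is homologous to $\zbinclinduc{ch}{0}{t}(v')$, as required. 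The heavy lifting is already done by Lemma~\ref{lem:paths_homologous}; the only care needed is bookkeeping between edges in $E(G)$ and $E(G^t) \setminus E(G)$ (and recognising that $G^0$ has no edges), which is routine.
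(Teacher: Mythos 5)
Your proposal is correct and follows essentially the same route as the paper: both replace each edge of $E(G^t)\setminus E(G)$ by the representative of a short path in $G$ and invoke Lemma~\ref{lem:paths_homologous} to see that the replacement changes the chain only by an element of $\zbbdrs{G}{t}$. Your version just makes explicit some bookkeeping (the verification that $v'$ is a cycle and that $G^0$ has no edges) that the paper leaves implicit.
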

\begin{proof}
Given any edge $\tau=(a, b)\in E(G^t)$, there is a path $p:a \leadsto b$ in $G$ of length at most $t$.
Denoting the edges of $p$ by $(\tau_1, \dots, \tau_m)$,
Lemma~\ref{lem:paths_homologous} tells us, $\tau = \sum_{i=1}^m \tau_i \pmod{\zbbdrs{G}{t}}$.
Now, since $\tau_i \in E(G)$, we see $\zbinclinduc{ch}{0}{t}\tau_i = \tau_i$ for each edge in $p$.
\end{proof}

\begin{cor}\label{cor:decreasing_curves}
  Given $G\in\WDgr$,
  \begin{enumerate}[label=(\alph*)]
    \item any interval in $\thedesc{G}$ has birth time $0$; 
    \item $\zbhom{G}{0}$ has a basis of simple undirected circuits; and
    \item $\card{\Dgm(\zb{\mathcal{H}}_1(G))}$ coincides with the circuit rank of the underlying undirected graph.
  \end{enumerate}
\end{cor}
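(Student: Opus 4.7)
The plan is to prove the three parts in sequence, with part (a) following from Proposition~\ref{prop:cycles_born_at_0} and parts (b), (c) from a direct computation of $\zbhom{G}{0}$.

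For part (a), Proposition~\ref{prop:cycles_born_at_0} says every cycle $v \in \zbcycles{G}{t}$ is homologous to $\zbinclinduc{ch}{0}{t} v'$ for some initial cycle $v'$. Hence the induced map $\zbinclinduc{hom}{0}{t} : \zbhom{G}{0} \to \zbhom{G}{t}$ is surjective for every $t \geq 0$. By the structure theorem (Theorem~\ref{thm:pvs_decompose}), surjectivity of all transition maps out of $t=0$ forces every interval in $\Barcode{\zb{\mathcal{H}}_1(G)}$ to contain $0$; if some interval had birth time strictly greater than $0$, the corresponding summand at that birth time would contribute a non-image element to $\zbhom{G}{t}$, contradicting surjectivity.

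For parts (b) and (c), I first observe that since edge weights are strictly positive and self-loops are excluded from $E^t$, we have $d(i,j) > 0$ for all distinct $i,j \in V$, so $E^0 = \emptyset$. Consequently $G \cup G^0 = G$ and $C_k(G^0) = 0$ for $k\geq 1$, so the \zbadj\ complex at $t=0$ collapses to
\begin{equation}
0 \longrightarrow C_1(G) \xrightarrow{\;\bd_1\;} C_0(G) \longrightarrow 0,
\end{equation}
whence $\zbhom{G}{0} = \ker \bd_1$. Under the natural identification $C_1(G) = \Rspan{E}$, the map $\bd_1$ coincides (edge-by-edge, up to the choice of orientation induced by the direction of each edge) with the boundary map of the undirected multigraph $\underlying{G}$, so $\ker \bd_1$ is the real cycle space of $\underlying{G}$. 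For any undirected circuit $p$, the representative $\repres{p}$ lies in $\ker \bd_1$ by the telescoping computation $\sum_i \alpha_i (\fn(e_i) - \st(e_i)) = \sum_i (v_i - v_{i-1}) = 0$. To extract a basis of simple circuits, I choose a spanning forest $T$ of $\underlying{G}$; for each $e \in \underlying{E} \setminus T$ there is a unique simple circuit $p_e \subseteq T \cup \{e\}$. The family $\{\repres{p_e}\}$ is linearly independent because $e$ appears with nonzero coefficient in $\repres{p_e}$ but in no other $\repres{p_{e'}}$, and it has cardinality $|\underlying{E}| - (|V| - c) = |\underlying{E}| - |V| + c$ (where $c$ is the number of connected components), matching $\dim\ker\bd_1$ by rank–nullity applied to $\bd_1$ of rank $|V|-c$. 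This gives (b); for (c), by part (a) the number of intervals equals $\dim\zbhom{G}{0}$, which by (b) is the circuit rank of $\underlying{G}$.

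I do not anticipate any serious obstacle: part (a) is essentially a barcode-language reformulation of Proposition~\ref{prop:cycles_born_at_0}, and parts (b), (c) reduce to the standard fundamental-circuit basis of the cycle space of a graph. The one bookkeeping point to handle carefully is the identification of the directed chain complex in degrees $\leq 1$ with the undirected one when $G$ has anti-parallel pairs of edges; these give rise to double edges of $\underlying{G}$, but since both sides are free modules on the same underlying edge set with boundary $e \mapsto \fn(e)-\st(e)$, the identification is immediate.
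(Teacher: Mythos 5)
Your proof is correct and follows essentially the same route as the paper: part (a) is read off from Proposition~\ref{prop:cycles_born_at_0}, and parts (b) and (c) come from observing that $G^0$ has no edges, so $\zb{C}_\bullet(G,0)$ reduces to $0 \to C_1(G) \to C_0(G)$ and its first homology is the real cycle space of $\underlying{G}$, for which a fundamental-circuit basis relative to a spanning forest works. The only difference is that you spell out the linear-independence and rank--nullity count that the paper leaves implicit, which is a harmless (and arguably welcome) addition.
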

\begin{proof}
The first point follows immediately from Lemma~\ref{prop:cycles_born_at_0}. 
To see the final two points, consider the chain complex, $\zb{C}_\bullet(G, 0)$ at the start of the filtration.
Since $G^0$ has no edges, the chain complex is simply
\begin{figure}[H]
  \centering
  \begin{tikzcd}
  \cdots 0 \arrow[r] & 0 \arrow[r] & C_1(G) \arrow[r] & C_0(G) \cdots
  \end{tikzcd}
\end{figure}\noindent
Since $G$ is an orientation of $\underlying{G}$, the first homology of this chain complex is precisely the real cycle space of $\underlying{G}$.
Fix an arbitrary spanning forest $T$ of $\underlying{G}$, and label the remaining edges $e_1, \dots, e_k$.
Note $k$ is the circuit rank of $\underlying{G}$.
Let $p_i$ denote the simple undirected circuit in $G$ which traverses $e_i$ and then returns to $\st(e_i)$ through the unique path in $T$.
Then $\{ \repres{p_1}, \dots, \repres{p_k} \}$ is a basis for $\zbhom{G}{0}$.
\end{proof}

\subsection{Circuit lifetimes}\label{sec:comparison}

While all features in the barcode (and hence all cycles) are born at time $t=0$, their death times generally differ.
We can assign a death-time to any cycle $v\in\zbcycles{G}{0}$, as the first time $v$ becomes null-homologous.

\begin{defin}
  Given $v\in\zbcycles{G}{0}$, the \mdf{death-time} of $v$ is
  \begin{equation}
    \mdf{\death(v)} \defeq \inf
    \left\{ 
      t \geq 0 \rmv
      \zbinclinduc{hom}{0}{t} [v] = 0
    \right\}
  \end{equation}
  where we let $\death(v)\defeq\infty$ if there is no such $t$.
  The \mdf{lifetime} of $v$ is the interval $\mdf{\lifetime(v)} \defeq [0, \death(v))$.
\end{defin}

\begin{rem}
  Let $p$ be an undirected circuit in $G$ and $p'$ the same circuit, traversed in the opposite direction so that $\repres{p'} = - \repres{p}$.
  Since $\zbinclinduc{hom}{0}{t}$ is linear, $\death(\repres{p})=\death(\repres{p'})$.
  Also, since $\repres{p}$ does not depend on the starting vertex of $p$, neither does $\death(\repres{p})$.
\end{rem}

This pipeline gives us a method for associating a lifetime to an undirected circuit in $G$ which is `geometric' in the sense that it does not depend on the starting vertex or direction.
The length of this lifetime gives us a `size' to the circuit from the perspective of the filtration.
Since we use the shortest-path filtration, we interpret this size as the time it takes for the flow to `fill in' the circuit.

\begin{lemma}\label{lem:death_bound}
Given $G=(V, E, w)\in\WDgr$ and two directed paths $p_1, p_2: a \leadsto b$ between distinct vertices $a, b \in V$, let $p_c$ denote the undirected circuit which traverses $p_1$ forwards and then $p_2$ in reverse.
For $i= 1, 2$, define
\begin{equation}
 h_i \defeq \min
 \left\{ 
   t \geq 0 \rmv
   \exists v_i \in V\text{ along } p_i \text{ such that } d(a, v_i) \leq t \text{ and } d(v_i, b) \leq t
 \right\}.
\end{equation}
Then $\death(p_c) \leq \max( h_1, h_2 )$.
\end{lemma}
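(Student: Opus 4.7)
The plan is to show that, at $t = \max(h_1, h_2)$, the cycle $\repres{p_c}$ becomes null-homologous in $\zb{C}_\bullet(G, t)$ by exhibiting an explicit 2-chain whose boundary realises it. Choose witnessing vertices $v_i$ on each $p_i$ from the definition of $h_i$, and decompose each $p_i$ as a concatenation $q_i^{(1)} \cdot q_i^{(2)}$ with $q_i^{(1)}: a \leadsto v_i$ and $q_i^{(2)}: v_i \leadsto b$. The definition of $h_i$ ensures both sub-trails have length at most $t$, so Lemma~\ref{lem:paths_homologous} gives $\repres{q_i^{(1)}} \equiv a v_i$ and $\repres{q_i^{(2)}} \equiv v_i b$ modulo $\zb{B}(G, t)$, whence
\begin{equation*}
  \repres{p_i} \equiv a v_i + v_i b \pmod{\zb{B}(G, t)},
\end{equation*}
with the convention that $a v_i$ or $v_i b$ vanishes when $v_i$ coincides with an endpoint of $p_i$.

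Next I establish the bridging relation $a v_1 + v_1 b \equiv a v_2 + v_2 b \pmod{\zb{B}(G, t)}$ via a single 2-chain that treats the directed-triangle and long-square cases of Proposition~\ref{prop:biv2_gens} uniformly. The bounds $d(a, v_i), d(v_i, b) \leq t$ place $(a, v_i)$ and $(v_i, b)$ in $E(G^t)$, and a direct calculation gives
\begin{equation*}
  \bd_2\!\bigl((a v_1 b) - (a v_2 b)\bigr) = (a v_1 + v_1 b) - (a v_2 + v_2 b).
\end{equation*}
The right-hand side lies in $\mathcal{A}_1(G^t) = \Rspan{E(G^t)}$, so $(a v_1 b) - (a v_2 b)$ is $\bd$-invariant in $G^t$ --- corresponding to a combination of directed triangles when $(a, b) \in E(G^t)$ and to a long square when it is absent. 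Pushing forward through $\inducedch{\iota}$ establishes the bridging relation modulo $\zb{B}(G, t)$.

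Combining the two steps,
\begin{equation*}
  \repres{p_c} = \repres{p_1} - \repres{p_2} \equiv (a v_1 + v_1 b) - (a v_2 + v_2 b) \equiv 0 \pmod{\zb{B}(G, t)},
\end{equation*}
so $[\repres{p_c}] = 0$ in $\zb{H}_1(G, t)$ and therefore $\death(p_c) \leq t = \max(h_1, h_2)$.

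The main obstacle is the second step: one must recognise that the chain $(a v_1 b) - (a v_2 b)$ is $\bd$-invariant in $G^t$ regardless of whether the shortcut edge $(a, b)$ is present, so that both the directed-triangle and long-square cases of Proposition~\ref{prop:biv2_gens} are absorbed into a single 2-chain and no explicit case split is needed. A secondary technicality is the bookkeeping in Step~1 for degenerate choices $v_i \in \{a, b\}$ or $v_1 = v_2$, where one or both sub-trails collapse; these situations simplify consistently once the zero-conventions on non-regular 1-chains and 2-chains are applied, and they are in fact the easiest cases since the argument reduces to a direct application of Lemma~\ref{lem:paths_homologous}.
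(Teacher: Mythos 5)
Your argument follows the same route as the paper's: pick the witnessing vertices $v_1,v_2$, reduce each $\repres{p_i}$ to $av_i+v_ib$ via Lemma~\ref{lem:paths_homologous}, and kill the difference $(av_1+v_1b)-(av_2+v_2b)$ with the $2$-chain $av_1b-av_2b$. Your observation that this chain is $\bd$-invariant in $G^t$ whether or not $(a,b)\in E(G^t)$ packages the paper's explicit case split (one edge versus at least two edges in $p_1$) more uniformly, and your remarks on the degenerate choices $v_i\in\{a,b\}$ cover the same ground; that part is fine.

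The problem is in Step~1, where you assert that ``the definition of $h_i$ ensures both sub-trails have length at most $t$.'' It does not: $h_i$ bounds the shortest-path distances $d(a,v_i)$ and $d(v_i,b)$, whereas the segments $q_i^{(1)},q_i^{(2)}$ of $p_i$ itself may be far longer than the corresponding shortest paths (for instance, if $G$ contains a cheap edge $a\to v_1$ while the initial segment of $p_1$ reaching $v_1$ is expensive, then $d(a,v_1)$ is small but $\pathlen(q_1^{(1)})$ is large). Lemma~\ref{lem:paths_homologous} applied to $q_i^{(1)}$ requires $t\geq\pathlen(q_i^{(1)})$, not $t\geq d(a,v_i)$, so the congruence $\repres{p_i}\equiv av_i+v_ib \pmod{\zbbdrs{G}{t}}$ does not follow as written. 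What the hypotheses actually give is $av_i+v_ib\equiv\repres{r_i}+\repres{s_i}$ for \emph{shortest} paths $r_i:a\leadsto v_i$ and $s_i:v_i\leadsto b$, and identifying $\repres{r_i}+\repres{s_i}$ with $\repres{p_i}$ modulo $\zbbdrs{G}{t}$ is another instance of exactly the statement being proved. In fairness, the paper's own proof invokes Lemma~\ref{lem:paths_homologous} at this same point without comment, and in the intended application (Proposition~\ref{prop:asymptotic_example}, where $G$ is just the union of the two paths) the segment of $p_i$ is the unique, hence shortest, path so the issue disappears; but in the generality in which you (and the lemma) work, this step needs either the extra hypothesis that the segments of $p_i$ on either side of $v_i$ have length at most $t$, or a genuine argument closing the gap between $p_i$ and the concatenation $r_i\cdot s_i$.
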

\begin{proof}
Denote $T\defeq \max(h_1, h_2)$.
First assume that there are at least 2 edges in each $p_i$.
Then, by the definition of $h_i$, there exists $v_i\in V\setminus\left\{ a, b \right\}$ along each $p_i$ such that $d(a, v_i) \leq T$ and $d(v_i, b) \leq T$.
Hence there is a long square $a v_1 b - a v_2 b \in \zb{C}_2(G, T)$.
By Lemma~\ref{lem:paths_homologous},
\begin{equation}
  0=\zb{\bd}_2 (a v_1 b - a v_2 b) = \repres{p_c}
  \pmod{\zbbdrs{G}{T}}.
\end{equation}

Finally, if $p_1$ contains one edge then $h_1 = \pathlen(p_1)$ so $d(a, b) \leq h_1 \leq T$.
The definition of $h_2$ ensures there is $v_2\in V$ along $p_2$ such that $d(a, v_2), d(v_2, b) \leq T$.
Hence there is a directed triangle $a v_2 b \in \zb{C}_2(G, T)$.
By Lemma~\ref{lem:paths_homologous},
\begin{equation}
  0=\zb{\bd}_2 (a v_2 b) = \repres{p_c}
  \pmod{\zbbdrs{G}{T}}
\end{equation}
which concludes the proof.
\end{proof}

\begin{figure}[hptb]
  \centering
  \begin{tikzpicture}[
  roundnode/.style={circle, fill=black, minimum size=4pt},
	squarenode/.style={fill=black, minimum size=4pt},
	inner sep=2pt,
	outer sep=1pt
  ]

  \node (a) at (0, 0) [roundnode, label=left:$a$] {};
  \node (b) at (2, 1) [roundnode, label=above:$b$] {};
  \node (c) at (2, -1) [roundnode, label=below:$c$] {};
  \node (d) at (4, 0) [roundnode, label=right:$d$] {};
  \node (e) at (1.2, 0) [roundnode, label=right:$e$] {};
  \node (f) at (2.8, 0) [roundnode, label=left:$f$] {};
  \draw[->, red] (a) -- (b) node[midway, above, sloped] {\tiny $10$};
  \draw[->, red] (a) -- (c) node[midway, below, sloped] {\tiny $10$};
  \draw[->, red] (b) -- (d) node[midway, above, sloped] {\tiny $10$};
  \draw[->, red] (c) -- (d) node[midway, below, sloped] {\tiny $10$};
  \draw[->] (a) -- (e) node[midway, above, sloped, xshift=5pt] {\tiny $1$};
  \draw[->] (e) -- (b) node[midway, below, sloped] {\tiny $1$};
  \draw[->] (e) -- (c) node[midway, above, sloped] {\tiny $1$};
  \draw[->] (b) -- (f) node[midway, below, sloped] {\tiny $1$};
  \draw[->] (c) -- (f) node[midway, above, sloped] {\tiny $1$};
  \draw[->] (f) -- (d) node[midway, above, sloped, xshift=-5pt] {\tiny $1$};

\end{tikzpicture}
  \caption{A simple example of a weighted digraph for which the bound of Lemma~\ref{lem:death_bound} fails to be sharp.}\label{fig:death_bound}
\end{figure}
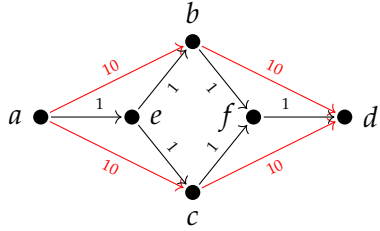

\begin{example}
  Note that the bound of Lemma~\ref{lem:death_bound} is by no means sharp.
  Consider for example Figure~\ref{fig:death_bound}.
  Let $p_1$ be the outer red path $(a, b, d)$, $p_2$ the lower red path $(a, c, d)$ and $p_c$ the undirected circuit which traverse $p_1$ forward then $p_2$ in reverse.
  Then $h_1 = h_2 = 10$ but $\death(\repres{p_c}) = 2$.

  To see this is the correct death time, first note that $\zbhom{G}{t}$ can only change at integer values.
  At $t=1$, the only edges present in $G^t$ are the black ones drawn in in Figure~\ref{fig:death_bound}.
  Hence, $C_2(G, 1)$ is generated by the long square $ebf - ecf$, whose boundary is not $\repres{p_c}$.

  However, at $t=2$ the edges $(a, b), (b, d), (a, c)$ and $(c, d)$ also appear in $G^t$.
  These generate additional long squares and directed triangles.
  In particular $abd - acd \in C_2(G, 2)$ and
  \begin{equation}
 \zb{\bd}_2(abd - acd) = \repres{p_c}.
  \end{equation}
\end{example}

\subsection{Representatives}\label{sec:representatives}

After computing persistent homology, it is common to compute cycles with represent the features of the barcode.
In general, representatives are not unique and may be quite complicated.
In practice, one can often compute representatives with integer (and even unit) coefficients~\cite{li2021minimal}.
These representatives are frequently used for interpreting features (e.g. \cite{Benjamin2022, Sizemore2018}).

\begin{defin}
A \mdf{persistence basis} for $\zbpershom{G}$ is a choice of initial cycles 
$B=\left\{ 
  b_i \in \zbcycles{G}{0}
\right\}$
such that for each $t\geq 0$, the set
$\left\{ 
  \zbinclinduc{hom}{0}{t}[b_i] 
\right\}\setminus\left\{ 0 \right\}$
yields a basis for $\zbhom{G}{t}$.
We call elements of a persistence basis \mdf{representatives}.
\end{defin}

\begin{lemma}
Given any $G\in\WDgr$, a persistence basis for $\zbpershom{G}$ always exists.
\end{lemma}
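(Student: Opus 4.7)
The plan is to invoke the structure theorem for pointwise finite-dimensional persistent vector spaces (Theorem~\ref{thm:pvs_decompose}), combined with the fact that every feature in $\zbpershom{G}$ is born at $t=0$ (Corollary~\ref{cor:decreasing_curves}(a)). A persistence basis is essentially the output of transporting back the standard basis of a chosen interval-module decomposition to $\zbhom{G}{0}$, and then lifting to cycles. The only mild subtlety is confirming we are in the p.f.d.~setting and that the interval decomposition is compatible with basis choices at $t=0$.

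First I would verify that $\zbpershom{G}\in\Persvec$: since $G$ is finite, $\zb{C}_1(G,t)$ is finite-dimensional for every $t$, hence so is $\zbhom{G}{t}$. Applying Theorem~\ref{thm:pvs_decompose} gives an isomorphism
\[
\phi : \zbpershom{G} \xrightarrow{\;\cong\;} \bigoplus_{i\in \Barcode{\zbpershom{G}}} P(I_i).
\]
By Corollary~\ref{cor:decreasing_curves}(a) each interval is of the form $I_i=[0,d_i)$ (or $[0,\infty)$), so in particular $0\in I_i$ for every $i$ and $P(I_i)(0)=\R$.

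Next I would select representatives. Let $e_i \defeq 1 \in P(I_i)(0)$; then $\{e_i\}_i$ is a basis of $\bigoplus_i P(I_i)(0)$. Define $[b_i]\defeq \phi_0^{-1}(e_i)\in\zbhom{G}{0}$ and choose any cycle $b_i\in\zbcycles{G}{0}$ whose class is $[b_i]$. I claim $B=\{b_i\}$ is a persistence basis. To verify this, for each $t\geq 0$ I would use the naturality square for $\phi$ at $0\leq t$, which gives
\[
\phi_t\circ \zbinclinduc{hom}{0}{t}
= \Bigl(\bigoplus_i P(I_i)(0\leq t)\Bigr)\circ \phi_0.
\]
Since $P(I_i)(0\leq t)(e_i)=e_i$ when $t\in I_i$ and $0$ otherwise, the image of $[b_i]$ under $\zbinclinduc{hom}{0}{t}$ is non-zero precisely when $t<d_i$, and in that case $\{\phi_t(\zbinclinduc{hom}{0}{t}[b_i])\mid t<d_i\}$ is the restricted standard basis of $\bigoplus_i P(I_i)(t)$. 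Because $\phi_t$ is an isomorphism, pulling back yields a basis of $\zbhom{G}{t}$, as required.

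There isn't really a hard obstacle here: the result is essentially formal once we have the decomposition theorem and the observation that every interval contains $0$. The only thing to be careful about is not confusing "basis of $\zbhom{G}{0}$" with "persistence basis" — one must keep track of which representatives die when, which is exactly what the naturality square encodes. Everything else is bookkeeping.
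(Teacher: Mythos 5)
Your proof is correct and follows exactly the route the paper takes: the paper's own proof is a one-line citation of the structure theorem (Theorem~\ref{thm:pvs_decompose}) together with Corollary~\ref{cor:decreasing_curves}, and your argument is simply the careful expansion of that citation (pull back the standard basis at $t=0$ through the decomposition isomorphism, lift to cycles, and check the naturality square). No gaps; the bookkeeping you spell out is precisely what the paper leaves implicit.
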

\begin{proof}
This follows from the structure theorem (Theorem~\ref{thm:pvs_decompose}) and Corollary~\ref{cor:decreasing_curves}.
\end{proof}

Obtaining a persistence basis $B\subseteq\zbcycles{G}{0}$ is desirable because the constituent cycles \emph{represent} the features of the barcode, in the following sense.
If the barcode is $\thedesc{G} = \ms{ I_1, \dots, I_m }$ 
then there is an ordering on the cycles $B = \left\{  b_1, \dots, b_m \right\}$ such that $\lifetime(b_i) = I_i$ and
\begin{equation}
\bigoplus_{i=1}^m P(I_i)\cong 
 \zbpershom{G} .
\end{equation}
Moreover, the isomorphism $\phi : \oplus_{i=1}^m P(I_i) \to \zbpershom{G}$ is given by mapping 
\begin{equation}
1\in I_i(t) \mapsto \zbinclinduc{hom}{0}{t}[b_i] \text{ whenever } t \in I_i.
\end{equation}
This mapping gives an isomorphism because $\{\zbinclinduc{hom}{0}{t}[b_i]\}\setminus\{0\}$ is always a basis for $\zbhom{G}{t}$.
In this sense, the representatives in $B$ generate $\zbpershom{G}$.

Representatives live in $\zbcycles{G}{0}$ so they are just $\R$-linear combinations of edges in $G$.
However, a priori, the coefficients of these linear combinations may be arbitrarily complicated.
The goal of this section is to show that grounded persistent homology always admits a \emph{geometrically interpretable} persistence basis, in the following sense.

\begin{theorem}\label{thm:rep_of_cycles}
Given $G\in \WDgr$ with circuit rank $m$ there exist undirected circuits $p_1, \dots, p_m$ in $G$, such that $\{\repres{p_1},\dots, \repres{p_m}\}$ is a persistence basis for $\zbpershom{G}$.
\end{theorem}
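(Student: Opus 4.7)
My plan is to work with the kernel filtration $K_t \defeq \ker(\zbinclinduc{hom}{0}{t})$ of $\zbcycles{G}{0} = \zbhom{G}{0}$, which by Corollary~\ref{cor:decreasing_curves} is $m$-dimensional. Since every feature is born at $0$, a persistence basis is exactly a basis of $\zbcycles{G}{0}$ that is compatible with the flag $\{K_t\}_{t \geq 0}$, i.e.\ a basis whose elements lying in $K_t$ form a basis of $K_t$ for every $t$. The main reduction is to the following claim: for every $t \geq 0$, $K_t$ is spanned by representatives of undirected circuits in $G$. Granting this, I would build the persistence basis greedily by processing the (finitely many) critical values $t_1 < \dots < t_{m'}$ at which $K_t$ strictly grows: at each stage, extend the current basis of $K_{t_{j-1}}$ by adding circuit representatives from $K_{t_j}$ whose images form a basis of $K_{t_j}/K_{t_{j-1}}$; finally extend to all of $\zbcycles{G}{0}$ using circuit representatives supplied by Corollary~\ref{cor:decreasing_curves}(b).

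To prove the claim, suppose $z \in K_t$, so $z = \zb{\bd}_2^t(\omega)$ for some $\omega \in \zb{C}_2(G, t) = C_2(G^t)$. I would invoke Proposition~\ref{prop:biv2_gens} to decompose $\omega = \sum_i a_i \omega_i$, where each $\omega_i$ is a double edge, directed triangle, or long square in $G^t$. For every edge $\tau = (a, b) \in E(G^t)$, choose once and for all a directed path $p_\tau : a \leadsto b$ in $G$ with $\pathlen(p_\tau) \leq t$, taking $p_\tau = \tau$ when $\tau \in E(G)$; this is possible since $d(a, b) \leq t$. Extend linearly to a \emph{path-replacement} map $\Phi : C_1(G \cup G^t) \to C_1(G)$ by $\Phi(\tau) \defeq \repres{p_\tau}$. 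By Lemma~\ref{lem:paths_homologous}, $\Phi(\tau) \equiv \tau \pmod{\zbbdrs{G}{t}}$, so $\Phi$ acts as the identity modulo boundaries; and $\Phi$ fixes elements of $C_1(G)$ on the nose, so in particular $\Phi(z) = z$.

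The crux is that $\Phi \circ \zb{\bd}_2^t$ sends each generator $\omega_i$ to an explicit circuit representative in $G$. For a directed triangle $\omega_i = ijk$ one computes
\begin{equation*}
\Phi(\zb{\bd}_2^t(ijk)) = \repres{p_{ij}} + \repres{p_{jk}} - \repres{p_{ik}},
\end{equation*}
which is precisely the representative of the undirected circuit obtained by concatenating $p_{ij}$, $p_{jk}$, and the reverse of $p_{ik}$; the double-edge case yields a $2$-leg closed walk $p_{ij} * p_{ji}$ and the long-square case yields a $4$-leg closed walk, both analogously. Each such representative lies in $K_t$, since $\Phi(\zb{\bd}_2^t(\omega_i)) \equiv \zb{\bd}_2^t(\omega_i) = 0 \pmod{\zbbdrs{G}{t}}$ and sits in $C_1(G)$. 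Therefore $z = \Phi(z) = \sum_i a_i \Phi(\zb{\bd}_2^t(\omega_i))$ expresses $z$ as a linear combination of circuit representatives in $K_t$, proving the claim.

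I expect the main obstacle to be this last crux step: carefully verifying that the path-replaced boundaries are genuine representatives of undirected circuits in $G$. In particular, edges shared between two of $p_{ij}, p_{jk}, p_{ik}$ may cause coefficients in $\repres{p_{ij}} + \repres{p_{jk}} - \repres{p_{ik}}$ to fall outside $\{-1, 0, +1\}$, which is nevertheless consistent with the definition of $\repres{p}$ when $p$ is a (non-simple) closed walk traversing edges with multiplicity. It is essential here that Theorem~\ref{thm:rep_of_cycles} asks only for undirected circuits, not simple ones, so that such concatenations are permitted.
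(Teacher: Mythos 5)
Your proposal is correct, but it reaches the theorem by a genuinely different route from the paper. The paper's proof runs through Lemma~\ref{lem:submap_circuit_basis}: for each pair of consecutive critical values it decomposes the transition chain map into elementary steps (first adjoining all new edges of $G^{t_i}$ together with one witness triangle per edge, shown to induce an isomorphism on $H_1$ via an explicit chain homotopy; then adjoining the remaining generators of $C_2(G^{t_i})$ one at a time), reads off a basis of each \emph{step} kernel $\ker\zbinclinduc{hom}{t_{i-1}}{t_i}$ from the generators whose addition drops the homology dimension, converts each such boundary into a circuit by tracing its outline and path-replacing, and finally assembles everything with Lemmas~\ref{lem:collection_of_bases} and~\ref{lem:basis_extension}. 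You instead work with the \emph{cumulative} kernels $K_t=\ker\zbinclinduc{hom}{0}{t}$, observe that a persistence basis is precisely a basis of $\zbhom{G}{0}$ compatible with the flag $\{K_t\}$, and prove the single spanning claim that every $z\in K_t$ decomposes as $\sum_i a_i\,\Phi(\zb{\bd}_2\omega_i)$ with each term a circuit representative lying in $K_t$ --- where $\Phi$ is a global path-replacement map and the $\omega_i$ come from Proposition~\ref{prop:biv2_gens}. Both arguments ultimately rest on the same two ingredients (Proposition~\ref{prop:biv2_gens} and Lemma~\ref{lem:paths_homologous}), and your $\Phi$ plays the combined role of the paper's collapse map $c_1$ and its subsequent circuit-tracing step; but your version dispenses with the chain-homotopy bookkeeping and the step-by-step dimension tracking, at the cost of being less constructive: the paper's proof attaches each representative to a specific new generator of $C_2(G^{t_i})$, which is closer to an algorithm and underlies the non-uniqueness discussion in Remark~\ref{rem:pb_not_unique}. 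Your closing caveat is also correctly resolved: the concatenated closed walks need not be simple and may repeat edges, but the theorem (and the paper's definition of undirected circuit and of $\repres{p}$) permits exactly this, as the paper itself notes in the remark following Lemma~\ref{lem:submap_circuit_basis}.
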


\begin{figure}[hptb]
  \centering
  \begin{tikzpicture}[
  roundnode/.style={circle, fill=black, minimum size=4pt},
	squarenode/.style={fill=black, minimum size=4pt},
	inner sep=2pt,
	outer sep=1pt
  ]

  \node (a) at (1, 2) [roundnode, label=above:$a$] {};
  \node (b) at (0, 1) [roundnode, label=left:$b$] {};
  \node (c) at (2, 1) [roundnode, label=right:$c$] {};
  \node (d) at (1, 0) [roundnode, label=below:$d$] {};
  \draw[->] (a) -- (b) node[midway, above, sloped] {\tiny $1$};
  \draw[->] (a) -- (c) node[midway, above, sloped] {\tiny $1$};
  \draw[->, red] (b) -- (d) node[midway, below, sloped] {\tiny $2$};
  \draw[->, red] (c) -- (d) node[midway, below, sloped] {\tiny $2$};
  \draw[->] (b) -- (c) node[midway, above, sloped] {\tiny $1$};

  \node (a2) at (5, 2) [roundnode, label=above:$a$] {};
  \node (b2) at (4, 1) [roundnode, label=left:$b$] {};
  \node (c2) at (6, 1) [roundnode, label=right:$c$] {};
  \node (d2) at (5, 0) [roundnode, label=below:$d$] {};
  \draw[->] (a2) -- (b2) node[midway, above, sloped] {\tiny $1$};
  \draw[->] (a2) -- (c2) node[midway, above, sloped] {\tiny $1$};
  \draw[->] (b2) -- (d2) node[midway, below, sloped] {\tiny $1$};
  \draw[->] (c2) -- (d2) node[midway, below, sloped] {\tiny $1$};
  \draw[->, red] (b2) -- (c2)  node[midway, above, sloped] {\tiny $2$};

  \node at (1, -1) {$G_1$};
  \node at (5, -1) {$G_2$};
\end{tikzpicture}
  \caption{
    Two weighted digraphs with the same underlying digraph but different persistence bases, illustrating that not every circuit basis of $\zbhom{G}{0}$ yields a persistence basis for $\zbpershom{G}$.}\label{fig:representatives_example}
\end{figure}
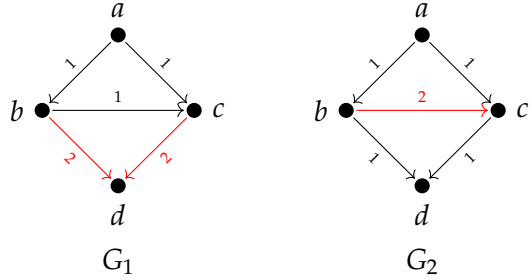

\begin{example}\label{ex:not_any_reps}
First, we note that it does not suffice to chose \emph{any} basis of undirected circuits for $\zbcycles{G}{0}$.
For example, consider the two weighted digraphs pictured in Figure~\ref{fig:representatives_example}.
In both digraphs, ignoring choice of direction, there are three undirected simple circuits, whose representatives we denote
\begin{align}
  \gamma_1^i & \defeq ab + bc - ac, \\
  \gamma_2^i & \defeq bc + cd - bd, \\
  \gamma_3^i & \defeq ab + bd - cd - ac.
\end{align}
where $\gamma_j^i \in \zbcycles{G_i}{0}$.
Note $\gamma_3^i = \gamma_1^i - \gamma_2^i$.
The \grpph\ of these two weighted digraphs is
\begin{equation}
\thedesc{G_1} = \ms{ [0, 2), [0, 1) }\quad\text{and}\quad\thedesc{G_2} =  \ms{ [0, 2), [0, 1) }.
\end{equation}

A persistence basis for $\zbpershom{G_1}$ is $\{\gamma_1^1, \gamma_2^1\}$ with 
$\lifetime(\gamma_1^1) = [0, 1)$ and $\lifetime(\gamma_2^1) = [0, 2)$.
Note that $\{\gamma_2^1, \gamma_3^1\}$ is \emph{not} a persistence basis for $\zbpershom{G_1}$ because $\lifetime(\gamma_3^1) = [0, 2)$.

However $\lifetime(\gamma_1^2) = \lifetime(\gamma_2^2) = [0, 2)$, hence $\{\gamma_1^2, \gamma_2^2\}$ does \emph{not} form a persistence basis for $\zbpershom{G_2}$.
At $t=2$, we see $\zbinclinduc{hom}{0}{2}\gamma_3^2 = 0$ and hence $\zbinclinduc{hom}{0}{2}\gamma_1^2 = \zbinclinduc{hom}{0}{2}\gamma_2^2$.
Instead, a persistence basis for $\zbpershom{G_2}$ is $\{\gamma_2^2, \gamma_3^2\}$.

This illustrates that an arbitrary choice of undirected circuit basis for $\zbhom{G}{0}$ may not yield a persistence basis of $\zbpershom{G}$.
Moreover, a correct choice of basis does not depend only on $\underlying{G}$; we must incorporate information about how cycles in $\zbcycles{G}{0}$ die, in order to choose a persistence basis.
\end{example}

To begin tackling Theorem~\ref{thm:rep_of_cycles}, since $G$ is finite, we note there are finitely critical values $t_1=0, \dots, t_m$ where the chain complex $\zb{C}_\bullet(G, t)$ changes.
Therefore, it suffices to study the following finite persistent chain complex instead.
\begin{figure}[H]
  \centering
  \begin{tikzcd}
\cdots C_2(G^{t_1}) \arrow[r] \arrow[d]
& C_1(G \cup G^{t_1}) \arrow[r] \arrow[d]
& C_0(G \cup G^{t_1}) \cdots \arrow[d]
& & \zbhom{G}{t_1} \arrow[d, two heads, "\zbinclinduc{hom}{t_1}{t_2}"] \\
\cdots C_2(G^{t_2}) \arrow[r] \arrow[d]
& C_1(G \cup G^{t_2}) \arrow[r] \arrow[d]
& C_0(G \cup G^{t_2}) \cdots \arrow[d]
& & \zbhom{G}{t_2} \arrow[d, two heads, "\zbinclinduc{hom}{t_2}{t_3}"] \\
\vdots \arrow[d]
& \vdots \arrow[d]
& \vdots \arrow[d]
& & \vdots \arrow[d, two heads, "\zbinclinduc{hom}{t_{m-1}}{t_m}"] \\
\cdots C_2(G^{t_m}) \arrow[r]
& C_1(G \cup G^{t_m}) \arrow[r]
& C_0(G \cup G^{t_m}) \cdots
& & \zbhom{G}{t_m} \\
  \end{tikzcd}
\end{figure}

To the right of the chain complex we show the induced maps on homology $\zbinclinduc{hom}{t_{i-1}}{t_i}$.
By Lemma~\ref{prop:cycles_born_at_0}, these maps on homology are always surjective.
Our strategy is to find undirected circuit bases for the kernel of each of these maps; Lemma~\ref{lem:submap_circuit_basis} achieves this and is the key result.
We then collect these elements into a basis for $\ker\zbinclinduc{hom}{0}{t_m}$.
Together, these elements form representatives for the homology classes with finite lifetime.
To obtain representatives for the infinite feature, we extend this to a basis for all of $\zbhom{G}{0}$ and show that we obtain a persistence basis.

\begin{lemma}\label{lem:submap_circuit_basis}
For each $i=2, \dots, m$, there is a basis $\{b_1, \dots, b_{k_i}\}$ of $\ker\zbinclinduc{hom}{t_{i-1}}{t_i}$ such that $b_j = \zbinclinduc{hom}{0}{t_{i-1}} [ \repres{p_{i, j}} ]$ for some  undirected circuit $p_{i, j}$ in $G$.
\end{lemma}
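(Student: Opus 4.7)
My plan is to construct, for each $2$-generator $\sigma$ of $C_2(G^{t_i})$, an undirected circuit $q_\sigma$ in $G$ such that the representatives $\{\repres{q_\sigma}\}$ span $K_i \defeq \ker\zbinclinduc{hom}{0}{t_i}$, and then extract the required basis of $K_i/K_{i-1}$ from this spanning set.

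The key tool will be an auxiliary linear map $\psi: C_1(G \cup G^{t_i}) \to C_1(G)$. I will define it by choosing, for each edge $(a,b) \in E(G^{t_i})$, a shortest directed path $p_{ab}$ in $G$ from $a$ to $b$; for $(a,b) \in E(G) \cap E(G^{t_i})$ I pick $p_{ab}$ to be the single-edge path. Setting $\psi(ab) \defeq \repres{p_{ab}}$ and extending linearly yields a map with two features I will exploit: $\psi$ restricts to the identity on $C_1(G)$, and Lemma~\ref{lem:paths_homologous} implies $\psi(ab) - ab \in \zbbdrs{G}{t_i}$ for every edge $ab \in E(G^{t_i})$.

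Next I will compute $\psi \circ \zb{\bd}_2^{t_i}$ on each of the three $2$-generator types from Proposition~\ref{prop:biv2_gens}. In each case the result is the representative of a single undirected circuit $q_\sigma$ in $G$, obtained by concatenating the chosen paths around the boundary of $\sigma$ (e.g.\ a long square $\sigma = ijk - imk$ yields the circuit traversing $p_{ij} \cdot p_{jk}$ and returning via $p_{mk} \cdot p_{im}$ reversed; directed triangles and double edges give analogous circuits). Combining with the congruence $\psi(ab) \equiv ab \pmod{\zbbdrs{G}{t_i}}$, I obtain $\repres{q_\sigma} = \psi(\zb{\bd}_2^{t_i}\sigma) \equiv \zb{\bd}_2^{t_i}\sigma \equiv 0 \pmod{\zbbdrs{G}{t_i}}$, so $[\repres{q_\sigma}] \in K_i$.

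The heart of the argument, and the step I expect to be the main obstacle, is the reverse inclusion: every class in $K_i$ is a linear combination of the $[\repres{q_\sigma}]$. Given $[z] \in K_i$ with representative $z \in \zbcycles{G}{0}$, I can write $z = \zb{\bd}_2^{t_i}\tau$ for some $\tau = \sum_\sigma c_\sigma \sigma$. Applying $\psi$ and using that it fixes $C_1(G)$ pointwise then gives $z = \psi(z) = \sum_\sigma c_\sigma \repres{q_\sigma}$. The subtle point is that $\psi$ is not a chain map, yet because $z$ already lives in $C_1(G)$, applying $\psi$ returns it unchanged while simultaneously replacing each $\zb{\bd}_2^{t_i}\sigma$ by a single undirected-circuit representative. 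With $K_i$ shown to be spanned by $\{\repres{q_\sigma}\}$, I will pick a subset whose images in $K_i/K_{i-1}$ form a basis (any spanning set of a vector space contains a basis). Lemma~\ref{prop:cycles_born_at_0} provides the isomorphism $K_i/K_{i-1} \cong \ker \zbinclinduc{hom}{t_{i-1}}{t_i}$ induced by $\zbinclinduc{hom}{0}{t_{i-1}}$, and relabelling the chosen circuits as $p_{i,1}, \dots, p_{i,k_i}$ and setting $b_j \defeq \zbinclinduc{hom}{0}{t_{i-1}}[\repres{p_{i,j}}]$ will give the required basis.
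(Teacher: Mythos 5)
Your argument is correct, and it reaches the lemma by a genuinely different route than the paper. The paper decomposes the transition $\zbinclinduc{ch}{t_{i-1}}{t_i}$ into a sequence of subcomplex inclusions: it first adjoins all new edges together with one directed triangle $w_e = av_eb$ per new edge and proves, via an explicit chain homotopy, that this step is an isomorphism on $H_1$; it then adds the remaining $2$-generators $u_1,\dots,u_n$ one at a time, reading off a kernel basis element $[\bd_2 u_i]$ at each dimension drop, and finally converts these to circuits in $G$ by applying the collapsing chain map $c_1$ followed by Lemma~\ref{lem:paths_homologous}. You instead build a single ``straightening'' map $\psi$ on $C_1(G\cup G^{t_i})$, use that it fixes $C_1(G)$ pointwise and sends the boundary of each generator of Proposition~\ref{prop:biv2_gens} to the representative of one undirected circuit in $G$, and deduce in one stroke that $K_i=\ker\zbinclinduc{hom}{0}{t_i}$ equals the span of these circuit classes; the lemma then follows by passing to $K_i/K_{i-1}\cong\ker\zbinclinduc{hom}{t_{i-1}}{t_i}$ and extracting a basis from a spanning set. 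The identity $\psi(z)=\sum_\sigma c_\sigma\repres{q_\sigma}$ for $z=\zb{\bd}_2^{t_i}\tau\in C_1(G)$ is the step one would worry about, but it is sound: it uses only that $\psi$ is linear and restricts to the identity on $C_1(G)$, not that $\psi$ is a chain map, and the concatenated walks are legitimate undirected circuits under the paper's definition (which does not require simplicity). Your approach is shorter and avoids the homotopy bookkeeping; what the paper's incremental decomposition buys is finer information -- a kernel basis indexed by individually added generators, which is what makes the non-uniqueness of the persistence basis (Remark~\ref{rem:pb_not_unique}) transparent. Both arguments ultimately rest on the same two inputs, Proposition~\ref{prop:biv2_gens} and Lemma~\ref{lem:paths_homologous}.
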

\begin{proof}
For notational convenience, we define $r\defeq t_{i-1}$ and $s\defeq t_i$.
When the filtration increases from $t=r$ to $t=s$, some number of edges are added to $G^t$ which yield new generators for both $C_1(G \cup G^t)$ and $C_2(G^t)$.
Our approach is to decompose $\zbinclinduc{ch}{r}{s}$ into a sequence of chain maps.
In the first, all the new generators of $C_1(G \cup C^{s})$ are added, along with sufficient new generators in $C_2(G^{s})$ to make the new edges homologous to a sum of edges already present in $C_1(G\cup G^{r})$.
Therefore, on homology this first map is an isomorphism.
We then add the remaining generators of $C_2(G^{s})$ one at a time in order to find a basis for $\ker\zbinclinduc{hom}{r}{s}$.
Since we are only interested in degree $1$ homology, it suffices to restrict our attention to degrees $0, 1$ and $2$.

Denote the set of new edges $E_{new} \defeq E(G\cup G^{s}) \setminus E(G \cup  G^{r})$.
Given an edge $e=(a, b)\in E_{new}$, there is some directed path $p:a \leadsto b$ in $G$, of length at most $s$.
Moreover this path must have at least one vertex distinct from the endpoints of $e$, otherwise $e \in G$.
Choose arbitrary such $v_e \in V(p)$.
Then $(a, v_e, b)$ is a directed triangle in $G^{s}$ so $w_e \defeq av_e b$ is a new generator of $C_2(G^{s})$.
Repeating this for all new edges we obtain a set of generators
\begin{equation}
  U_{new} \defeq \left\{w_e \rmv e \in E_{new} \right\}
\end{equation}
which were not present in $C_2(G^{r})$ and are linearly independent.
Define $W_0 \defeq \left\langle U_{new} \right\rangle$ and let $Q_0$ denote the degree 1 homology of the chain complex
\begin{figure}[H]
  \centering
  \begin{tikzcd}
     C_2(G^r) \oplus W_0 \arrow[r, "\zb{\bd}_2"]
    & C_1( G \cup G^s ) \arrow[r, "\zb{\bd}_1"]
    & C_0( G \cup G^s ) 
  \end{tikzcd}
\end{figure}\noindent
which is a subcomplex of $\zb{C}_\bullet(G, s)$.

\begin{claim}
  The inclusion chain map
\begin{figure}[H]
  \centering
  \begin{tikzcd}
 C_2(G^{r}) \arrow[r] \arrow[d, "q^0_2"]
& C_1(G \cup G^{r}) \arrow[r] \arrow[d, "q^0_1"]
& C_0(G \cup G^{r})  \arrow[d, "q^0_0"]
& \zbhom{G}{{r}}  \arrow[d, "q^0"', "\cong"] \\
 C_2(G^{r}) \oplus W_0 \arrow[r] 
& C_1(G \cup G^{s}) \arrow[r] 
& C_0(G \cup G^{s}) 
& Q_0 
  \end{tikzcd}
\end{figure}\noindent
induces an isomorphism on degree $1$ homology, $q^0 : \zbhom{G}{r} \to Q_0$.
\end{claim}
\begin{poc}
We define a chain map $\inducedch{c}$ in the opposite direction and a homotopy $P:C_1(G\cup G^s) \to C_2(G^r)\oplus W_0$ such that $c_1 \circ q^0_1 = \id$ while $q^0_1 \circ c_1 - \id = \zb{\bd}_2 \circ P$.
Hence, on degree 1 homology, $\inducedch{c}$ induces an inverse to $q^0$.
The chain map in degrees $k\neq 1$ is given by
\begin{equation}
  c_k(v) \defeq
  \begin{cases}
    v &\text{if }v \in \zb{C}_k(G, r), \\
    0 &\text{otherwise},
  \end{cases}
\end{equation}
and $c_1$ is defined on the basis of $C_1(G \cup G^s)$ by
\begin{equation}
  c_1(ab) \defeq
  \begin{cases}
    av_{(a, b)} + v_{(a, b)}b & \text{if } (a, b) \in E_{new}, \\
    ab &\text{otherwise}.
  \end{cases}
\end{equation}
The homotopy is given on the basis of $C_1(G\cup G^s)$ by
\begin{equation}
  P(ab) \defeq
  \begin{cases}
    w_{(a, b)} & \text{if } (a, b) \in E_{new}, \\
    0 &\text{otherwise}.
  \end{cases}
\end{equation}
where $w_{(a, b)}\in U_{new}$.
A standard check of the two cases verifies that $\inducedch{c}$ is a chain map and the relations $c_1 \circ q_1^0 = \id$ and $q_1^0 \circ c_1 - \id = \zb{\bd}_2 \circ P$ hold.

Intuitively, $\inducedch{c}$ collapses a new edge $e=(a, b)\in E_{new}$ onto the sum of edges $av_e + v_eb$ in $G \cup G^r$.
The homotopy $P$ shows the two elements are homologous thanks to the presence of the directed triangle $w_e\in U_{new}$.
\end{poc}

By Proposition~\ref{prop:biv2_gens}, there exists $u_1 , \dots, u_n \in C_2( G^{s} )$ such that
\begin{equation}
  C_2 (G^{s}) = C_2(G^{r}) \oplus W_0 \oplus \left\langle u_1 , \dots, u_n\right\rangle
\end{equation}
where each $u_i$ is amongst the generators identified in Proposition~\ref{prop:biv2_gens}.
Define $W_i \defeq W_0 \oplus \left\langle u_1 , \dots, u_i \right\rangle$.
This gives a sequence of inclusion chain maps
\begin{figure}[H]
  \centering
  \begin{tikzcd}
C_2(G^{r}) \arrow[r] \arrow[d, "q^i_2"] \oplus W_{i-1} 
& C_1(G \cup G^{s}) \arrow[r] \arrow[d, "q^i_1"]
& C_0(G \cup G^{s}) \arrow[d, "q^i_0"]
& Q_{i-1} \arrow[d, two heads, "q^i"] \\
C_2(G^{r}) \oplus W_i \arrow[r] 
& C_1(G \cup G^{s}) \arrow[r] 
& C_0(G \cup G^{s}) 
& Q_i
  \end{tikzcd}
\end{figure}\noindent
where each rows is a subcomplex of $\zb{C}_\bullet(G, s)$.
We denote the degree $1$ homology groups by $Q_i$, with $Q_n \defeq \zbhom{G}{S}$, and the induced homology maps by $q^i : Q_{i-1} \to Q_i$.
Together these chain maps decompose $\zbinclinduc{ch}{r}{s}$ and hence the $q^i$ decompose $\zbinclinduc{hom}{r}{s}$, as show in the following diagram.
\begin{figure}[H]
  \centering
  \begin{tikzcd}
C_2(G^{r}) \arrow[r] \arrow[d]
& C_1(G \cup G^{r}) \arrow[r] \arrow[d]
& C_0(G \cup G^{r}) \arrow[d]
& 
& \zbhom{G}{{r}} \arrow[ddddd, two heads, "\zbinclinduc{hom}{r}{s}"] \arrow[ld, "q^0", sloped,"\cong"'] \\
C_2(G^{r}) \oplus W_0 \arrow[r] \arrow[d]
& C_1(G \cup G^{s}) \arrow[r] \arrow[d]
& C_0(G \cup G^{s}) \arrow[d]
& Q_0 \arrow[d, two heads, "q^1"'] \arrow[ddddr, dashed, two heads, sloped, "q^n \circ \dots \circ q^1"]
& \\
C_2(G^{r}) \oplus W_1 \arrow[r] \arrow[d]
& C_1(G \cup G^{s}) \arrow[r] \arrow[d]
& C_0(G \cup G^{s}) \arrow[d]
& Q_1 \arrow[d, two heads, "q^2"']
& \\
\vdots \arrow[d]
& \vdots \arrow[d]
& \vdots \arrow[d]
& \vdots \arrow[d, two heads, "q^{n-1}"']
& \\
C_2(G^{n}) \oplus W_{n-1} \arrow[r] \arrow[d]
& C_1(G \cup G^{s}) \arrow[r] \arrow[d]
& C_0(G \cup G^{s}) \arrow[d]
& Q_{n-1} \arrow[rd, two heads,sloped, "q^n"']
& \\
C_2(G^{s}) \arrow[r]
& C_1(G \cup G^{s}) \arrow[r]
& C_0(G \cup G^{s})
&
& \zbhom{G}{{s}}
  \end{tikzcd}
\end{figure}\noindent

Note that $\dim Q_i$ drops by at most $1$ at each step, since $\dim W_{i+1} = \dim W_i + 1$.
Let $J$ denote the subset of indices where the dimension drops, i.e.
\begin{equation}
  J \defeq \left\{ i \in \N \rmv \dim Q_i = \dim Q_{i-1} - 1 \right\}.
\end{equation}
Then for each $i\in J$, the map on homology $q^i$ has nullity $1$ and a basis for $\ker q^i$ is $\{[ b_i ]\}$ where $b_i \defeq \zb{\bd}_2 u_i \in C_1(G \cup G^{s})$.

Note that $[b_i] \in Q_0$ and $(q^{i-1} \circ \dots \circ q^1)[b_i] = [b_i]$ in $Q_{i-1}$.
Since each $[b_i]$ for $i \in J$ dies in a different $Q_i$, they must be linearly independent in $Q_0$.
Moreover, the nullity of $q^n \circ \dots \circ q^1$ is $\card{J}$
so $\left\{ [b_i] \rmv i \in J \right\}$ gives a basis for $\ker(q^n \circ \dots \circ q^1)$.
Hence $\left\{ (q^0)^{-1}[b_i] \rmv i \in J \right\}$ forms a basis for $\ker\zbinclinduc{hom}{r}{s}$.
It now remains to prove that each $(q^0)^{-1}[b_i]$ has a undirected circuit representative.

\begin{claim}
  For each $i\in J$, there exists an undirected circuit $p_i$ in $G$ such that $\zbinclinduc{hom}{0}{r}[\repres{p_i}] = (q^0)^{-1}[b_i]$.
\end{claim}
\begin{poc}
First we recall that $b_i = \zb{\bd}_2 u_i$ where $u_i$ is either a double edge, a directed triangle or a long square in $G^s$.
Some of the boundary edges may be edges in $G^r$ but at least one boundary edge is new in $G^s$.
By the previous claim, a representative for $(q^0)^{-1}[b_i]$ is $c_1(b_i)$.

We can write $b_i = \repres{p}$ where $p$ is the undirected circuit which traces the outline of the generator $u_i$.
Now $c_1$ maps edges in $G^r$ to themselves and edges not in $G^r$ to a sum of two edges in $G^r$ with the same boundary.
Therefore, no matter which type of generator $u_i$ is, we can write
$c_1(b_i) = \repres{\widetilde{p}}$
for some undirected circuit $\widetilde{p}$ through $G^r$.
Using Lemma~\ref{lem:paths_homologous}, for each edge $\tau \in E(\widetilde{p})$ in this circuit there is a directed path $T_\tau$ through $G$ of length at most $r$ such that $\tau = \repres{T_\tau} \pmod{\zbbdrs{G}{r}}$.
Concatenating the $T_\tau$ we obtain an undirected circuit $p_{i}$ through $G$ such that $\repres{p} = \repres{p_i}\pmod{\zbbdrs{G}{r}}$.
\end{poc}

This claim concludes the proof.
\end{proof}

\begin{rem}
Note that the undirected circuits $p_{i, j}$ may not be simple.
We conjecture that it should be possible to choose every $p_{i, j}$ to be simple but do not, as yet, have a proof.
\end{rem}

\begin{figure}[hptb]
  \centering
  \begin{tikzpicture}[
  roundnode/.style={circle, fill=black, minimum size=4pt},
	squarenode/.style={fill=black, minimum size=4pt},
	inner sep=2pt,
	outer sep=1pt
  ]

  \node (a) at (-1, 0) [roundnode, label=left:$a$] {};
  \node (b) at (1, 0) [roundnode, label=right:$b$] {};
  \node (v1) at (-1, 1) [roundnode, label=above:$v_1$] {};
  \node (v2) at (0, 1) [roundnode, label=above:$v_2$] {};
  \node (v3) at (1, 1) [roundnode, label=above:$v_3$] {};
  \node (v4) at (-1, -1) [roundnode, label=below:$v_4$] {};
  \node (v5) at (0, -1) [roundnode, label=below:$v_5$] {};
  \node (v6) at (1, -1) [roundnode, label=below:$v_6$] {};
  \draw[->] (a) -- (v1) node[midway, above, sloped] {\tiny $1$};
  \draw[->] (a) -- (v4) node[midway, below, sloped] {\tiny $1$};
  \draw[->, red] (v1) -- (v2) node[midway, above, sloped] {\tiny $2$};
  \draw[->, red] (v2) -- (v3) node[midway, above, sloped] {\tiny $2$};
  \draw[->, red] (v4) -- (v5) node[midway, below, sloped] {\tiny $2$};
  \draw[->, red] (v5) -- (v6) node[midway, below, sloped] {\tiny $2$};
  \draw[->] (v3) -- (b) node[midway, above, sloped] {\tiny $1$};
  \draw[->] (v6) -- (b) node[midway, below, sloped] {\tiny $1$};

  \fill[blue!50, rounded corners] (3.1, 0.2) -- (3.1, 0.9) -- (3.8, 0.9) -- cycle;
  \fill[blue!50, rounded corners] (4.9, 0.2) -- (4.9, 0.9) -- (4.2, 0.9) -- cycle;
  \fill[blue!50, rounded corners] (3.1, -0.2) -- (3.1, -0.9) -- (3.8, -0.9) -- cycle;
  \fill[blue!50, rounded corners] (4.9, -0.2) -- (4.9, -0.9) -- (4.2, -0.9) -- cycle;
  \fill[green!50, rounded corners] (3.1, 0) -- (4, 0.9) -- (4.9, 0) -- (4, -0.9) -- cycle;

  \node (a_2) at (3, 0) [roundnode, label=left:$a$] {};
  \node (b_2) at (5, 0) [roundnode, label=right:$b$] {};
  \node (v1_2) at (3, 1) [roundnode, label=above:$v_1$] {};
  \node (v2_2) at (4, 1) [roundnode, label=above:$v_2$] {};
  \node (v3_2) at (5, 1) [roundnode, label=above:$v_3$] {};
  \node (v4_2) at (3, -1) [roundnode, label=below:$v_4$] {};
  \node (v5_2) at (4, -1) [roundnode, label=below:$v_5$] {};
  \node (v6_2) at (5, -1) [roundnode, label=below:$v_6$] {};
  \draw[->] (a_2) -- (v1_2);
  \draw[->] (a_2) -- (v4_2);
  \draw[->] (v1_2) -- (v2_2);
  \draw[->] (v2_2) -- (v3_2);
  \draw[->] (v4_2) -- (v5_2);
  \draw[->] (v5_2) -- (v6_2);
  \draw[->] (v3_2) -- (b_2);
  \draw[->] (v6_2) -- (b_2);
  \draw[->, blue] (a_2) -- (v2_2);
  \draw[->, blue] (v2_2) -- (b_2);
  \draw[->, blue] (a_2) -- (v5_2);
  \draw[->, blue] (v5_2) -- (b_2);

  \node[] at (0, -2) {$G$};
  \node[] at (4, -2) {$G^3$};

\end{tikzpicture}
  \caption{
    An example weighted digraph with a single feature which dies at $t=3$.
    To the right we show $G^3$, colouring the new edges in $G^3$ in blue and highlighting the new generators in $C_2(G^3)$ via blue and green polygons.
  }\label{fig:adding_generators}
\end{figure}
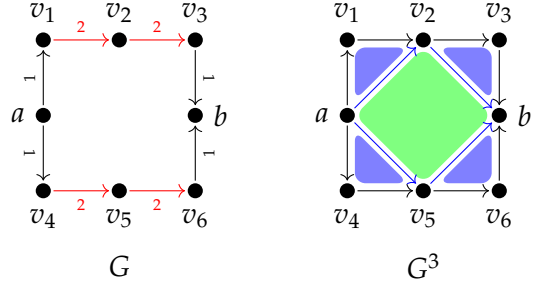

\begin{example}
To illustrate how the generators are added in the proof of Lemma~\ref{lem:submap_circuit_basis}, consider Figure~\ref{fig:adding_generators}.
First note that there is a single feature with representative
\begin{equation}
  \gamma \defeq (av_1 + v_1 v_2 + v_2 v_3 + v_3 b) - (a v_4 + v_4 v_5 + v_5 v_6 + v_6 b)
\end{equation}
which dies at $t=3$.
Further note that new edges appear at integer values in the shortest-path filtration and $\thedesc{G} = \ms{ [0, 3) }$.

The new edges which appear at $t=3$ are highlighted in blue.
The new generators of $C_2(G^3)$ are the four blue directed triangles and the central green long square.
The blue directed triangles form the elements of $U_{new}$ and the central long square is the sole remaining generator $u_1$.
So a basis for $\ker q^1$ is $\{[\zb{\bd}_2 u_1]\}$.

To find a basis for $\ker\zbinclinduc{hom}{2}{3}$ we must compute $c_1(\zb{\bd}_2 u_1)$.
Firstly $\zb{\bd}_2 u_1 = a v_2 + v_2 b - v_5 b - a v_5$.
Then the chain map $c_1$ maps each of the blue edges to a sum of edges in $G^2$.
In fact
$c_1(\zb{\bd}_2 u_1) = \gamma$ which is the representative of the sole simple undirected circuit in $G$.
\end{example}

The following Lemma follows by a standard linear algebra argument, since each $\zbinclinduc{hom}{t_{i-1}}{t_i}$ is surjective.

\begin{lemma}\label{lem:collection_of_bases}
Given $B_i\subseteq\zbhom{G}{0}$ such that $\zbinclinduc{hom}{0}{t_i}(B_i)$ is a basis for $\ker\zbinclinduc{hom}{t_i}{t_{i+1}}$, the union $\cup_{i=1}^{m-1} B_i$ is a basis for $\ker\zbinclinduc{hom}{0}{t_m}$.
\end{lemma}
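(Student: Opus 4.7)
The plan is to establish the claim by filtering $\ker\zbinclinduc{hom}{0}{t_m}$ according to the critical values and showing that each $B_i$ contributes a basis to one subquotient of the filtration. The key ingredient beyond pure linear algebra is surjectivity: by Proposition~\ref{prop:cycles_born_at_0} (or, equivalently, Corollary~\ref{cor:decreasing_curves}) every map $\zbinclinduc{hom}{0}{t}$ is surjective, and hence so is every intermediate map $\zbinclinduc{hom}{t_i}{t_{i+1}}$.

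Concretely, I would set $K_0 \defeq 0$ and $K_i \defeq \ker\zbinclinduc{hom}{0}{t_{i+1}}$ for $i=1,\dots,m-1$, giving a filtration
\begin{equation}
0 = K_0 \subseteq K_1 \subseteq \cdots \subseteq K_{m-1} = \ker\zbinclinduc{hom}{0}{t_m}.
\end{equation}
Using the factorisation $\zbinclinduc{hom}{0}{t_{i+1}} = \zbinclinduc{hom}{t_i}{t_{i+1}} \circ \zbinclinduc{hom}{0}{t_i}$, I would check that $B_i \subseteq K_i$ (since $\zbinclinduc{hom}{0}{t_i}(B_i) \subseteq \ker\zbinclinduc{hom}{t_i}{t_{i+1}}$) and that $\zbinclinduc{hom}{0}{t_i}$ restricts to a linear map $\psi_i : K_i \to \ker\zbinclinduc{hom}{t_i}{t_{i+1}}$. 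Surjectivity of $\psi_i$ follows by lifting any $k \in \ker\zbinclinduc{hom}{t_i}{t_{i+1}}$ to a preimage under the (surjective) map $\zbinclinduc{hom}{0}{t_i}$, which then lies in $K_i$. The kernel of $\psi_i$ is exactly $K_i \cap \ker\zbinclinduc{hom}{0}{t_i} = K_{i-1}$, so $\psi_i$ descends to an isomorphism $K_i/K_{i-1} \cong \ker\zbinclinduc{hom}{t_i}{t_{i+1}}$.

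Since $\zbinclinduc{hom}{0}{t_i}(B_i)$ is a basis for $\ker\zbinclinduc{hom}{t_i}{t_{i+1}}$ by hypothesis, pulling back along this isomorphism shows that the image of $B_i$ in $K_i/K_{i-1}$ is a basis. A standard induction on $i$, choosing bases compatibly through each short exact sequence $0 \to K_{i-1} \to K_i \to K_i/K_{i-1} \to 0$, then shows that $\bigcup_{i=1}^{m-1} B_i$ is a basis for $K_{m-1} = \ker\zbinclinduc{hom}{0}{t_m}$, as required.

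The only step requiring any care is verifying surjectivity of $\psi_i$, which is where Proposition~\ref{prop:cycles_born_at_0} is used; everything else is routine bookkeeping about filtered vector spaces. I do not anticipate a substantive obstacle.
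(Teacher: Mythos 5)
Your proposal is correct and follows exactly the route the paper intends: the paper dispatches this lemma with the one-line remark that it ``follows by a standard linear algebra argument, since each $\zbinclinduc{hom}{t_{i-1}}{t_i}$ is surjective,'' and your filtration $0 = K_0 \subseteq K_1 \subseteq \cdots \subseteq K_{m-1}$ with the induced isomorphisms $K_i/K_{i-1} \cong \ker\zbinclinduc{hom}{t_i}{t_{i+1}}$ is precisely that standard argument spelled out. The use of Proposition~\ref{prop:cycles_born_at_0} for surjectivity matches the paper's own justification.
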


Certainly a basis of undirected circuits for $\zbhom{G}{0}$ exists (by Corollary~\ref{cor:decreasing_curves}).
Therefore, we can always extend linearly independent undirected circuits to a basis of such circuits for $\zbhom{G}{0}$. 

\begin{lemma}\label{lem:basis_extension}
Given undirected circuits $p_1, \dots, p_k$ such that $[\repres{p_1}], \dots, [\repres{p_k}]$ are linearly independent in $\zbhom{G}{0}$,
there exists undirected circuits $p_{k+1}, \dots, p_N$ such that $\{[\repres{p_i}]\}_{i=1}^N$ is a basis for $\zbhom{G}{0}$.
\end{lemma}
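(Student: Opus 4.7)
The plan is to reduce this to a standard linear-algebra extension argument, using the circuit basis provided by Corollary~\ref{cor:decreasing_curves}(b) as the pool from which to draw new basis elements.

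First, I would invoke Corollary~\ref{cor:decreasing_curves}(b) to fix a reference basis of $\zbhom{G}{0}$ consisting of (simple) undirected circuit classes, say $\{[\repres{q_1}], \ldots, [\repres{q_N}]\}$. Here $N$ is the circuit rank of $\underlying{G}$, and by construction each $q_j$ is an undirected circuit in $G$. This is the key input: we now know that $\zbhom{G}{0}$ is spanned by classes of the prescribed geometric form.

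Next I would apply the Steinitz exchange lemma to the linearly independent family $\{[\repres{p_1}], \ldots, [\repres{p_k}]\}$ together with the spanning set $\{[\repres{q_1}], \ldots, [\repres{q_N}]\}$. Concretely, starting from $S_0 \defeq \{[\repres{p_1}], \ldots, [\repres{p_k}]\}$, iterate: if $S_j$ does not yet span $\zbhom{G}{0}$, then some $[\repres{q_{i}}]$ is not in $\operatorname{span}(S_j)$ (otherwise the $q_i$'s could not span), and we adjoin it to form $S_{j+1}$, which remains linearly independent. After at most $N-k$ steps this terminates with a linearly independent spanning set, i.e.\ a basis. Relabelling the chosen $q_i$'s as $p_{k+1}, \ldots, p_N$ gives the required circuits.

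There is no real obstacle here, since the argument is purely a Steinitz exchange once Corollary~\ref{cor:decreasing_curves}(b) is available; the work has already been done in providing a circuit basis of $\zbhom{G}{0}$. The only thing to note is that the extending elements $p_{k+1}, \ldots, p_N$ inherit the property of being undirected circuits (and in fact simple ones) directly from the $q_j$, so the conclusion of the lemma is met verbatim.
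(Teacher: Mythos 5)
Your proposal is correct and matches the paper's (implicit) argument: the paper simply notes that a basis of undirected circuits for $\zbhom{G}{0}$ exists by Corollary~\ref{cor:decreasing_curves} and that the extension then follows, which is precisely the Steinitz exchange you spell out. No gaps.
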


We now have all the ingredients we need to prove the main theorem.

\begin{proof}[Proof of Theorem~\ref{thm:rep_of_cycles}]
Using Lemma~\ref{lem:submap_circuit_basis} and Lemma~\ref{lem:collection_of_bases}, we obtain undirected circuits $p_1, \dots, p_l$ such that $\left\{ \repres{p_1}, \dots, \repres{p_l} \right\}$ forms a basis for $\ker\zbinclinduc{hom}{0}{t_m}$.
Using Lemma~\ref{lem:basis_extension}, we extend this to a basis $\left\{ \repres{p_1}, \dots, \repres{p_l} , \repres{p_{l+1}}, \dots, \repres{p_N} \right\}$ for $\zbhom{G}{0}$.
This is a persistence basis for $\zbpershom{G}$.
\end{proof}

\begin{rem}\label{rem:pb_not_unique}
  While we are guaranteed a basis of undirected circuits, this choice of basis is by no means unique.
  As a simple example, consider again Example~\ref{ex:not_any_reps} and Figure~\ref{fig:representatives_example}.
  Two possible persistence bases for $\zbpershom{G_2}$ are 
  $\left\{ \gamma^2_1, \gamma^2_3 \right\}$ and 
  $\left\{ \gamma^2_2, \gamma^2_3 \right\}$.
  The non-uniqueness of the basis arises in the proof of Lemma~\ref{lem:submap_circuit_basis}.
  Namely, there is a choice of $v_e$ and $w_e$ for each $e\in E_{new}$, and a choice of order on the remaining $u_i$.
\end{rem}

\subsection{Decomposition}\label{sec:decomposition}

\begin{figure}[hptb]
  \centering
  \begin{tikzpicture}[
  roundnode/.style={circle, fill=black, minimum size=4pt},
	squarenode/.style={fill=black, minimum size=4pt},
	inner sep=2pt,
	outer sep=1pt
  ]

  \node (a) at (-3, 0) [roundnode] {};
  \node (b) at (-3, 1) [roundnode] {};
  \node (c) at (-3, -1) [roundnode] {};
  \node (d) at (-2, 0) [roundnode] {};
  \node (e) at (-1.5, 0) [roundnode] {};
  \node (f) at (-0.5, 1) [roundnode] {};
  \node (g) at (-0.5, -1) [roundnode] {};

  \draw[->] (a) -- (b);
  \draw[->] (a) -- (c);
  \draw[->] (a) -- (d);
  \draw[->] (b) -- (d);
  \draw[->] (c) -- (d);
  
  \draw[->] (e) -- (f);
  \draw[->] (f) -- (g);
  \draw[->] (e) -- (g);

  \node (a2) at (1.5, 0) [roundnode] {};
  \node (b2) at (1.5, 1) [roundnode] {};
  \node (c2) at (1.5, -1) [roundnode] {};
  \node (d2) at (2.5, 0) [roundnode, label=above:$\hat{v}$] {};
  \node (f2) at (3.5, 1) [roundnode] {};
  \node (g2) at (3.5, -1) [roundnode] {};

  \draw[->] (a2) -- (b2);
  \draw[->] (a2) -- (c2);
  \draw[->] (a2) -- (d2);
  \draw[->] (b2) -- (d2);
  \draw[->] (c2) -- (d2);
  
  \draw[->] (d2) -- (f2);
  \draw[->] (f2) -- (g2);
  \draw[->] (d2) -- (g2);

  \node[] at (-1.75, -2) {$G_1 \sqcup G_2$};
  \node[] at (2.5, -2) {$G_1 \vee_{\hat{v}} G_2$};

\end{tikzpicture}
  \caption{
    Illustrations of the disjoint union and wedge decomposition considered in Section~\ref{sec:decomposition}.
  }\label{fig:decompositions}
\end{figure}
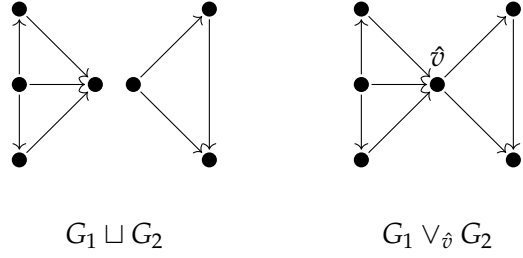

In order to more easily compute \grpph, it is desirable to understand how decompositions of the input weighted digraphs give rise to decompositions of the descriptor.
The simplest such decomposition is a disjoint union; as one might expect, the descriptor decomposes as a direct sum.

\begin{theorem}\label{thm:union_decomp}
Suppose $G\in\WDgr$ decomposes as a disjoint union, $G=G_1\sqcup G_2$, then
\begin{equation}
 \zbpershom{G} \cong \zbpershom{G_1} \oplus \zbpershom{G_2}.
\end{equation}
\end{theorem}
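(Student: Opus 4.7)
The plan is to establish the isomorphism at the level of persistent chain complexes and then take degree~$1$ homology. The central observation is that when $G = G_1 \sqcup G_2$, there are no directed paths connecting $V(G_1)$ to $V(G_2)$, so $d_G(i, j) = \infty$ whenever $i$ and $j$ lie in different components. Consequently, the shortest-path filtration splits as $G^t = G_1^t \sqcup G_2^t$ for every $t \in \R$, and therefore $G \cup G^t = (G_1 \cup G_1^t) \sqcup (G_2 \cup G_2^t)$.

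The second ingredient is a general lemma: for any disjoint union $H = H_1 \sqcup H_2$ of simple digraphs, the regular path chain complex decomposes as $C_\bullet(H) \cong C_\bullet(H_1) \oplus C_\bullet(H_2)$. Indeed, every allowed elementary path $v_0 \dots v_p$ must satisfy $(v_i, v_{i+1}) \in E(H)$ for all $i$, which forces all its vertices into a single connected component, so $\mathcal{A}_p(H) = \mathcal{A}_p(H_1) \oplus \mathcal{A}_p(H_2)$. The regular boundary removes one vertex at a time and so preserves the component of an allowed path, whence this direct sum decomposition descends to $\biv[reg]_p$ and commutes with the differentials. Moreover, inclusion digraph maps respect this decomposition, so it is natural in $H$.

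Combining these two ingredients with Definition~\ref{defin:zb_chain_complex}, one obtains a pointwise chain complex isomorphism
\begin{equation}
\zb{C}_\bullet(G, t) \;\cong\; \zb{C}_\bullet(G_1, t) \oplus \zb{C}_\bullet(G_2, t)
\end{equation}
for every $t \in \R$: the degrees~$k \geq 2$ use the decomposition of $C_k(G^t)$, degrees $k = 0, 1$ use the decomposition of $C_k(G \cup G^t)$, and the horizontal maps $\inducedch{\iota} \circ \bd_2$, $\bd_1$ respect the splitting because they come from component-preserving constructions. Since the transition chain map $\zbinclinduc{ch}{s}{t}$ is itself induced by inclusion vertex maps $V(G) \hookrightarrow V(G)$ (fixed by the identity) and by $\filtincl{s}{t}: F^s G \to F^t G$, each of which respects the direct sum decomposition of the underlying vertex set, the isomorphisms above assemble into an isomorphism in $\Funct{\Rposet}{\Ch}$. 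Applying the additive functor $\Funct{\Rposet}{H_1}$ yields $\zbpershom{G} \cong \zbpershom{G_1} \oplus \zbpershom{G_2}$ in $\PersVec$.

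The only mildly subtle step is the chain complex decomposition lemma for $\biv[reg]$: one must verify that the condition defining $\biv[reg]_p$ (namely, $\bd[reg]_p v \in \mathcal{A}_{p-1}$) is preserved by projecting onto either summand. This is immediate from the component-preservation property of $\bd[reg]$: if $v = v_1 + v_2$ with $v_i \in \mathcal{A}_p(H_i)$, then $\bd[reg]_p v_i \in \mathcal{R}_{p-1}(H_i)$, and $\bd[reg]_p v \in \mathcal{A}_{p-1}(H)$ iff each $\bd[reg]_p v_i \in \mathcal{A}_{p-1}(H_i)$. Everything else is routine bookkeeping.
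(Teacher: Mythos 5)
Your proposal is correct and follows essentially the same route as the paper: observe that the shortest-path filtration splits over the disjoint union, that the (grounded) path chain complex of a disjoint union decomposes degreewise as a direct sum compatible with the boundary and transition maps, and then take degree-$1$ homology. The extra detail you supply on why $\biv[reg]_p$ respects the splitting is a correct elaboration of a step the paper treats as immediate.
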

\begin{proof}
Note that for each $t\geq 0$, 
\begin{equation}
G^t = G_1^t \sqcup G_2^t
\quad\text{ and }\quad
G \cup G^t = (G_1 \cup G_1^t) \sqcup (G_2 \cup G_2^t).
\end{equation}
For each degree $k\geq 0$, if $H = H_1 \sqcup H_2$ then $C_k(H) = C_k(H_1)\oplus C_k(H_2)$.
Therefore, for each $k\geq 0$, $\zb{C}_k(G, t)$ splits as direct sum $\zb{C}_k(G_1, t)\oplus\zb{C}_k(G_2, t)$.
The boundary operator respects this split, mapping $\zb{C}_k(G_i, t) \to \zb{C}_{k-1}(G_i, t)$ and the maps $\zbinclinduc{ch}{s}{t}$ also respect this split, mapping $\zb{C}(G_i, s) \to \zb{C}(G_i, t)$.
Taking homology in degree $1$ maintains this direct sum decomposition.
\end{proof}

\begin{defin}
  \begin{enumerate}[label=(\alph*)]
    \item Given a weighted digraph $G=(V, E, w)$, a \mdf{wedge vertex} is a vertex $\hat{v}\in V$ such that there is a decomposition
\begin{equation}
 V = V_1 \cup V_2
\end{equation}
with $V_1 \cap V_2 = \{ \hat{v} \}$ such that $E \subseteq (V_1 \times V_1) \cup (V_2 \times V_2)$.
    \item Given a wedge vertex, $\hat{v}$ as above the corresponding \mdf{wedge decomposition} of $G$ is the pair $(G_1, G_2)$ where $G_1$ and $G_2$ are the induced subgraphs on $V_1$ and $V_2$ respectively.
      We write $\mdf{G=G_1\vee_{\hat{v}} G_2}$.
    \item Given a wedge decomposition as above, a pair of vertices $a, b \in V$ are called \mdf{separated} if they do not lie in a common $V_i$.
  \end{enumerate}
\end{defin}
\begin{rem}
Given a wedge decomposition $G = G_1\vee_{\hat{v}} G_2$ note that $G = G_1 \cup G_2$.
\end{rem}

In the case of a wedge decomposition $G=G_1 \vee_{\hat{v}} G_2$, since each simple circuit is contained either entirely in $G_1$ or entirely in $G_2$, one expects that \grpph\ also decomposes.
The proof is more complicated because, in general, $G^t \neq G_1^t \vee_{\hat{v}} G_2^t$, since there may be paths between separated vertices, through $\hat{v}$.
However, using a chain homotopy, we can show that these edges do not affect the homology.

\begin{theorem}\label{thm:wedge_decomp}
For a weighted digraph $G=(V, E, w)$ and a wedge decomposition $G=G_1 \vee_{\hat{v}} G_2$,
\begin{equation}
 \zbpershom{G} \cong \zbpershom{G_1} \oplus \zbpershom{G_2}.
\end{equation}
\end{theorem}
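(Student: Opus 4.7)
The plan is to construct, for each $t \geq 0$ and naturally in $t$, a chain retraction of $\zb{C}_\bullet(G, t)$ onto a sub-chain-complex that is the wedge of $\zb{C}_\bullet(G_1, t)$ and $\zb{C}_\bullet(G_2, t)$; passing to $H_1$ then yields the claimed decomposition as persistent vector spaces. To set things up, the condition $E \subseteq (V_1 \times V_1) \cup (V_2 \times V_2)$ forces every directed path between $V_1 \setminus \{\hat{v}\}$ and $V_2 \setminus \{\hat{v}\}$ to pass through $\hat{v}$, so $d_G(a, b) = d_{G_i}(a, b)$ whenever $a, b \in V_i$ and $G_i^t$ coincides with the induced subgraph of $G^t$ on $V_i$. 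The edges of $G \cup G^t$ therefore split into edges of $G_1 \cup G_1^t$, edges of $G_2 \cup G_2^t$, and \emph{cross edges} $(a, b)$ between $V_1 \setminus \{\hat{v}\}$ and $V_2 \setminus \{\hat{v}\}$. For any cross edge $(a, b)$, the shortest $a \leadsto b$ path goes through $\hat{v}$, giving $d(a, \hat{v}), d(\hat{v}, b) \leq t$ and a witnessing directed triangle $w_{(a,b)} = a\hat{v}b \in C_2(G^t)$.

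Let $\Gamma^t \defeq G_1^t \cup G_2^t$ be the subgraph of $G^t$ obtained by removing all cross edges, and define the sub-chain-complex $K(t) \subseteq \zb{C}_\bullet(G, t)$ by $K_0(t) = \R\langle V\rangle$, $K_1(t) = C_1(\Gamma^t)$, and $K_2(t) = C_2(\Gamma^t)$. By Proposition~\ref{prop:biv2_gens}, every generator of $C_2(\Gamma^t)$ has its support in a single $V_i$, so $C_2(\Gamma^t) = C_2(G_1^t) \oplus C_2(G_2^t)$, and the analogous edge-level decomposition gives $C_1(\Gamma^t) = C_1(G_1^t) \oplus C_1(G_2^t)$. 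A short Mayer--Vietoris argument for the wedge $\Gamma^t = G_1^t \vee_{\hat{v}} G_2^t$ then yields $H_1(K(t)) \cong \zbhom{G_1}{t} \oplus \zbhom{G_2}{t}$.

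The main step is to show the inclusion $\iota: K(t) \hookrightarrow \zb{C}_\bullet(G, t)$ induces an isomorphism on $H_1$. Surjectivity is easy: given a cycle $z \in C_1(G \cup G^t)$ with cross-edge terms $\sum c_e e$, subtracting $\sum c_e \partial w_e$ yields a homologous cycle supported on non-cross edges, i.e.\ in $K_1(t)$. For injectivity, given $z \in K_1(t)$ with $z = \partial \omega$ for some $\omega \in C_2(G^t)$, I would build $\omega' \in K_2(t)$ with $\partial \omega' = z$ by mimicking the chain-map construction of Lemma~\ref{lem:submap_circuit_basis}: each generator of $C_2(G^t)$ from Proposition~\ref{prop:biv2_gens} whose vertex set mixes $V_1$ and $V_2$ is replaced by a combination of $\hat{v}$-mediated generators lying in $K_2(t)$ with the same projected boundary. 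Representative substitutions are $iji \mapsto i\hat{v}i + j\hat{v}j$ for a double edge with both edges cross, and $ijk \mapsto ij\hat{v}$ for a directed triangle with $i, j \in V_1$ and $k \in V_2$. The auxiliary edges $(x, \hat{v})$ and $(\hat{v}, x)$ appearing in these substitutions always lie in $G^t$ whenever the relevant cross edge is present, by the shortest-path estimates, so every substitution does land in $C_2(\Gamma^t)$.

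Naturality under $s \leq t$ is immediate: cross edges and all auxiliary generators used in the substitutions persist in $G^t$, so the retraction commutes with $\zbinclinduc{ch}{s}{t}$ and the induced isomorphism $H_1(K(\cdot)) \cong \zb{\mathcal{H}}_1(G)$ is an isomorphism of persistent vector spaces, yielding $\zbpershom{G} \cong \zbpershom{G_1} \oplus \zbpershom{G_2}$. The main obstacle is the exhaustive case analysis underlying the injectivity step: enumerating distributions of the vertices of each generator type of $C_2(G^t)$ (double edges, directed triangles, long squares) among $V_1 \setminus \{\hat{v}\}$, $V_2 \setminus \{\hat{v}\}$, and $\{\hat{v}\}$, and verifying boundary matching in each. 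The shortest-path filtration ensures that all the $\hat{v}$-triangles and $\hat{v}$-double edges needed to effect each substitution are simultaneously present, which is precisely what makes the argument close.
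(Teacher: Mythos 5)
Your proposal is correct in substance, but it reaches the result by a more hands-on route than the paper, and it is worth seeing why the paper's version is shorter. Your degree-$1$ retraction (cross edge $ab\mapsto a\hat{v}+\hat{v}b$, everything else fixed) and your homotopy generators $w_{(a,b)}=a\hat{v}b$ are \emph{exactly} the degree-$1$ part of the composite $J\circ F$ and the homotopy $P$ in the paper's proof; where you diverge is in producing the degree-$2$ part of the retraction. You build it generator-by-generator from Proposition~\ref{prop:biv2_gens}, which forces the exhaustive case analysis you flag as the main obstacle (double edges, directed triangles and long squares, with each vertex distributed among $V_1\setminus\{\hat{v}\}$, $V_2\setminus\{\hat{v}\}$ and $\{\hat{v}\}$). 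The paper instead observes that the collapse maps $f_i:G\to G_i$ sending $V_{3-i}\setminus\{\hat{v\}}$ to $\hat{v}$ are \emph{contractions}, hence morphisms in $\Cont\WDgr$, so Theorem~\ref{thm:grd_functoriality} hands over chain maps $\zbdiginduc{ch}{f_i}$ in all degrees for free; your substitution rules ($ijk\mapsto ij\hat{v}$, $iji\mapsto i\hat{v}i+j\hat{v}j$, etc.) are precisely $u\mapsto \zbdiginduc{ch}{f_1}u+\zbdiginduc{ch}{f_2}u$ computed by hand, and the chain-map identity you would have to verify case by case is the functoriality of the path complex under digraph maps, already cited in Section~2. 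I checked the representative cases (including mixed long squares) and your substitutions do close up — the crucial facts being that a cross edge $(a,b)\in E(G^t)$ forces $d(a,\hat{v})\leq t$ and $d(\hat{v},b)\leq t$ — so the argument works; it is just longer than necessary. Two small points to fix: (i) as written, $K_1(t)=C_1(\Gamma^t)$ with $\Gamma^t=G_1^t\cup G_2^t$ omits the edges of $G$ itself, so $K_1(0)=0$ and $H_1(K(0))$ would vanish; you clearly intend $\Gamma^t$ to be $G\cup G^t$ with the cross edges removed, i.e.\ $(G_1\cup G_1^t)\cup(G_2\cup G_2^t)$, and the rest of your argument uses that reading. (ii) In the Mayer--Vietoris step, note that a $1$-chain pair $(z_1,z_2)$ with $\bd z_1=-\bd z_2$ forces $\bd z_1=\lambda\hat{v}$ with $\lambda=0$ because boundary coefficients sum to zero, which is what actually gives $H_1(K(t))\cong\zbhom{G_1}{t}\oplus\zbhom{G_2}{t}$ without any contribution from the overlap in degree $0$.
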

\begin{proof}
There are natural inclusion digraph maps $j_i : G_i \to G$ which are also contractions.
Less obviously, there are contraction digraph maps $f_i: G \to G_i$, where
\begin{equation}
 f_i(v) \defeq
 \begin{cases}
   v &\text{if }v\in V_i, \\
   \hat{v} &\text{otherwise}.
 \end{cases}
\end{equation}
Since these are all morphisms in $\Cont\WDgr$, we obtain induced morphisms $\zbdiginduc{ch}{j_i}$ and $\zbdiginduc{ch}{f_i}$.
We combine these morphisms to get two morphisms as follows
\begin{align}
 J : \zb{C}(G_1) \oplus \zb{C}(G_2) \to \zb{C}(G) , \quad
 & J(\gamma_1, \gamma_2) \defeq \zbdiginduc{ch}{j_1}\gamma_1 + \zbdiginduc{ch}{j_2}\gamma_2 ;
 \\
 F : \zb{C}(G) \to \zb{C}(G_1) \oplus \zb{C}(G_2) , \quad 
 & F(\gamma) \defeq (\zbdiginduc{ch}{f_1}\gamma , \zbdiginduc{ch}{f_2} \gamma).
\end{align}
Composing with homology in degree $1$, denote
$\inducedhom{J} \defeq \Funct{\Rposet}{H_1} \circ J$
and
$\inducedhom{F} \defeq \Funct{\Rposet}{H_1} \circ F$.
In the rest of the proof, we show that $\inducedhom{J}$ and $\inducedhom{F}$ are mutually inverse.

\begin{claim}
In degree $1$, $F\circ J = \id$ is the identity map on $\zb{C}(G_1)\oplus \zb{C}(G_2)$.
\end{claim}
\begin{poc}
First note that, $f_i \circ j_i$ is the identity digraph map $\id_i : G_i \to G_i$.
However, $f_{3-i}\circ j_i$ is the constant digraph map $c_{i}: G_i \to G_{3-i}$ which maps all of $G_i$ to the vertex $\hat{v}$.
Hence, in matrix form, we can write $F\circ J$ as
\begin{equation}
  F\circ J = 
  \begin{pmatrix}
    \zbdiginduc{ch}{\id_1} & \zbdiginduc{ch}{c_2} \\
    \zbdiginduc{ch}{c_1} & \zbdiginduc{ch}{\id_2}
  \end{pmatrix}.
\end{equation}
Since the constant maps $c_i$ send all vertices to a single vertex, $\zbdiginduc{ch}{c_i}$ maps every edge to $0$.
Hence, in degree $1$, $\zbdiginduc{ch}{c_i}$ is the zero map.
Whereas, in degree $1$, $\zbdiginduc{ch}{\id_i}$ is the identity map on $\zb{C}_1(G_i, t)$ at each $t$.
Therefore, on degree $1$, $F\circ J$ is the identity map on $\zb{C}(G_1) \oplus \zb{C}(G_2)$.
\end{poc}
Composing with homology in degree $1$, we see $\inducedhom{F}\circ\inducedhom{J}=\id$ is the identity map on $\zbpershom{G_1} \oplus \zbpershom{G_2}$.

\begin{claim}
In degree $1$, $\inducedhom{J} \circ \inducedhom{F} = \id$ is the identity map on $\zbpershom{G}$. 
\end{claim}
\begin{poc}
First, we compute $J\circ F$ in degree $1$.
Recall that $\zb{C}_1(G, t)$ is freely generated by the edges in $G \cup G^t$.
Given an edge $e=(a, b)\in E(G \cup G^t)$, if the vertices $a, b$ lie in a common $V_i$ then $(J \circ F)(e) = e$.
However, if $a, b$ are separated then $(J\circ F)(e) = a\hat{v} + \hat{v}b$.
So we see, at the level of chains, $J\circ F$ does not compose to the identity.

If $e=(a, b)\in E(G\cup G^t)$ but the endpoints are separated then we must have $e\in E(G^t)$.
Hence, there is a path $p: a\leadsto b$ in $G$ of length at most $t$.
Moreover, this path must traverse the vertex $\hat{v}$.
Hence, $p$ decomposes into two paths $a\leadsto \hat{v}$ and $\hat{v} \leadsto b$, each of length at most $t$.
Therefore, the directed triangle $a\hat{v} b$ is present in $G^t$ and is a generator of $\zb{C}_2(G, t)$.
Note that the boundary of $a\hat{v}b$ is
\begin{equation}
\bd_2(a\hat{v} b) = a\hat{v} + \hat{v} b - (ab) = (J\circ F)(e) - e.
\end{equation}

This discussion show that we can define a map $P: \zb{C}_1(G, t) \to \zb{C}_2(G, t)$ by
\begin{equation}
P(ab) \defeq 
\begin{cases}
  a\hat{v} b & \text{if }a, b\text{ are separated}, \\
  0 &\text{otherwise}.
\end{cases}
\end{equation}
Then, $J\circ F - \id = \bd_2 P$ as maps on $\zb{C}_1(G)$.
Composing with homology, we see $\inducedhom{J}\circ\inducedhom{F} = \id$ is the identity on $\zbpershom{G}$.
\end{poc}
Since $\inducedhom{J}$ and $\inducedhom{F}$ are mutually inverse, they induce isomorphisms of persistent vector spaces.
\end{proof}

\section{Stability analysis of GrPPH}\label{sec:stability}

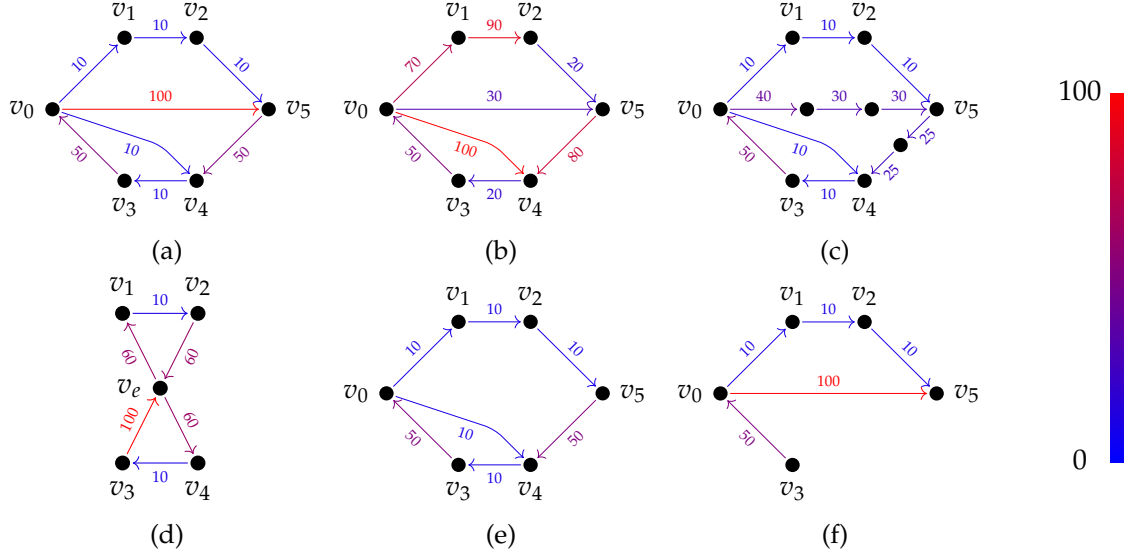
\begin{figure}[htbp]
  \begin{minipage}{.9\linewidth}
    \centering
    \begin{subfigure}[t]{0.3\textwidth}
      \centering
      \resizebox{\textwidth}{!}{%
        \begin{tikzpicture}[
  roundnode/.style={circle, fill=black, minimum size=4pt},
	squarenode/.style={fill=black, minimum size=4pt},
	inner sep=2pt,
	outer sep=1pt
  ]

  \node[] at (-0.5, 0) {};
  \node[] at (3.5, 0) {};

  \node (a) at (0, 0) [roundnode, label=left:$v_0$] {};
	\node (b) at (1, 1) [roundnode, label=above:$v_1$] {};
	\node (c) at (2, 1) [roundnode, label=above:$v_2$] {};
	\node (d) at (1, -1) [roundnode, label=below:$v_3$] {};
	\node (e) at (2, -1) [roundnode, label=below:$v_4$] {};
	\node (f) at (3, 0) [roundnode, label=right:$v_5$] {};

  \draw[red!10!blue, ->] (a)--(b) node[midway, above, sloped] {\tiny $10$};
  \draw[red!10!blue, ->] (b)--(c) node[midway, above, sloped] {\tiny $10$};
  \draw[red!10!blue, ->] (c)--(f) node[midway, above, sloped] {\tiny $10$};
  \draw[red!100!blue, ->] (a)--(f) node[midway, above, sloped] {\tiny $100$};
  \draw[red!50!blue, ->] (f)--(e) node[midway, below, sloped] {\tiny $50$};
  \draw[red!10!blue, ->] (e)--(d) node[midway, below, sloped] {\tiny $10$};
  \draw[red!50!blue, ->] (d)--(a) node[midway, below, sloped] {\tiny $50$};
  \draw[red!10!blue, ->, rounded corners] (a)--(1.5, -0.5)node[near end, below, sloped] {\tiny $10$}  -- (e) ;


\end{tikzpicture}
      }
      \caption{}
    \end{subfigure}
    \begin{subfigure}[t]{0.3\textwidth}
      \centering
      \resizebox{\textwidth}{!}{%
        \begin{tikzpicture}[
  roundnode/.style={circle, fill=black, minimum size=4pt},
	squarenode/.style={fill=black, minimum size=4pt},
	inner sep=2pt,
	outer sep=1pt
  ]
  \node[] at (-0.5, 0) {};
  \node[] at (3.5, 0) {};

  \node (a) at (0, 0) [roundnode, label=left:$v_0$] {};
	\node (b) at (1, 1) [roundnode, label=above:$v_1$] {};
	\node (c) at (2, 1) [roundnode, label=above:$v_2$] {};
	\node (d) at (1, -1) [roundnode, label=below:$v_3$] {};
	\node (e) at (2, -1) [roundnode, label=below:$v_4$] {};
	\node (f) at (3, 0) [roundnode, label=right:$v_5$] {};

  \draw[red!70!blue, ->] (a)--(b) node[midway, above, sloped] {\tiny $70$};
  \draw[red!90!blue, ->] (b)--(c) node[midway, above, sloped] {\tiny $90$};
  \draw[red!20!blue, ->] (c)--(f) node[midway, above, sloped] {\tiny $20$};
  \draw[red!30!blue, ->] (a)--(f) node[midway, above, sloped] {\tiny $30$};
  \draw[red!80!blue, ->] (f)--(e) node[midway, below, sloped] {\tiny $80$};
  \draw[red!20!blue, ->] (e)--(d) node[midway, below, sloped] {\tiny $20$};
  \draw[red!50!blue, ->] (d)--(a) node[midway, below, sloped] {\tiny $50$};
  \draw[red!100!blue, ->, rounded corners] (a)--(1.5, -0.5)node[near end, below, sloped] {\tiny $100$}  -- (e) ;
\end{tikzpicture}
      }
      \caption{}
    \end{subfigure}
    \begin{subfigure}[t]{0.3\textwidth}
      \centering
      \resizebox{\textwidth}{!}{%
        \begin{tikzpicture}[
  roundnode/.style={circle, fill=black, minimum size=4pt},
	squarenode/.style={fill=black, minimum size=4pt},
	inner sep=2pt,
	outer sep=1pt
  ]
  \node[] at (-0.5, 0) {};
  \node[] at (3.5, 0) {};

  \node (a) at (0, 0) [roundnode, label=left:$v_0$] {};
	\node (b) at (1, 1) [roundnode, label=above:$v_1$] {};
	\node (c) at (2, 1) [roundnode, label=above:$v_2$] {};
	\node (d) at (1, -1) [roundnode, label=below:$v_3$] {};
	\node (e) at (2, -1) [roundnode, label=below:$v_4$] {};
  \node (mid1) at (1.2, 0) [roundnode] {};
  \node (mid2) at (2.1, 0) [roundnode] {};
  \node (mid3) at (2.5, -0.5) [roundnode] {};
	\node (f) at (3, 0) [roundnode, label=right:$v_5$] {};

  \draw[red!10!blue, ->] (a)--(b) node[midway, above, sloped] {\tiny $10$};
  \draw[red!10!blue, ->] (b)--(c) node[midway, above, sloped] {\tiny $10$};
  \draw[red!10!blue, ->] (c)--(f) node[midway, above, sloped] {\tiny $10$};
  \draw[red!40!blue, ->] (a)--(mid1) node[midway, above, sloped] {\tiny $40$};
  \draw[red!30!blue, ->] (mid1)--(mid2) node[midway, above, sloped] {\tiny $30$};
  \draw[red!30!blue, ->] (mid2)--(f) node[midway, above, sloped, xshift=-2pt] {\tiny $30$};
  \draw[red!25!blue, ->] (f)--(mid3) node[midway, below, sloped] {\tiny $25$};
  \draw[red!25!blue, ->] (mid3)--(e) node[midway, below, sloped] {\tiny $25$};
  \draw[red!10!blue, ->] (e)--(d) node[midway, below, sloped] {\tiny $10$};
  \draw[red!50!blue, ->] (d)--(a) node[midway, below, sloped] {\tiny $50$};
  \draw[red!10!blue, ->, rounded corners] (a)--(1.5, -0.5) node[near end, below, sloped] {\tiny $10$} -- (e) ;

\end{tikzpicture}
      }
      \caption{}
    \end{subfigure}
    \begin{subfigure}[t]{0.3\textwidth}
      \centering
      \resizebox{\textwidth}{!}{%
        \begin{tikzpicture}[
  roundnode/.style={circle, fill=black, minimum size=4pt},
	squarenode/.style={fill=black, minimum size=4pt},
	inner sep=2pt,
	outer sep=1pt
  ]
  \node[] at (-0.5, 0) {};
  \node[] at (3.5, 0) {};

  \node (af) at (1.5, 0) [roundnode, label=left:$v_e$] {};
	\node (b) at (1, 1) [roundnode, label=above:$v_1$] {};
	\node (c) at (2, 1) [roundnode, label=above:$v_2$] {};
	\node (d) at (1, -1) [roundnode, label=below:$v_3$] {};
	\node (e) at (2, -1) [roundnode, label=below:$v_4$] {};

  \draw[red!60!blue, ->] (af)--(b) node[midway, below, sloped] {\tiny $60$};
  \draw[red!10!blue, ->] (b)--(c) node[midway, above, sloped] {\tiny $10$};
  \draw[red!60!blue, ->] (c)--(af) node[midway, below, sloped] {\tiny $60$};
  \draw[red!60!blue, ->] (af)--(e) node[midway, above, sloped] {\tiny $60$};
  \draw[red!10!blue, ->] (e)--(d) node[midway, below, sloped] {\tiny $10$};
  \draw[red!100!blue, ->] (d)--(af) node[midway, above, sloped] {\tiny $100$};


\end{tikzpicture}
      }
      \caption{}
    \end{subfigure}
    \begin{subfigure}[t]{0.3\textwidth}
      \centering
      \resizebox{\textwidth}{!}{%
        \begin{tikzpicture}[
  roundnode/.style={circle, fill=black, minimum size=4pt},
	squarenode/.style={fill=black, minimum size=4pt},
	inner sep=2pt,
	outer sep=1pt
  ]
  \node[] at (-0.5, 0) {};
  \node[] at (3.5, 0) {};

  \node (a) at (0, 0) [roundnode, label=left:$v_0$] {};
	\node (b) at (1, 1) [roundnode, label=above:$v_1$] {};
	\node (c) at (2, 1) [roundnode, label=above:$v_2$] {};
	\node (d) at (1, -1) [roundnode, label=below:$v_3$] {};
	\node (e) at (2, -1) [roundnode, label=below:$v_4$] {};
	\node (f) at (3, 0) [roundnode, label=right:$v_5$] {};

  \draw[red!10!blue, ->] (a)--(b) node[midway, above, sloped] {\tiny $10$};
  \draw[red!10!blue, ->] (b)--(c) node[midway, above, sloped] {\tiny $10$};
  \draw[red!10!blue, ->] (c)--(f) node[midway, above, sloped] {\tiny $10$};
  \draw[red!50!blue, ->] (f)--(e) node[midway, below, sloped] {\tiny $50$};
  \draw[red!10!blue, ->] (e)--(d) node[midway, below, sloped] {\tiny $10$};
  \draw[red!50!blue, ->] (d)--(a) node[midway, below, sloped] {\tiny $50$};
  \draw[red!10!blue, ->, rounded corners] (a)--(1.5, -0.5)node[near end, below, sloped] {\tiny $10$}  -- (e) ;
\end{tikzpicture}
      }
      \caption{}
    \end{subfigure}
    \begin{subfigure}[t]{0.3\textwidth}
      \centering
      \resizebox{\textwidth}{!}{%
        \begin{tikzpicture}[
  roundnode/.style={circle, fill=black, minimum size=4pt},
	squarenode/.style={fill=black, minimum size=4pt},
	inner sep=2pt,
	outer sep=1pt
  ]
  \node[] at (-0.5, 0) {};
  \node[] at (3.5, 0) {};

  \node (a) at (0, 0) [roundnode, label=left:$v_0$] {};
	\node (b) at (1, 1) [roundnode, label=above:$v_1$] {};
	\node (c) at (2, 1) [roundnode, label=above:$v_2$] {};
	\node (d) at (1, -1) [roundnode, label=below:$v_3$] {};
	\node (f) at (3, 0) [roundnode, label=right:$v_5$] {};

  \draw[red!10!blue, ->] (a)--(b) node[midway, above, sloped] {\tiny $10$};
  \draw[red!10!blue, ->] (b)--(c) node[midway, above, sloped] {\tiny $10$};
  \draw[red!10!blue, ->] (c)--(f) node[midway, above, sloped] {\tiny $10$};
  \draw[red!100!blue, ->] (a)--(f) node[midway, above, sloped] {\tiny $100$};
  \draw[red!50!blue, ->] (d)--(a) node[midway, below, sloped] {\tiny $50$};
\end{tikzpicture}
      }
      \caption{}
    \end{subfigure}
  \end{minipage}
  \begin{minipage}{.07\linewidth}
      \resizebox{\textwidth}{!}{%
        \begin{tikzpicture}[
  roundnode/.style={circle, fill=black, minimum size=4pt},
	squarenode/.style={fill=black, minimum size=4pt},
	inner sep=2pt,
	outer sep=1pt
  ]

  \node[shading=axis,
    rectangle,
    left color=red!100!blue, right color=red!0!blue, shading angle = 0,
    minimum width=5pt, minimum height=5cm, anchor=north] (my_gradient) at (-1, 2.5) {};
  \node[] at (-1.5, 2.5) {$100$};
  \node[] at (-1.5, -2.5) {$0$};

\end{tikzpicture}
      }
  \end{minipage}
	\caption{%
    Illustration of a number of operations for altering a weighted digraph.
    \textbf{(a)} The initial weighted digraph $G$.
    \textbf{(b)} Weight perturbation $\wdop{p}{w'}{G}$.
    \textbf{(c)} Edge subdivision $\wdop{s}{S}{G}$
    where the subdivision is $S:\{(v_0, v_5), (v_5, v_4)\}\to \Delta^2$ where $S((v_0, v_5))=(4/10, 3/10, 3/10)$ and $S((v_5, v_4))=(1/2, 1/2, 0)$.
    \textbf{(d)} Edge collapse $\wdop{c}{e}{G}$ where $e=(v_0, v_5)$.
    \textbf{(e)} Edge deletion $\wdop{d}{e}{G}$ where $e=(v_0, v_5)$.
    \textbf{(f)} Vertex deletion $\wdop{d}{v}{G}$ where $v=v_4$.
  }\label{fig:all_ops}
\end{figure}

It is important that \grpph\ is stable with respect to a reasonable noise model.
Typically this is shown by proving that $\zbpershommap$ is Lipschitz with respect to reasonable metrics on $\WDgr$ and the bottleneck distance on $\PersVec$.
A common choice of metric on graphs is the graph edit distance.
However, assigning costs to operations such as edge deletion or edge subdivision is somewhat arbitrary.

Therefore, in this section, we consider operations $\wdop{T}{\theta}{}:\Obj(\WDgr)\to\Obj(\WDgr)$ for editing weighted digraphs, where $T$ is the type of operation $\theta$ is the parameter of the operation.
For each type $T$, we derive bounds of the form
\begin{equation}
 d_B(\thedesc{G}, \thedesc{\wdop{T}{\theta}{G}}) \leq f(G, \theta)\label{eq:general_stab_bound}
\end{equation}
and then say \grpph\ is \mdf{stable} to operations of type $T$.

Often the operations only alter the graph at a subset of vertices or edges.
We say that \grpph\ is \mdf{locally stable} to the operation if we obtain a bound as in (\ref{eq:general_stab_bound}) and $f$ depends only on the neighbourhood graph around the altered vertices/edges and $\theta$.
If we can show that no such local $f$ exists (for general $G$ and $\theta$) then we say \grpph\ is \mdf{locally unstable}.
If, in general, $f$ depends on all of $G$ then we say \grpph\ is \mdf{non-locally stable} to the operation.
Occasionally, some operations do not change the descriptor and we can find an isomorphism $\zbpershom{G}\cong\zbpershom{\wdop{T}{\theta}{G}}$.

Figure~\ref{fig:all_ops} illustrates all of the operations we consider, the precise definitions of which of which are provided in the relevant subsection.
Table~\ref{tbl:stability_result_summary} summaries our findings.

\begin{table}[hbtp]
\begin{center}
\ars{1.2}
\tcs{0.7\tabcolsep}
\renewcommand\theadfont{\bfseries}
\newcommand*\partialthm{${}^\blacklozenge$}
\begin{tabular}{ l | c c c c }
  \thead{Operation} & \thead{Locally\\Stable} & \thead{Non-locally\\Stable} & \thead{Locally\\Unstable}  & \thead{Isomorphism} \\ \hline
  Weight perturbation & Theorem~\ref{thm:pertub_stability} & & & \\
  Edge subdivision & Theorem~\ref{thm:subdiv_stability} & & & \\
  Edge collapse & Theorem~\ref{thm:collapse_stability}\partialthm &
                & Theorem~\ref{thm:collapse_instability} & \\
  Edge deletion &
  Corollary~\ref{cor:local_edge_deletion}\partialthm &
  Theorem~\ref{thm:edge_deletion_stability} &
  Theorem~\ref{thm:edge_deletion_instability} &
    Theorem~\ref{cor:edge_deletion_disconnect}\partialthm
  \\
  Vertex deletion & & & Corollary~\ref{cor:vertex_deletion_instability} & Corollary~\ref{cor:isolated_vertex_deletion}\partialthm
\end{tabular}
\caption{
  Stability and instability theorems for $\zbpershommap$, under various digraph operations.
  $\blacklozenge$ Denotes a theorem which only applies to a subset of such operations.
}\label{tbl:stability_result_summary}
\end{center}
\end{table}

\subsection{Preliminaries}

In order to prove stability, we will have to build interleaving chain maps.
Often, these will be constructed via maps of the underlying vertex sets.

\begin{defin}
 For $\delta\geq 0$, a \mdf{$\delta$-shifting vertex map}, between two weighted digraphs $G$ and $H$, is a vertex map $f:V(G) \to V(H)$ such that $f$ induces digraph maps 
$G^t \to H^{t+\delta}$ and $G\cup G^t \to H \cup H^{t+\delta}$
 for all $t\geq 0$.
\end{defin}

\begin{rem}
By Lemma~\ref{lem:wdgrfd_is_contwdgr}, a $0$-shifting vertex map is precisely a contraction digraph map.
\end{rem}

\begin{lemma}\label{lem:shift_induce_comp}
Any $\delta$-shifting vertex map $f:V(G) \to V(H)$ induces a morphism
\begin{equation}
 \shiftinduced{ch}{f}{\delta}:\zb{C}(G) \to \shifted{\zb{C}(H)}{\delta}.
\end{equation}
Given another $\epsilon$-shifting vertex map $g:V(H) \to V(K)$,
\begin{equation}
 \shiftinduced{ch}{g\circ f}{\epsilon + \delta} = \shiftinduced{ch}{g}{\epsilon} \circ \shiftinduced{ch}{f}{\delta}.
\label{eq:shift_functoriality_comp}
\end{equation}
Moreover, if $f$ is $0$-shifting then $\shiftinduced{ch}{f}{0} = \zbdiginduc{ch}{f}$.
\end{lemma}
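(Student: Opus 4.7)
The plan is to mirror the proof of Theorem~\ref{thm:grd_functoriality}, but replace the equality of filtration values with a shift of $\delta$. Fix $t\in\R$. By hypothesis, $f$ induces digraph maps $f:F^t G \to F^{t+\delta} H$ and $f:G\cup F^t G \to H \cup F^{t+\delta} H$. Applying the functor $C$ and assembling the result into the diagram of Definition~\ref{defin:zb_chain_complex} produces a candidate chain map
\begin{equation}
\zb{C}_\bullet(G, t) \to \zb{C}_\bullet(H, t+\delta) = \shifted{\zb{C}(H)}{\delta}(t).
\end{equation}
The analogues of squares $\color{blue}\boxed{A}$, $\color{blue}\boxed{B}$, $\color{blue}\boxed{D}$ in the proof of Theorem~\ref{thm:grd_functoriality} commute because the vertical maps in each are components of one of the chain maps $\inducedch{f}:C(F^tG)\to C(F^{t+\delta}H)$ or $\inducedch{f}:C(G\cup F^t G)\to C(H\cup F^{t+\delta}H)$. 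The analogue of square $\color{blue}\boxed{C}$ commutes because at the level of vertices the inclusion $\iota$ and $f$ trivially commute, and applying $C$ preserves this.

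Next, I would check naturality in $t$, so that the above yields a morphism of persistent chain complexes. Given $s\leq t$, one must check commutativity of
\begin{figure}[H]
  \centering
  \begin{tikzcd}
    \zb{C}_\bullet(G, s) \arrow[r] \arrow[d, "\zb{C}(G)(s\leq t)"'] & \zb{C}_\bullet(H, s+\delta) \arrow[d, "\zb{C}(H)(s+\delta \leq t+\delta)"] \\
    \zb{C}_\bullet(G, t) \arrow[r] & \zb{C}_\bullet(H, t+\delta)
  \end{tikzcd}
\end{figure}\noindent
in every degree. All four arrows come from applying $C$ to digraph maps, and each of these four digraph maps is determined by a vertex map: on the horizontal side by $f$, on the vertical side by the relevant identity-restriction. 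These commute at the level of vertices, so functoriality of $C$ (together with the construction of $\zb{C}(G)(s\leq t)$ in Lemma~\ref{lem:zb_pers_complex}) yields the required commutativity. This defines the morphism $\shiftinduced{ch}{f}{\delta}:\zb{C}(G)\to\shifted{\zb{C}(H)}{\delta}$.

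For the composition identity, observe that the composite vertex map $g\circ f:V(G)\to V(K)$ is automatically $(\epsilon+\delta)$-shifting: composing the digraph maps $f:F^t G\to F^{t+\delta}H$ with $g:F^{t+\delta}H\to F^{t+\delta+\epsilon}K$ (and similarly on $G\cup F^tG$) gives the needed digraph maps. Each component of $\shiftinduced{ch}{g\circ f}{\epsilon+\delta}$ is then obtained by applying $C$ to a composite digraph map, so functoriality of $C$ gives equation~(\ref{eq:shift_functoriality_comp}) componentwise.

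Finally, when $\delta=0$, Lemma~\ref{lem:wdgrfd_is_contwdgr} identifies $0$-shifting vertex maps with contraction digraph maps, i.e.\ with morphisms in $\WDgrFd$. In this situation, the construction above reduces identically to the one used in Theorem~\ref{thm:grd_functoriality} to define $\zbdiginduc{ch}{f}$, so $\shiftinduced{ch}{f}{0}=\zbdiginduc{ch}{f}$. I do not anticipate any real obstacle beyond bookkeeping; the only delicate point is making sure that in each of the commuting diagrams the underlying vertex maps really do agree, after which functoriality of $C$ does all the work.
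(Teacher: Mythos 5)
Your proposal is correct and follows essentially the same route as the paper's own proof: apply $C$ to the digraph maps $F^tG\to F^{t+\delta}H$ and $G\cup F^tG\to H\cup F^{t+\delta}H$, observe that the analogues of squares $\color{blue}\boxed{A}$--$\color{blue}\boxed{D}$ commute for the same reasons as in Theorem~\ref{thm:grd_functoriality}, check naturality in $t$ via commutation with the identity vertex map, and deduce the composition law and the $\delta=0$ case from functoriality of $C$. No gaps.
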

\begin{proof}
Fix $t\geq 0$.
Since $f$ is a $\delta$-shifting vertex map, the functor $C$ induces chain maps $\inducedch{f}: C_\bullet(G^t) \to C_\bullet(H^{t+\delta})$ and $\inducedch{f}: C_\bullet(G \cup G^t) \to C_\bullet(H \cup H^{t+\delta})$.
These chain maps fit together into the following commutative diagram.
\begin{figure}[H]
\centering
\begin{tikzcd}[row sep=tiny]
  C_3(G^t) \arrow[r] \arrow[ddd, "\inducedch{f}"] \arrow[rddd, phantom, description, "\color{blue}\boxed{A}"] &
  C_2(G^t) \arrow[rr] \arrow[rd] \arrow[ddd, "\inducedch{f}"] \arrow[rddd, phantom, description, "\color{blue}\boxed{B}"]
  & & C_1(G \cup G^t) \arrow[r] \arrow[ddd, "\inducedch{f}"] \arrow[lddd, phantom, description, "\color{blue}\boxed{C}"]
  \arrow[rddd, phantom, description, "\color{blue}\boxed{D}"]
  & C_0(G\cup G^t) \arrow[ddd, "\inducedch{f}"]  \\
  & & C_1(G^t) \arrow[hook, ru] \arrow[d, "\inducedch{f}"] & & \\
  & & C_1(H^{t+\delta}) \arrow[hook, rd] & & \\
  C_3(H^{t+\delta}) \arrow[r] &
  C_2(H^{t+\delta}) \arrow[ru] \arrow[rr] 
  & {\phantom{C_1(H^{t+\delta})}} & C_1(H \cup H^{t+\delta}) \arrow[r] 
  & C_0(H \cup H^{t+\delta})
\end{tikzcd}
\end{figure}\noindent
Squares $\color{blue}\boxed{A}$, $\color{blue}\boxed{B}$ and $\color{blue}\boxed{D}$ commute because they are parts of the same chain map.
The inclusion digraph maps $G^t\hookrightarrow G \cup G^t$ and $H^{t+\delta} \hookrightarrow H\cup H^{t+\delta}$ are induced by the identity vertex maps.
Therefore $f$ trivially commutes with these inclusions and hence square $\color{blue}\boxed{C}$ commutes.
Hence we get a chain map  which we denote $\shiftinduced{ch}{f}{\delta}:\zb{C}_\bullet(G, t)\to\zb{C}_\bullet(H, t+\delta)$.
Note, in the case $\delta=0$, this is precisely the same construction as $\zbdiginduc{ch}{f}$, given in the proof of Theorem~\ref{thm:grd_functoriality}.

It remains to show that these chain maps (for each $t\geq 0$) constitute a morphism $\zb{C}(G) \to \shifted{\zb{C}(H)}{\delta}$.
Given $s\leq t$, the chain map $\zbinclinduc{ch}{s}{t}:\zb{C}_\bullet(G, s) \to \zb{C}_\bullet(G, t)$ is induced by the identity vertex map.
The identity vertex map clearly commutes with $f$.
Hence, in each degree $\inducedch{f}$ commutes with the $\inducedch{\filtincl{s}{t}}$.
Therefore, $\shiftinduced{ch}{f}{\delta}$ commutes with $\zbinclinduc{ch}{s}{t}$.

Simple composition of digraph maps confirms that $g\circ f$ is a $\delta_g + \delta_f$-shifting vertex map.
Note that in each degree the chain map is constructed via the functor $C_\bullet$.
Therefore, in each degree, we have $\inducedch{(g\circ f)} = \inducedch{g}\circ\inducedch{f}$ which yields equation~(\ref{eq:shift_functoriality_comp}).
\end{proof}

Recall that $\transmorph{\zb{C}(G)}{\epsilon}$ at each $t\geq 0$ is the chain map $\zbinclinduc{ch}{t}{t+\epsilon}:\zb{C}(G, t) \to \zb{C}(G, t+\epsilon)$.
Shifting this by $\delta$, $\shifted{\transmorph{\zb{C}(G)}{\epsilon}}{\delta}$ is given at $t\geq 0$ by the chain map $\zbinclinduc{ch}{t+\delta}{t+\delta+\epsilon}$.

\begin{lemma}\label{lem:shift_induce_cont}
A $\delta$-shifting vertex map is a $\delta'$-shifting vertex map for any $\delta' \geq \delta$ and
\begin{equation}
 \shiftinduced{ch}{f}{\delta'} = \shifted{\transmorph{\zb{C}(H)}{\delta' - \delta}}{\delta} \circ \shiftinduced{ch}{f}{\delta}.
\end{equation}
\end{lemma}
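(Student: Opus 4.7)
The plan is to reduce both claims to routine bookkeeping plus one previous lemma. For the first claim, I will observe that the shortest-path filtration is monotone: if $\delta' \geq \delta$ then $H^{t+\delta} \subseteq H^{t+\delta'}$ for all $t$, and similarly $H \cup H^{t+\delta} \subseteq H \cup H^{t+\delta'}$. Hence, given a $\delta$-shifting vertex map $f$, composing the digraph map $G^t \to H^{t+\delta}$ induced by $f$ with the (identity-induced) inclusion $H^{t+\delta} \hookrightarrow H^{t+\delta'}$ produces a digraph map $G^t \to H^{t+\delta'}$, and likewise for the union. Thus $f$ is $\delta'$-shifting.

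For the equality of chain maps, my approach is to factor $f$ through the identity vertex map on $V(H)$, viewed as a $(\delta'-\delta)$-shifting vertex map from $H$ to itself. Applying Lemma~\ref{lem:shift_induce_comp} (with $g = \id_{V(H)}$ and $\epsilon = \delta'-\delta$) to the trivial factorisation $f = \id_{V(H)} \circ f$ yields
\begin{equation}
 \shiftinduced{ch}{f}{\delta'} = \shifted{\shiftinduced{ch}{\id_{V(H)}}{\delta'-\delta}}{\delta} \circ \shiftinduced{ch}{f}{\delta},
\end{equation}
where the outer shift by $\delta$ is required so that the source of the left factor matches the target of $\shiftinduced{ch}{f}{\delta}$, namely $\shifted{\zb{C}(H)}{\delta}$.

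It remains to identify $\shiftinduced{ch}{\id_{V(H)}}{\delta'-\delta}$ with $\transmorph{\zb{C}(H)}{\delta'-\delta}$. Both are morphisms $\zb{C}(H) \to \shifted{\zb{C}(H)}{\delta'-\delta}$, and at each level $s \in \R$ both are given degree-wise by the chain maps that $C$ assigns to the two inclusion digraph maps $H^s \hookrightarrow H^{s+\delta'-\delta}$ and $H \cup H^s \hookrightarrow H \cup H^{s+\delta'-\delta}$; these are exactly the inclusion-induced maps appearing in the construction of $\zbinclinduc{ch}{s}{s+\delta'-\delta}$ in Lemma~\ref{lem:zb_pers_complex}, which is the definition of $\transmorph{\zb{C}(H)}{\delta'-\delta}$ at level $s$. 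Combining these two observations gives the claimed equality.

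There is essentially no obstacle here beyond careful tracking of which index is being shifted and where; the content of the lemma is that the shifted-chain-map construction behaves like a functor with respect to enlarging the shift, and the factorisation through $\id_{V(H)}$ is exactly what exhibits that functoriality.
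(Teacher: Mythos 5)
Your proof is correct and follows essentially the same route as the paper: both rest on factoring the induced map $G^t \to H^{t+\delta'}$ through $H^{t+\delta}$ via the inclusion and applying the functor $C$. The only cosmetic difference is that you package this factorisation as an application of Lemma~\ref{lem:shift_induce_comp} with $g=\id_{V(H)}$ and then identify $\shiftinduced{ch}{\id_{V(H)}}{\delta'-\delta}$ with the transition morphism, whereas the paper redraws the commutative triangle of digraph maps directly.
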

\begin{proof}
Clearly a $\delta$-shifting vertex map is a $\delta'$-shifting vertex map since $G^\delta \subseteq G^{\delta '}$.
Since $\filtincl{t+\delta}{t+\delta'}$ is induced by the inclusion vertex map, we get the following commutative diagrams of digraph maps
\begin{figure}[H]
  \centering
  \begin{tikzcd}
    G^t \arrow[d, "f"'] \arrow[rd, "f"] &
                                       &
    G\cup G^t \arrow[d, "f"'] \arrow[rd, "f"] &
    \\
    H^{t+\delta} \arrow[r, "\filtincl{t+\delta}{t+\delta'}"'] &
    H^{t+\delta'} &
    H \cup H^{t+\delta} \arrow[r, "\filtincl{t+\delta}{t+\delta'}"'] &
    H \cup H^{t+\delta'}
  \end{tikzcd}
\end{figure}\noindent
Applying the functor $C$ to these diagrams yields two commutative diagrams in $\Ch$.
The diagonal maps are components of $\shiftinduced{ch}{f}{\delta'}$, the vertical maps are components of $\shiftinduced{ch}{f}{\delta}$ and the horizontal maps are components of $\zbinclinduc{ch}{t+\delta}{t+\delta'}$.
Hence we have a commutative diagram
\begin{figure}[H]
  \centering
  \begin{tikzcd}[column sep=huge]
    \zb{C}_\bullet(G, t) \arrow[d, "\shiftinduced{ch}{f}{\delta}"']
    \arrow[rd, "\shiftinduced{ch}{f}{\delta'}"] & \\
    \zb{C}_\bullet(H, t+\delta) \arrow[r, "\zbinclinduc{ch}{t+\delta}{t+\delta'}"'] &
    \zb{C}_\bullet(H, t+\delta')
  \end{tikzcd}
\end{figure}\noindent
as required.
\end{proof}

\subsection{Weight perturbation}

The classical stability theorem of persistent homology (first shown in~\cite{cohen2005stability}) is that for two continuous tame function $f, g: X \to \R$ of a trianguable topological space $X$, denoting the persistence barcode of their sub-level set filtration by $\Barcode{}_f$ and $\Barcode{}_g$ respectively,
\begin{equation}
 d_B(\Barcode{}_f, \Barcode{}_g) \leq \norm{f - g}_\infty.
\end{equation}
In our setting, the closet analogy to changing the function is changing the weighting, as well as the corresponding effect that has on the shortest-path quasimetric.
We find that \grpph\ is stable to perturbations of the edge weights.
Moreover, the stability is local since it depends only of the weights of the perturbed edges.

\begin{defin}
 Given a weighted digraph $G=(V, E, w)$ and a new weight function $w': E(G) \to \Rpos$, we define $\mdf{\wdop{p}{w'}{G}}\defeq (V, E, w')$.
\end{defin}

\begin{theorem}\label{thm:pertub_stability}
Given a weighted digraph $G=(V, E, w)\in\WDgr$ and a new weighting function $w':E(G) \to \Rpos$,
let $d$ and $d'$ denote the shortest-path quasimetric on $G$ and $\wdop{p}{w'}{G}$ respectively.
Then
\begin{equation}
 d_B\big(\thedesc{G}, \thedesc{\wdop{p}{w'}{G}}\big) \leq
 \max_{i, j \in V} \abs{d(i, j) - d'(i, j)} \leq \sum_{e \in E} \abs{w(e) - w'(e)}.
\end{equation}
\end{theorem}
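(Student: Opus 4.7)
The plan is to invoke the Isometry Theorem (Theorem~\ref{thm:isometry}), which reduces the first inequality to exhibiting a $\delta$-interleaving between $\zbpershom{G}$ and $\zbpershom{G'}$, where $G'\defeq\wdop{p}{w'}{G}$ and $\delta\defeq\max_{i,j\in V}\abs{d(i,j)-d'(i,j)}$. Since the perturbation only changes weights, $V(G)=V(G')=V$ and $E(G)=E(G')$, which is the feature I will exploit throughout.

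The central structural observation is that the identity vertex map $\id_V$, considered in both directions, is a $\delta$-shifting vertex map. Indeed, if $(i,j)\in E(G^t)$ then $d(i,j)\leq t$, so by the definition of $\delta$, $d'(i,j)\leq d(i,j)+\delta\leq t+\delta$; hence $(i,j)\in E((G')^{t+\delta})$, and $\id_V$ induces the required digraph map $G^t\to (G')^{t+\delta}$. Combined with $E(G)=E(G')$, this also gives a digraph map $G\cup G^t\to G'\cup (G')^{t+\delta}$. The symmetric argument, with the roles of $d$ and $d'$ swapped, handles $V(G')\to V(G)$.

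By Lemma~\ref{lem:shift_induce_comp}, these $\delta$-shifting maps yield chain morphisms $\phi:\zb{C}(G)\to\shifted{\zb{C}(G')}{\delta}$ and $\psi:\zb{C}(G')\to\shifted{\zb{C}(G)}{\delta}$. To verify the interleaving identity $\shifted{\psi}{\delta}\circ\phi=\transmorph{\zb{C}(G)}{2\delta}$, I would apply the composition formula in Lemma~\ref{lem:shift_induce_comp} to identify $\shifted{\psi}{\delta}\circ\phi$ with the morphism induced by $\id_V\circ\id_V=\id_V$ viewed as a $2\delta$-shifting map. Since $\id_V:V(G)\to V(G)$ is trivially $0$-shifting and the last clause of Lemma~\ref{lem:shift_induce_comp} gives $\shiftinduced{ch}{\id_V}{0}=\id_{\zb{C}(G)}$, Lemma~\ref{lem:shift_induce_cont} then identifies $\shiftinduced{ch}{\id_V}{2\delta}$ with $\transmorph{\zb{C}(G)}{2\delta}$, as required. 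The symmetric calculation handles $\shifted{\phi}{\delta}\circ\psi$. Taking $H_1$ produces a $\delta$-interleaving of $\zbpershom{G}$ and $\zbpershom{G'}$, and Theorem~\ref{thm:isometry} yields the first inequality.

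For the second inequality, let $(e_1,\dots,e_m)$ be any shortest $i\leadsto j$ path in $G$; then $d'(i,j)\leq\sum_{k}w'(e_k)\leq\sum_{k}w(e_k)+\sum_{e\in E}\abs{w(e)-w'(e)}=d(i,j)+\sum_{e\in E}\abs{w(e)-w'(e)}$, and swapping the roles of $w$ and $w'$ gives the claimed uniform bound on $\abs{d(i,j)-d'(i,j)}$. Since the argument works verbatim when the two distances are both infinite (no pair $(i,j)$ ever enters either filtration), the potential presence of infinite distances is not an obstacle. The only real subtlety is the bookkeeping of shifts and compositions in the interleaving; once the $\delta$-shifting interpretation of $\id_V$ is in place, the rest is a mechanical application of the preliminary lemmas.
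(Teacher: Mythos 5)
Your proposal is correct and follows essentially the same route as the paper's proof: both establish that the identity vertex map is a $\delta$-shifting map in each direction (using exactly the filtration comparison $d(i,j)\leq t \implies d'(i,j)\leq t+\delta$), invoke the shifting lemmas to build a $\delta$-interleaving, conclude via the Isometry Theorem, and derive the second inequality by comparing path lengths edge by edge. Your treatment is slightly more explicit than the paper's in verifying the interleaving identities via the composition formula and Lemma~\ref{lem:shift_induce_cont}, and in noting that infinite distances pose no obstacle since $E(G)=E(G')$, but the argument is the same.
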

\begin{proof}
 For brevity, denote $G'\defeq \wdop{p}{w'}{G}$ and $\delta\defeq \max_{i, j\in V}\abs{d(i, j) - d'(i, j)}$.
 First note that for any $(i, j)$ and any path $p\in\pathsfrom{i}{j}$
 \begin{equation}
     \abs{\sum_{e \in p}w(e) - \sum_{e\in p} w'(e)} \leq \sum_{e \in p}\abs{w(e) - w'(e)} \leq \sum_{e \in E}\abs{w(e) - w'(e)}\eqdef W_1.
 \end{equation}
 So the cost of $p$ differs by at most $W_1$.
 Minimising over $\pathsfrom{i}{j}$, we see $\abs{d(i, j) - d'(i, j)} \leq W_1$.

 Since $V(G) = V(G')$, there are identity vertex maps $i_1:V(G) \to V(G')$ and $i_2:V(G')\to V(G)$.
 Now $i_1$ defines a digraph map $G\to G'$ since $G=G'$ as digraphs.
 Moreover, given $(i, j)\in E(G^t)$, then $d(i, j) \leq t$ so  $d'(i, j) \leq t + \delta$ and hence $(i, j) \in E((G')^{t+\delta})$.
 This shows $i_1$ defines a digraph map $G^t \to (G')^{t+\delta}$ for all $t \geq 0$.
 Therefore $i_1$ (and likewise $i_2$) is a $\delta$-shifting vertex map.
 Hence, we obtain morphisms
 \begin{equation}
  \shiftinduced{ch}{i_1}{\delta} : \zb{C}(G) \to \shifted{\zb{C}(G')}{\delta}
  \quad\text{ and }\quad
  \shiftinduced{ch}{i_2}{\delta} : \zb{C}(G') \to \shifted{\zb{C}(G)}{\delta}.
 \end{equation}
 Moreover, since composing $i_1$ and $i_2$ in either ordered yields the identity vertex map, these morphisms constitute a $\delta$-interleaving.
 The first inequality then follows by the isometry theorem.
\end{proof}

\begin{rem}
Continuing the analogy to the classical stability theorem, note that the sharper bound obtained by Theorem~\ref{thm:pertub_stability} is $\norm{d-d'}_\infty$ while the weaker bound is $\norm{w-w'}_1$.
\end{rem}

\subsection{Edge subdivision}

Weighted digraphs arising in applications are subject not only to numerical noise (i.e. weight perturbation) but also \emph{structural noise}.
For the remainder of this section, we investigate the effects of various structural perturbations.

First, we consider edge subdivision, in which one or mare parent edge is split into multiple daughter edges with the weight distributed amongst them.
Since we are interpreting edge weights as corresponding to a length, it is natural to require that the sum of the weights of the daughter edges equals the weight of the parent edge.
In order to formalise how the weight of an edge is subdivided amongst its daughters, we use maps into the standard $d$-simplex, where $d$ is the number of daughter edges.

\begin{defin}
Given a weighted digraph $G=(V, E, w)$ 
\begin{enumerate}[label=(\alph*)]
  \item A \mdf{subdivision} $S$ of $G$ is a choice of edges $F \subseteq E$, along with a map $S:F\to\sqcup_{d\in\N} \Delta^d$ from edges in $F$ to the formal disjoint union of all standard $d$-simplices.
\end{enumerate}
\end{defin}

Intuitively, a subdivision gives us a recipe for subdividing the edges of $F$ where $S(e)_i$ describes the fraction of $w(e)$ which the $i^{th}$ daughter edge of $e$ should receive.

\begin{notation}
Given a subdivision $S:F\to\sqcup_{d\in\N}\Delta^d$,
\begin{enumerate}[label=(\alph*)]
  \item Let $\mdf{d(e)}$ denote the simplex dimension such that $S(e) \in \Delta^{d(e)}$.
  \item Let $\mdf{CS(e)}$ denote the $d(e)$-tuple of cumulative sums, i.e.\ $CS(e)_i \defeq \sum_{j=1}^i S(e)_j$.
\end{enumerate}
\end{notation}

\begin{defin}
Given a subdivision $S:F\to\sqcup_{d\in\N}\Delta^d$, define
\begin{align*}
  V_S &\defeq V_{old} \sqcup V_{new} \defeq V \sqcup \bigsqcup_{e \in F} \{ v_{e, 1}, \dots, v_{e, d(e) -1}\}  \\
  E_S &\defeq E_{old} \sqcup E_{new}\defeq (E \setminus F) \sqcup \bigsqcup_{e\in F} \{ \tau_{e, 1}, \dots, \tau_{e, d(e)}\} \\
  w_S(\tau) & \defeq
  \begin{cases}
    w(\tau) & \text{if }\tau \in E_{old} \\
    S(e)_i \cdot w(e) & \text{if }\tau = \tau_{e, i}
  \end{cases}
\end{align*}
where $\tau_{e, i} = (v_{e, i-1}, v_{e, i})$ and we denote $v_{e, 0} \defeq \st(e)$ and $v_{e, d(e)} \defeq \fn(e)$.
We then define $\mdf{\wdop{s}{S}{G}}\defeq (V_S, E_S, w_S)$.
\end{defin}

We show that the descriptor is stable to arbitrary subdivisions of arbitrary subsets of edges.
Moreover, this stability is local since the bound depends only on the weight of subdivided edges.

\begin{theorem}\label{thm:subdiv_stability}
Given a weighted digraph $G=(V, E, w)\in\WDgr$ and any subdivision $S:F\to\sqcup_{d\in\N}\Delta^d$,
\begin{equation}
d_B\big(\thedesc{G} , \thedesc{\wdop{s}{S}{G}} \big) \leq \max_{e\in F}w(e).
\end{equation}
\end{theorem}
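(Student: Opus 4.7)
The plan is to construct a $\delta$-interleaving between $\zbpershom{G}$ and $\zbpershom{\wdop{s}{S}{G}}$ with $\delta \defeq \max_{e \in F} w(e)$, and then invoke the isometry theorem (Theorem~\ref{thm:isometry}). Write $G' \defeq \wdop{s}{S}{G}$. I build the interleaving from two vertex maps: the inclusion $f : V(G) = V_{old} \hookrightarrow V(G')$, and the collapse $g : V(G') \to V(G)$ defined as the identity on $V_{old}$ with $g(v_{e, k}) \defeq \fn(e)$ for each internal subdivision vertex $v_{e, k}$.

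First I would verify that $f$ and $g$ are $\delta$-shifting vertex maps. For $f$, every $G$-path lifts to a $G'$-path of the same length, so $d_{G'}(i, j) \leq d_G(i, j)$ on $V_{old}$, while any edge $e \in F \subseteq E(G)$ missing from $E(G')$ still satisfies $d_{G'}(\st(e), \fn(e)) \leq w(e) \leq \delta$. For $g$, the key estimate is $d_G(g(i), g(j)) \leq d_{G'}(i, j) + \delta$, which I would check by a short case analysis depending on whether $i, j$ lie in $V_{old}$ or are internal; the only excess arises from an incomplete traversal of a daughter path at an endpoint of the shortest $G'$-path, costing at most $w(e) \leq \delta$. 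Moreover $g$ already induces a digraph map $G' \to G$, since each daughter edge maps either to the original edge $e$ or to a self-loop. Applying Lemma~\ref{lem:shift_induce_comp} then produces chain maps $\shiftinduced{ch}{f}{\delta}$ and $\shiftinduced{ch}{g}{\delta}$.

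One interleaving identity is immediate from $g \circ f = \id_{V(G)}$: combining Lemma~\ref{lem:shift_induce_comp} with Lemma~\ref{lem:shift_induce_cont} gives $\shifted{\shiftinduced{ch}{g}{\delta}}{\delta} \circ \shiftinduced{ch}{f}{\delta} = \transmorph{\zb{C}(G)}{2\delta}$ already at the chain level, so applying $H_1$ recovers $\transmorph{\zbpershom{G}}{2\delta}$. The other direction is more delicate: set $h \defeq f \circ g : V(G') \to V(G')$, noting via Lemma~\ref{lem:shift_induce_comp} that $\shifted{\shiftinduced{ch}{f}{\delta}}{\delta} \circ \shiftinduced{ch}{g}{\delta} = \shiftinduced{ch}{h}{2\delta}$. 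I must show this chain map induces the same degree-$1$ homology map as the transition $\zbinclinduc{ch}{t}{t + 2\delta}$, which I would achieve by exhibiting a chain homotopy $P_\bullet$ in degrees $0$ and $1$ between them. Set $P_0(v) \defeq (v, \fn(e))$ when $v = v_{e, k}$ is internal and $P_0(v) \defeq 0$ otherwise; the edge $(v, \fn(e))$ lies in $(G')^\delta$ since the remainder of the daughter path has length $(1 - CS(e)_k)\, w(e) \leq \delta$, and a direct check yields $\bd_1 P_0 = h - \id$.

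For $P_1 : \zb{C}_1(G', t) \to \zb{C}_2(G', t + 2\delta)$, I would define its value on each basis edge $(i, j)$ by casework on whether its endpoints are original or internal, producing either a directed triangle through $\fn(e)$ or a long square of the form $i\,\fn(e)\,\fn(e') - i\,j\,\fn(e')$. In every case, a routine boundary computation yields $\bd_2 P_1(i, j) = h(i, j) - (i, j) + P_0(i) - P_0(j)$, and the auxiliary edges appearing in these 2-paths (such as $(v, \fn(e))$, $(\fn(e), \fn(e'))$, $(i, \fn(e'))$) all satisfy the distance bound $\leq t + 2\delta$ in $G'$, because entering or leaving a daughter path costs at most $\delta$. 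The main obstacle is this casework together with the attendant verification that each 2-path is $\bd$-invariant in $\zb{C}_2(G', t + 2\delta)$. Once $P_\bullet$ is in place, $\shiftinduced{ch}{h}{2\delta}$ and $\zbinclinduc{ch}{t}{t + 2\delta}$ descend to the same map on $H_1$, completing the $\delta$-interleaving, and the isometry theorem yields $d_B \leq \delta$.
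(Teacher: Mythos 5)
Your proposal is correct and shares the paper's overall skeleton -- the inclusion $f$, a collapsing vertex map $g$, the identity $g\circ f=\id_{V(G)}$ for the easy interleaving relation, and the isometry theorem at the end -- but it handles the hard relation by a genuinely different mechanism. The paper never builds a chain homotopy on all of $\zb{C}_1(G_S,t)$: it invokes Proposition~\ref{prop:cycles_born_at_0} to reduce the verification to a basis of simple undirected circuits in $\zbcycles{G_S}{0}$, notes that internal subdivision vertices have in- and out-degree $1$ so each such circuit traverses every daughter chain entirely or not at all, computes that the composite sends $\tau_{e,1}+\dots+\tau_{e,d(e)}$ to the single edge $e$, and then applies Lemma~\ref{lem:paths_homologous} to identify the two modulo boundaries. (The paper's $g$ also sends $v_{e,i}$ to the \emph{nearer} endpoint of $e$ rather than always to $\fn(e)$; both choices give the same bound, and yours is marginally cleaner since only the terminal endpoint of a shortest path contributes excess.) Your route -- an explicit prism $P_0,P_1$ witnessing that $\shiftinduced{ch}{f\circ g}{2\delta}$ and $\zbinclinduc{ch}{t}{t+2\delta}$ agree on $H_1$ -- is self-contained at the chain level and avoids the reduction to circuits, at the cost of the casework you flag: you must confirm that each proposed $2$-chain is genuinely $\partial$-invariant in $\zb{C}_2(G_S,t+2\delta)$, including degenerate coincidences such as $\fn(e')=i$, where the triangle collapses to a double edge $(i\,j\,i)$ and the regular boundary discards the non-regular middle face, and you must set $P_1=0$ on edges with both endpoints in $V_{old}$, since such an edge of $G_S$ may have weight exceeding $t+2\delta$ and therefore cannot appear in a $2$-path of $(G_S)^{t+2\delta}$. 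I checked the main and degenerate cases and they close up, so the argument is sound; the paper's reduction simply trades this bookkeeping for a reuse of its earlier structural lemmas.
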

\begin{proof}
First, we setup some notation.
Denote $G_S\defeq \wdop{s}{S}{G} = (V_{old} \sqcup V_{new}, E_{old}\sqcup E_{new}, w_S)$.
We let $d$ and $d_S$ denote the shortest-path quasimetric on $G$ and $G_S$ respectively.
Finally, let $W\defeq \max_{e \in F} w(G)(e)$.

Our strategy is to employ the isometry theorem (Theorem~\ref{thm:isometry})
Define the following vertex maps
\begin{align}
    f: V(G) \to V(G_S) \quad & v \mapsto v; \\
    g: V(G_S) \to V(G) \quad & v \mapsto
    \begin{cases}
        v &\text{if } v\in V_{old},\\
        \st(e) & \text{if }v_{e, i} \in V_{new} \text{ and }CS(e)_i < 1/2, \\
        \fn(e) & \text{if }v_{e, i} \in V_{new} \text{ and }CS(e)_i \geq 1/2,
    \end{cases}
\end{align}
which are visualized in Figure~\ref{fig:subdiv_vertex_map}.

\begin{figure}[htb]
  \centering
  \begin{tikzpicture}[
  roundnode/.style={circle, fill=black, minimum size=4pt},
	squarenode/.style={fill=black, minimum size=4pt},
	inner sep=2pt,
	outer sep=1pt
  ]

  \node(a1) at (0, 2) [roundnode, label=above:$a$] {};
  \node(b1) at (4, 2) [roundnode, label=above:$b$] {};

  \draw[->] (a1)--(b1);

  \node (a) at (0, 0) [roundnode, label=below:$a$] {};
	\node (b) at (1, 0) [roundnode, label=below:$v_{e, 1}$] {};
	\node (c) at (2, 0) [roundnode, label=below:$v_{e, 2}$] {};
	\node (d) at (3, 0) [roundnode, label=below:$v_{e, 3}$] {};
	\node (e) at (4, 0) [roundnode, label=below:$b$] {};

  \draw[->] (a)--(b);
  \draw[->] (b)--(c);
  \draw[->] (c)--(d);
  \draw[->] (d)--(e);

  \path[|->, blue] (a1) edge [bend right] (a);
  \path[|->, blue] (b1) edge [bend left] node[anchor=west, right=3pt] {f} (e);

  \path[|->, red] (a) edge (a1);
  \path[|->, red] (b) edge (a1);
  \path[|->, red] (c) edge node[anchor=east, left=7pt] {g} (b1);
  \path[|->, red] (d) edge (b1);
  \path[|->, red] (e) edge (b1);

  \node[] at (-1.5, 2) {$G:$};
  \node[] at (-1.5, 0) {$G_S:$};
\end{tikzpicture}
  \caption{Visualising the vertex maps $f$ and $g$ under the subdivision $S(e) = (1/4, 1/4, 1/4, 1/4)$ where $e=(a, b)$.}
  \label{fig:subdiv_vertex_map}
\end{figure}
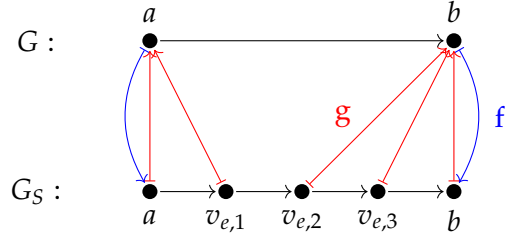

\begin{claim}\label{claim:paths_conserved}
For vertices $i, j \in V_{old}$, there is a path $i\leadsto j$ in $G$ of length $t$ if and only if there is one in $G_S$.
\end{claim}
\begin{poc}
This is clear to see, since the weight of an edge is shared amongst its daughter edges in the subdivision.
\end{poc}

\begin{claim}
    For any $t\geq 0$, $f$ defines a digraph map $G^t \to G_S^{t+W}$ and $G\cup G^t \to G_S \cup G_S^{t+W}$.
\end{claim}
\begin{poc}
Since $f$ is just the inclusion vertex map, Claim~\ref{claim:paths_conserved} shows that $f$ defines a digraph map $G^t \to G_S^t$ and so certainly $G^t \to G_S^{t+W}$.
For the second map, pick an edge $e \in E$ and note $f(e) = e$.
If $e\not\in F$ then it is undivided and $e \in E(G_s)$.
Otherwise $e\in F$ and $e\not\in E(G_s)$, however we note $d_S(\st(e), \fn(e)) \leq w(e) \leq W$.
Therefore, for any $t\geq 0$, $e \in G_s^t+W$ and hence $f$ defines a digraph map $G\cup G^t \to G_S \cup G_S^{t+W}$.
\end{poc}

\begin{claim}
  For any $t\geq 0$, $g$ defines a digraph map $G_S^t \to G^{t+W}$  and $G_S \cup G_S^t \to G \cup G^{t+W}$.
\end{claim}
\begin{poc}
Given an edge $\tau= (a, b)\in E(G_S^t)$ there is a path $p:a\leadsto b$ in $G_S$ of length at most $t$.
We may assume that $g(a)\neq g(b)$, else there is nothing to check for this edge.
If $a=v_{e, i}$ is a new vertex from subdividing an edge $e\in F$ then $g(a)$ is either $\st(e)$ or $\fn(e)$.
Either by adding or removing relevant daughter edges of $e$ to/from the start of $p$, we obtain a new path $g(a)\leadsto b$ in $G_S$.
By construction this, will add at most $w(e)/2 \leq W/2$ to the length of $p$.
Likewise we can alter the end of $p$ to obtain a path $g(a) \leadsto g(b)$ in $G_S$ of length at most $t+W$.
By Claim~\ref{claim:paths_conserved}, we see $g(\tau)\in E(G^{t+W})$.

Finally, given an edge $\tau\in G_s$ there are two cases.
If $\tau \in E_{old}$ then the edge is preserved under $g$.
Else $\tau = \tau_{e, i} \in E_{new}$ in which case either $\tau$ is collapsed to one of the endpoints of $e$, or it is mapped to $e$.
Hence $g$ is digraph map $G_S \to G$ and the final requirement follows.
\end{poc}

Therefore, $f$ and $g$ are $W$-shifting vertex maps and induce morphisms
\begin{equation}
\shiftinduced{ch}{f}{W}:\zb{C}(G)\to\shifted{\zb{C}(G_S)}{W}
\quad \text{ and }\quad
\shiftinduced{ch}{g}{W}:\zb{C}(G_S)\to\shifted{\zb{C}(G)}{W}.
\end{equation}
Now note that, as vertex maps $g\circ f = \id_{V(G)}$.
Therefore, at the level of homology we have
$
  \shiftinduced{hom}{g}{W} \circ \shiftinduced{hom}{f}{W} 
  = \transmorph{\zbpershom{G}}{2W}.
$

Composing vertex maps in the opposite order, we do not obtain the identity.
Moreover, $\shiftinduced{ch}{f}{W} \circ \shiftinduced{ch}{g}{W} \neq \transmorph{\zb{C}(G_s)}{2W}$.
However, we will show that, at every $t\geq 0$, the chain maps on either side of this inequality differ by a boundary.
First, choose a basis $\{ c_1, \dots, c_k \}$ of simple undirected circuits for $\zbcycles{G_S}{0}$.
By Lemma~\ref{prop:cycles_born_at_0},
it suffices to prove that for each $c_i$,
\begin{equation}\label{eq:interleaving_equality}
    \shiftinduced{ch}{f}{W} \shiftinduced{ch}{g}{W} \zbinclinduc{ch}{0}{t} c_i = \zbinclinduc{ch}{0}{t+2W} c_i \pmod{B} 
\end{equation}
where $B \defeq \zbbdrs{G_S}{t+2W}$.
Since simple undirected circuits are non-backtracking and the vertices $\{v_{e, 1}, \dots, v_{e, d(e)-1}\}$ have in-degree $1$ and out-degree $1$, we can write
\begin{equation}
  c_i = \sum_{e \in F}\alpha_e (\tau_{e, 1} + \dots + \tau_{e, d(e)}) +
  \sum_{e \in E\setminus F}\alpha_e e
\end{equation}
for some $\alpha_e\in\{0, \pm 1\}$.
Therefore, it suffices to prove the following two claims.

\begin{claim}
  For each $e \in F$, $\shiftinduced{ch}{f}{W} \shiftinduced{ch}{g}{W} (\tau_{e, 1} + \dots + \tau_{e, d(e)}) = e$ and for each $e \in E\setminus F$, $\shiftinduced{ch}{f}{W} \shiftinduced{ch}{g}{W} e = e$.
\end{claim}
\begin{poc}
First note that $f$ fixes the endpoints of every edge $e\in E$ and hence $\shiftinduced{ch}{f}{W} e = e$.
Now, choose arbitrary $e \in F$.
There exists some $M$ such that $g(v_{e, i}) = \st(e)$ for all $i< M$ and $g(v_{e, i}) = \fn(e)$ for all $i\geq M$.
Hence $\shiftinduced{ch}{g}{W} \tau_{e, i} = 0$ for all $i < M$ and all $i > M$ but $\shiftinduced{ch}{g}{W} \tau_{e, M} = e$.
Hence
\begin{equation}
  \shiftinduced{ch}{f}{W} \shiftinduced{ch}{g}{W} (\tau_{e, 1} + \dots + \tau_{e, d(e)}) = \shiftinduced{ch}{f}{W} e = e.
\end{equation}
Finally for $e \in E \setminus F$, $g$ fixes the endpoints of $e$ and hence $\shiftinduced{ch}{g}{W} e = e$.
\end{poc}

\begin{claim}
  For each $e \in F$, $\tau_{e,1} + \dots + \tau_{e, d(e)} = e \pmod{B}$.
\end{claim}
\begin{poc}
Choose any $e \in F$.
The path $(\tau_{e, 1}, \dots, \tau_{e, d(e)})$ is a path of length $W$ in $G_S$, between the endpoints of $e$.
The claim now follows by Lemma~\ref{lem:paths_homologous}.
\end{poc}

Hence 
$\shiftinduced{ch}{f}{W} \shiftinduced{ch}{g}{W} \zbinclinduc{ch}{0}{t} c_i = \zbinclinduc{ch}{0}{t+2W} c_i \pmod{B}$
for each $c_i$ and each $t\geq 0$,
so, at the level of homology,
$\shiftinduced{hom}{f}{W} \circ \shiftinduced{hom}{g}{W} = \transmorph{\zbpershom{G_S}}{2W}$.
Therefore $\shiftinduced{hom}{f}{W}$ and $\shiftinduced{hom}{g}{W}$ constitute a $W$-interleaving
and the bound on bottleneck distance follows by the isometry theorem.
\end{proof}

\begin{rem}
Since subdividing an edge does not effect circuit rank of $\underlying{G}$, the number of features does not change upon subdivision (by Corollary~\ref{cor:decreasing_curves}).
\end{rem}

\begin{defin}\label{defin:ims}
  Fix a weighted digraph $G=(V, E, w)\in\WDgr$.
  \begin{enumerate}[label=(\alph*)]
    \item The \mdf{medial subdivision}, $\mdf{S_{med}(G)}:E(G)\to\Delta^2$, is given by $S(e) = (1/2, 1/2)$ for every $e \in E(G)$.
    \item The \mdf{$n^{th}$ iterated medial subdivision of $G$}, \mdf{$\IMS{G}{n}$}, is defined iteratively as follows.
      Firstly, $\IMS{G}{0}\defeq G$ then for each $n$, we define $\IMS{G}{n} \defeq \wdop{s}{S}{\IMS{G}{n-1}}$ where $S=S_{med}(\IMS{G}{n-1})$.
  \end{enumerate}
\end{defin}

\begin{cor}\label{cor:gen_ims_convergence}
  Given a weighted digraph $G\in\WDgr$, the sequence of barcodes
  $(\thedesc{\IMS{G}{n}})_{n\in\N}$ converges under the bottleneck distance.
\end{cor}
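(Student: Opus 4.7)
The plan is to show the sequence is Cauchy in bottleneck distance and then appeal to completeness of the space of barcodes. First, observe that the medial subdivision halves every edge weight, so by a straightforward induction
\begin{equation}
\max_{e \in E(\IMS{G}{n})} w_{\IMS{G}{n}}(e) = \frac{M_0}{2^n}, \quad \text{where } M_0 \defeq \max_{e \in E(G)} w(e).
\end{equation}
Since $\IMS{G}{n+1} = \wdop{s}{S_{med}(\IMS{G}{n})}{\IMS{G}{n}}$, Theorem~\ref{thm:subdiv_stability} (applied with $F = E(\IMS{G}{n})$) yields the key estimate
\begin{equation}
d_B\bigl(\thedesc{\IMS{G}{n}},\, \thedesc{\IMS{G}{n+1}}\bigr) \leq \frac{M_0}{2^n}.
\end{equation}

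Next I would invoke the triangle inequality for the bottleneck distance: for any $m > n \geq 0$,
\begin{equation}
d_B\bigl(\thedesc{\IMS{G}{n}},\, \thedesc{\IMS{G}{m}}\bigr) \leq \sum_{k=n}^{m-1} \frac{M_0}{2^k} \leq \frac{M_0}{2^{n-1}}.
\end{equation}
The right-hand side tends to $0$ as $n \to \infty$, so $(\thedesc{\IMS{G}{n}})_{n\in\N}$ is Cauchy with respect to $d_B$.

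The final step is convergence, for which I would cite that the space of barcodes (equivalently, persistence diagrams) is complete in the bottleneck distance; this is a standard result in the persistence literature. The main thing to be careful about is that all the barcodes $\thedesc{\IMS{G}{n}}$ have the same finite cardinality (namely the circuit rank of $\underlying{G}$, which is invariant under edge subdivision, by Corollary~\ref{cor:decreasing_curves}), so one does not even need the full strength of completeness for general persistence diagrams: working in the subspace of barcodes with a fixed finite number of intervals, convergence in bottleneck distance reduces to componentwise convergence of the endpoints in $\R \cup \{\infty\}$ after a suitable matching, which follows from the Cauchy property above. I do not expect any serious obstacle; the substantive content is entirely contained in Theorem~\ref{thm:subdiv_stability}, and this corollary is just the observation that the stability bound is summable along the iterated medial subdivision.
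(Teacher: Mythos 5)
Your proof is correct and follows essentially the same route as the paper: both deduce from the halving of the maximal edge weight and Theorem~\ref{thm:subdiv_stability} that consecutive barcodes are within $M_0/2^n$ in bottleneck distance, conclude the sequence is Cauchy, and invoke completeness of the space of persistence diagrams under $d_B$ (the paper cites a reference for this). Your additional observation that completeness can be sidestepped because all the barcodes share the same finite cardinality is a nice refinement but not needed.
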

\begin{proof}
We first note that
\begin{equation}
  \max_{e\in E(\IMS{G}{n})} w(e) = \frac{1}{2^n}\max_{e\in E(G)} w(e).
\end{equation}
Hence Theorem~\ref{thm:subdiv_stability} implies that the sequence of barcodes is Cauchy.
The space of persistence diagrams with the bottleneck distance is complete~\cite{Che2022} and hence the sequence of barcodes converges.
\end{proof}

While we have bottleneck stability, we do \emph{not} have $1$-Wasserstein stability.

\begin{prop}\label{prop:subdiv_wass_instability}
There exists no function $f: \WDgr \to \R$ such that for any weighted digraph $G=(V, E, w)\in\WDgr$ and any subdivision $S: F \to \Delta^d$ we have
\begin{equation}
  d_{W_1}\big(
    \thedesc{\wdop{s}{S}{G}} ,
   \thedesc{G} 
    \big) \leq f\big( \nbhdgraph(F; G) \big).
\end{equation}
\end{prop}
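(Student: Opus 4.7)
The plan is to disprove the existence of such an $f$ by exhibiting a sequence $(G_n,S_n)_{n\in\N}$ of weighted digraphs with subdivisions of edge sets $F_n\subseteq E(G_n)$ for which $\nbhdgraph(F_n;G_n)$ equals a fixed weighted digraph $H$ (independent of $n$), yet $d_{W_1}\!\big(\thedesc{G_n},\thedesc{\wdop{s}{S_n}{G_n}}\big)\to\infty$ as $n\to\infty$. Such a family immediately contradicts the existence of any $f$, since $f(H)$ would need to dominate an unbounded sequence. This sharp contrast with bottleneck stability (Theorem~\ref{thm:subdiv_stability}) is what makes the $1$-Wasserstein setting genuinely worse.

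First I would fix a small weighted digraph $H$ containing a single edge $e=(a,b)$ to be subdivided (so $F_n=\{e\}$); any extension $G_n$ in which the additional edges avoid being incident to $a$ or $b$ automatically satisfies $\nbhdgraph(\{e\};G_n)=H$, because $\nbhd(\{e\};G)$ consists exactly of edges sharing a vertex with $e$. The extra structure must therefore be attached at the \emph{boundary} vertices of $H$, i.e. those in $V(H)\setminus\{a,b\}$ that arise as endpoints of edges adjacent to $e$.

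Next, I would design the extra structure so that $G_n$ carries $\Theta(n)$ additional undirected circuits whose death times all shift by the same positive amount $\delta$ when $e$ is subdivided. The mechanism is that the new vertex $v$ introduced by subdivision provides shortcut edges $(v,x)$ in $G_S^t$ appearing at $t=w(e)/2+d(b,x)$, giving rise to new triangles and long squares in $\zb{C}_2(G;F)$ (via Proposition~\ref{prop:biv2_gens}) whose boundaries kill cycle representatives earlier than in $G_n$. By attaching pendants whose cycle representatives genuinely depend on the distance through the endpoints of $e$ (so the effective filling path passes through $a$ or $b$), the single shortcut through $v$ acts uniformly on all $n$ such features. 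A direct computation along the lines of Lemma~\ref{lem:paths_homologous} and Lemma~\ref{lem:death_bound} then shows each of the $n$ affected features has its death time reduced by exactly $\delta$, and by choosing an explicit matching of features between the pre- and post-subdivision barcodes, $d_{W_1}\!\big(\thedesc{G_n},\thedesc{\wdop{s}{S_n}{G_n}}\big)=n\delta$.

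The main obstacle is step two: naive attachments (such as additional disjoint double edges or small triangles at boundary vertices) yield features whose death times are determined entirely by local structure and are therefore insensitive to the subdivision of $e$, so that only the single ``big'' cycle through $e$ shifts and $d_{W_1}$ remains bounded independently of $n$. Overcoming this requires pendants whose killing $2$-cells in the original digraph rely on shortcut edges emanating from $a$, since then, and only then, does replacing $a$ by the closer-to-downstream vertex $v$ systematically reduce the death time of each pendant's feature by the same quantity $w(e)/2$.
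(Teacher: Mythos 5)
Your plan is correct and is essentially the paper's argument: the paper instantiates it by gluing $n$ hexagonal circuits along a common directed path $(v_0,v_1,v_2,v_3)$ of unit-weight edges and bisecting the middle edge $e=(v_1,v_2)$, so that $\nbhdgraph(\{e\};G_n)$ is the fixed $3$-edge path while all $n$ features shift from $[0,3)$ to $[0,2.5)$, giving $d_{W_1}=0.5n$. The only point to tighten is that for the contradiction you need a \emph{lower} bound on $d_{W_1}$ over all multi-bijections (immediate here, since every off-diagonal point must move by at least $0.5$ in the $\infty$-norm whether matched to a feature or to the diagonal), rather than just the cost of one explicit matching.
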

\begin{proof}
Suppose such $f$ exists and consider the following sequence of digraphs in which each edge has unit weight.
\begin{figure}[H]
  \centering
\begin{tikzpicture}[
  roundnode/.style={circle, fill=black, minimum size=4pt},
	squarenode/.style={fill=black, minimum size=4pt},
	inner sep=2pt,
	outer sep=1pt
  ]

  \node (a1) at (0, 0) [roundnode, label=below:$v_0$] {};
  \node (x1) at (0.66, 0) [roundnode, label=below:$v_1$] {};
  \node (y1) at (1.33, 0) [roundnode, label=below:$v_2$] {};
  \node (b1) at (2, 0) [roundnode, label=below:$v_3$] {};
  \node (c1) at (2, 2) [roundnode] {};
  \node (d1) at (0, 2) [roundnode] {};
  \draw[->] (a1)--(x1);
  \draw[->] (x1)--(y1);
  \draw[->] (y1)--(b1);
  \draw[->] (b1)--(c1);
  \draw[->] (d1)--(c1);
  \draw[->] (d1)--(a1);

  \node (a2) at (5, 0) [roundnode, label=below:$v_0$] {};
  \node (x2) at (5.66, 0) [roundnode, label=below:$v_1$] {};
  \node (y2) at (6.33, 0) [roundnode, label=below:$v_2$] {};
  \node (b2) at (7, 0) [roundnode, label=below:$v_3$] {};
  \node (c2) at (7, 2) [roundnode] {};
  \node (d2) at (5, 2) [roundnode] {};
  \node (c22) at (6.6, 1.6) [roundnode] {};
  \node (d22) at (4.6, 1.6) [roundnode] {};
  \draw[->] (a2)--(x2);
  \draw[->] (x2)--(y2);
  \draw[->] (y2)--(b2);
  \draw[->] (b2)--(c2);
  \draw[->] (d2)--(c2);
  \draw[->] (d2)--(a2);
  \draw[->] (b2)--(c22);
  \draw[->] (d22)--(c22);
  \draw[->] (d22)--(a2);

  \node (a3) at (10, 0) [roundnode, label=below:$v_0$] {};
  \node (x3) at (10.66, 0) [roundnode, label=below:$v_1$] {};
  \node (y3) at (11.33, 0) [roundnode, label=below:$v_2$] {};
  \node (b3) at (12, 0) [roundnode, label=below:$v_3$] {};
  \node (c3) at (12, 2) [roundnode] {};
  \node (d3) at (10, 2) [roundnode] {};
  \node (c32) at (11.6, 1.6) [roundnode] {};
  \node (d32) at (9.6, 1.6) [roundnode] {};
  \node (c33) at (11.2, 1.2) [roundnode] {};
  \node (d33) at (9.2, 1.2) [roundnode] {};
  \draw[->] (a3)--(x3);
  \draw[->] (x3)--(y3);
  \draw[->] (y3)--(b3);
  \draw[->] (b3)--(c3);
  \draw[->] (d3)--(c3);
  \draw[->] (d3)--(a3);
  \draw[->] (b3)--(c32);
  \draw[->] (d32)--(c32);
  \draw[->] (d32)--(a3);
  \draw[->] (b3)--(c33);
  \draw[->] (d33)--(c33);
  \draw[->] (d33)--(a3);

  \node[] at (1, -1) {$G_1$};
  \node[] at (6, -1) {$G_2$};
  \node[] at (11, -1) {$G_3$};
\end{tikzpicture}
    \caption{
      A sequence of weighted digraphs $G_n$, in which all edges have unit weight and $\card{\thedesc{G_n}}=n$.
    }
    \label{fig:bad_split}
\end{figure}\noindent
Intuitively, $G_n$ is constructed by gluing $n$ disjoint copies of $G_1$ along the path $(v_0, v_1, v_2, v_3)$.
Note that each copy of $G_1$ introduces a feature which dies at $t=3$ so
\begin{equation}
  \thedesc{G_n} = \ms{[0, 3)\text{ with multiplicity } n }.
\end{equation}
Upon subdividing the edge $e=(v_1, v_2)$ via $S(e)=(1/2, 1/2)$, each feature changes to $[0, 2.5)$.
Hence 
\begin{equation}
  d_{W_1}\big(
    \thedesc{\wdop{s}{S}{G}} ,
   \thedesc{G} 
    \big) = 0.5n
\end{equation}
which eventually exceeds the constant $f(\nbhdgraph(e; G))$.
\end{proof}

\subsection{Edge collapse}

Another potential structural perturbation is that of edge collapses, in which the two end points of an edge are identified and the edge deleted.
In applications, this may happen particularly to low-weight edges, which cannot be discerned by the imaging method and hence collapsed to a vertex instead.
Since we interpret edge-weights as corresponding to distance, we add half the weight of the collapsed edge to each of its neighbours so that the length of paths through the collapsed edge are not changed.

\begin{defin}
Given a weighted digraph $G=(V, E, w)$ 
and an edge $e=(a, b)\in E$, define
\begin{align}
  V_e &\defeq \faktor{V}{a\sim b}\\
  E_e &\defeq \faktor{(E\setminus\{e\})}{(i, j)\sim (i', j') \iff i \sim i', j\sim j'}\\
  w_e(\sigma) &\defeq \min_{\tau \in \sigma} \left( w(\tau) + \frac{w(e)}{2}\cdot \indic{\tau \in \nbhd(e)} \right)
\end{align}
where $\indic{\tau \in \nbhd(e)} = 1 \iff \tau \in \nbhd(e)$, else $\indic{\tau\in\nbhd(e)}=0$.
We then define $\mdf{\wdop{c}{e}{G}} \defeq (V_e, E_e, w_e)$.
\end{defin}

Some edge collapses do not drastically alter the topological structure of the graph or the shortest-path quasimetric.
Therefore, we can get a local stability bound on a subset of such operations.

\begin{theorem}\label{thm:collapse_stability}
Given a weighted digraph, $G=(V, E, w)\in\WDgr$ and an edge $e = (a, b) \in E$.
Suppose $e$ is the only outgoing edge from $a$ and the only incoming edge to $b$ (as in Figure~\ref{fig:collapse_stability}), then
\begin{equation}
 d_B\big(\thedesc{G}, \thedesc{\wdop{c}{e}{G}}\big)
 \leq w(e) +
 \min\left( \max_{v\in\nbhd_{in}(a)} w(v, a) , \max_{v\in\nbhd_{out}(b)} w(b, v) \right)
 \eqdef\delta.
\end{equation}
\end{theorem}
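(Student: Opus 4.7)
The plan is to mimic the interleaving argument of Theorem~\ref{thm:subdiv_stability}: build a pair of vertex maps between $G$ and $G' \defeq \wdop{c}{e}{G}$ that are shifting with amount $\delta$, assemble them into a $\delta$-interleaving of $\zb{C}(G)$ and $\zb{C}(G')$ using Lemmas~\ref{lem:shift_induce_comp} and~\ref{lem:shift_induce_cont}, and conclude with the isometry theorem (Theorem~\ref{thm:isometry}). Write $v_e \in V(G')$ for the collapsed vertex and consider $f : V(G) \to V(G')$ with $f(a) = f(b) = v_e$ and $f(v) = v$ elsewhere, together with $g : V(G') \to V(G)$ fixing $V(G)\setminus\{a,b\}$ pointwise and sending $v_e \mapsto a$; the other choice $g(v_e) = b$ produces, by the obvious symmetry, the other summand inside the $\min$, so we focus on this one, aiming for the bound $w(e) + \max_{v\in\nbhd_{out}(b)} w(b,v)$.

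The first step is to check the shifts. The map $f$ is $w(e)$-shifting: given a shortest path $p\colon i \leadsto j$ in $G$, the walk $f(p)$ in $G'$ (with self-loops dropped whenever $e$ is traversed) has length exceeding that of $p$ by at most $w(e)$, since the hypothesis that $e$ is the unique outgoing edge at $a$ and unique incoming edge at $b$ forces any interior visit of $p$ to $a$ or $b$ to use $e$, producing a cancelling discount of $w(e)$. The map $g$ is $\delta$-shifting because the only non-trivial edges to check are the outgoing edges $(v_e, v) \in E(G')$ for $v \in \nbhd_{out}(b) \setminus \{a\}$, whose images $(a, v)$ satisfy $d_G(a, v) \leq w(e) + w(b, v) \leq \delta$, as every $G$-path leaving $a$ must begin with $e$. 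Lemma~\ref{lem:shift_induce_cont} then allows us to enlarge $f$'s shift to $\delta$.

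Since $f \circ g$ is the identity vertex map on $V(G')$, Lemma~\ref{lem:shift_induce_comp} gives the interleaving identity $\shiftinduced{ch}{f}{\delta} \circ \shiftinduced{ch}{g}{\delta} = \transmorph{\zb{C}(G')}{2\delta}$ directly on chains. The main obstacle is the second identity, because $g \circ f$ collapses $b$ onto $a$ and is not the identity on $V(G)$; agreement with $\transmorph{\zbpershom{G}}{2\delta}$ must be extracted only after passing to homology. Following the strategy of Theorem~\ref{thm:subdiv_stability}, by Lemma~\ref{prop:cycles_born_at_0} and Corollary~\ref{cor:decreasing_curves} it suffices to verify, on a basis of simple undirected circuits $\{p\}$ for $\zbcycles{G}{0}$, that $\shiftinduced{ch}{g}{\delta}\shiftinduced{ch}{f}{\delta}\repres{p} \equiv \repres{p} \pmod{\zbbdrs{G}{t+2\delta}}$ for each $t \ge 0$.

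This verification is a short case analysis on how $p$ meets $\{a,b\}$. Only a few edges are altered by $g \circ f$: $e$ itself and any present $(b,a)$ are sent to zero, while $(b, w)$ becomes $(a, w)$ for $w \in \nbhd_{out}(b)\setminus\{a\}$; all incoming edges $(v,a)$ are fixed. The inequality $\delta \geq w(e) + w(b,w)$ guarantees that the directed triangle $abw$ lies in $\zb{C}_2(G, t+2\delta)$, giving the boundary relation $(a,w) - (b,w) \equiv e$, and if $(b,a) \in E$ then the double edge $aba$ gives $e + (b,a) \equiv 0$. The hypothesis on $e$ forces, at any interior visit of $p$ to $a$ or $b$, both incident edges of $p$ to be either both incoming at $a$ or both outgoing at $b$ in $G$, hence to appear with opposite signs in $\repres{p}$; combining this with the cases where $p$ actually contains $e$ or $(b,a)$, the collected residual of $e$'s and $(a,w)-(b,w)$'s cancels modulo $\zbbdrs{G}{t+2\delta}$ in every subcase. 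With both interleaving identities established on homology, the isometry theorem delivers the bound for this choice of $g$; the other summand in the $\min$ follows by the symmetric argument with $g(v_e) = b$.
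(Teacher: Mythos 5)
Your overall architecture is exactly the paper's: the same pair of vertex maps $f$ and $g$ (with $g(v_e)$ chosen to realise the smaller summand of the $\min$), the same use of shifting vertex maps to assemble a $\delta$-interleaving, the trivial identity on the $G_e\defeq\wdop{c}{e}{G}$ side coming from $f\circ g=\id$, and the same reduction — via Lemma~\ref{prop:cycles_born_at_0} and a basis of simple undirected circuits — of $\shiftinduced{hom}{g}{\delta}\circ\shiftinduced{hom}{f}{\delta}=\transmorph{\zbpershom{G}}{2\delta}$ to a local computation at $a$ and $b$, resolved by the boundaries of the directed triangles $abw$. Your handling of that last step is, if anything, a little more complete than the paper's, since you also account for circuits meeting $a$ through two incoming edges or $b$ through two outgoing edges, and for a possible edge $(b,a)$.

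There is, however, one genuine gap: the assertion that $g$ is $\delta$-shifting. By definition a $\delta$-shifting vertex map must induce digraph maps $G_e^t\to G^{t+\delta}$ for every $t$, i.e.\ $d_G(g(i),g(j))\le t+\delta$ whenever $d_{G_e}(i,j)\le t$ for \emph{arbitrary} pairs $(i,j)$, not only for edges of $G_e$. Your justification ("the only non-trivial edges to check are the outgoing edges $(v_e,v)\in E(G')$") verifies only the condition on $G_e\cup G_e^t\to G\cup G^{t+\delta}$ restricted to $E(G_e)$. Per-edge bounds do not compose into a quasimetric bound: if every edge of a long path in $G_e$ could only be replaced by a $G$-detour that is up to $\delta$ longer, the penalties would accumulate and no uniform shift would follow. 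What rescues the claim — and what the paper proves as a separate, three-case claim — is that a shortest path in $G_e$ visits $v_e$ at most once, so it lifts to a trail in $G$ by inserting the segment $(a,e,b)$ at that single visit; after accounting for the $w(e)/2$ weight redistribution on $\nbhd(e)$ and for the endpoint cases $i=v_e$ or $j=v_e$, the total overhead is at most $w(e)\le\delta$. You make precisely this kind of path-level argument for $f$, but for $g$ you replace it with an edge-level check that does not suffice; the path-lifting step must be supplied for the interleaving to be established.
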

\begin{figure}[H]
  \centering
  \begin{tikzpicture}[
  roundnode/.style={circle, fill=black, minimum size=4pt},
	squarenode/.style={fill=black, minimum size=4pt},
	inner sep=2pt,
	outer sep=1pt
  ]

  \node (a) at (0, 0)[roundnode, label=above:$a$] {};
  \node (b) at (1, 0)[roundnode, label=above:$b$] {};
  \node (c) at (-2, -1)[roundnode] {};
  \node (d) at (-2, 0)[roundnode] {};
  \node (e) at (-2, 1)[roundnode] {};
  \node (f) at (3, -1)[roundnode] {};
  \node (g) at (3, 0)[roundnode] {};
  \node (h) at (3, 1)[roundnode] {};

  \draw[->] (a)--(b);
  \draw[->] (c)--(a);
  \draw[->] (d)--(a);
  \draw[->] (e)--(a);
  \draw[->] (b)--(f);
  \draw[->] (b)--(g);
  \draw[->] (b)--(h);

  \node (a2) at (7.5, 0)[roundnode, label=above:$v_e$] {};
  \node (c2) at (5, -1)[roundnode] {};
  \node (d2) at (5, 0)[roundnode] {};
  \node (e2) at (5, 1)[roundnode] {};
  \node (f2) at (10, -1)[roundnode] {};
  \node (g2) at (10, 0)[roundnode] {};
  \node (h2) at (10, 1)[roundnode] {};

  \draw[->, rounded corners] (c2)--(6.8, -0.1)--(a2);
  \draw[->, rounded corners] (d2)--(6.8, 0)--(a2);
  \draw[->, rounded corners] (e2)--(6.8, 0.1)--(a2);
  \draw[->, rounded corners] (a2)--(8.2, -0.1)--(f2);
  \draw[->, rounded corners] (a2)--(8.2, 0)--(g2);
  \draw[->, rounded corners] (a2)--(8.2, 0.1)--(h2);

  \node[] at (0.5, -1.5) {$G$};
  \node[] at (7.5, -1.5) {$\wdop{c}{e}{G}$};

\end{tikzpicture}
  \caption{Schematic of $\nbhdgraph(e)\subseteq G$ and $\nbhdgraph(v_e)\subseteq \wdop{c}{e}{G}$, under the assumptions of Theorem~\ref{thm:collapse_stability}.}\label{fig:collapse_stability}
\end{figure}
\begin{proof}
Denote $G_e \defeq \wdop{c}{e}{G} = (V_e, E_e, w_e)$.
First note that the condition ensures that $\{a, b\}$ are the only vertices which get identified.
Moreover, no edges are identified, although $e$ is collapsed and the weights on neighbours of $e$ may change.
To ease notation we drop all equivalence class notation for the vertices and edges of $G_e$ and refer to the new vertex as $v_e\defeq \{ a, b\}$.

We define two vertex maps.
Firstly $f: V \to V_e$ is given by $v\mapsto v$ for $v\neq a, b$ and $a, b\mapsto v_e$.
Secondly, we define $g:V_e \to V$ as follows.
For most elements of $V_e$ we choose the only representative of the equivalence class $v\mapsto v$.
The only class containing more than one element is $v_e=\{a, b \}$.
For this class, we choose $g(v_e)=a$ if
\begin{equation}
  \max_{v\in\nbhd_{in}(a)} w(v, a) \geq \max_{v\in\nbhd_{out}(b)} w(b, v),
\end{equation}
else we choose $g(v_e) = b$.
Let us assume that $g(v_e) = a$; the other case admits a similar proof.
We show that $f$ and $g$ define $\delta$-shifting vertex maps.

\begin{claim}
  $f$ defines a digraph map $G \to G_e$.
\end{claim}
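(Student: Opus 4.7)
The plan is to verify the digraph map condition of equation~(\ref{eq:dig_map_cond}) directly: for each edge $(i,j) \in E$, I will check that either $(f(i), f(j)) \in E_e$ or $f(i) = f(j)$. I would proceed by case analysis on the membership of $i$ and $j$ in $\{a, b\}$, using the structural hypothesis on $e$ to eliminate several cases.

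First, I would observe that the hypothesis (that $e$ is the only outgoing edge from $a$ and the only incoming edge to $b$) rules out all edges of the form $(a, j)$ with $j \neq b$ and all edges of the form $(i, b)$ with $i \neq a$. The remaining possibilities for edges in $E$ are then: (i) the collapsed edge $e = (a,b)$ itself; (ii) edges $(i, j)$ with $i, j \notin \{a, b\}$; (iii) edges $(i, a)$ with $i \notin \{a, b\}$; (iv) edges $(b, j)$ with $j \notin \{a, b\}$; and (v) the possible back-edge $(b, a)$.

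In cases (i) and (v), both endpoints map to $v_e$, so $f(i) = f(j)$ and the condition is satisfied trivially. In cases (ii)--(iv), the image under $f$ is a pair of distinct vertices in $V_e$, and I would check that the corresponding equivalence class is indeed present in $E_e$. This amounts to verifying that the equivalence relation on $E \setminus \{e\}$ does not spuriously identify the edge with any other: for instance, the class of $(i, a)$ could only merge with $(i, b)$, but the latter does not exist by hypothesis; analogously for $(b, j)$ and $(a, j)$. In case (ii), the edges in question are untouched by the identification $a \sim b$.

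I do not anticipate any substantial obstacle; the proof reduces to routine case analysis. The only subtlety is the bookkeeping around the quotient defining $E_e$, and the structural hypothesis on $e$ is exactly what guarantees that none of the surviving edges are accidentally identified with each other under $\sim$.
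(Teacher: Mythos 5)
Your proof is correct and follows essentially the same route as the paper, which simply observes that every edge of $G$ other than $e$ is mapped to an edge of $G_e$ while the endpoints of $e$ (and of any back-edge $(b,a)$) are identified. Your additional case analysis merely makes explicit the bookkeeping that the paper leaves implicit, namely that the hypothesis on $e$ prevents any surviving edges from being identified under $\sim$.
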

\begin{poc}
All edges of $G$ are mapped to edges of $G_e$, with the exception of $e=(a, b)$.
The two endpoints of $e$ are mapped to the same point, $v_e$.
Therefore, $f$ define a digraph map as required.
\end{poc}

\begin{claim}
  $f$ defines a digraph map $G^t\to G_e^{t+\delta}$ for all $t\geq 0$.
\end{claim}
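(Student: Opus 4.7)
The condition to verify is that for every edge $(i,j) \in E(G^t)$ we have either $f(i) = f(j)$ or $(f(i), f(j)) \in E(G_e^{t+\delta})$, i.e., $d_{G_e}(f(i), f(j)) \leq t+\delta$. The first possibility occurs exactly when $\{i,j\} = \{a,b\}$, in which case $f$ collapses both endpoints to $v_e$ and the digraph-map condition holds for free. My plan for the remaining case is to prove the stronger estimate
\[
 d_{G_e}(f(i), f(j)) \leq d_G(i, j) + \tfrac{1}{2}w(e),
\]
which suffices because $\delta \geq w(e) \geq \tfrac{1}{2}w(e)$.

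To produce a witness path in $G_e$, I would take any directed path $p : i \leadsto j$ in $G$ realising $d_G(i,j)$ and push its edges through the quotient $E \setminus \{e\} \twoheadrightarrow E_e$, deleting any occurrence of $e$ itself; call the result $\widetilde{p}$. The standing hypothesis that $e$ is the unique outgoing edge at $a$ and the unique incoming edge at $b$ plays two roles. First, it guarantees that no pair of distinct edges of $E \setminus \{e\}$ is identified in $E_e$, so the pushforward of edges is unambiguous. Second, it severely restricts how $p$ can meet $\{a,b\}$: either (i) $p$ avoids both vertices, in which case no edge of $p$ lies in $\nbhd(e)$ and the length is preserved; or (ii) $p$ passes through both $a$ and $b$ as interior vertices, necessarily via $e$, where the $-w(e)$ saved by deleting $e$ exactly cancels the $+\tfrac{1}{2}w(e)$ absorbed by each of its two neighbouring edges; or (iii) $p$ starts at $a$ or ends at $b$ and so uses $e$ at the relevant end, producing a net length change of $-\tfrac{1}{2}w(e)$; or (iv) $p$ ends at $a$ or starts at $b$ without using $e$, so exactly one edge in $\nbhd(e)$ is traversed and the length grows by $\tfrac{1}{2}w(e)$.

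Each of these is a short weight computation, and combined they give $\pathlen(\widetilde{p}) \leq d_G(i,j) + \tfrac{1}{2}w(e) \leq t + \delta$. The main obstacle — really the only non-bookkeeping point — is verifying that $\widetilde{p}$ is a genuine directed path in $G_e$ rather than merely a formal sum of edges. This is precisely where the uniqueness assumptions on $e$ are essential: without them, a path in $G$ could, say, enter $b$ along an edge distinct from $e$ and the pushed-forward edges of $G_e$ would then fail to concatenate into a legal directed sequence. Under the hypotheses of the theorem, however, the case analysis exhausts all admissible configurations of $p$ and each yields an admissible $\widetilde{p}$ satisfying the length bound, completing the claim.
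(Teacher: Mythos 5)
Your proof is correct and follows essentially the same route as the paper: push a shortest path $p:i\leadsto j$ of length at most $t$ forward to $G_e$ and bound the weight increase using the hypothesis that $e$ is the unique outgoing edge at $a$ and unique incoming edge at $b$. Your finer case analysis even yields the sharper increment $\tfrac{1}{2}w(e)$ where the paper is content with the cruder bound $w(e)$ (a path meets at most one in-edge of $a$ and one out-edge of $b$, each inflated by $\tfrac{1}{2}w(e)$); both are dominated by $\delta$, so nothing changes downstream.
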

\begin{poc}
If $(i, j)\in E(G^t)$, then there is a path $p$ joining $i\leadsto j$ of length at most $t$ in $G$.
Thanks to the previous claim, $f(p)$ is a path $f(i) \leadsto f(j)$ in $G_e$.
Thanks to the assumption on $e$, $p$ passes through at most one incoming edge to $a$ and at most one outgoing edge from $b$.
Since these are the only edges whose weights are increased and each of these is increased by $w(e)/2$, we see $f(p)$ is of length at most $t + w(e)$.
Therefore $(i, j) \in E(G_e^{t+\delta})$.
\end{poc}

\begin{claim}\label{claim:collapse_s_dig_map}
  $g$ defines a digraph map $G_e^t \to G^{t+\delta}$.
\end{claim}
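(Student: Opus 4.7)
The approach is to lift a short path in $G_e$ to a path in $G$ with only a controlled increase in length. Given an edge $(i,j) \in E(G_e^t)$ with $g(i) \neq g(j)$, I fix a path $p: i \leadsto j$ in $G_e$ with $\pathlen(p) \leq t$ and construct a lift $\tilde{p}: g(i) \leadsto g(j)$ in $G$, which will demonstrate that $d_G(g(i), g(j)) \leq t + \delta$.

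The lifting procedure is dictated by the structure at $v_e$. An edge of $E_e$ not incident to $v_e$ corresponds to a unique edge of $E$ with the same weight. An edge $(u, v_e) \in E_e$ lifts to the unique preimage $(u, a) \in E$ of weight $w(u,a) = w_e((u, v_e)) - w(e)/2$, and symmetrically $(v_e, u) \in E_e$ lifts to $(b, u) \in E$. Here the hypothesis that $e$ is the only outgoing edge at $a$ and the only incoming edge at $b$ is essential: it forces uniqueness of the preimage (so the minimum in the definition of $w_e$ is trivial), and, crucially, it guarantees that any internal visit of $v_e$ by $p$ lifts to an edge ending at $a$ immediately followed by an edge starting at $b$. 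At each such visit I bridge the gap by inserting the edge $e = (a, b)$. Additionally, in the case $g(v_e) = a$, if $i = v_e$ then I prepend $e$ to $\tilde{p}$, and if $j = v_e$ the last lifted edge naturally ends at $a$ and no insertion is needed (the symmetric case $g(v_e) = b$ is treated analogously with roles of $a$ and $b$ swapped).

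It then remains to tally $\pathlen(\tilde{p})$. Let $k$ denote the number of internal occurrences of $v_e$ in $p$ and $m$ the number of edges of $p$ incident to $v_e$. Every incident edge saves $w(e)/2$ on lifting, and every inserted $e$-edge costs $w(e)$. When $i, j \neq v_e$, one has $m = 2k$ and $k$ insertions, for net change zero. When $i = v_e$ but $j \neq v_e$ (under $g(v_e) = a$), $m = 2k+1$ and there are $k+1$ insertions, giving a net increase of $w(e)/2$; the symmetric case $i \neq v_e, j = v_e$ yields a net saving of $w(e)/2$. In all cases $\pathlen(\tilde{p}) \leq \pathlen(p) + w(e)/2 \leq t + w(e) \leq t + \delta$, which completes the claim.

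I do not anticipate a major obstacle: the key observation is that each bridging insertion of $e$ (cost $w(e)$) is almost exactly paid for by the $w(e)/2$ savings on each of its two adjacent edges, leaving only a boundary correction of at most $w(e)/2$ at the endpoints. The main care required is bookkeeping at the endpoints to verify that this worst-case excess is indeed absorbed by $\delta$; in fact, the $\min$ term in $\delta$ is looser than necessary for this particular claim (but presumably arises from tightness considerations elsewhere in the theorem).
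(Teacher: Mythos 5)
Your proposal is correct and follows essentially the same route as the paper: both lift a short path in $G_e$ back to $G$ by re-expanding $v_e$ into $a$ and $b$ joined by the edge $e$, then treat the endpoint cases $i=v_e$ and $j=v_e$ separately. Your length accounting is slightly sharper (a worst-case excess of $w(e)/2$ versus the paper's cruder $w(e)$), but both are absorbed by $\delta$, so the difference is immaterial.
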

\begin{poc}
Suppose $(i, j) \in E(G_e^t)$, then there is a path $p:i\leadsto j$ in $G_e$ of length at most $t$.
Suppose $p$ does not traverse $v_e$, then it also doesn't traverse any edge incident to $v_e$.
Therefore $p$ also exists in $G$ and is of length at most $t$.
Hence, $(i, j) \in G^{t+\delta}$.

Conversely, suppose $p$ does traverse $v_e$ and write
\begin{equation}
  p = (v_0=i, v_1, \dots, v_{k-1}, v_e, v_{k+1}, \dots, v_l=j).
\end{equation}
Note that $v_{k-1}\in\nbhd_{in}(a)$ and $v_{k+1}\in\nbhd_{out}(a)$.
Replacing $v_e$ with the sequence $(a, b)$ we obtain a new path
\begin{equation}
  p' = (v_0=i, v_1, \dots, v_{k-1}, a, b, v_{k+1}, \dots, v_l=j).
\end{equation}
which exists in $G$.
We now split into cases.\\
\textbf{Case 1:}
If $i\neq v_e$ and $j\neq v_e$ then $g(i)=i$ and $g(j)=j$ and $p'$ is a path $g(i)=i\leadsto j=g(j)$. \\
\textbf{Case 2:}
If $i=v_e$ then $j\neq v_e$ so $g(i)=a$ and $g(j)=j$ and $p'$ is a path $g(i)=a\leadsto j= g(j)$. \\
\textbf{Case 3:}
If $j=v_e$ then $i\neq v_e$ so $g(i)=i$ and $g(j)=a$ but $p'$ is a path $g(i)=i\leadsto b$ which ends $(a, e, b)$; removing the last edge yields a path $g(i)=i\leadsto a= g(j)$.

Note that edge-weights may decrease moving from $G_e$ to $G$ however we may add one additional edge, namely $e$.
Therefore, the length of $p'$ is at most $t+w(e)$ and hence $(i, j) \in G_e^{t+\delta}$.
\end{poc}

\begin{claim}
  $g$ defines a digraph map $G_e \cup G_e^t \to G \cup G^{t+\delta}$.
\end{claim}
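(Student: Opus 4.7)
The plan is to reduce the claim to the previous one, since Claim~\ref{claim:collapse_s_dig_map} already establishes $g$ as a digraph map $G_e^t \to G^{t+\delta}$. It then suffices to verify that every edge $\tau \in E(G_e)$ is mapped by $g$ to an edge (or self-loop) in $G \cup G^{t+\delta}$. I would split the edges of $G_e$ into two groups: those not incident to $v_e$, and those incident to $v_e$.

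For edges not incident to $v_e$, the quotient relation identifies no vertices and no edges, so such $\tau$ corresponds to a unique edge of $G$ with the same endpoints (unchanged by $g$); this lies in $E(G)\subseteq E(G\cup G^{t+\delta})$.

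For edges incident to $v_e$, I would invoke the structural hypothesis. Because $e$ is the unique outgoing edge from $a$ and the unique incoming edge to $b$, every outgoing edge of $v_e$ in $G_e$ lifts uniquely to an edge $(b, c)$ of $G$, and every incoming edge lifts uniquely to an edge $(c, a)$ of $G$. Without loss of generality (using the case assumption $g(v_e) = a$), incoming edges $(c, v_e)$ are carried by $g$ directly to $(c, a) \in E(G)$, so there is nothing to check. The interesting case is an outgoing edge $(v_e, c)$, lifting to $(b, c) \in E(G)$: $g$ sends this to $(a, c)$, which is not an edge of $G$, but the concatenation $a \to b \to c$ is a directed path in $G$ of length $w(e) + w(b, c) \leq w(e) + \max_{v\in\nbhd_{out}(b)} w(b, v) = \delta$ by our choice of branch. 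Hence $d_G(a, c) \leq \delta \leq t + \delta$ and $(a, c) \in E(G^{t+\delta})$, as required. The symmetric branch $g(v_e) = b$ is handled identically, with outgoing edges trivially preserved and incoming edges absorbed into $G^{t+\delta}$ via the bound $\max_{v\in\nbhd_{in}(a)} w(v, a) + w(e) \leq \delta$.

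The only real obstacle is ensuring the bookkeeping of the two branches of the definition of $g$ is consistent with the $\min$ in the definition of $\delta$; once one fixes WLOG $g(v_e) = a$ and observes that the corresponding branch of the $\min$ is exactly the one that bounds the outgoing-edge detour, the argument is mechanical. Combining this with Claim~\ref{claim:collapse_s_dig_map} gives $g$ as a digraph map $G_e \cup G_e^t \to G \cup G^{t+\delta}$, so $g$ (and symmetrically $f$) is a $\delta$-shifting vertex map, and the isometry theorem, applied to the induced chain-level morphisms, then yields the stated bottleneck bound in the main theorem.
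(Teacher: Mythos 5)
Your proposal is correct and follows essentially the same route as the paper: reduce to edges of $G_e$ via the previous claim, observe that edges not incident to $v_e$ and incoming edges of $v_e$ map to edges of $G$, and absorb the image $(a,j)$ of an outgoing edge of $v_e$ into $G^{t+\delta}$ via the path $(a,b,j)$ of length $w(e)+w(b,j)\leq\delta$. Your explicit check that the chosen branch of the $\min$ in the definition of $\delta$ matches the outgoing-edge case is a detail the paper leaves implicit, but the argument is the same.
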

\begin{poc}
Thanks to the previous claim, we only need to check edges $(i, j)\in E(G_e)$.
Any edge $(i, j)\in E(G_e)$ which is not incident to $v_e$ is mapped by $g$ to itself $(i, j)\in E(G)$.
By the assumption on $e$, if $j = v_e$, then $i\in\nbhd_{in}(a)$ and hence $(g(i), g(v_e)) = (i, a) \in E(G)$.
Else, suppose $i = v_e$, then $j \in \nbhd_{out}(b)$.
Note that the path $(a, b, j)$ in $G$ is of length $w(e)+w(b, j) \leq \delta$.
Therefore $(g(v_e), g(j))=(a, j)\in E(G^{t+\delta})$ for all $t\geq 0$.
\end{poc}

As vertex maps $f\circ g = \id_{V_e}$ and hence $\shiftinduced{hom}{f}{\delta} \shiftinduced{hom}{g}{\delta} = \transmorph{\zb{C}(G_e)}{2\delta}$.
However, it is not the case that $g\circ f = \id_V$ nor $\shiftinduced{ch}{g}{\delta} \shiftinduced{ch}{f}{\delta} = \transmorph{\zb{C}(G)}{2\delta}$.
However, these two chain maps do agree at the level of homology, as we now show.
We follow a similar approach to the proof of Theorem~\ref{thm:subdiv_stability}.

\begin{claim}
  Given a simple undirected circuit $c\in\zbcycles{G}{0}$, we can write
  \begin{equation}
    c = \alpha_e\big( v_1 a + ab + b v_2 \big) + \sum_{\substack{\tau \in E(G)\\ \tau\neq e, \tau \not\in\nbhd(e)}}\alpha_\tau \tau
  \end{equation}
  for some $v_1 \in \nbhd_{in}(a)$, $v_2\in\nbhd_{out}(b)$ and $\alpha_e, \alpha_\tau \in \{ 0, \pm 1 \}$.
\end{claim}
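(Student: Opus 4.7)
The plan is to case-split on whether the edge $e$ appears in the circuit $c$ and exploit the strong structural hypothesis on $a, b$ to force the representation of $c$ into the claimed form.

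I would first record a key structural observation that underpins everything: since $e$ is the only outgoing edge at $a$, every edge incident to $a$ other than $e$ has the form $(v, a)$ with $v \in \nbhd_{in}(a)$; symmetrically at $b$, every non-$e$ edge incident to $b$ has the form $(b, v)$ with $v \in \nbhd_{out}(b)$. Hence $\nbhd(e)$ decomposes as the disjoint union of $\{e\}$, the edges $\{(v, a) : v \in \nbhd_{in}(a)\}$, and the edges $\{(b, v) : v \in \nbhd_{out}(b)\}$.

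In the principal case, $e$ is one of the edges of $c$. Simplicity of $c$ forces each of $a$ and $b$ to appear exactly once in the cyclic vertex sequence, with exactly two adjacent edges each. At $a$ one of those adjacent edges is $e$, so the other is forced (by the structural observation) to be some $(v_1, a)$ with $v_1 \in \nbhd_{in}(a)$; analogously at $b$ the non-$e$ adjacent edge is some $(b, v_2)$ with $v_2 \in \nbhd_{out}(b)$. Reading off the relevant portion of $c$ as $(\ldots, v_1, (v_1, a), a, e, b, (b, v_2), v_2, \ldots)$ (or its reversal), and then tracking the sign convention of $\repres{c}$ from the definition of representative of an undirected circuit, the three edges $(v_1, a), e, (b, v_2)$ are traversed in a consistent direction and so share a common sign $\alpha_e \in \{\pm 1\}$; this produces the block $\alpha_e(v_1 a + ab + b v_2)$ in $\repres{c}$. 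Simplicity then ensures that no further vertex of $c$ is $a$ or $b$, so all remaining edges are not incident to $a$ or $b$, i.e.\ lie outside $\nbhd(e)$, and contribute to the sum $\sum_\tau \alpha_\tau \tau$ with $\alpha_\tau \in \{\pm 1\}$ determined by traversal direction.

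The complementary case has $e \not\in c$, in which we set $\alpha_e = 0$ and want $c$ to be supported entirely outside $\nbhd(e)$. The \emph{main obstacle} is precisely here: an arbitrary simple undirected circuit avoiding $e$ could still pass through $a$ via two distinct incoming edges (or through $b$ via two distinct outgoing edges), whose representatives lie in $\nbhd(e)$ but cannot be accommodated by $\alpha_e = 0$. The resolution I plan is to avoid picking out such pathological circuits in the first place: one chooses a basis of simple undirected circuits for $\zbhom{G}{0}$ built by first taking a circuit basis for the cycle space of the induced subgraph on $V \setminus \{a, b\}$ and extending by one circuit through $e$ for each additional independent cycle, so that every basis element either uses $e$ (Case 1) or avoids $\{a, b\}$ entirely (trivial Case 2). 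The claim, read as applying within such a basis, then follows from the two cases above; verifying that such a basis exists, via a standard linear-algebra argument on $\zbcycles{G}{0}$ and its restriction to $V \setminus \{a, b\}$, is the technical step requiring the most care.
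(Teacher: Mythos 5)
Your Case~1 analysis (the circuit uses $e$) is correct, and it is in substance all that the paper offers: the paper's entire proof of this claim is the sentence that it is ``a direct consequence of the requirement that $e$ is the only outgoing edge from $a$ and the only incoming edge to $b$.'' More importantly, the obstacle you isolate in your second case is genuine and the paper does not address it: the claim as printed is false for a simple undirected circuit that visits $a$ through two distinct incoming edges (or visits $b$ through two distinct outgoing edges) while avoiding $e$. The representative of such a circuit contains two edges of $\nbhd(e)$, yet the coefficient of $ab$ is zero, which forces $\alpha_e=0$ and leaves nowhere in the displayed expression for those edges to appear. You deserve credit for noticing that the paper's one-line justification skips exactly this case.

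However, your proposed repair --- choosing a circuit basis in which every element either uses $e$ or avoids $\{a,b\}$ --- cannot work, because such a basis need not exist. Take $V=\{u,w,a,b,x\}$ with edges $(u,w)$, $(u,a)$, $(w,a)$, $(a,b)$, $(b,x)$: the hypothesis of the theorem holds for $e=(a,b)$, the circuit rank is $1$, and the unique simple undirected circuit is the triangle on $\{u,w,a\}$, which passes through $a$ via its two incoming edges and does not use $e$; there are no circuits through $e$ and none avoiding $\{a,b\}$, so your construction yields nothing while the cycle space is one-dimensional. The repair has to go the other way: weaken the normal form rather than the basis. A simple circuit through $a$ not using $e$ contributes a block $\pm(v_1a - v_1'a)$ with $v_1,v_1'\in\nbhd_{in}(a)$, and one through $b$ not using $e$ contributes a block $\pm(bv_2 - bv_2')$ with $v_2,v_2'\in\nbhd_{out}(b)$. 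These extra blocks are harmless in the interleaving computation that follows: $g\circ f$ fixes every edge $(v_1,a)$ outright, and it sends $bv_2 - bv_2'$ to $av_2 - av_2'$, which is homologous to it via the boundary of the difference of directed triangles $abv_2 - abv_2'$, both of which lie in $G^{t+2\delta}$ for every $t\geq 0$ because $w(e)+w(b,v)\leq\delta$ under the paper's choice $g(v_e)=a$.
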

\begin{poc}
This is a direct consequence of the requirement that $e$ is the only outgoing edge from $a$ and the only incoming edge to $b$.
\end{poc}

\begin{claim}
 For $v_1 \in \nbhd_{in}(a)$ and $v_2 \in \nbhd_{out}(b)$, we have $\shiftinduced{ch}{g}{\delta} \shiftinduced{ch}{f}{\delta} (v_1 a + ab + bv_2) = v_1 a + av_2$ and for $\tau\in E(G)\setminus\nbhd(e), \tau \neq e$ we have $\shiftinduced{ch}{g}{\delta} \shiftinduced{ch}{f}{\delta} \tau = \tau$.
\end{claim}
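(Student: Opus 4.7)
The plan is to unpack the definitions of the two shifted chain maps and compute directly on edge generators; there is essentially no obstacle here, it is just bookkeeping about which vertices collapse.

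First I would recall that in degree $1$, $\shiftinduced{ch}{f}{\delta}$ and $\shiftinduced{ch}{g}{\delta}$ act on an edge generator $xy$ by the formula of Equation~(\ref{eq:induced_ch}): the image is $f(x)f(y)$ (resp.\ $g(x)g(y)$) when this is regular, and zero otherwise. Here $f$ identifies $a$ and $b$ to the class $v_e$ and is the identity elsewhere, while $g(v_e)=a$ (by our choice) and $g$ is the identity on the remaining vertices of $V_e$.

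For the second statement, let $\tau=(x,y)\in E(G)$ with $\tau\neq e$ and $\tau\notin\nbhd(e)$. Then neither $x$ nor $y$ equals $a$ or $b$, so $f(x)=x$, $f(y)=y$, giving $\shiftinduced{ch}{f}{\delta}\tau = xy$ as a generator of $\zb{C}_1(G_e,t+\delta)$. Since $x,y\neq v_e$, we also have $g(x)=x$, $g(y)=y$, so $\shiftinduced{ch}{g}{\delta}(xy) = \tau$, as required.

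For the first statement, I compute term by term. We have
\begin{align*}
\shiftinduced{ch}{f}{\delta}(v_1 a) &= v_1 v_e, \\
\shiftinduced{ch}{f}{\delta}(ab) &= 0 \quad \text{(since $v_e v_e$ is non-regular)}, \\
\shiftinduced{ch}{f}{\delta}(bv_2) &= v_e v_2,
\end{align*}
so $\shiftinduced{ch}{f}{\delta}(v_1 a + ab + bv_2) = v_1 v_e + v_e v_2$. Applying $\shiftinduced{ch}{g}{\delta}$ and using $g(v_e)=a$, $g(v_1)=v_1$, $g(v_2)=v_2$, we obtain $v_1 a + a v_2$, which proves the claim. (In the symmetric case $g(v_e)=b$ the same computation yields $v_1 b + b v_2$, and the rest of the surrounding argument goes through unchanged.)
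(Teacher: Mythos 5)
Your proposal is correct and follows essentially the same route as the paper: both arguments reduce to tracking the vertex maps, observing that $g\circ f$ fixes every vertex except $b$ (which goes to $a$), so that $ab$ is sent to the degenerate $aa=0$, $bv_2$ to $av_2$, and all other relevant edges to themselves. Your version is just slightly more explicit in invoking the formula for the induced chain map and in spelling out the term-by-term computation.
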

\begin{poc}
For the first equality, note that $a$, $v_1$ and $v_2$ are fixed by $g\circ f$ whereas $(g\circ f)(b) = a$.
For the second, any edge $\tau \in E(G) \setminus\nbhd(e)$, $\tau \neq e$ does not have $b$ as one of its endpoints and hence both endpoints of $\tau$ are fixed by $g \circ f$.
\end{poc}

Finally it remains to show that for any $v_1\in\nbhd_{in}(a)$ and $v_2\in\nbhd_{out}(b)$,
\begin{equation}
  v_1 a + ab + bv_2 = v_1 a + a v_2 \pmod{\zbbdrs{G}{t+\delta}}
\end{equation}
for all $t\geq 0$.
Note that $(a, b, v_2)$ is a path $a\leadsto v_2$ of length at most $\delta$ and hence the directed triangle $(a, b, v_2)$ is in the digraph $G^{t+\delta}$ for all $t\geq 0$.
The claim then follows since $\zb{\bd}_2(a b v_2) = ab + bv_2 - a v_2$.
\end{proof}

\begin{rem}
  If $G$ is a DAG and an edge $e=(a,b)$ satisfies the condition of Theorem~\ref{thm:collapse_stability}, then there is a topological ordering of $G$ with $a$ and $b$ adjacent.
  However, note that this is \emph{not} a sufficient condition for local stability to edge collapse.
\end{rem}

\begin{rem}
  Collapses of the sort described in Theorem~\ref{thm:collapse_stability} remove exactly one vertex and one edge and do not change the number of weakly connected components.
  Hence, by Corollary~\ref{cor:decreasing_curves} we have
  $\card{\thedesc{G}}= \card{\thedesc{\wdop{c}{e}{G}}}.$
\end{rem}

While some edge collapses are relatively minor, in general they can drastically alter the topology of the digraph.
As such, we cannot expect local stability to arbitrary edge collapses.

\begin{theorem}\label{thm:collapse_instability}
There exists no function $f: \WDgr \to \R$ such that for any weighted digraph $G=(V, E, w)\in\WDgr$ and any edge $e\in E$ therein we have
\begin{equation}
 d_B\big(\thedesc{G}, \thedesc{\wdop{c}{e}{G}}\big) \leq f\big( \nbhdgraph(e; G) \big).
\end{equation}
\end{theorem}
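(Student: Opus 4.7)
The theorem is a local instability statement: no function of the local neighbourhood alone can bound the effect on the barcode. The standard strategy is proof by contradiction via a family of counterexamples. I would assume such $f$ exists and exhibit a sequence of pairs $(G_n, e_n)$ with $G_n \in \WDgr$ and $e_n \in E(G_n)$ such that the weighted isomorphism type of $\nbhdgraph(e_n; G_n)$ is independent of $n$, yet $d_B(\thedesc{G_n}, \thedesc{\wdop{c}{e_n}{G_n}}) \to \infty$. Any putative bound $f(\nbhdgraph(e_n; G_n))$ is then a fixed constant, giving the contradiction.

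For the construction, the guiding principle is that collapsing an edge $e = (a,b)$ identifies two vertices which may have been far apart in the shortest-path quasimetric, creating new long paths (through $v_e$) connecting all of $\nbhd_{in}(a) \cup \nbhd_{in}(b)$ to all of $\nbhd_{out}(a) \cup \nbhd_{out}(b)$. These new paths can drastically alter the death time of circuits that live entirely outside $\nbhdgraph(e; G)$. Concretely, I would take $G_n$ to contain two vertex-disjoint directed paths of length $\Theta(n)$ from a source $s$ to a sink $t$ (unit edge weights), together with a small fixed local gadget supporting $e_n = (a,b)$ that attaches to $s$ and $t$ in such a way that:
\begin{enumerate}[label=(\roman*)]
  \item in $G_n$, there is no short directed path from $s$ to $t$, so the $(s,t)$-circuit has death time $\Theta(n)$ (bounded below using the argument of the example following Lemma~\ref{lem:death_bound});
  \item in $\wdop{c}{e_n}{G_n}$, the identification of $a$ and $b$ into $v_e$ opens a directed path $s \leadsto v_e \leadsto t$ of bounded length, forcing the corresponding circuit to die at a time $O(1)$ via the new directed triangles and long squares it supports.
\end{enumerate}
One concrete realisation places $a$ on an incoming branch from $s$ and $b$ on an outgoing branch to $t$ (with $a \to b$ and several incident edges going the ``wrong way'' so that $s \leadsto t$ does not exist through the gadget in $G_n$, only in the collapse). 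The fixed isomorphism type of $\nbhdgraph(e_n; G_n)$ is then the small weighted digraph consisting of $e_n$ together with its incident edges in the gadget.

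Finally, I would bound the bottleneck distance from below by pairing the $\Theta(n)$-lifetime interval of $\thedesc{G_n}$ with some feature of $\thedesc{\wdop{c}{e_n}{G_n}}$ of lifetime $O(1)$. Since the circuit rank is preserved under edge collapse (a straightforward count using the discussion before Corollary~\ref{cor:decreasing_curves}), every such pairing must move at least one interval by $\Omega(n)$, so $d_B \geq \Omega(n)$. The main obstacle is designing the gadget so that its neighbourhood is genuinely fixed while the $s\leadsto t$ path is present only after collapse; this requires care with edge orientations and placement of the neighbours of $a$ and $b$, since the natural attempt (attaching $a$ to $s$ and $b$ to $t$ via short directed edges) already provides the shortcut in $G_n$, nullifying the effect. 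One fix is to attach $a$ via an incoming edge from $s$ and $b$ via an outgoing edge to $t$ but arrange the remaining local edges so that $e_n = (a,b)$ is the only forward connection, whereupon the collapse creates a brand new path $s \to v_e \to t$ that did not exist before.
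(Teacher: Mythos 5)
Your overall strategy --- assume such an $f$ exists and exhibit graphs whose neighbourhood $\nbhdgraph(e;G)$ is fixed while $d_B\big(\thedesc{G},\thedesc{\wdop{c}{e}{G}}\big)$ is arbitrarily large --- is exactly the paper's, and your guiding intuition (the collapse creates new short paths through $v_e$ that change the death times of circuits supported entirely outside the neighbourhood) is also correct. The gap is in the concrete mechanism of item (ii). If the feature in question is the circuit $\repres{p_1}-\repres{p_2}$ formed by two vertex-disjoint, unit-weight directed paths of length $L=\Theta(n)$ from a source $s$ to a sink $t$, then opening a short directed path $s\leadsto v_e\leadsto t$ after the collapse does \emph{not} make that circuit die at time $O(1)$. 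For $\repres{p_1}-\repres{p_2}$ to become a boundary at time $T$ one needs directed triangles or long squares in $G^T$ bridging interior vertices of $p_1$ to interior vertices of $p_2$; any such generator requires a vertex $v_i$ along each $p_i$ reachable from a common start within $T$ and reaching a common end within $T$, and since $t$ is a sink and the paths are otherwise disjoint, this forces $T$ to be at least roughly $L/2$ whether or not the edge $(s,t)$ is present. Compare the example following Lemma~\ref{lem:death_bound}: what kills a two-path circuit early is a shortcut to the \emph{midpoints} of the paths, not a shortcut between their shared endpoints. So in your construction the big feature has death time $\Theta(n)$ both before and after the collapse, the bottleneck distance stays bounded, and no contradiction is reached.

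The paper uses a different lever, which you could adopt to repair the argument. It takes a fixed six-vertex graph containing a single heavy edge $(v_2,v_3)$ of weight $W$ lying outside $\nbhdgraph(e;G)$. The key observation is that a $1$-cycle whose support contains an edge of $G$ not yet present in $G^t$ cannot lie in $\im\zb{\bd}_2^t$, because boundaries of generators of $C_2(G^t)$ are supported on $E(G^t)$; since the only directed path $v_2\leadsto v_3$ in $G$ is the heavy edge itself, the feature it carries persists until exactly $t=W$. Collapsing $e=(v_0,v_5)$ identifies the source and sink and creates the short path $v_2\to v_e\to v_3$ of length $3$, so the heavy edge enters the filtration at $t=3$ and the feature dies there, while the remaining features move by at most a constant. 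Taking $W=2f(\nbhdgraph(e;G))+4$ --- legitimate since the heavy edge is not in the neighbourhood graph --- yields $d_B\geq\min(W/2,\,W-3)>f(\nbhdgraph(e;G))$. In short: replace your ``long detour'' consisting of many unit edges by a single heavy edge, so that the feature's death time equals the filtration time at which that edge appears, namely the shortest-path distance between its endpoints; that is the quantity an edge collapse far away can change drastically.
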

\begin{proof}
Suppose such $f$ exists  then 
consider the following weighted digraph $G$,
where $e\defeq(v_0, v_5)$ and $W\defeq 2f(\nbhdgraph(e; G))+4$.
\begin{figure}[H]
  \centering
  \begin{tikzpicture}[
  roundnode/.style={circle, fill=black, minimum size=4pt},
	squarenode/.style={fill=black, minimum size=4pt},
	inner sep=2pt,
	outer sep=1pt
  ]
  %

  \node (a) at (0, 0)[roundnode, label=left:$v_0$] {};
  \node (b) at (1, -1)[roundnode, label=left:$v_1$] {};
  \node (c) at (2, -1)[roundnode, label=right:$v_2$] {};
  \node (d) at (1, 1)[roundnode, label=left:$v_3$] {};
  \node (e) at (2, 1)[roundnode, label=right:$v_4$] {};
  \node (f) at (3, 0)[roundnode, label=right:$v_5$] {};

  \node (a2) at (6, 0)[roundnode, label=left:$v_e$] {};
  \node (b2) at (5.5, -1)[roundnode, label=left:$v_1$] {};
  \node (c2) at (6.5, -1)[roundnode, label=right:$v_2$] {};
  \node (d2) at (5.5, 1)[roundnode, label=left:$v_3$] {};
  \node (e2) at (6.5, 1)[roundnode, label=right:$v_4$] {};

  \draw[->, rounded corners, red] (c)--(2, -1.5)--(-1, -1.5)--(-1, 1.5) node[midway, sloped, above] {\tiny $W$} --(1, 1.5) --(d);
  \draw[->, blue] (a)--(b);
  \draw[->, blue] (b)--(c);
  \draw[->, blue] (a)--(d);
  \draw[->, blue] (d)--(e);
  \draw[->, blue] (c)--(f);
  \draw[->, blue] (e)--(f);
  \draw[->, blue] (a)--(f);

  \draw[->, rounded corners, red] (c2)--(6.5, -1.5)--(4.5, -1.5)--(4.5, 1.5) node[midway, sloped, above] {\tiny $W$} --(5.5, 1.5)--(d2);
  \draw[->, blue!50!red] (a2)--(b2) node[midway, sloped, above] {\tiny $1.5$};
  \draw[->, blue] (b2)--(c2);
  \draw[->, blue!50!red] (a2)--(d2) node[midway, sloped, below] {\tiny $1.5$};
  \draw[->, blue] (d2)--(e2);
  \draw[->, blue!50!red] (c2)--(a2) node[midway, sloped, above] {\tiny $1.5$};
  \draw[->, blue!50!red] (e2)--(a2) node[midway, sloped, below] {\tiny $1.5$};


  \node[] at (1.5, -2) {$G$};
  \node[] at (6, -2) {$\wdop{c}{e}{G}$};

\end{tikzpicture}
  \caption{An example weighted digraph which illustrates that $\zbpershommap$ is locally unstable to arbitrary edge collapses.
  Unlabelled edges have weight $1$.}\label{fig:collapse_instability}
\end{figure}
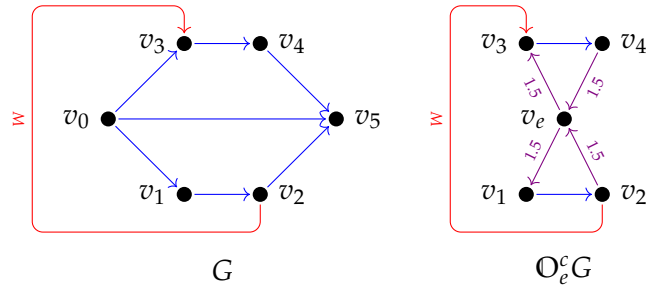\noindent
Note, $G$ has 3 features, which die at $2$, $2$ and $W$,
while $\wdop{d}{e}{G}$ has 3 features, which die at $2.5$, $2.5$ and $3$.
The longer feature, supported on the red edge $(v_2, v_3)$, has a reduced death-time in $\wdop{d}{e}{G}$ because there is a shortcut $(v_2, v_e, v_3)$, of length $3$.
Any bijection between these features (and the diagonals) must have bottleneck cost at least $\min(W/2, W-3) > f(\nbhdgraph(e; G))$.
\end{proof}

While this seems like a serious problem for our descriptor, note that the collapse in Figure~\ref{fig:collapse_instability} makes significant changes to the topology of the underlying digraph.
Originally, $G$ was a DAG with source $v_0$ and sink $v_5$; the edge collapse identified these two nodes and introduced directed cycles.
Moreover, in $G$ the only path $v_2\leadsto v_3$ was via the costly red edge but in $\wdop{d}{e}{G}$ there is a shortcut via $v_e$.
Therefore, since the profile of paths has changed drastically, it is arguably desirable that our descriptor changes too.

\subsection{Edge deletion}\label{sec:stability_edge_deletion}
\subsubsection{General case}

Another important class of structural perturbations is edge deletion.
Intuitively, as with edge collapse, some edge deletions can have drastic impact on the descriptor whereas some deletions are minor events.

\begin{defin}
Given a weighted digraph $G=(V, E, w)$ and an edge $e\in E$, we define
$\mdf{\wdop{d}{e}{G}}\defeq (V, E\setminus \{e\}, w_e)$ where $w_e$ is obtained by restricting $w$ to $E\setminus \{e\}$.
\end{defin}

We find that our descriptor is stable to deletions but the bound depends on the minimum length of a possible diversion.
In general, this diversion cost may be infinite.

\begin{theorem}\label{thm:edge_deletion_stability}
Given a weighted digraph $G=(V, E, w)\in\WDgr$ and an edge $e=(a, b)\in E$, 
let $d$ and $d_e$ denote the shortest-path quasimetric for $G$ and $\wdop{d}{e}{G}$ respectively.
Assume that $d_e(a, b)$ is finite, then
\begin{equation}
  d_B\big( \thedesc{G} , \thedesc{\wdop{d}{e}{G}}\big)
  \leq d_e(a, b).
\end{equation}
\end{theorem}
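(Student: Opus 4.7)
The plan is to construct a $\delta$-interleaving between $\zb{C}(G)$ and $\zb{C}(\wdop{d}{e}{G})$ at the chain level, where $\delta \defeq d_e(a,b)$, and then apply the isometry theorem. Since both digraphs share the same vertex set $V$, natural candidates are the identity vertex maps $f: V(G) \to V(\wdop{d}{e}{G})$ and $g: V(\wdop{d}{e}{G}) \to V(G)$; denote $G' \defeq \wdop{d}{e}{G}$. The goal is to invoke Lemma~\ref{lem:shift_induce_comp} by showing $f$ is a $\delta$-shifting vertex map and $g$ is a $0$-shifting (hence also $\delta$-shifting, by Lemma~\ref{lem:shift_induce_cont}) vertex map.

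The map $g$ is straightforward: $G'$ is an edge-subgraph of $G$, so any path in $G'$ is a path in $G$ of the same length, and $g$ therefore induces digraph maps $(G')^t \to G^t$ and $G' \cup (G')^t \to G \cup G^t$ for all $t \geq 0$. The substantive content lies in checking $f$. For the filtration inclusion $G^t \to (G')^{t+\delta}$: given $(i,j) \in E(G^t)$ realised by a shortest path $p$ in $G$ of length at most $t$, I may assume $p$ is simple and hence traverses $e$ at most once; if $p$ avoids $e$, it survives in $G'$ unchanged, while if it uses $e$, I substitute a shortest path from $a$ to $b$ in $G'$ (of length exactly $\delta$, by hypothesis) to produce a path in $G'$ of length at most $t + \delta - w(e) \leq t + \delta$. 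For the map $G \cup G^t \to G' \cup (G')^{t+\delta}$, only the edge $e$ itself requires attention, but $d_e(a,b) = \delta$ places $e$ in $E((G')^\delta) \subseteq E((G')^{t+\delta})$ for every $t \geq 0$.

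With $f$ and $g$ established as $\delta$-shifting, Lemma~\ref{lem:shift_induce_comp} yields chain maps $\shiftinduced{ch}{f}{\delta}: \zb{C}(G) \to \shifted{\zb{C}(G')}{\delta}$ and $\shiftinduced{ch}{g}{\delta}: \zb{C}(G') \to \shifted{\zb{C}(G)}{\delta}$. Since both $g \circ f$ and $f \circ g$ equal the identity vertex map on $V$, equation~(\ref{eq:shift_functoriality_comp}) of Lemma~\ref{lem:shift_induce_comp} identifies each composed chain map with the chain map induced by the identity vertex map viewed as a $2\delta$-shifting map, which is exactly the $2\delta$-transition morphism. Thus
\begin{equation}
\shifted{\shiftinduced{ch}{g}{\delta}}{\delta} \circ \shiftinduced{ch}{f}{\delta} = \transmorph{\zb{C}(G)}{2\delta}
\quad\text{ and }\quad
\shifted{\shiftinduced{ch}{f}{\delta}}{\delta} \circ \shiftinduced{ch}{g}{\delta} = \transmorph{\zb{C}(G')}{2\delta}.
\end{equation}
Applying $\Funct{\Rposet}{H_1}$ produces a $\delta$-interleaving of $\zbpershom{G}$ and $\zbpershom{G'}$, and the isometry theorem (Theorem~\ref{thm:isometry}) delivers the bottleneck bound.

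The main obstacle is the detour argument verifying $f$ is $\delta$-shifting: one must justify that a shortest path can be taken simple (so $e$ is traversed at most once) and control the resulting length after substituting the detour. Everything else reduces to bookkeeping using the interleaving framework already developed for Theorems~\ref{thm:pertub_stability} and~\ref{thm:subdiv_stability}, with the pleasant simplification that the chain-level interleaving holds on the nose rather than only up to boundaries.
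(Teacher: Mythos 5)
Your proposal is correct and follows essentially the same route as the paper: both use the identity vertex maps as $\delta$-shifting maps with $\delta = d_e(a,b)$, verify the detour substitution bounds the change in the quasimetric by $\delta$, note that the deleted edge $e$ reappears in $(G')^{t+\delta}$ so the union-level digraph map exists, and conclude via the interleaving machinery and the isometry theorem. The only cosmetic difference is that you observe $g$ is genuinely $0$-shifting and then promote it via Lemma~\ref{lem:shift_induce_cont}, whereas the paper treats both directions symmetrically as $\delta$-shifting; the substance is identical.
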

\begin{proof}
Denote $G_e\defeq\wdop{d}{e}{G}=(V_e, E_e, w_e)$ and $\delta\defeq d_e(a, b)$.
We first note that
\begin{equation}\label{eq:metric_difference}
 \max_{i, j \in V} \abs{d_e(i, j) - d(i, j)} \leq \abs{d_e(a, b) - d(a, b)} \leq d_e(a, b)=\delta.
\end{equation}
To see this, note that for arbitrary $i, j \in V$ we have $d_e(i, j) \geq d(i, j)$ since there are strictly fewer paths $i\leadsto j$ in $G_e$ than in $G$.
Moreover, there is path $p_e:a\leadsto b$ in $G_e$ of length at most $\delta$.
Then, given a path $p:i\leadsto j$ in $G$ of length $t$, the path contains $e$ at most once.
We can replace $e$ with $p_e$ to obtain a new path $i\leadsto j$ in $G_e$ of length at most $t+(\delta - w(e))\leq t+\delta$.
Therefore, $d_e(i, j) \leq d(i, j) + \delta$.

We claim that $\id_{V}$ constitutes a $d_e(a, b)$-shifting vertex map $G\to G_e$ and $G_e \to G$.
Then, by a similar argument to that of Theorem~\ref{thm:pertub_stability}, we obtain the result via the isometry theorem.
The inequalities of (\ref{eq:metric_difference}) automatically imply that
$\id_V$ defines a digraph map $G^t \to G_e^{t+\delta}$ and $G_e^t \to G^{t+\delta}$ for all $t\geq 0$.

Then $E_e \subseteq E$, so $\id_V$ certainly defines a digraph map $G_e \to G$ and thus $G_e \cup G_e^t \to G \cup G^{t+\delta}$ for all $t\geq 0$.
In the other direction, $E\setminus E_e = \{ e\}$ so $\id_V$ does \emph{not} define a digraph map $G \to G_e$
However $e\in E(G_e^{t+\delta})$ for all $t\geq 0$ and hence $\id_V$ \emph{does} define a digraph map $G\cup G^t \to G_e \cup G_e^{t+\delta}$ for all $t\geq 0$.
\end{proof}

In general, Theorem~\ref{thm:edge_deletion_stability} is a non-local bound, but if an edge has an alternative route in its local neighbourhood then the bound becomes local.

\begin{cor}\label{cor:local_edge_deletion}
Given a weighted digraph $G=(V, E, w)$ and an edge $e=(a, b)\in E$ such that there exists a vertex $v\in V$ such that $a \to v \to b$ then
\begin{equation}
  d_B\big( \thedesc{G} , \thedesc{\wdop{d}{e}{G}}\big)
  \leq w(a, v) + w(v, b).
\end{equation}
\end{cor}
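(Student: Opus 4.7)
The plan is to obtain the bound as a direct consequence of Theorem~\ref{thm:edge_deletion_stability}. That theorem gives the bound $d_B(\thedesc{G}, \thedesc{\wdop{d}{e}{G}}) \leq d_e(a,b)$ whenever $d_e(a,b)$ is finite, where $d_e$ is the shortest-path quasimetric on $\wdop{d}{e}{G}$. So it suffices to exhibit a short path from $a$ to $b$ in the deleted graph and control its length.

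The hypothesis $a \to v \to b$ gives edges $(a,v), (v,b) \in E$. First I would observe that $v \neq a$ and $v \neq b$, since $G$ is a simple digraph (no self-loops). Consequently, the edges $(a, v)$ and $(v, b)$ cannot coincide with $e=(a,b)$, so both persist in $\wdop{d}{e}{G}$. Therefore the trail $(a, (a,v), v, (v,b), b)$ is a path $a \leadsto b$ in $\wdop{d}{e}{G}$ of length $w(a,v) + w(v,b)$, which is in particular finite. This gives $d_e(a,b) \leq w(a,v) + w(v,b) < \infty$.

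Having established that $d_e(a,b)$ is finite, the hypothesis of Theorem~\ref{thm:edge_deletion_stability} is met, and combining its conclusion with the above bound yields
\begin{equation}
d_B\big(\thedesc{G}, \thedesc{\wdop{d}{e}{G}}\big) \leq d_e(a,b) \leq w(a,v) + w(v,b)
\end{equation}
as required. There is no genuine obstacle here; the content is entirely in Theorem~\ref{thm:edge_deletion_stability}, and this corollary is simply pointing out that when a ``local detour'' around the deleted edge exists, the non-local bound becomes local (depending only on $\nbhdgraph(e; G)$).
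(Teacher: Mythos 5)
Your proof is correct and is exactly the argument the paper intends: the corollary is stated without proof as an immediate consequence of Theorem~\ref{thm:edge_deletion_stability}, since the two-edge detour $a \to v \to b$ survives the deletion of $e$ (as $v \neq a, b$ in a simple digraph) and witnesses $d_e(a,b) \leq w(a,v) + w(v,b)$. Your explicit check that neither $(a,v)$ nor $(v,b)$ coincides with $e$ is a nice touch of rigour the paper leaves implicit.
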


Next, we consider the scenario where an edge $e$ is subdivided and then one of the daughter edges $\tau\defeq\tau_{e, i}$ is deleted (as shown in Figure~\ref{fig:edge_subdivision_deletion}).
In the resulting weighted digraph $\wdop{d}{\tau}{G}$, there is no alternative path between the endpoints of $\tau$ so the bound from Theorem~\ref{thm:edge_deletion_stability} would be infinite.
However, we can bound the effect of such an operation. 

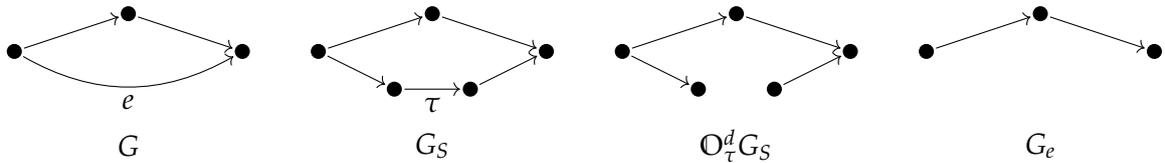
\begin{figure}[ht]
  \centering
  \begin{tikzpicture}[
  roundnode/.style={circle, fill=black, minimum size=4pt},
	squarenode/.style={fill=black, minimum size=4pt},
	inner sep=2pt,
	outer sep=1pt
  ]
  \node (a) at (0, 0) [roundnode] {};
  \node (b) at (1.5, 0.5) [roundnode] {};
  \node (c) at (3, 0) [roundnode] {};
  \node (d) at (1, -0.5) [roundnode] {};
  \node (e) at (2, -0.5) [roundnode] {};

  \node (a2) at (4, 0) [roundnode] {};
  \node (b2) at (5.5, 0.5) [roundnode] {};
  \node (c2) at (7, 0) [roundnode] {};
  \node (d2) at (5, -0.5) [roundnode] {};
  \node (e2) at (6, -0.5) [roundnode] {};

  \node (a3) at (-4, 0) [roundnode] {};
  \node (b3) at (-2.5, 0.5) [roundnode] {};
  \node (c3) at (-1, 0) [roundnode] {};

  \node (a4) at (8, 0) [roundnode] {};
  \node (b4) at (9.5, 0.5) [roundnode] {};
  \node (c4) at (11, 0) [roundnode] {};

  \draw[->] (a)--(b);
  \draw[->] (b)--(c);
  \draw[->] (a)--(d);
  \draw[->] (d)-- node[anchor=north, black] {$\tau$} (e);
  \draw[->] (e)--(c);

  \draw[->] (a2)--(b2);
  \draw[->] (b2)--(c2);
  \draw[->] (a2)--(d2);
  \draw[->] (e2)--(c2);

  \draw[->] (a3)--(b3);
  \draw[->] (b3)--(c3);
  \draw[->] (a3) to[bend right] node[anchor=north, black] {$e$} (c3);

  \draw[->] (a4)--(b4);
  \draw[->] (b4)--(c4);

  \node[] at (1.5, -1.25) {$G_S$};
  \node[] at (5.5, -1.25) {$\wdop{d}{\tau}{G_S}$};
  \node[] at (-2.5, -1.25) {$G$};
  \node[] at (9.5, -1.25) {$G_e$};
\end{tikzpicture}
  \caption{Schematic of the four weighted digraphs in Theorem~\ref{thm:subdiv_then_delete}, where $e$ is the curved bottom edge and $\tau$ is as labelled.}
  \label{fig:edge_subdivision_deletion}
\end{figure}

\begin{theorem}\label{thm:subdiv_then_delete}
  Given a weighted digraph $G=(V, E, w)\in\WDgr$, an edge $e=(a, b)\in E$ and a subdivision $S: \{e \} \to \Delta^d$,
  denote $G_S \defeq \wdop{s}{S}{G}$ and $G_e\defeq \wdop{d}{e}{G}$. 
  Let $d_e$ denote the shortest-path quasimetric in $G_e$.
  Choose any of the daughter edges $\tau\defeq\tau_{e, i} \in E(G_S)$, then
  \begin{equation}
  d_B\big( \thedesc{G_S} , \thedesc{\wdop{d}{\tau}{G_S}}\big)
  \leq
  \max\left(d_e(a, b) , \frac{w(e)}{2}\right) \eqdef \delta .
  \end{equation}
\end{theorem}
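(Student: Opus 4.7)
The plan is to produce a $\delta$-interleaving between $\zbpershom{G_S}$ and $\zbpershom{H}$ for $H \defeq \wdop{d}{\tau}{G_S}$, and then invoke the isometry theorem (Theorem~\ref{thm:isometry}). Following the template of Lemma~\ref{lem:shift_induce_comp}, I would build the interleaving from two vertex maps: let $f: V(H) \to V(G_S)$ be the identity, and let $g: V(G_S) \to V(H)$ round each subdivision vertex by setting $g(v) = v$ for $v \in V$, $g(v_{e,j}) = a$ for $j < i$, and $g(v_{e,j}) = b$ for $j \geq i$.

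First I would verify the shifting conditions. Since $H$ is obtained from $G_S$ by deleting the single edge $\tau$, $H$ is a subgraph of $G_S$ on the same vertex set, so $f$ is $0$-shifting and, by Lemma~\ref{lem:shift_induce_cont}, $\delta$-shifting for any $\delta \geq 0$. For $g$, edges in $E \setminus \{e\}$ are preserved, daughter edges other than $\tau$ become self-loops, and $\tau$ is sent to $(a, b) = e$; since the daughter chain is severed in $H$, one has $d_H(a, b) = d_e(a, b)$, so $e \in E(H^{d_e(a, b)})$. Moreover any path in $G_S$ that traverses $\tau$ can be rerouted in $H$ by substituting the shortest $a \leadsto b$ detour in $G_e$, lengthening the walk by at most $d_e(a, b)$. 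Hence $g$ is $d_e(a, b)$-shifting and, via Lemma~\ref{lem:shift_induce_cont}, also $\delta$-shifting, yielding $\phi \defeq \shiftinduced{ch}{g}{\delta}$ and $\psi \defeq \shiftinduced{ch}{f}{\delta}$.

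The heart of the argument is verifying, on homology, that $\phi$ and $\psi$ form a $\delta$-interleaving. For the identity governing the composition ending in $\zbpershom{H}$, the key observation is that in $H$ the vertex $v_{e,i-1}$ has no outgoing edges and $v_{e,i}$ has no incoming edges; propagating the degree constraints along the two severed segments of the daughter chain shows that no simple undirected circuit in $H$ traverses any daughter edge. By Corollary~\ref{cor:decreasing_curves}(b), $\zbcycles{H}{0}$ is therefore supported on $E \setminus \{e\}$, where $g$ acts as the identity, so the composition agrees with the transition morphism on representatives. For the identity ending in $\zbpershom{G_S}$, Theorem~\ref{thm:rep_of_cycles} reduces the check to a persistence basis of simple undirected circuits in $G_S$: each such circuit either avoids the daughter chain (and is fixed by $g_*$) or traverses all $d$ daughter edges with a common sign $\alpha \in \{\pm 1\}$, in which case
\begin{equation*}
 c - g_*(c) = \alpha\Bigl(\sum_{j=1}^d \tau_{e,j} - (a, b)\Bigr)
\end{equation*}
lies in $\zbbdrs{G_S}{w(e)}$ by Lemma~\ref{lem:paths_homologous} applied to the daughter path of length $w(e)$. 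Since $2\delta \geq w(e)$, this difference vanishes in $\zbhom{G_S}{2\delta}$.

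The main obstacle will be the careful bookkeeping in the shifting verification for $g$, in particular handling the endpoint cases $i = 1$ and $i = d$ (where $v_{e,i-1}$ or $v_{e,i}$ coincides with $a$ or $b$, so the sink/source argument must be phrased in terms of the first pure subdivision vertex along the daughter chain), as well as tracking that a path in $G_S^t$ traversing $\tau$ really does become a walk in $H$ of length at most $t + d_e(a, b)$. Once this is settled, the two requirements on $\delta$---namely $\delta \geq d_e(a, b)$ from the shifting of $g$ and $\delta \geq w(e)/2$ from the boundary witness of Lemma~\ref{lem:paths_homologous}---combine into $\delta = \max\bigl(d_e(a, b), w(e)/2\bigr)$, and the isometry theorem yields the stated bottleneck bound.
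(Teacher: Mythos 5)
Your proposal is correct, and its core -- the inclusion/rounding pair of vertex maps, the rerouting of paths through the shortest $a\leadsto b$ detour in $G_e$ at cost at most $d_e(a,b)$, and the use of Lemma~\ref{lem:paths_homologous} (forcing $2\delta\geq w(e)$) followed by the isometry theorem -- is the same interleaving argument the paper gives. The one genuine difference is organizational: the paper first deletes the dangling daughter edges and the resulting isolated vertices via Corollary~\ref{cor:edge_deletion_disconnect} and Corollary~\ref{cor:isolated_vertex_deletion}, replacing $\wdop{d}{\tau}{G_S}$ by the isomorphic $G_e$ before interleaving, so that $g\circ f$ is literally the identity vertex map and the composite ending in $\zbpershom{G_e}$ is immediately the transition morphism. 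You interleave $G_S$ with $H=\wdop{d}{\tau}{G_S}$ directly, so $g\circ f$ rounds the surviving daughter vertices and is not the identity; you compensate correctly by noting that the severed chain is a pendant tree, hence $\zbcycles{H}{0}$ is supported on $E\setminus\{e\}$ where $g\circ f$ fixes every edge, which by Lemma~\ref{prop:cycles_born_at_0} suffices to identify the composite with the transition morphism. Your route avoids the two decomposition corollaries at the cost of this extra (easy) homological check; the paper trades that check for the preliminary reduction to $G_e$. The endpoint bookkeeping you flag for the shifting verification of $g$ is precisely the case analysis the paper carries out in its first claim, and your rounding threshold at the deleted edge (rather than at the cumulative-weight midpoint used in Theorem~\ref{thm:subdiv_stability}) is the natural choice here and works, since the rerouting cost is controlled by $d_e(a,b)$ rather than by $w(e)/2$.
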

\begin{proof}
We first note that upon deleting $\tau$ from $G_S$, the remaining daughter edges and daughter vertices from the subdivision can be deleted, using Corollary~\ref{cor:edge_deletion_disconnect} and Corollary~\ref{cor:isolated_vertex_deletion}.
The remaining graph is precisely $G_e$ and so
$\zbpershom{\wdop{d}{\tau}{G_S}} \cong \zbpershom{G_e}.$
Hence, it remains to prove
\begin{equation}
 d_B\big(
 \thedesc{G_S} , \thedesc{G_e}
 \big)
  \leq \delta .
\end{equation}
The proof continues as an amalgamation of the proofs of Theorem~\ref{thm:subdiv_stability} and Theorem~\ref{thm:edge_deletion_stability}.
Define the following vertex maps
\begin{align}
    f: V(G_e) \to V(G_S) \quad & v \mapsto v; \\
    g: V(G_S) \to V(G_e) \quad & v \mapsto
    \begin{cases}
        v &\text{if } v\in V_{old},\\
        \st(e) & \text{if }v_{e, i} \in V_{new} \text{ and }CS(e)_i < 1/2, \\
        \fn(e) & \text{if }v_{e, i} \in V_{new} \text{ and }CS(e)_i \geq 1/2.
    \end{cases}
\end{align}
We note that $f$ induces a contraction digraph map $G_e \to G_S$
Hence, by Lemma~\ref{lem:shift_induce_cont}, it is a $\delta$-shifting vertex map and moreover,
$\shiftinduced{ch}{f}{\delta} = \transmorph{\zb{C}(G_s)}{\delta}\circ\zbdiginduc{ch}{f}$.

\begin{claim}
  $g$ induces a digraph map $G_S^{t} \to G_e^{t+\delta}$, for every $t\geq 0$.
\end{claim}
\begin{poc}
Given $(i, j)\in E(G_S^t)$, there is a path $p:i\leadsto j$ in $G_s$ of length at most $t$.
Since $d_e(a, b)\leq \delta$ there is a path $p_e: a\leadsto b$ in $G_e$ of length at most $\delta$.
We construct a new trail $p'$ in $G_e$ as follows.

If the entire sequence of daughter edges $(\tau_{e, 1}, \dots, \tau_{e, d(e)})$ appears in $p$ then we replace that sequence with $p_e$.
If $i \in V_{new}$ and $g(i)=a$ then we replace the initial sequence of daughter edges with $p_e$.
If $i \in V_{new}$ and $g(i) = b$ then we simply remove the initial sequence of daughter edges.
Likewise, if $j\in V_{new}$ and $g(j) = b$ then we replace the final sequence of daughter edges with $p_e$.
If $j \in V_{new}$ and $g(j) = a$ then we simply remove the final sequence of daughter edges.
This yields a trail $p': g(i) \leadsto g(j)$
Since $p$ cannot repeat edges, this construction inserts $p_e$ at most once and hence the length of $p'$ is at most $t+\delta$.
Therefore $(i, j) \in E(G_e^{t+\delta})$.
\end{poc}

\begin{claim}
  $g$ induces a digraph map $G_S\cup G_S^{t} \to G_e \cup G_e^{t+\delta}$, for every $t\geq 0$.
\end{claim}
\begin{poc}
  It remains to check the image of edge $e\in (G_S)$
Any un-subdivided edge $e\in E_{old}$ is preserved under $g$.
Given an edge $\tau_{e, i}=(x, y)\in E_{new}$ then $\tau=(v_{e, i-1}, v_{e, i})$ for some $i$ and there are three cases
\begin{equation}
  (g(v_{e, i-1}), g(v_{e, i})) = (\st(e), \st(e)) \text{ or } (\st(e), \fn(e)) \text{ or } (\fn(e), \fn(e)).
\end{equation}
Hence either $g(x) = g(y)$ or $(g(x), g(y)) = e$.
The edge $e$ does not appear in $G_e$ but it does appear in $G_e^{t+\delta}$ for all $t\geq 0$.
Therefore $g$ defines a digraph map as required.
\end{poc}

Again, we see $g \circ f = \id_{V(G)}$ and hence $\shiftinduced{hom}{g}{\delta} \circ \shiftinduced{hom}{f}{\delta} = \transmorph{\zbpershom{G_e}}{2\delta}$.
The proof that $\shiftinduced{hom}{f}{\delta}\circ\shiftinduced{hom}{g}{\delta}=\transmorph{\zbpershom{G_S}}{2\delta}$ is identical to the corresponding section in the proof of Theorem~\ref{thm:subdiv_stability}.
Note that we require $2\delta \geq w(e)$ so that we can apply Lemma~\ref{lem:paths_homologous} to show 
\begin{equation}
  \tau_{e, 1} + \dots + \tau_{e, d(e)} = e
  \pmod{\zbbdrs{G}{t+2\delta}},
\end{equation}
for all $t\geq 0$.
\end{proof}

In general, the shortest-path distance between the endpoints of an edge, upon its deletion, can depend on \emph{all} remaining edges in the graph.
Therefore, the bound of Theorem~\ref{thm:edge_deletion_stability} is non-local and indeed no generic, local stability theorem is possible.

\begin{theorem}\label{thm:edge_deletion_instability}
There exists no function $f:\WDgr \to \R$ such that for any digraph $G=(V, E, w)\in\WDgr$ and any edge $e\in E$ therein we have
\begin{equation}
  d_B\big( \thedesc{G} , \thedesc{\wdop{d}{e}{G}}\big)
  \leq f\big( \nbhdgraph(e; G) \big).
\end{equation}
\end{theorem}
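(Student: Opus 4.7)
The plan is to exhibit an explicit family $\{(G_n, e_n)\}_{n \geq 3}$ with $\nbhdgraph(e_n; G_n)$ isomorphic to a single fixed weighted digraph $N$ for every $n$, but with $d_B\bigl(\thedesc{G_n}, \thedesc{\wdop{d}{e_n}{G_n}}\bigr) \to \infty$. Any alleged $f$ would then take the constant value $f(N)$ on the family while the bottleneck distance is unbounded, a contradiction.

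For each $n \geq 3$, let $G_n$ have vertices $v_0, v_1, \ldots, v_n$ and edges consisting of the directed path $v_0 \to v_1 \to \cdots \to v_n$ together with the shortcut $e_n \defeq (v_0, v_n)$, all of unit weight. Since no edge of $G_n$ ends at $v_0$ and none leaves $v_n$, the neighbourhood of $e_n$ is $\nbhd(e_n; G_n) = \{e_n, (v_0, v_1), (v_{n-1}, v_n)\}$, spanning the four distinct vertices $\{v_0, v_1, v_{n-1}, v_n\}$; up to relabelling, $\nbhdgraph(e_n; G_n)$ is the same weighted digraph $N$ for all $n \geq 3$. Now $\wdop{d}{e_n}{G_n}$ has underlying undirected graph a path, so Corollary~\ref{cor:decreasing_curves} gives $\thedesc{\wdop{d}{e_n}{G_n}} = \emptyset$, while the underlying graph of $G_n$ has circuit rank $1$, so $\thedesc{G_n}$ is a single interval $[0, \death(\gamma))$ with $\gamma \defeq \sum_{i=1}^n (v_{i-1}, v_i) - (v_0, v_n)$ the representative of the unique simple undirected circuit.

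I claim $\death(\gamma) = \lceil n/2 \rceil$. For the upper bound, set $m \defeq \lceil n/2\rceil$; the distances $d(v_0, v_m) = m$, $d(v_m, v_n) = n - m \leq m$, and $d(v_0, v_n) = 1$ are all $\leq m$, so $v_0 v_m v_n \in \zb{C}_2(G_n, m)$ is a directed triangle, and Lemma~\ref{lem:paths_homologous} applied to the two halves of the path reduces $\gamma$ modulo $\zbbdrs{G_n}{m}$ exactly to $\partial_2(v_0 v_m v_n)$. The lower bound is the main obstacle. Any 2-chain whose boundary is $\gamma$ must contain a 2-cell in whose boundary $(v_0, v_n)$ appears with non-zero coefficient; since no edge terminates at $v_0$ and none issues from $v_n$ in any $G_n^t$, a case analysis against Proposition~\ref{prop:biv2_gens} rules out double edges and long squares, leaving only directed triangles of the form $v_0 v_l v_n$ with $0 < l < n$. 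Existence of such a triangle in $G_n^t$ requires $d(v_0, v_l) \leq t$ and $d(v_l, v_n) \leq t$, i.e.\ $\max(l, n-l) \leq t$, forcing $t \geq \lceil n/2 \rceil$.

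Combining, $d_B\bigl(\thedesc{G_n}, \thedesc{\wdop{d}{e_n}{G_n}}\bigr) = \lceil n/2\rceil / 2$, which exceeds $f(N)$ for all sufficiently large $n$, completing the contradiction.
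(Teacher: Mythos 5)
Your construction is correct, but it takes a genuinely different route from the paper's. The paper uses a single fixed $4$-cycle in which the diversion around the deleted unit-weight edge consists of one very heavy edge, with the weight $W$ chosen \emph{self-referentially} as $2f(\nbhdgraph(e;G))+2$; this defeats an arbitrary function $f$ in one stroke, since the heavy edge lies outside $\nbhdgraph(e;G)$ and so cannot influence $f$'s value. You instead keep all weights equal to $1$ and let the \emph{combinatorial} length of the diversion grow, producing a family $(G_n,e_n)$ with bounded neighbourhoods but death time $\lceil n/2\rceil$. Your extra work is in pinning down that death time: the upper bound via the triangle $v_0v_mv_n$ and Lemma~\ref{lem:paths_homologous} is routine, and your lower bound correctly observes that $v_0$ is a source and $v_n$ is a sink in every $G_n^t$, so among the generators of Proposition~\ref{prop:biv2_gens} only a directed triangle $v_0v_lv_n$ can contribute the edge $(v_0,v_n)$ to a boundary, forcing $t\geq\max(l,n-l)\geq\lceil n/2\rceil$. (In fact the cruder bound $\death(\gamma)\geq 2$ for... no --- you genuinely need the sharp unbounded growth, and you get it.) What your approach buys is a demonstration that the instability is purely structural and does not rely on a single large weight; what the paper's buys is brevity and immunity to the following small issue in yours: as literally stated, $f$ is an arbitrary function on $\Obj(\WDgr)$, and your neighbourhood graphs $\nbhdgraph(e_n;G_n)$ are only \emph{isomorphic}, not equal, so a pathological $f$ could assign them different values. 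This is trivially repaired --- relabel so that the first two and last two path vertices carry the same names for every $n$, making the neighbourhood graphs literally identical --- but you should say so explicitly rather than appeal to ``up to relabelling''.
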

\begin{proof}
Suppose such $f$ exists  then 
consider the following weighted digraph $G$,
where $e\defeq(v_1, v_3)$ and $W\defeq 2f(\nbhdgraph(e; G))+2$.
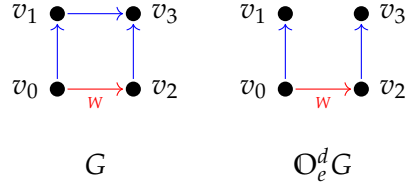
\begin{figure}[H]
  \centering
  \begin{tikzpicture}[
  roundnode/.style={circle, fill=black, minimum size=4pt},
	squarenode/.style={fill=black, minimum size=4pt},
	inner sep=2pt,
	outer sep=1pt
  ]
  \node (a) at (0, 0) [roundnode, label=left:$v_0$] {};
  \node (b) at (0, 1) [roundnode, label=left:$v_1$] {};
  \node (c) at (1, 1) [roundnode, label=right:$v_3$] {};
  \node (d) at (1, 0) [roundnode, label=right:$v_2$] {};

  \node (a2) at (3, 0) [roundnode, label=left:$v_0$] {};
  \node (b2) at (3, 1) [roundnode, label=left:$v_1$] {};
  \node (c2) at (4, 1) [roundnode, label=right:$v_3$] {};
  \node (d2) at (4, 0) [roundnode, label=right:$v_2$] {};

  \draw[->, blue] (a)--(b);
  \draw[->, blue] (b)--(c);
  \draw[->, blue] (d)--(c);
  \draw[->, red] (a)--(d) node[midway, sloped, below] {\tiny $W$};

  \draw[->, blue] (a2)--(b2);
  \draw[->, blue] (d2)--(c2);
  \draw[->, red] (a2)--(d2) node[midway, sloped, below] {\tiny $W$};



  \node[] at (0.5, -1) {$G$};
  \node[] at (3.5, -1) {$\wdop{d}{e}{G}$};
\end{tikzpicture}
  \caption{An example weighted digraph which illustrates that $\zbpershommap$ is locally unstable to arbitrary edge deletions.
  Unlabelled edges have weight $1$.}%
  \label{fig:edge_deletion_instability}
\end{figure}\noindent
Note, $G$ has a single feature which dies at time $W$, whereas $\wdop{d}{e}{G}$ has no features.
Therefore, the bottleneck distance is $W/2 > f(\nbhdgraph(e; G))$.
\end{proof}

\subsubsection{Separating edges}

Since all features are born at $t=0$ and $\zbcycles{G}{0}$ has a basis of simple undirected circuits, one might expect that edges never involved in such circuits can be safely deleted without changing the descriptor.
Indeed, this is the case and is a direct consequence of the wedge decomposition theorem.

\begin{defin}
  In a weighted digraph $G=(V, E, w)$, an edge $e=(a, b)\in E$ is called a \mdf{separating edge} if $a$ and $b$ are weakly disconnected in $\wdop{d}{e}{G}$.
\end{defin}

\begin{rem}
An edge is separating if and only if there are no simple undirected circuits containing it.
\end{rem}

\begin{cor}\label{cor:edge_deletion_disconnect}
Given a weighted digraph $G=(V, E, w)$ and a separating edge $e=(a, b)\in E$,
\begin{equation}
  \zbpershom{G} \cong \zbpershom{\wdop{d}{e}{G}}.
\end{equation}
\end{cor}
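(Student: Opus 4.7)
The plan is to adapt the proof of Theorem~\ref{thm:wedge_decomp} to the disjoint-union situation produced by a separating edge. Write $G'\defeq\wdop{d}{e}{G}$; since $e$ is separating, $G'=G_a\sqcup G_b$, where $G_a,G_b$ are the weak components of $G'$ containing $a,b$ respectively. This induces a vertex partition $V(G)=V(G_a)\sqcup V(G_b)$, and by Theorem~\ref{thm:union_decomp} it suffices to exhibit $\zbpershom{G}\cong\zbpershom{G_a}\oplus\zbpershom{G_b}$.

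To do this, I would introduce four morphisms in $\Cont\WDgr$: the inclusions $j_a:G_a\hookrightarrow G$ and $j_b:G_b\hookrightarrow G$, together with collapse maps $f_a:G\to G_a$ (the identity on $V(G_a)$, sending all of $V(G_b)$ to $a$) and $f_b:G\to G_b$ (defined symmetrically). A routine check on the shortest-path quasimetric --- using that any path in $G$ between $V(G_a)$ and $V(G_b)$ must traverse $e$, so $d_G(i,j)=d_{G_a}(i,a)+w(e)+d_{G_b}(b,j)$ in the only nontrivial case --- shows these are all contractions. By Theorem~\ref{thm:grd_functoriality} they induce persistent chain morphisms, which I bundle exactly as in the proof of Theorem~\ref{thm:wedge_decomp} into $F:\zb{C}(G)\to\zb{C}(G_a)\oplus\zb{C}(G_b)$ and $J:\zb{C}(G_a)\oplus\zb{C}(G_b)\to\zb{C}(G)$. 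The goal is then to show $\inducedhom{F}$ and $\inducedhom{J}$ are mutually inverse.

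One direction, $\inducedhom{F}\circ\inducedhom{J}=\id$, holds already at the chain level: $f_i\circ j_i=\id_{G_i}$, whereas $f_a\circ j_b$ and $f_b\circ j_a$ collapse an entire subgraph onto a single vertex and so send every edge to a self-loop, vanishing in degree $1$. For the converse $\inducedhom{J}\circ\inducedhom{F}=\id$, I would first verify the chain-level identity $(J\circ F)(\gamma)=\gamma$ at $t=0$: since $e$ is a bridge of $\underlying{G}$, Corollary~\ref{cor:decreasing_curves} identifies $\zbhom{G}{0}$ with the cycle space of $\underlying{G}$, which splits as the direct sum of cycle spaces of $\underlying{G_a}$ and $\underlying{G_b}$, and on each summand the identity is a direct check. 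To lift this to arbitrary $t$, for $[\gamma]\in\zbhom{G}{t}$ Proposition~\ref{prop:cycles_born_at_0} supplies $\gamma_0\in\zbcycles{G}{0}$ with $[\gamma]=\zbinclinduc{hom}{0}{t}[\gamma_0]$, and naturality of $J\circ F$ in $t$ then gives $H_1(J\circ F)[\gamma]=\zbinclinduc{hom}{0}{t}H_1(J\circ F)[\gamma_0]=[\gamma]$. Combined with Theorem~\ref{thm:union_decomp}, this produces the desired isomorphism.

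The main obstacle, in contrast to Theorem~\ref{thm:wedge_decomp}, is that no chain-level homotopy between $J\circ F$ and $\id$ exists: the edge $e$ is a bipartite edge of $G$ itself (rather than appearing only in $G^t$ for large $t$, as bipartite edges do in the wedge case), and $(J\circ F)(e)-e=-e$ is not a boundary at $t=0$. The argument is saved by Proposition~\ref{prop:cycles_born_at_0}, which reduces the homology-level identity to a chain-level check at $t=0$ --- exactly the time at which every cycle avoids $e$ and the problematic term cannot appear.
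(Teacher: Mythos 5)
Your argument is correct, but it takes a genuinely different route from the paper. The paper's proof is a two-line application of results already in hand: writing $G_1$ for the weak component of $a$ in $\wdop{d}{e}{G}$, $G_2$ for the single-edge graph on $\{a,b\}$ and $G_3$ for the induced subgraph on $V\setminus V_1$, it observes $G=(G_1\vee_a G_2)\vee_b G_3$ and $\wdop{d}{e}{G}=G_1\sqcup G_3$, then invokes Theorem~\ref{thm:wedge_decomp} twice and Theorem~\ref{thm:union_decomp} once, using that $\zbpershom{G_2}$ is trivial. You instead re-run the machinery of the wedge proof directly on the bridge decomposition, and --- having correctly diagnosed that the chain homotopy used there is unavailable here, since $\zb{C}_2(G,0)=0$ while $(J\circ F)(e)-e=-e$ is nonzero --- you replace it with a different device: verify $J\circ F=\id$ on cycles at $t=0$ (where no cycle can meet the bridge $e$), then propagate to all $t$ using naturality together with the surjectivity of $\zbinclinduc{hom}{0}{t}$ supplied by Proposition~\ref{prop:cycles_born_at_0}. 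That ``check an endomorphism of the persistent module only at $t=0$'' principle is sound and arguably more robust than the homotopy argument; it would in fact also streamline the hard direction of Theorem~\ref{thm:wedge_decomp} itself. What the paper's route buys is brevity: by packaging the separating edge as a wedge summand it lets the already-proven decomposition theorems do all the work. Two small points to tidy in your version: if $G$ is not weakly connected then $\wdop{d}{e}{G}$ need not equal $G_a\sqcup G_b$ on the nose, so either split off the remaining components via Theorem~\ref{thm:union_decomp} first or take $G_b$ to be the induced subgraph on $V\setminus V(G_a)$ (as the paper effectively does with $V_3$); and you should record that $f_a,f_b$ are well-defined digraph maps in the first place --- the only edge of $G$ between the two sides is $e$ itself, which collapses to a self-loop --- not only that they are contractions.
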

\begin{proof}
First note that $a$ and $b$ are both wedge vertices.
Let $V_1$ denote the vertices in the weak connected component of $a$ in $\wdop{d}{e}{G}$.
Define $V_2 \defeq \{ a, b \}$.
Finally, define $V_3 \defeq (V(G)\setminus V_1)$.
Let $G_1$, $G_2$ and $G_3$ denote the induced subgraph of $G$ on $V_1$, $V_2$ and $V_3$ respectively.

Then a wedge decomposition of $G$ is
$G = (G_1 \vee_a G_2) \vee_b G_3$
and a disjoint union decomposition of $\wdop{d}{e}{G}$ is
$\wdop{d}{e}{G} = G_1 \sqcup G_3$.
Note that $G_2$ is just a single edge connecting two vertices so $\zbpershom{G_2}$ is the trivial persistent vector space.
Using Theorem~\ref{thm:union_decomp} and Theorem~\ref{thm:wedge_decomp}, we see
\begin{equation}
 \zbpershom{G}
 \cong \zbpershom{G_1} \oplus \zbpershom{G_2} \oplus \zbpershom{G_3}
 \cong \zbpershom{G_1} \oplus \zbpershom{G_3}
 \cong \zbpershom{\wdop{d}{e}{G}}
\end{equation}
as required.
\end{proof}

\subsubsection{Interpretation}

\begin{figure}[ht]
  \centering
  \begin{tikzpicture}[
  roundnode/.style={circle, fill=black, minimum size=4pt},
	squarenode/.style={fill=black, minimum size=4pt},
	inner sep=2pt,
	outer sep=1pt
  ]

  \node (a) at (0, 0) [roundnode, label=left:$v_0$] {};
	\node (b) at (0.75, 1) [roundnode, label=above:$v_2$] {};
	\node (c) at (0.75, -1) [roundnode] {};
	\node (d) at (2.25, 1) [roundnode, label=above:$v_3$] {};
	\node (e) at (2.25, -1) [roundnode] {};
	\node (f) at (1.5, 0) [roundnode] {};
	\node (g) at (3, 0) [roundnode, label=right:$v_1$] {};

  \draw[->] (a)--(b);
  \draw[->] (a)--(c);
  \draw[->] (c)--(e);
  \draw[->] (b)--(d);
  \draw[->] (f)--(d);
  \draw[->] (f)--(e);
  \draw[->] (a)--(f);
  \draw[->] (f)--(g);
  \draw[->] (d)--(g);
  \draw[->] (e)--(g);
  \draw[->] (b)--(f);
  \draw[->] (c)--(f);
  \draw[->, red, rounded corners] (a)--(0.25, 1.75)--(2.75, 1.75) node[midway, above, sloped] {\tiny $5$} -- (g) ;

  \node(a2) at (5, 0) [roundnode, label=left:$w_0$] {};
  \node(b2) at (8, 0) [roundnode, label=right:$w_1$] {};
  \node(c2) at (5.5, -0.4) [roundnode] {};
  \node(d2) at (6.15, -0.6) [roundnode, label=below:$w_2$] {};
  \node(e2) at (6.85, -0.6) [roundnode, label=below:$w_3$] {};
  \node(f2) at (7.5, -0.4) [roundnode] {};

  \draw[->, red] (5.1, 0.1) arc (130:50:2.18) node[midway, above, sloped] {\tiny $10$};
  \draw[->, red!20!blue] (a2)--(c2) node[midway, above, sloped] {\tiny $2$};
  \draw[->, red!20!blue] (c2)--(d2) node[midway, above, sloped] {\tiny $2$};
  \draw[->, red!20!blue] (d2)--(e2) node[midway, above, sloped] {\tiny $2$};
  \draw[->, red!20!blue] (e2)--(f2) node[midway, above, sloped] {\tiny $2$};
  \draw[->, red!20!blue] (f2)--(b2) node[midway, above, sloped] {\tiny $2$};

  \node[] at (1.5, -1.5) {$G_1$};
  \node[] at (6.5, -1.5) {$G_2$};
\end{tikzpicture}
  \caption{Some example weighted digraphs, used to interpret the consequences of the stability theorems obtained in Section~\ref{sec:stability_edge_deletion}. Unlabelled edges have weight $1$.}
  \label{fig:edge_deletion_remark}
\end{figure}
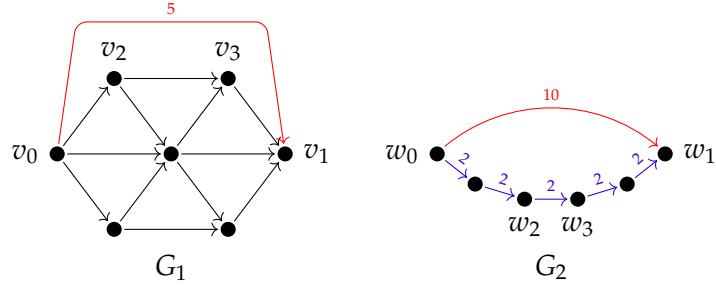

Theorem~\ref{thm:edge_deletion_stability} tells us that we are stable to deleting edges which have fast diversions.
That is, if there is a path $p:i\leadsto j$, not involving the edge $e\defeq(i, j)$, of length $\delta$, then removing $e$ changes the barcode by at most $\delta$ in bottleneck distance.
Note, this bound is independent of the weight of the deleted edge $w(e)$.

To illustrate this point, consider $G_1$ in Figure~\ref{fig:edge_deletion_remark}.
Removing $(v_2, v_3)$ incurs a bottleneck cost of at most $2$, since there is a diversion of length $2$.
Likewise, despite being a highly-weighted edge, we can also remove $(v_0, v_1)$ for a bottleneck cost of at most $2$.

On the other hand, consider now $G_2$ in Figure~\ref{fig:edge_deletion_remark}. 
The edge $(w_0, w_1)$ has a high weight and the only diversion is via the black edges, of length $10$.
Deleting the edge $(w_0, w_1)$ incurs a bottleneck cost of $5$ since it removes the sole feature.
Moreover, deleting one of the smaller edges (for example $(w_2, w_3)$) also incurs a bottleneck cost of $5$ since it deletes the same feature.

\subsection{Vertex deletion}
\begin{defin}
Given a weighted digraph $G=(V, E, w)$ and a vertex $v\in V$, we define
$\mdf{\wdop{d}{v}{G}}\defeq (V\setminus \{v\}, E_v, w_v)$ where $E_v \defeq E \cap (V\setminus \{v\})\times (V\setminus \{v\})$ and $w_v$ is obtained by restricting $w$ to $E_v$.
\end{defin}

Since a single vertex graph has trivial \grpph\, the disjoint union decomposition theorem (Theorem~\ref{thm:union_decomp}) allows us to delete isolated vertices.

\begin{cor}\label{cor:isolated_vertex_deletion}
Given a weighted digraph $G=(V, E, w)$ and an isolated vertex $v_i\in V$ (i.e.~$\nbhd(v)=\emptyset$), then
\begin{equation}
  \thedesc{G} \cong \thedesc{\wdop{d}{v_i}{G}}.
\end{equation}
\end{cor}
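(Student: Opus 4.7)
The plan is to deduce this directly from the disjoint union decomposition, Theorem~\ref{thm:union_decomp}. Since $v_i\in V$ is isolated, no edge of $G$ is incident to $v_i$, and therefore the vertex partition $V = (V\setminus\{v_i\}) \sqcup \{v_i\}$ certifies a disjoint union decomposition of weighted digraphs
\begin{equation}
G \;=\; \wdop{d}{v_i}{G} \;\sqcup\; G_{v_i},
\end{equation}
where $G_{v_i} = (\{v_i\}, \emptyset, \emptyset)$ is the single-vertex weighted digraph on $v_i$.

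Next, I would observe that $\zbpershom{G_{v_i}}$ is the zero persistent vector space. Indeed, for every $t \geq 0$ the shortest-path filtration satisfies $G_{v_i}^t = G_{v_i}$, which has no edges, so $\zb{C}_1(G_{v_i}, t) = C_1(G_{v_i} \cup G_{v_i}^t) = 0$. Consequently $\zb{H}_1(G_{v_i}, t) = 0$ for all $t$, and the transition maps are all trivial.

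Applying Theorem~\ref{thm:union_decomp} to the above decomposition then yields
\begin{equation}
\zbpershom{G} \;\cong\; \zbpershom{\wdop{d}{v_i}{G}} \oplus \zbpershom{G_{v_i}} \;\cong\; \zbpershom{\wdop{d}{v_i}{G}},
\end{equation}
and passing to the associated barcodes (via the structure theorem, Theorem~\ref{thm:pvs_decompose}) gives $\thedesc{G} \cong \thedesc{\wdop{d}{v_i}{G}}$. There is no real obstacle here; the only subtle point is verifying that an isolated vertex genuinely contributes a trivial summand at every filtration index, which is immediate from the definition of the grounded chain complex.
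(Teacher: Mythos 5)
Your proof is correct and follows essentially the same route as the paper, which likewise justifies this corollary by noting that a single-vertex graph has trivial \grpph\ and invoking the disjoint union decomposition theorem (Theorem~\ref{thm:union_decomp}). Your explicit verification that $\zb{C}_1(G_{v_i}, t) = 0$ for all $t$ is a correct (and slightly more detailed) spelling-out of the paper's one-line justification.
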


However, in general, deleting a vertex from a digraph can drastically change its topology.
This follows immediately from Theorem~\ref{thm:subdiv_stability} and Theorem~\ref{thm:edge_deletion_instability} since a local vertex deletion stability theorem would imply a local edge deletion stability theorem.

\begin{cor}\label{cor:vertex_deletion_instability}
There exists no function $f:\WDgr \to \R$ such that for any digraph $G=(V, E, w)$ and any vertex $v\in V$ therein we have
\begin{equation}
  d_B\big( \thedesc{G} , \thedesc{\wdop{d}{v}{G}}\big)
  \leq f\big( \nbhdgraph(v; G) \big).
\end{equation}
\end{cor}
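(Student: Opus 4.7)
The plan is to argue by contradiction. Assume such a function $f$ exists; I will derive from it a function which bounds edge-deletion locally, contradicting Theorem~\ref{thm:edge_deletion_instability}. The trick is to realise the effect of deleting an edge by first subdividing it and then deleting the resulting midpoint vertex, converting an edge-deletion into a (single) vertex-deletion whose local neighbourhood is controlled by the local neighbourhood of the original edge.

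Given any weighted digraph $G \in \WDgr$ and any edge $e = (a,b) \in E(G)$, let $S : \{e\} \to \Delta^2$ be the subdivision with $S(e) = (1/2, 1/2)$, and set $G' := \wdop{s}{S}{G}$. Let $v_e$ denote its new vertex. The crucial observation is that $\wdop{d}{v_e}{G'} = \wdop{d}{e}{G}$ as weighted digraphs, because deleting $v_e$ from $G'$ removes precisely the two incident daughter edges $(a, v_e)$ and $(v_e, b)$, leaves the vertex set $V(G)$, and does not touch any other edge or weight in $G$. The triangle inequality, together with Theorem~\ref{thm:subdiv_stability} and the hypothetical local bound on vertex deletion, then gives
\begin{equation*}
  d_B\big(\thedesc{G}, \thedesc{\wdop{d}{e}{G}}\big) \leq d_B\big(\thedesc{G}, \thedesc{G'}\big) + d_B\big(\thedesc{G'}, \thedesc{\wdop{d}{v_e}{G'}}\big) \leq w(e) + f\big(\nbhdgraph(v_e; G')\big).
\end{equation*}

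The key step is verifying that the right-hand side is determined by $\nbhdgraph(e; G)$, so that it gives a local edge-deletion bound. Since $e$ itself lies in $\nbhdgraph(e; G)$ (together with its weight), the quantity $w(e)$ is part of that data. Furthermore, by construction $\nbhd(v_e; G') = \{a, b\}$, so $\nbhdgraph(v_e; G')$ is the induced subgraph of $G'$ on $\{a,b\}$; the edges between $a$ and $b$ in $G'$ are exactly those of $G$ other than $e$, all of which appear in $\nbhdgraph(e; G)$ with their weights. Hence $\tilde f\big(\nbhdgraph(e; G)\big) := w(e) + f\big(\nbhdgraph(v_e; G')\big)$ is a well-defined function of $\nbhdgraph(e; G)$, providing the forbidden local edge-deletion bound and contradicting Theorem~\ref{thm:edge_deletion_instability}. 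The only mild obstacle here is this bookkeeping — ensuring that subdividing only $e$ (not any other edges) keeps $\nbhdgraph(v_e; G')$ accessible from $\nbhdgraph(e; G)$ — but it follows immediately from the definitions.
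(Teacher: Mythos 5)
Your proposal is correct and is precisely the argument the paper intends: the paper derives the corollary from Theorem~\ref{thm:subdiv_stability} and Theorem~\ref{thm:edge_deletion_instability} by observing that a local vertex-deletion bound would yield a local edge-deletion bound, and your subdivide-then-delete-the-midpoint construction is exactly how that implication works. The additional bookkeeping you supply (that $\wdop{d}{v_e}{G'}=\wdop{d}{e}{G}$ and that $\nbhdgraph(v_e;G')$ is determined by $\nbhdgraph(e;G)$) is accurate and fills in the details the paper leaves implicit.
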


\section{Examples}\label{sec:examples}

\subsection{Iterated medial subdivision}\label{sec:asymptotic}

In order to develop intuition for how the descriptor behaves under iterative subdivision, we explicitly derive the limiting diagram for a DAG with exactly one loop (shown in Figure~\ref{fig:two_paths}).
Certainly, the diagram contains exactly one feature which is born at $t=0$.
Intuitively, the death time corresponds to the earliest time that a long square can appear between the source and sink nodes, filling in the central hole.

\begin{figure}[htbp]
  \centering
  \begin{tikzpicture}[
	roundnode/.style={circle, fill=black, minimum size=4pt},
	roundnodered/.style={circle, fill=red, minimum size=1pt},
	inner sep=2pt,
	outer sep=1pt
	]
	\node (a) at (0, 0) [roundnode, label=left:$a$] {};
	\node (b) at (4, 0) [roundnode, label=right:$b$] {};
  \node (mid1) at (2, 0.5) [roundnodered] {};
  \node (mid2) at (2, -0.5) [roundnodered] {};

  \draw[->, rounded corners] (a) -- (0.5, 1.2) -- (1.5, 0.9) -- (mid1) -- (2.5, 1.1) -- (3.5, 0.8) -- (b);
  \draw[->, rounded corners] (a) -- (0.3, -0.9) -- (0.7, -0.4) -- (1.5, -0.9) -- (mid2) -- (2.9, -1.0) -- (3.2, -0.9) -- (b);

  \draw[->, dashed, red] (a) -- (mid1);
  \draw[->, dashed, red] (a) -- (mid2);
  \draw[->, dashed, red] (mid1) -- (b);
  \draw[->, dashed, red] (mid2) -- (b);

  \node[] at (2, 1) {$l_1$};
  \node[] at (2, -1) {$l_2$};
\end{tikzpicture}
  \caption{Illustration of the weighted digraph $G$, considered in Proposition~\ref{prop:asymptotic_example}.
    The top path $p_1$ has length $l_1$ and the bottom path $p_2$ has length $l_2$.
    The limiting death time of the sole feature corresponds to the limiting value of the earliest time that a long square appears of the form $av_1 b - av_2 b$ where $v_1$ is along $p_1$ and $v_2$ is along $p_2$ (as drawn in red) dashed lines).
  }
  \label{fig:two_paths}
\end{figure}
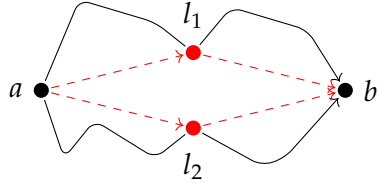

\begin{prop}\label{prop:asymptotic_example}
  Suppose $G\in\WDag$ is the union two directed paths $p_1, p_2$ from a source to a sink, with lengths $l_1 \geq l_2$ respectively.
  Recall the definition of iterated medial subdivision (Definition~\ref{defin:ims}).
  Then
  \begin{equation}
    \lim_{n\to\infty}\thedesc{\IMS{G}{n}}
    =
    \ms{
      \left[0, \frac{1}{2}l_1 \right)
    }
  \end{equation}
\end{prop}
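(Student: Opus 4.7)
Since $\underlying{G}$ (and hence $\underlying{\IMS{G}{n}}$) has circuit rank one, Corollary~\ref{cor:decreasing_curves} implies each barcode consists of a single feature $[0, d_n)$. Corollary~\ref{cor:gen_ims_convergence} guarantees that the sequence $(\thedesc{\IMS{G}{n}})_n$ converges under the bottleneck distance, so the task reduces to showing $d_n \to l_1/2$. Write $G_n \defeq \IMS{G}{n}$ and let $p_i^{(n)}$ denote the $n$-times medially subdivided version of $p_i$. The plan is to prove
\begin{equation}
  d_n = \max(h_1^{(n)}, h_2^{(n)}),
  \quad\text{where}\quad
  h_i^{(n)} \defeq \min\left\{\max(d(a, v), d(v, b)) \rmv v\text{ a vertex along }p_i^{(n)}\right\},
\end{equation}
and then check $h_i^{(n)} \to l_i/2$.

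The upper bound $d_n \leq \max(h_1^{(n)}, h_2^{(n)})$ is an immediate application of Lemma~\ref{lem:death_bound} to the undirected circuit $p_c$ that traverses $p_1^{(n)}$ forward and $p_2^{(n)}$ in reverse; the representative $\repres{p_c}$ generates $\zbhom{G_n}{0}$ up to sign because the circuit rank is $1$. Since iterated medial subdivision splits every original edge of weight $w$ into $2^n$ equal pieces, the vertices along $p_i^{(n)}$ form an $\epsilon_n$-net of $[0, l_i]$ with $\epsilon_n \defeq (\max_{e\in E(G)} w(e))/2^n \to 0$; a vertex within $\epsilon_n$ of the midpoint of $p_i$ gives $l_i/2 \leq h_i^{(n)} \leq l_i/2 + \epsilon_n$, so $\max(h_1^{(n)}, h_2^{(n)}) \to \max(l_1, l_2)/2 = l_1/2$.

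The matching lower bound is the main obstacle. The plan, for each $t < \max(h_1^{(n)}, h_2^{(n)})$ (WLOG $t < h_1^{(n)}$), is to exhibit a $1$-cochain $\phi : C_1(G \cup G_n^t) \to \R$ with $\phi \circ \zb{\bd}_2^t = 0$ yet $\phi(\repres{p_c}) \neq 0$, ruling out that $\repres{p_c}$ is a boundary. Set $\phi(x, y) \defeq d(x, y)$ (the length of the unique subpath of $p_1^{(n)}$ from $x$ to $y$) whenever both endpoints lie on $p_1^{(n)}$, and $\phi = 0$ on every other edge (including the edge $(a, b)$ whenever it is present). Then $\phi(\repres{p_c}) = l_1$, since only $p_1^{(n)}$-edges contribute. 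The cocycle condition must be checked against the generators identified in Proposition~\ref{prop:biv2_gens}: there are no double edges (as $G$ is a DAG), and any directed triangle or long square has its vertices either all on a single $p_i^{(n)}$ (in which case $\phi \circ \bd_2$ telescopes to zero by additivity of subpath length), or else is ``mixed'', forcing the outer vertices to be $a$ and $b$ (since no directed path of $G$ crosses between the interiors of $p_1$ and $p_2$). The only mixed generators on which $\phi \circ \bd_2$ fails to vanish are the triangle $aub$ and the long square $aub - avb$ with $u$ an interior vertex of $p_1^{(n)}$; both require $\max(d(a, u), d(u, b)) \leq t$ and hence are absent when $t < h_1^{(n)}$. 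A symmetric cochain, swapping the roles of $p_1$ and $p_2$, handles $t < h_2^{(n)}$, giving $d_n \geq \max(h_1^{(n)}, h_2^{(n)})$; combined with the upper bound this yields $d_n \to l_1/2$, and hence the limiting barcode is $\ms{[0, l_1/2)}$.
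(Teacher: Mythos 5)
Your proof is correct, and the overall skeleton (one bar by Corollary~\ref{cor:decreasing_curves}, upper bound on the death time via Lemma~\ref{lem:death_bound}, and the computation $h_i^{(n)}\to l_i/2$) matches the paper's. Where you genuinely diverge is the lower bound. The paper shows $\death(\repres{p_c})\geq \tfrac12 l_1$ by building a retraction: a chain map $q$ from $\zb{C}_\bullet(G_n,t)$ back onto the complex of $G_n$ itself, together with a homotopy $P$ with $j_1q_1-\id=\zb{\bd}_2P$, so that $\zbhom{G_n}{t}$ is still one-dimensional for $t<\tfrac12 l_1$ (the key observation being the same as yours: no $2$-path $aub$ with $u$ along $p_1^{(n)}$ exists yet). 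You instead exhibit an explicit $1$-cochain $\phi$ annihilating $\im\zb{\bd}_2^t$ but not $\repres{p_c}$, checked against the generator types of Proposition~\ref{prop:biv2_gens}; your case analysis is complete (no double edges in a DAG filtration, telescoping on single-path generators, and the only non-vanishing generators are exactly those excluded for $t<h_1^{(n)}$). The cocycle route is more hands-on but buys you the sharper statement $d_n=\max(h_1^{(n)},h_2^{(n)})$ exactly, rather than the paper's sandwich $\tfrac12 l_1\leq d_n\leq\max(h_1^{(n)},h_2^{(n)})$; both suffice in the limit. Two cosmetic points: since $a,b$ lie on $p_1^{(n)}$, your assignment $\phi(a,b)=0$ is an explicit override of the rule ``$\phi(x,y)=d(x,y)$ when both endpoints lie on $p_1^{(n)}$'' and should be stated as such (you do flag it), and calling the triangle $aub$ with $u$ interior to $p_1^{(n)}$ ``mixed'' is a slight misnomer, though it does not affect the argument.
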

\begin{proof}
For brevity we denote $G_n \defeq \IMS{G}{n}$.
By Corollary~\ref{cor:decreasing_curves}, the barcode $\thedesc{G_n}$ has exactly one feature.
Let $p_i^{(n)}$ denote the path $a\leadsto b$ in $G_n$ arising from subdividing the edges of $p_i$.
For each $n$, let $c^{(n)}$ denote the simple undirected circuit in $G_n$ which follows $p_1^{(n)}$ and then $p_2^{(n)}$ in reverse.
Clearly $\{\repres{c^{(n)}}\}$ is a persistence basis for $\zbpershom{G_n}$.
Therefore, it suffices to show $\death(\repres{c^{(n)}}) \to \frac{1}{2}l_1$ as $n\to\infty$.
Fix some natural $n$.

Using Lemma~\ref{lem:death_bound} we see $\death(\repres{c^{(n)}}) \leq \max(h_{1}^{(n)}, h_{2}^{(n)})$ where
\begin{equation}
 h_i^{(n)} \defeq \min
 \left\{ 
   t \geq 0 \rmv
   \exists v_i \in V\text{ along } p_i^{(n)} \text{ such that } d(a, v_i) \leq t \text{ and } d(v_i, b) \leq t
 \right\}.
\end{equation}
Note that $h_1^{(n)} \to \frac{1}{2}l_1$ and $h_2^{(n)} \to \frac{1}{2}l_2$ and hence $\max(h_1^{(n)}, h_2^{(n)}) \to \frac{1}{2}l_1$ as $n\to\infty$.

Next, we wish to show $\death(\repres{c^{(n)}}) \geq \frac{1}{2}l_1$.
Choose arbitrary $t_2 < \frac{1}{2} l_1$, then it suffices to show that $\dim\zbhom{G_n}{t_2}\neq 0$.
In order to do so, we claim the inclusion chain map
\begin{figure}[H]
  \centering
  \begin{tikzcd}
    0 \arrow[r] \arrow[d, "j_2"]
    & C_1(G_n) \arrow[r, "\zb{\bd}_1"] \arrow[d, "j_1"]
    & C_0(G_n) \arrow[d, "j_0"] \\
    C_2(G_n^{t_2}) \arrow[r, "\zb{\bd}_2"]
    & C_1(G_n \cup G_n^{t_2}) \arrow[r, "\zb{\bd}_1"]
    & C_0(G_n \cup G_n^{t_2}) \\
  \end{tikzcd}
\end{figure}\noindent
induces an isomorphism on homology in degree $1$.
It then follows that $\dim\zbhom{G_n}{t_2}=1$ because the first homology of top row is the real cycle space of $G_n$.

We define a chain map $\inducedch{q}$ in the opposite direction to $q$.
In degree $2$, $q_2$ is the zero map and in degree $0$, $q_0$ is the identity map.
Finally in degree $1$, given $(i, j)\in C_1(G_n \cup G_n^{t_2})$, if $(i, j) = (a, b)$ then let $p_{i, j}\defeq p_2$, otherwise let $p_{i, j}$ denote the unique path $i\leadsto j$ in $G_n$
Then $q_1$ is given by $q_1(ij) \defeq \repres{p_{i,j}}$.
This is a chain map because there is no $2$-path $avb$ where $v$ is somewhere along $p_1^{(n)}$.

It is certainly the case that, at the level of chain maps, $q_1 j_1 = \id$.
Choose arbitrary $(i, j) \in C_1(G_n \cup G_n^{t_2})$ and note that
$j_1 q_1 (ij) - (ij) = \repres{p_{i, j}} - ij$.
By Lemma~\ref{lem:paths_homologous}, there is some $u_{i, j} \in C_2(G_n^{t_2})$ such that $\zb{\bd}_2 u_{i, j} =  \repres{p_{i, j}} - ij$.
Define $P:C_1(G_n \cup G_n^{t_2}) \to C_2(G_n^{t_2})$ by $ij \mapsto u_{i, j}$.
Then, by construction, we see $j_1 q_1 - \id = \zb{\bd}_2 P$.
Hence, $j$ and $q$ are mutually inverse on homology in degree $1$.

To conclude, we have shown for each $n$,
\begin{equation}
  \frac{1}{2}l_1 \leq \death(\repres{c^{(n)}}) \leq \max( h_1^{(n)}, h_2^{(n)} ).
\end{equation}
Taking the limit $n\to\infty$ finishes the proof.
\end{proof}

Note that description of Proposition~\ref{prop:asymptotic_example} is not unique to this descriptor, indeed the same result holds for the standard pipeline.
As discussed in Section~\ref{sec:intro_subdivision}, for the standard pipeline, as the weighted digraph is subdivided, the birth times of all features tend to $0$.
When the digraph is sufficiently subdivided, all edges enter the filtration very early on and the effect of adding the edges from $G$ at $t=0$ has negligible effect.
Hence, in the subdivision limit, the diagrams obtained from the two pipelines coincide.

\begin{theorem}
  Given $G\in\WDgr$, let $\mathcal{H}_1$ denote the `standard pipeline' with $C=\Omega$ and $F=F_d$, as used in Example~\ref{ex:std_pipe}. Then
  \begin{equation}
    \lim_{n\to\infty}\thedesc{\IMS{G}{n}}=
    \lim_{n\to\infty}\Barcode(
    \mathcal{H}_1(\IMS{G}{n})
    ).
  \end{equation}
\end{theorem}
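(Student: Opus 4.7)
The plan is to prove a stronger local statement: for every $G \in \WDgr$, the two persistent vector spaces $\mathcal{H}_1(G)$ and $\zbpershom{G}$ are $W$-interleaved, where $W \defeq \max_{e \in E(G)} w(e)$. Taking $G = \IMS{G_0}{n}$ then gives $W_n = 2^{-n}\max_{e \in E(G_0)} w(e) \to 0$, and the two limits collapse onto a single value.

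First I would construct a natural comparison morphism $\Phi : \mathcal{H}_1(G) \to \zbpershom{G}$. For each $t \in \R$ there is a chain map $\Omega_\bullet(G^t) \to \zb{C}_\bullet(G, t)$ which is the identity in degrees $\geq 2$ and is the inclusion $\Omega_k(G^t) \hookrightarrow \Omega_k(G \cup G^t)$ in degrees $\leq 1$. It commutes with $\zb{\bd}_2 = \inducedch{\iota} \circ \bd_2$ by construction, and it commutes with the persistence maps $G^s \hookrightarrow G^t$ and $G \cup G^s \hookrightarrow G \cup G^t$ since all four inclusions are induced by the identity vertex map. Passing to degree $1$ homology yields the morphism $\Phi$.

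The key observation is that when $t \geq W$, every edge of $G$ already lies in $G^t$, so $G \cup G^t = G^t$ and the comparison chain map is literally the identity. Consequently $\Phi_t$ is an isomorphism for every $t \geq W$. I then build the interleaving explicitly: define
\begin{align}
\phi_t & = N(t \leq t + W) \circ \Phi_t : \mathcal{H}_1(G)(t) \to \zbpershom{G}(t+W), \\
\psi_t & = \Phi_{t+W}^{-1} \circ N(t \leq t+W) : \zbpershom{G}(t) \to \mathcal{H}_1(G)(t+W),
\end{align}
using $N = \zbpershom{G}$ for brevity; the inverse $\Phi_{t+W}^{-1}$ exists because $t + W \geq W$. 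Naturality of $\Phi$ against the persistence maps gives $\shifted{\psi}{W} \circ \phi = \transmorph{\mathcal{H}_1(G)}{2W}$ and $\shifted{\phi}{W} \circ \psi = \transmorph{\zbpershom{G}}{2W}$, so $(\phi, \psi)$ is a $W$-interleaving. By the isometry theorem (Theorem~\ref{thm:isometry}),
\begin{equation}
d_B\big(\Barcode(\mathcal{H}_1(G)),\ \thedesc{G}\big) \leq W.
\end{equation}

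Applying this to $G_n \defeq \IMS{G}{n}$ gives $d_B(\Barcode(\mathcal{H}_1(G_n)), \thedesc{G_n}) \leq 2^{-n}\max_{e \in E(G)} w(e) \to 0$. By Corollary~\ref{cor:gen_ims_convergence}, $(\thedesc{G_n})$ converges; the triangle inequality then shows $(\Barcode(\mathcal{H}_1(G_n)))$ is Cauchy, hence convergent by completeness of the bottleneck space~\cite{Che2022}, and the two limits agree. The main thing to verify carefully is the commutativity of the comparison chain map with the $\zb{\bd}_2$ boundary (the only non-routine square), but this follows directly because $\zb{\bd}_2$ was defined by composing $\bd_2$ with the very inclusion $\inducedch{\iota}$ that we are using.
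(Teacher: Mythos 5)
Your proposal is correct and takes essentially the same route as the paper: your $\phi_t$ and $\psi_t$ coincide with the paper's interleaving maps $i_\ast$ and $j_\ast$ (the latter exists precisely because every edge of $G_n$ enters $G_n^{t+W}$ once $W$ bounds the maximal edge weight), and both arguments conclude via the isometry theorem and $W_n \to 0$. Your packaging through the comparison morphism $\Phi$, which becomes an isomorphism for $t \geq W$, is only a cosmetic difference, and your explicit final step (Cauchyness plus completeness to justify that both limits exist and agree) fills in a detail the paper leaves implicit.
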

\begin{proof}
For brevity, we denote $G_n \defeq \IMS{G}{n}$.
Fix $\epsilon >0$ and choose $N$ sufficiently large that for any $n\geq N$ we have $w(e) < \epsilon$ for all $e\in E(G_n)$.
For any $t\geq 0$, define
\begin{align*}
  i_1 &: C_1(G_n^t)\to C_1(G_n \cup G_n^{t+\epsilon}) \\
  i_2 &: C_2(G_n^t)\to C_2(G_n^{t+\epsilon})
\end{align*}
where each $i_k$ is taken from the chain map induced by the relevant inclusion of digraphs.
It can be easily checked that $i_1 \bd_2 = \zb{\bd}_2 i_2$ and hence $i_1$ induces a map on homology $i_\ast : H_1(G_n^t) \to \zb{H}_1(G_n,{t+\epsilon})$.
Similarly, for any $t\geq 0$ define
\begin{align*}
  j_1 &: C_1(G_n \cup G_n^t)\to C_1(G_n^{t+\epsilon}) \\
  j_2 &: C_2(G_n^t)\to C_2(G_n^{t+\epsilon})
\end{align*}
where each $j_k$ is likewise taken from the chain map induced by the relevant inclusion of digraphs.
Note, in particular, given an edge $e\in E(G_n)$, we know $d(\st(e), \fn(e)) < \epsilon$ and hence $e \in E(G_n^{t+\epsilon})$.
Again $j_1 \zb{\bd}_2 = \bd_2 j_2$ and hence $j_1$ induces a map on homology $j_\ast : \zb{H}_1(G,t) \to H_1(G^{t+\epsilon})$.

Clearly $i_\ast \circ j_\ast = \zbinclinduc{hom}{t}{t+2\epsilon}$ and $j_\ast \circ i_\ast = \inducedhom{\filtincl{t}{t+2\epsilon}}$.
Therefore, by the algebraic stability theorem, we see
\begin{equation}
    d_B \left(
    \thedesc{G_n},
    \Dgm(\mathcal{H}_1(G_n))
  \right) \leq \epsilon
\end{equation}
for all $n\geq N$.
\end{proof}

\subsection{Square motifs}\label{sec:square_motifs}

\begin{figure}
  \centering
  \begin{tikzpicture}[
  roundnode/.style={circle, fill=black, minimum size=4pt},
	squarenode/.style={fill=black, minimum size=4pt},
	inner sep=2pt,
	outer sep=1pt
  ]

  \node (a) at (0, 0) [roundnode, label=left:$a$] {};
	\node (b) at (1, 1) [roundnode, label=above:$b$] {};
	\node (c) at (1, -1) [roundnode, label=below:$c$] {};
	\node (d) at (2, 0) [roundnode, label=right:$d$] {};
  \draw[->] (a)--(b);
  \draw[->] (a)--(c);
  \draw[->] (b)--(d);
  \draw[->] (c)--(d);

  \node (a2) at (3, 0) [roundnode, label=left:$a$] {};
	\node (b2) at (4, 1) [roundnode, label=above:$b$] {};
	\node (c2) at (4, -1) [roundnode, label=below:$c$] {};
	\node (d2) at (5, 0) [roundnode, label=right:$d$] {};
  \draw[->] (a2)--(b2);
  \draw[->] (a2)--(c2);
  \draw[->] (d2)--(b2);
  \draw[->] (d2)--(c2);

  \node (a3) at (6, 0) [roundnode, label=left:$a$] {};
	\node (b3) at (7, 1) [roundnode, label=above:$b$] {};
	\node (c3) at (7, -1) [roundnode, label=below:$c$] {};
	\node (d3) at (8, 0) [roundnode, label=right:$d$] {};
  \node (e3) at (9,0) [roundnode, label=right:$e$] {};
  \draw[->] (a3)--(b3);
  \draw[->] (a3)--(c3);
  \draw[->] (d3)--(b3);
  \draw[->] (d3)--(c3);
  \draw[->] (b3) edge [bend left] (e3);
  \draw[->] (c3) edge [bend right] (e3);

  \node (a4) at (10, 0) [roundnode, label=left:$a$] {};
	\node (b4) at (11, 1) [roundnode, label=above:$b$] {};
	\node (c4) at (11, -1) [roundnode, label=below:$c$] {};
	\node (d4) at (12, 0) [roundnode, label=right:$d$] {};
  \node (e4) at (13,0) [roundnode, label=right:$e$] {};
  \draw[->] (a4)--(b4);
  \draw[->] (a4)--(c4);
  \draw[->] (d4)--(b4);
  \draw[->] (d4)--(c4);
  \draw[->] (e4) edge [bend right] (b4);
  \draw[->] (e4) edge [bend left] (c4);

  \node[] at (1, 2) {$G_1$};
  \node[] at (4, 2) {$G_2$};
  \node[] at (7.5, 2) {$G_3$};
  \node[] at (11.5, 2) {$G_4$};

  \draw[dotted] (2.5, 2.5) -- (2.5, -2.5);
  \draw[dotted] (5.5, 2.5) -- (5.5, -2.5);
  \draw[dotted] (9.5, 2.5) -- (9.5, -2.5);

  \draw[dotted] (-0.5, 1.7) -- (13.5, 1.7);
  \draw[dotted] (-0.5, -1.6) -- (13.5, -1.6);

  \node[] at (1, -2) {$\ms{[0, 1)}$};
  \node[] at (4, -2) {$\ms{[0, \infty)}$};
  \node[] at (7.5, -2) {$\ms{[0, 1), [0, 1)}$};
  \node[] at (11.5, -2) {$\ms{[0,\infty), [0, \infty)}$};
\end{tikzpicture}
  \caption{Interpreting $\zbpershom{G}$ via differences between directed paths in square motifs.
  Top row: name of weighted digraph; middle row: diagram where all weights are $1$; bottom row: barcode of \grpph. }
  \label{fig:two_squares}
\end{figure}
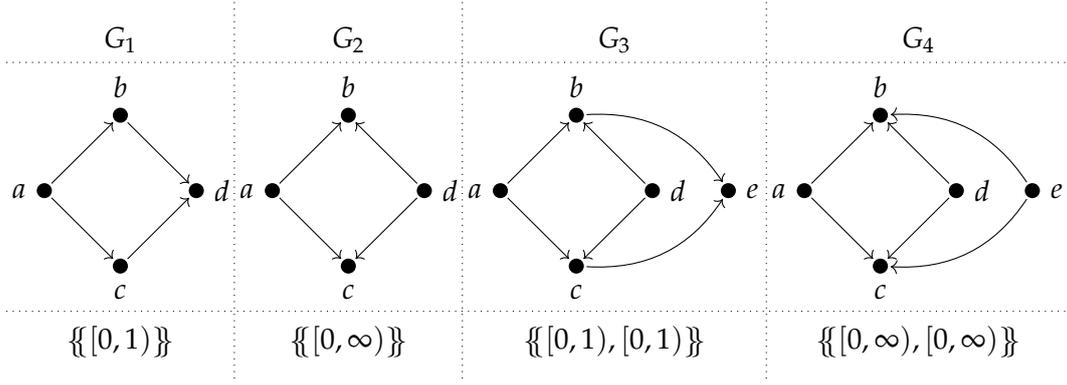

\begin{example}\label{ex:two_squares}
  Further to the interpretation developed in Proposition~\ref{prop:asymptotic_example}, consider the four weighted digraphs in Figure~\ref{fig:two_squares}.
All edges are given unit weight and the barcodes are indicated under each digraph.
Homology representatives for each of the features are given by
\begin{align*}
  &ab + bd - cd - ac; \\
  &ab - db + dc - ac; \\
  &ab + be - ce - ac \quad , \quad db + be - ce - dc; \\
  &ab - eb + ec - ac \quad , \quad db - eb + ec - dc. 
\end{align*}

In $G_1$, note that the flow starting at $a$ recombines at $d$ after flowing for $t=2$ seconds.
In contrast, the flow in $G_2$ splits from the sources and then never recombines.
This is reflected in the lifetime of the feature changing from $[0, 1)$ to $[0, \infty)$.

If we add additional edges to $G_2$ to recombine the flow at a new vertex (as in $G_3$), we add an additional feature but all features now have finite lifetime.
Finally, reversing these additional edges (as in $G_4$) prevents the flow from recombining again and the features return to lifetime $[0, \infty)$.

This further emphasises the interpretation that features arise when flow is split between two paths and the lifetime of the feature is related to the time it takes for the flow to recombine.
\end{example}

\subsection{Multiple paths}

\begin{figure}
  \centering
%
\begin{tikzpicture}[
	roundnode/.style={circle, fill=black, minimum size=4pt},
	roundnodered/.style={circle, fill=red, minimum size=1pt},
	inner sep=2pt,
	outer sep=1pt
	]
	\node (a) at (-3, 0) [roundnode, label=left:$a$] {};
	\node (b) at (3, 0) [roundnode, label=right:$b$] {};
  \node (mid1) at (0, 1.0) [roundnode, label=above:$v_1$] {};
  \node (mid2) at (0, 0.3) [roundnode, label=above:$v_2$] {};
  \node (mid3) at (0, -0.3) [roundnode, label=below:$v_{n-1}$] {};
  \node (mid4) at (0, -1.0) [roundnode, label=below:$v_n$] {};

  \draw[->] (a) to[bend left] node[anchor=south, black] {$a_1$} (mid1);
  \draw[->] (a) to node[anchor=south, black] {$a_2$} (mid2);
  \draw[->] (a) to node[anchor=north, black] {$a_{n-1}$} (mid3);
  \draw[->] (a) to[bend right] node[anchor=north, black] {$a_n$} (mid4);
  \draw[->] (mid1) to[bend left] node[anchor=south, black] {$b_1$} (b);
  \draw[->] (mid2) to node[anchor=south, black] {$b_2$} (b);
  \draw[->] (mid3) to node[anchor=north, black] {$b_{n-1}$} (b);
  \draw[->] (mid4) to[bend right] node[anchor=north, black] {$b_n$} (b);

  \node[] at (0, 0.1) {\small$\vdots$};
\end{tikzpicture}
    \caption{A weighted digraph with many paths (of 2 edges each) from source to sink for which we can compute $\thedesc{G}$.}
    \label{fig:redundancy}
\end{figure}
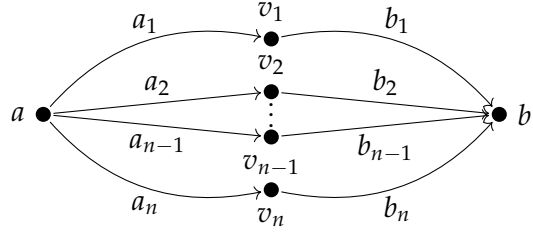

\begin{example}\label{ex:redundancy}
  Consider Figure~\ref{fig:redundancy}, in which there is a single source and a single sink but multiple paths between.
  Define
    $\alpha_i \defeq \max (a_i, b_i)$
  and assume that 
  $
    \alpha_1 \leq \alpha_2 \leq \dots \alpha_{n-1} \leq \alpha_n.
  $
  Then, the barcode is
  \begin{equation}
    \thedesc{G} = \ms{ [0, \alpha_2), [0, \alpha_3), \dots, [0, \alpha_{n-1}), [0, \alpha_{n}) }.
  \end{equation}
  A persistence basis for $\zbpershom{G}$ is $\{ c_2, \dots, c_n \}$ where
  $
    c_i \defeq {a v_1} + {v_1 b} - {v_i b} - {a v_i}
  $
  and $\death(c_i) = \alpha_i$.
\end{example}

\subsection{Identical quasimetric}

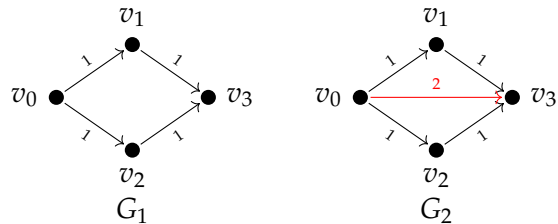
\begin{figure}[hptb]
  \centering
  \begin{tikzpicture}[
  roundnode/.style={circle, fill=black, minimum size=4pt},
	squarenode/.style={fill=black, minimum size=4pt},
	inner sep=2pt,
	outer sep=1pt
  ]
  \node (a) at (0, 0) [roundnode, label=left:$v_0$] {};
	\node (b) at (1, 0.7) [roundnode, label=above:$v_1$] {};
	\node (c) at (1, -0.7) [roundnode, label=below:$v_2$] {};
	\node (d) at (2, 0) [roundnode, label=right:$v_3$] {};

  \draw[->] (a)--(b) node[midway, above, sloped] {\tiny $1$};
  \draw[->] (a)--(c) node[midway, below, sloped] {\tiny $1$};
  \draw[->] (b)--(d) node[midway, above, sloped] {\tiny $1$};
  \draw[->] (c)--(d) node[midway, below, sloped] {\tiny $1$};

  \node (a2) at (4, 0) [roundnode, label=left:$v_0$] {};
	\node (b2) at (5, 0.7) [roundnode, label=above:$v_1$] {};
	\node (c2) at (5, -0.7) [roundnode, label=below:$v_2$] {};
	\node (d2) at (6, 0) [roundnode, label=right:$v_3$] {};

  \draw[->] (a2)--(b2) node[midway, above, sloped] {\tiny $1$};
  \draw[->] (a2)--(c2) node[midway, below, sloped] {\tiny $1$};
  \draw[->] (b2)--(d2) node[midway, above, sloped] {\tiny $1$};
  \draw[->] (c2)--(d2) node[midway, below, sloped] {\tiny $1$};
  \draw[->, red] (a2)--(d2) node[midway, above, sloped] {\tiny $2$};

  \node (g1) at (1, -1.5) {$G_1$};
  \node (g2) at (5, -1.5) {$G_2$};

\end{tikzpicture}
  \caption{Two weighted digraphs with identical shortest-path quasimetric network but differing \grpph\ which can be explained by the difference in circuit rank.}\label{fig:standard_pipeline}
\end{figure}

\begin{example}
Finally, consider the two weighted digraphs illustrated in Figure~\ref{fig:standard_pipeline}.
Since they both have the same shortest-path quasimetric, they yield the same barcode under the standard pipeline.
More formally, $F_d(G_1) = F_d(G_2)$ and hence 
$\zbpershom{G_1} = \zbpershom{G_2}$.
Moreover, $\thedesc{G_1}$ is empty because the circuit $(v_0, v_1, v_3, v_2)$ is filled-in with a long square as soon as it appears in the filtration.
In contrast,
\begin{equation}
  \thedesc{G_1}  = \ms{ [0, 1)} \quad\text{ and }\quad
  \thedesc{G_2}  = \ms{ [0, 1), [0, 2) }.
\end{equation}
A persistence basis for $\zbpershom{G_1}$ is
$
  \{v_0 v_1 + v_1 v_3 - ( v_0 v_2 + v_2 v_3 ) \}
$
while a persistence basis for $\zbpershom{G_2}$ is
$\{
  v_0 v_1 + v_1 v_3 - (v_0 v_2 + v_2 v_3)
  \quad , \quad
  v_0 v_1 + v_1 v_3 - v_0 v_3 .
  \}$
Note that at $t=1$ the two triangular cycles becomes homologous in $\zbpershom{G_2}$ but are still non-trivial, until they die at $t=2$.
\end{example}

\appendix
\section{Grounded pipeline with the directed flag complex}%
\label{appdx:complex}
We will now consider the pipeline developed in Section~\ref{sec:descriptor_defin}, changing the choice of chain complex, $C$, to the directed flag complex.

\begin{defin} 
  Given a digraph $G=(V, E)$,
  a \mdf{directed $n$-clique} is a $(n+1)$-tuple of distinct vertices $v_0 \dots v_n$ such that $i < j \implies v_i \to v_j$.
\end{defin}

\begin{defin}
  The \mdf{directed flag complex, $\dFl(G)$}, of a digraph $G\in\Dgr$ is the chain complex
  \begin{figure}[H]
    \centering
    \begin{tikzcd}
      \cdots \arrow[r, "\bd_3"]
      & \dFl_2(G) \arrow[r, "\bd_2"]
      & \dFl_1(G) \arrow[r, "\bd_1"]
      & \dFl_0(G) \arrow[r, "\bd_0"]
      & \Ring \arrow[r] & 0
    \end{tikzcd}
  \end{figure}\noindent
  where $\dFl_k(G)$ is the $\R$-vector space freely generated by the $(k+1)$-cliques in $G$.
  The boundary map $\bd_k$ is defined on the basis of cliques by
  \begin{equation}
    \bd_k (v_0 \dots v_k) \defeq \sum_{i=0}^{k} (-1)^i v_0 \dots \hat{v}_i \dots v_k
  \end{equation}
  where $v_0 \dots \hat{v}_i \dots v_k$ is the $k$-clique obtained from $v_0 \dots v_k$ by removing the vertex $v_i$.
\end{defin}

\begin{figure}[htbp]
  \centering
  \begin{tikzpicture}[
  roundnode/.style={circle, fill=black, minimum size=4pt},
	squarenode/.style={fill=black, minimum size=4pt},
	inner sep=2pt,
	outer sep=1pt
  ]

  \node (a) at (0, 0) [roundnode, label=left:$v_0$] {};
  \node (b) at (0.6, 1) [roundnode, label=left:$v_1$] {};
  \node (c) at (2.4, 1) [roundnode, label=right:$v_2$] {};
  \node (d) at (3, 0) [roundnode, label=right:$v_3$] {};

  \draw[->] (a) -- (b);
  \draw[->] (a) -- (c);
  \draw[->] (a) -- (d);
  \draw[->] (b) -- (c);
  \draw[->] (b) -- (d);
  \draw[->] (c) -- (d);
\end{tikzpicture}
  \caption{A directed $4$-clique, giving rise to a generator $v_0 v_1 v_2 v_3 \in F_3$.}
  \label{fig:directed_clique}
\end{figure}
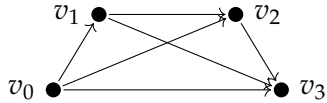

The directed flag complex is an alternative chain complex for digraphs, $\dFl:\Obj(\Dgr)\to\Obj(\Ch)$, which has seen more use in applications than path homology (see e.g.~\cite{reimann2017cliques, Luetgehetmann2020}).
We now repeat the investigation conducted in the main body of the paper, replacing path homology with the directed flag complex.
\textbf{Henceforth, for the rest of the paper, we fix $F$ to be the shortest-path filtration (see Definition~\ref{defin:shortest_path}) and $C$ to be the directed flag complex},
\begin{equation}
 F = F_d
 \quad\text{ and }\quad
 C=\dFl.
\end{equation}

Recalling that morphisms in $\Dgr$ can collapse edges, we show that $\dFl$ cannot constitute a functor $\Dgr\to\Ch$.
However, we do find a smaller category $\Tricol\Dgr\subseteq\Dgr$, containing all digraphs, upon which $\dFl$ is functorial.
This smaller category contains inclusions and thus 
we can apply the machinery developed in Section~\ref{sec:descriptor_defin} to obtain grounded persistent directed flag complex homology (\grpdflh), $\zbpershomdflagmap:\Obj(\WDgr)\to\Obj(\PersVec)$.

For the remainder of the appendix, we review the results obtained for \grpph, in the main text and check which results apply to $\zbpershomdflagmap$.
Reassuringly, the results on undirected circuit representatives and weight perturbation stability hold in this setting.
However, some stability results fail, most notably edge subdivision, since they rely on morphisms outside of $\Tricol\Dgr$.
We summarise the known stability results for the directed flag complex in Table~\ref{tbl:dflag_stability_result_summary}.

Moreover, we find that the failure in functoriality $\Dgr\to\Ch$ leads to a failure in the wedge decomposition theorem (Theorem~\ref{thm:wedge_decomp}).
In Example~\ref{ex:dflag_appendage_instability}, we exhibit an explicit example wherein deleting a small appendage edge can dramatically alter the barcode, $\thedescdfl{G}$.
This behaviour is not present in \grpph; we argue this is an instability which complicates the interpretation of $\thedescdfl{G}$.

\subsection{Functoriality of the directed flag complex}

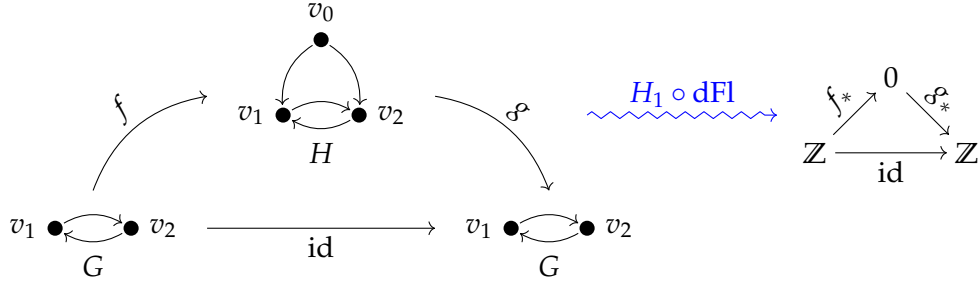
\begin{figure}[htbp]
  \centering
  \begin{tikzpicture}[
  roundnode/.style={circle, fill=black, minimum size=4pt},
	squarenode/.style={fill=black, minimum size=4pt},
	inner sep=2pt,
	outer sep=1pt
  ]

  \node (a) at (0, 0) [roundnode, label=left:$v_1$] {};
  \node (b) at (1, 0) [roundnode, label=right:$v_2$] {};
  \node at (0.5, -0.5) {$G$};

  \node (c) at (3, 1.5) [roundnode, label=left:$v_1$] {};
  \node (d) at (4, 1.5) [roundnode, label=right:$v_2$] {};
  \node (e) at (3.5, 2.5) [roundnode, label=above:$v_0$] {};
  \node at (3.5, 1) {$H$};

  \node (f) at (6, 0) [roundnode, label=left:$v_1$] {};
  \node (g) at (7, 0) [roundnode, label=right:$v_2$] {};
  \node at (6.5, -0.5) {$G$};

  \draw[->, bend left] (a) to (b);
  \draw[->, bend left] (b) to (a);

  \draw[->, bend left] (c) to (d);
  \draw[->, bend left] (d) to (c);
  \draw[->, bend right] (e) to (c);
  \draw[->, bend left] (e) to (d);

  \draw[->, bend left] (f) to (g);
  \draw[->, bend left] (g) to (f);

  \node (g1) at (10, 1) {$\Z$};
  \node (g2) at (11, 2) {$0$};
  \node (g3) at (12, 1) {$\Z$};

  \draw[->] (g1) to node [midway, above, sloped] {$\inducedhom{f}$} (g2);
  \draw[->] (g2) to node [midway, above, sloped] {$\inducedhom{g}$} (g3);
  \draw[->] (g1) to node [midway, below, sloped] {$\id$} (g3);

  \draw[->, bend left] (0.5, 0.5) to node [midway, above, sloped] {$f$} (2, 1.75);
  \draw[->, bend left] (5, 1.75) to node [midway, above, sloped] {$g$} (6.5, 0.5);
  \draw[->] (2, 0) to node [midway, below, sloped] {$\id$} (5, 0);

  \draw[->, blue,
    line join=round,
    decorate, decoration={
        zigzag,
        segment length=7,
        amplitude=1.3,post=lineto,
        post length=2pt
    }] (7, 1.5) to node [midway, above] {$H_1 \circ \dFl$} (9.5, 1.5);

\end{tikzpicture}
  \caption{A commuting diagram in $\Dgr$ which illustrates why $\dFl$ cannot be made into a functor $\Dgr\to\Ch$.
  For definitions of the digraph maps, see the proof of Theorem~\ref{thm:dflag_func_failure}}%
  \label{fig:dflag_functoriality}
\end{figure}

\begin{theorem}\label{thm:dflag_func_failure}
The directed flag complex cannot be made into a functor $\dFl:\Dgr \to \Ch$.
\end{theorem}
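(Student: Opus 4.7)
My plan is to prove this by exhibiting the counterexample suggested by Figure~\ref{fig:dflag_functoriality}: two digraphs $G, H$ and digraph maps $f: G \to H$, $g: H \to G$ with $g \circ f = \id_G$, but for which no assignment $\dFl(f)$, $\dFl(g)$ on chain complexes can be compatible with composition once homology is applied.

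Specifically, I would take $G$ to be the double edge on vertices $v_1, v_2$ and $H$ to be $G$ with an additional source vertex $v_0$ together with edges $v_0 \to v_1$ and $v_0 \to v_2$. The map $f: G \to H$ is the vertex inclusion, and $g : H \to G$ is the vertex map $v_0, v_1 \mapsto v_1$, $v_2 \mapsto v_2$ (which is easily checked to be a digraph map and satisfies $g \circ f = \id_G$). The first step is then to compute $H_1(\dFl(G))$ and $H_1(\dFl(H))$ directly from the definitions. Since $G$ contains no directed $3$-cliques, $\dFl_2(G) = 0$ and the cycle $v_1 v_2 + v_2 v_1$ generates $H_1(\dFl(G)) \cong \R$. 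By contrast, $H$ contains precisely the two directed $3$-cliques $v_0 v_1 v_2$ and $v_0 v_2 v_1$; their boundaries are linearly independent and one computes that $\partial(v_0 v_1 v_2) + \partial(v_0 v_2 v_1) = v_1 v_2 + v_2 v_1$, so the span of $\partial_2$ fills the entire $2$-dimensional kernel of $\partial_1$, giving $H_1(\dFl(H)) = 0$.

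The conclusion is then by a standard functoriality argument: if $\dFl$ extended to a functor $\Dgr \to \Ch$, then $H_1 \circ \dFl$ would be a functor $\Dgr \to \VecCat$, and we would have
\begin{equation}
\id_{H_1(\dFl(G))} = H_1(\dFl(g \circ f)) = H_1(\dFl(g)) \circ H_1(\dFl(f)).
\end{equation}
The right-hand side factors through $H_1(\dFl(H)) = 0$ and is therefore zero, while the left-hand side is the identity on $\R \neq 0$ -- a contradiction.

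The technical core is just the two homology computations, which are small and explicit, so there is no substantive obstacle; the only thing to be careful about is verifying that $g$ really is a valid digraph map in the sense of condition~(\ref{eq:dig_map_cond}), which is immediate because all edges of $H$ adjacent to $v_0$ either get collapsed (becoming $g(v_0) = g(v_1)$) or are sent to existing edges of $G$.
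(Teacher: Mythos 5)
Your proposal is correct and follows essentially the same route as the paper: the identical pair of digraphs $G$ (double edge) and $H$ (double edge plus a source $v_0$), the same maps $f$ and $g$ with $g\circ f=\id_G$, and the same contradiction that the identity on $H_1(\dFl(G))\cong\R$ would have to factor through $H_1(\dFl(H))=0$. The only difference is that you carry out the computation of $H_1(\dFl(H))=0$ explicitly via the two directed cliques $v_0v_1v_2$ and $v_0v_2v_1$, where the paper merely asserts it with a "hemisphere" remark.
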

\begin{proof}
Suppose such a functor exists.
Consider the three digraphs illustrated in Figure~\ref{fig:dflag_functoriality}.
The digraph map $f$ is given by the obvious inclusion, whilst $g$ maps $v_1$ and $v_2$ to themselves and $v_0\mapsto v_1$.
Note that $g\circ f$ composes to the identity and hence the triangle of digraph maps commutes.
Applying $H_1 \circ \dFl$ to this diagram we find that $\inducedhom{g}\circ\inducedhom{f}$ is the identity on $\Z$.
However, $H_1(\dFl(H))$ is the trivial vector space because $H$ has two ordered $2$-simplices which fit together to form a hemi-sphere.
The identity cannot factor through $0$ and hence we have a contradiction.
\end{proof}

In order to see why functoriality fails, consider the following obvious guess at the induced map.

\begin{defin}
Given a digraph map $f: G \to H$, the \emph{induced map} $\inducedch{f}:\dFl_k(G) \to \dFl_k(H)$ 
is given on each directed $k$-clique $v_0 \dots v_k$ in $G$ by
\begin{equation}\label{eq:induced_map_dfl}
  \inducedch{f} (v_0 \dots v_k) \defeq
  \begin{cases}
    f(v_0) \dots f(v_k) & \text{if all the }f(v_i)\text{ are distinct},\\
    0 & \text{otherwise}.
  \end{cases}
\end{equation}
We extend linearly to obtain a linear map $\inducedch{f}:\dFl_k(G) \to \dFl_k(H)$.
\end{defin}

This does not necessarily constitute a chain map $\dFl(G) \to \dFl(H)$.
To see why, consider the map $g$ defined in the proof of Theorem~\ref{thm:dflag_func_failure} and illustrated in Figure~\ref{fig:dflag_functoriality}.
The directed clique $(v_0, v_2, v_1)$ is mapped to $(v_1, v_2, v_1)$ which is a double edge and not a clique; this leads to a boundary being mapped to a non-trivial homological cycle.
However, such mappings (sending directed triangles to double edges) are the only obstruction to functoriality.

\begin{defin}
  \begin{enumerate}[label=(\alph*)]
    \item A digraph map $f:G \to H$ is called \mdf{triangle-collapsing} if whenever $ijk$ is a directed $2$-clique and $f(i) = f(k)$ then $f(j) = f(i) = f(k)$.
    \item We denote the category of all digraphs with triangle-collapsing digraph maps as $\mdf{\Tricol\Dgr}$.
  \end{enumerate}
\end{defin}

\begin{prop}\label{prop:dfl_functor}
 The directed flag complex with induced maps as in formula (\ref{eq:induced_map_dfl}) is a functor $\dFl:\Tricol\Dgr \to \Ch$.
\end{prop}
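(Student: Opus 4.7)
The plan is a direct verification: (i) check that the prescription (\ref{eq:induced_map_dfl}) really does define maps $\inducedch{f}_k:\dFl_k(G)\to\dFl_k(H)$; (ii) check the chain-map identity $\inducedch{f}\circ\bd=\bd\circ\inducedch{f}$ on basis elements; and (iii) check functoriality on objects and composable pairs. Step (i) is easy: if $v_0\dots v_k$ is a directed $k$-clique in $G$ and the $f(v_i)$ are pairwise distinct, then $f(v_a)\to f(v_b)$ for every $a<b$ (since $f$ is a digraph map and the image edge is not a self-loop), so $f(v_0)\dots f(v_k)$ is a directed $k$-clique in $H$; otherwise the image is declared to be $0$. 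Step (iii) is also formal: if some $f(v_i)=f(v_j)$ then $g(f(v_i))=g(f(v_j))$, so both $\inducedch{g}\circ\inducedch{f}(v_0\dots v_k)$ and $\inducedch{(g\circ f)}(v_0\dots v_k)$ vanish, while if all $f(v_i)$ are distinct the two sides agree by inspection.

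The heart of the argument is step (ii), and this is where the triangle-collapsing hypothesis enters. The key structural lemma I would isolate is: for a triangle-collapsing digraph map $f$ and any directed clique $v_0\dots v_k$ in $G$, the fibres of $f$ restricted to $\{v_0,\dots,v_k\}$ are \emph{intervals} in the total order $0<1<\dots<k$. Indeed, if $f(v_i)=f(v_j)$ with $i<j$ then for every $i<l<j$ the triple $v_iv_lv_j$ is a directed $2$-clique, so the triangle-collapsing condition forces $f(v_l)=f(v_i)$. This is the only place where the hypothesis is needed.

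With this structural lemma in hand, I compute $\inducedch{f}(\bd(v_0\dots v_k))$ by cases on the fibre structure. If all $f(v_i)$ are distinct, both $\bd\inducedch{f}$ and $\inducedch{f}\bd$ produce the standard alternating sum $\sum_i(-1)^if(v_0)\dots\widehat{f(v_i)}\dots f(v_k)$. If there is some collision, then $\bd\inducedch{f}(v_0\dots v_k)=0$, so I must show $\inducedch{f}(\bd(v_0\dots v_k))=0$. Removing a vertex $v_l$ that lies outside every fibre of size $\geq 2$, or that lies in a fibre of size $\geq 3$, leaves at least one collision and hence contributes $0$. The remaining terms come from removing one of the two vertices of a fibre of size exactly $2$, say the block $\{v_i,v_{i+1}\}$ (contiguity, from the lemma, is essential here); the two resulting $(k-1)$-cliques in $H$ are identical because $f(v_i)=f(v_{i+1})$, yet they occur with opposite signs $(-1)^i$ and $(-1)^{i+1}$, so they cancel. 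The main obstacle is bookkeeping: keeping track of the signs and confirming that the contiguity of fibres is exactly what rules out the ``non-adjacent'' cancellation failure exhibited in Theorem~\ref{thm:dflag_func_failure}, where removing positions $i$ and $j$ with $j>i+1$ would give two distinct cliques rather than equal ones.
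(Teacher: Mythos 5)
Your proposal is correct and follows essentially the same route as the paper's proof: the paper also verifies the chain-map identity on basis cliques, uses the triangle-collapsing condition in exactly the same place (to force $f(v_j)=f(v_{i_1})=f(v_{i_2})$ for any $j$ between two colliding indices), and resolves the remaining case by the adjacent-position sign cancellation. Your ``fibres are intervals'' lemma is a slightly more systematic packaging of the paper's argument, which instead fixes a single colliding pair $i_1<i_2$ and observes that only the two faces omitting $v_{i_1}$ or $v_{i_2}$ can survive, but the mathematical content is identical.
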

\begin{proof}
We first note that formula~(\ref{eq:induced_map_dfl}) certainly respects the composition and identity axioms required for functoriality; it remains only to confirm that $\inducedch{f}$ is a chain map $\dFl(G) \to \dFl(H)$.
That is, we need to check $\inducedch{f}$ commutes with the boundary map.
We verify this on the basis of $\dFl_k(G)$.

Choose some directed $(k+1)$-clique $(v_0, \dots, v_k)$ in $G$.
If all the $f(v_i)$ are distinct then clearly $\inducedch{f}\bd_n (v_0 \dots v_k) = \bd_n \inducedch{f} (v_0 \dots v_k)$.
Otherwise $f(v_{i_1}) = f(v_{i_2})$ for some $i_1 < i_2$.
Then
\begin{align}
  \bd_k \inducedch{f} (v_0 \dots v_k) &= \bd_k(0) = 0 \\
  \inducedch{f} \bd_k (v_0 \dots v_k) &=
  (-1)^{i_1} \inducedch{f} \left[ v_0 \dots \hat{v}_{i_1} \dots v_k \right] + 
  (-1)^{i_2} \inducedch{f} \left[ v_0 \dots \hat{v}_{i_2} \dots v_k \right] \label{eq:fbd_dfl}
\end{align}
If $i_2 = i_1 + 1$ then the two terms of (\ref{eq:fbd_dfl}) are equal but opposite sign so $\inducedch{f}\bd_k(v_0 \dots v_k) = 0$.
Else there is some $j$ such that $i_1 < j < i_2$.
Since $v_0 \dots v_k$ is a clique, $v_{i_1} v_j v_{i_2}$ is also a clique.
Then the triangle-collapsing condition requires that $f(v_j) = f(v_{i_1}) = f(v_{i_2})$ and hence both summands in (\ref{eq:fbd_dfl}) are $0$.
\end{proof}

\begin{rem}
    If $f:G \to H$ is an inclusion map, injective as a vertex map or $H$ is an oriented graph then $f$ is triangle-collapsing.
\end{rem}

\subsection{Grounded pipeline}

An immediate corollary of Proposition~\ref{prop:dfl_functor} is that $\dFl$ is a functor $\Incl\Dgr\to\Ch$.
This was the minimum condition that we needed to define the grounded pipeline (see Lemma~\ref{lem:zb_pers_complex}), and so we get a map on objects
\begin{equation}
 \zb{C}_F : \Obj(\WDgr) \to \Obj(\Funct{\Rposet}{\Ch}).
\end{equation}
Recall that, since $F=F_d$ and $C=\dFl$, at $t\in\R$ the chain complex $\zb{C}_F(G, t)$ is
\begin{figure}[H]
    \centering
    \begin{tikzcd}[row sep=small, column sep=small]
        \cdots \dFl_3(G^t) \arrow[r, "\partial_3"] &
        \dFl_2(G^t) \arrow[rr, "\inducedch{\iota}\circ\bd_2"] \arrow[rd, "\bd_2"', dotted] & &
        \dFl_1(G\cup G^t) \arrow[rr, "\bd_1"] & &
        \dFl_0(G\cup G^t) \cdots \\
       & & \dFl_1(G^t)\arrow[ru, "\inducedch{\iota}"', dotted, hook]
       \arrow[rr, dotted, "\bd_1"']
       & & \dFl_0(G^t)\arrow[ru, dotted, "\inducedch{\iota}"', hook] &
    \end{tikzcd}
\end{figure}

However since $\dFl$ is not a functor $\Dgr \to \Ch$, we cannot apply Theorem~\ref{thm:grd_functoriality} to obtain a functor $\zb{C}_F:\Cont\WDgr \to \Funct{\Rposet}{\Ch}$.
Instead, we must restrict the morphisms on $\WDgr$ so that they induce triangle-collapsing morphisms between the relevant digraphs at every step of the persistent chain complex.

\begin{defin}
  \begin{enumerate}[label=(\alph*)]
    \item Given two weighted digraphs $G, H$, 
      a vertex map $f: V(G) \to V(H)$ is called \mdf{path-collapsing}
      if whenever $f(i) = f(k)$ and there is some vertex $j$ with paths $i \leadsto j \leadsto k$ then $f(i) = f(j) = f(k)$.
    \item We let $\mdf{\Pathcol\Cont\WDgr}$ denote the category of weighted digraphs where morphisms are path-collapsing, contracting digraph maps.
  \end{enumerate}
\end{defin}

\begin{rem}
    Both $\Incl\Cont\WDgr$ and $\Cont\WDag$ are subcategories of $\Pathcol\Cont\WDgr$.
\end{rem}

One can check that if $f:G\to H$ is path-collapsing and contracting, then the underlying vertex map induces triangle-collapsing digraph maps $G \to H$, $G^t \to H^t$ and $G\cup G^t \to H \cup H^t$ for every $t$.
Then, using the functoriality established in Proposition~\ref{prop:dfl_functor}, the proof of Theorem~\ref{thm:grd_functoriality} goes through to show the following.

\begin{theorem}\label{thm:grd_functoriality_dflag}
Fixing $F=F_d$ and $C=\dFl$,
$\zb{C}_F$ is a functor $\zb{C}_F:\Pathcol\Cont\WDgr \to \Funct{\Rposet}{\Ch}$.
\end{theorem}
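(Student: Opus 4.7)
The plan is to mirror the proof of Theorem~\ref{thm:grd_functoriality} as closely as possible, with one additional verification: we must check that any morphism $f \in \Mor(\Pathcol\Cont\WDgr)$ induces \emph{triangle-collapsing} digraph maps $G \to H$, $G^t \to H^t$ and $G\cup G^t \to H \cup H^t$ for every $t \geq 0$, so that $\dFl$ actually produces chain maps when applied.

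Concretely, I would first check triangle-collapsing for each of the three digraph maps. For a directed $2$-clique $ijk$ in $G$, the edges $i \to j$ and $j \to k$ give a path $i \leadsto j \leadsto k$ in $G$, so if $f(i) = f(k)$, the path-collapsing property immediately forces $f(j) = f(i) = f(k)$. For a clique $ijk$ in $G^t$, the edges $i\to j$ and $j\to k$ in $G^t$ correspond to paths $i \leadsto j$ and $j \leadsto k$ \emph{in $G$} (of length at most $t$, though the length is irrelevant here), which concatenate to give a path $i \leadsto j \leadsto k$ in $G$; the same path-collapsing argument applies. Finally for $G \cup G^t$, every edge is either in $G$ or in $G^t$ and in either case corresponds to a path in $G$, so the same concatenation argument works. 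Since $f$ is contracting, by Lemma~\ref{lem:wdgrfd_is_contwdgr} it is $0$-shifting, so it induces digraph maps between all three pairs; the above shows each of these induced maps lives in $\Tricol\Dgr$.

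With triangle-collapsing in hand, Proposition~\ref{prop:dfl_functor} furnishes induced chain maps $\inducedch{f}: \dFl_\bullet(G) \to \dFl_\bullet(H)$, $\dFl_\bullet(G^t) \to \dFl_\bullet(H^t)$ and $\dFl_\bullet(G \cup G^t) \to \dFl_\bullet(H \cup H^t)$. The rest of the argument now proceeds exactly as in the proof of Theorem~\ref{thm:grd_functoriality}: one assembles these into the degree-shifted diagram defining $\zb{C}_F(f)$ at level $t$, and checks that squares $\color{blue}\boxed{A}$, $\color{blue}\boxed{B}$, $\color{blue}\boxed{D}$ commute because the vertical maps in each are components of a common chain map, while square $\color{blue}\boxed{C}$ commutes because the inclusion $\iota$ is induced by an inclusion vertex map, which trivially commutes with any vertex map $f$. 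The resulting family of chain maps constitutes a morphism $\zb{C}_F(G) \to \zb{C}_F(H)$ in $\Funct{\Rposet}{\Ch}$, since each filtration map $\filtincl{s}{t}$ is itself induced by an inclusion vertex map and hence commutes with $f$ strictly.

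The main obstacle \emph{would have been} functoriality of $\dFl$ across arbitrary digraph morphisms, but this is precisely the point of restricting to $\Pathcol\Cont\WDgr$: the path-collapsing condition is engineered so that the induced maps at every level of the persistent chain complex are triangle-collapsing. Functoriality in composition and identity follows from the functoriality of $\dFl$ on $\Tricol\Dgr$ together with the fact that a composition of path-collapsing contractions is again path-collapsing and contracting (which is an easy direct check using the definitions). The verification that $f$ commutes past each $\zbinclinduc{ch}{s}{t}$ is identical to the corresponding step in Theorem~\ref{thm:grd_functoriality} and requires no modification.
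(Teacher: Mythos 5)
Your proposal is correct and follows essentially the same route as the paper, which simply notes that a path-collapsing contraction induces triangle-collapsing digraph maps $G \to H$, $G^t \to H^t$ and $G \cup G^t \to H \cup H^t$, and then reruns the proof of Theorem~\ref{thm:grd_functoriality} using Proposition~\ref{prop:dfl_functor}. Your write-up actually supplies the details (the concatenation of the two paths witnessing a $2$-clique, and closure of path-collapsing contractions under composition) that the paper leaves as ``one can check''.
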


\begin{definthm}\label{defin:grpdflh}
  \mdf{Grounded persistent directed flag homology (\grpdflh)} is the functor
  \begin{equation}
 \mdf{\zbpershomdflagmap} \defeq \Funct{\Rposet}{H_1} \circ \zb{C}_F : \Pathcol\Cont\WDgr \to \PersVec.
  \end{equation}
\end{definthm}

We reuse the notation of Definition~\ref{notation:grd_notation}, to denote the space of grounded cycles, boundaries and homology in this directed flag complex setting.

\subsection{Interpretation}
Section~\ref{sec:decreasing_curves} established some basic properties of the descriptor.
In particular, all features are born at $t=0$ and the number of features in the barcode, $\card{\thedescdfl{G}}$, is the circuit rank of the underlying, undirected graph, $\underlying{G}$.
These results do not rely on the functoriality of $\zb{C}_F$.
Indeed, they only require that there are generators corresponding to directed triangles in $C_2(G)$, which is certainly the case for $C=\dFl$.
Therefore, all results of Section~\ref{sec:decreasing_curves} apply to the directed flag complex.

In contrast, Lemma~\ref{lem:death_bound}, bounding the death-time of a given circuit, requires the existence of generators corresponding to long squares.
Therefore the proof does not work in the directed flag setting.
However, we can prove the following, slightly weaker bound.

\begin{lemma}\label{lem:death_bound_dflag}
Given $G=(V, E, w)\in\WDgr$ and two directed paths $p_1, p_2: a \leadsto b$ between distinct vertices $a, b \in V$, let $p_c$ denote the undirected circuit which traverses $p_1$ forwards and then $p_2$ in reverse.
For $i= 1, 2$, define
\begin{equation}
 h_i \defeq \min
 \left\{ 
   t \geq 0 \rmv
   \exists v_i \in V\text{ along } p_i \text{ such that } d(a, v_i) \leq t \text{ and } d(v_i, b) \leq t
 \right\}.
\end{equation}

Then $\death(p_c) \leq \max( h_1, h_2, d(a, b) )$.
\end{lemma}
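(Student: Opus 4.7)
The strategy is to mirror the proof of Lemma~\ref{lem:death_bound}, replacing the long-square generators (which do not exist in $\dFl_2$) by directed $3$-clique generators. In the path-homology setting, a chain $a v_1 b - a v_2 b$ is $\bd$-invariant and hence a generator of $\biv_2$ even when $(a,b)\notin E(G^T)$; in the directed flag setting, however, the single triple $(a, v_i, b)$ is a generator of $\dFl_2(G^T)$ only when it forms a directed $3$-clique, which requires all three edges $(a, v_i)$, $(v_i, b)$ and $(a, b)$ to be present in $G^T$. The new term $d(a,b)$ in the upper bound is precisely what is needed to force $(a,b)\in E(G^T)$.

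First I would set $T\defeq \max(h_1, h_2, d(a,b))$. By construction $(a,b)\in E(G^T)$. In the main case, assume each $p_i$ has at least two edges and, by definition of $h_i$, choose internal vertices $v_i\in V\setminus\{a,b\}$ along $p_i$ with $d(a,v_i),d(v_i,b)\leq T$; then $(a,v_i),(v_i,b)\in E(G^T)$. The triple $(a,v_i,b)$ is therefore a directed $3$-clique in $G^T$, so $a v_i b\in \dFl_2(G^T)$. Unfolding $\zb{\bd}_2 = \inducedch{\iota}\circ\bd_2$, we compute
\begin{equation}
\zb{\bd}_2(a v_1 b - a v_2 b) = (v_1 b + a v_1) - (v_2 b + a v_2) \in \dFl_1(G\cup G^T),
\end{equation}
so this chain lies in $\zbbdrs{G}{T}$.

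Next I would invoke Lemma~\ref{lem:paths_homologous}, whose proof carries over verbatim to the directed flag setting because it only manipulates directed-triangle generators (which are $3$-cliques). Following the argument used in the proof of Lemma~\ref{lem:death_bound}, this identifies the right-hand side above with $\repres{p_c}$ modulo $\zbbdrs{G}{T}$, giving $\death(\repres{p_c})\leq T$. The degenerate case in which some $p_i$ is a single edge $(a,b)$ is handled exactly as in the path-homology proof: now $\repres{p_i} = ab$ and the single directed triangle $a v_{3-i} b \in \dFl_2(G^T)$ already suffices, with boundary $v_{3-i} b - ab + a v_{3-i}$ again equal to $\repres{p_c}$ modulo $\zbbdrs{G}{T}$.

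The main obstacle is the lack of long-square generators in the directed flag complex: when $(a,b)\notin E(G^T)$, no $2$-chain on the vertices $\{a,v_1,v_2,b\}$ can kill $\repres{p_c}$ in the directed flag setting, even though the corresponding long square does so for path homology. Adding $d(a,b)$ to the max is precisely the minimal hypothesis that sidesteps this obstacle, and it is the only substantive change in the proof. This also explains why $\zbpershomdflagmap$ is genuinely more sensitive to the presence of shortcut edges than $\zbpershommap$.
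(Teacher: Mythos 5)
Your proposal is correct and follows essentially the same route as the paper: set $T=\max(h_1,h_2,d(a,b))$ so that $(a,b)\in E(G^T)$, making each $a v_i b$ a genuine directed $3$-clique generator of $\dFl_2(G^T)$, and then run the argument of Lemma~\ref{lem:death_bound} with $a v_1 b - a v_2 b$ in place of the long square. The paper's proof is just a terser statement of exactly this observation.
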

\begin{proof}
The proof is similar to that of Lemma~\ref{lem:death_bound}.
However, since there are no generators corresponding to long squares, we need $T$ to be sufficiently large that the edge $(a, b)$ appears in $G^T$.
Therefore $av_1b$ and $av_2 b$ are both generators of $\zb{C}_2(G, T)$ and so certainly $av_1 b - av_2 b \in \zb{C}_2(G, T)$.
\end{proof}

Theorem~\ref{thm:rep_of_cycles}, which guarantees the existence of a persistence basis of undirected circuit, also applies to the directed flag complex.
The proof is unchanged except that each of the generators $u_1, \dots, u_n$ must be directed triangles since these are the generators of $C_2(G^t)$.

Finally, the disjoint union decomposition theorem (Theorem~\ref{thm:union_decomp}) applies unchanged to the directed flag complex.
Unfortunately, the proof of wedge the wedge decomposition theorem (Theorem~\ref{thm:wedge_decomp}) fails in the directed flag case, because the contractions $f_i$ are not necessarily path-collapsing.
However, if we restrict to $G\in\WDag$ then the proof goes through unchanged because $\Cont\WDag \subseteq \Pathcol\Cont\WDgr$.

Moreover, it is not just the proof of Theorem~\ref{thm:wedge_decomp} which fails but indeed the statement itself.
Note that, if Theorem~\ref{thm:wedge_decomp} were to hold in the directed flag case then Corollary~\ref{thm:edge_deletion_stability} (on stability to separating edge deletion) would hold automatically too.

\begin{figure}[hptb]
  \centering
  \begin{tikzpicture}[
  roundnode/.style={circle, fill=black, minimum size=4pt},
	squarenode/.style={fill=black, minimum size=4pt},
	inner sep=2pt,
	outer sep=1pt
  ]

  \node (a) at (0, 0) [roundnode, label=above:$v_1$] {};
  \node (b) at (1, 1) [roundnode, label=above:$v_2$] {};
  \node (c) at (2, 0) [roundnode, label=right:$v_3$] {};
  \node (d) at (1, -1) [roundnode, label=below:$v_4$] {};
  \node (e) at (-1, 0) [roundnode, label=above:$v_0$] {};

  \draw[red!100!blue, ->] (a) -- (b)node[midway, above, sloped] {\tiny $10$};
  \draw[red!100!blue, ->] (b) -- (c)node[midway, above, sloped] {\tiny $10$};
  \draw[red!100!blue, ->] (c) -- (d)node[midway, below, sloped] {\tiny $10$};
  \draw[red!100!blue, ->] (d) -- (a)node[midway, below, sloped] {\tiny $10$};
  \draw[red!10!blue, ->] (e) -- (a) node[midway, above, sloped] {\tiny $1$};
  
  \node[] at (3, 0) {};
  \node[] at (1, -2) {$G$};

\end{tikzpicture}
  \caption{
    An example weighted digraph for which the wedge decomposition result fails, when using the directed flag complex.
  }\label{fig:decomposition_dfl_instability}
\end{figure}
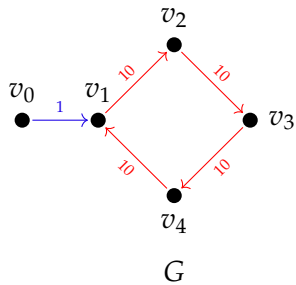

\begin{example}\label{ex:dflag_appendage_instability}
  Consider Figure~\ref{fig:decomposition_dfl_instability} and denote $e\defeq (v_0, v_1)$.
  Note that $e$ is a separating edge and hence, by Corollary~\ref{thm:edge_deletion_stability},
  $\zbpershom{G}\cong\zbpershom{\wdop{d}{e}{G}}$ and we can compute $\thedesc{G} = \ms{ [ 0, 20) }$.
  However, using the directed flag complex we observe
  \begin{equation}
    \thedescdfl{G} = \ms{ [0, 21) }
    \quad\text{ but }\quad
    \thedescdfl{\wdop{d}{e}{G}} = \ms{ [ 0, 30) }.
  \end{equation}
  The sole cycle in $G$ dies at $t=21$ because
  \begin{equation}
    \zb{\bd}_2(v_1 v_2 v_3 + v_3 v_4 v_1 + v_0 v_1 v_3 + v_0 v_3 v_1) = v_1 v_2 + v_2 v_3 + v_3 v_4 + v_4 v_1
  \end{equation}
  and the corresponding directed triangles appear in $G^{21}$.
\end{example}

This example illustrates an instability of $\zbpershomdflagmap$ which is not present in $\zbpershommap$.
Moreover, since $e$ is not involved in any simple undirected circuits, we would not expect its presence to affect $\zbpershomdflag{G}$; this instability complicates the interpretation of $\zbpershomdflagmap$.

\subsection{Stability analysis}

The key results used in the proof of most stability theorems were Lemmas~\ref{lem:shift_induce_comp} and~\ref{lem:shift_induce_cont}, in which a $\delta$-shifting vertex map $f:V(G) \to V(H)$ was used to construct part of an interleaving.
In order to repeat this construction in the directed flag setting, we need to ensure that $f$ is path-collapsing.

\begin{lemma}
Any $\delta$-shifting, path-collapsing vertex map $f:V(G) \to V(H)$ induces a morphism
\begin{equation}
 \shiftinduced{ch}{f}{\delta}:\zb{C}(G) \to \shifted{\zb{C}(H)}{\delta}.
\end{equation}
Given another $\epsilon$-shifting, path-collapsing vertex map $g:V(H) \to V(K)$,
\begin{equation}
 \shiftinduced{ch}{g\circ f}{\epsilon + \delta} = \shiftinduced{ch}{g}{\epsilon} \circ \shiftinduced{ch}{f}{\delta}.
\end{equation}
Moreover, if $f$ is $0$-shifting then $\shiftinduced{ch}{f}{0} = \zbdiginduc{ch}{f}$.
\end{lemma}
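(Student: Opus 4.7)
The plan is to mimic the proof of Lemma~\ref{lem:shift_induce_comp}, with the single additional ingredient being a verification that the path-collapsing hypothesis on $f$ is enough to ensure that the induced digraph maps at each filtration level are triangle-collapsing, so that we may invoke the functoriality established in Proposition~\ref{prop:dfl_functor}.

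The key observation is the following claim: if $f: V(G) \to V(H)$ is $\delta$-shifting and path-collapsing, then for every $t \geq 0$ the induced digraph maps
\begin{equation}
f: G^t \to H^{t+\delta}, \qquad f: G \cup G^t \to H \cup H^{t+\delta}
\end{equation}
are triangle-collapsing. Indeed, if $ijk$ is a directed $2$-clique in either $G^t$ or $G \cup G^t$, then each of the edges $i\to j$, $j \to k$, $i \to k$ is either an edge of $G$ or an edge of $G^t$, and in either case arises from a path in $G$ (the edge itself, if it lies in $G$, or a path of length at most $t$ otherwise). Concatenating the paths corresponding to $i\to j$ and $j \to k$ yields a path $i\leadsto j\leadsto k$ in $G$. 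If $f(i)=f(k)$, then the path-collapsing condition forces $f(j) = f(i) = f(k)$, establishing triangle-collapsing.

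With this in hand, Proposition~\ref{prop:dfl_functor} supplies chain maps $\inducedch{f}: \dFl(G^t) \to \dFl(H^{t+\delta})$ and $\inducedch{f}: \dFl(G \cup G^t) \to \dFl(H \cup H^{t+\delta})$, and the remainder of the argument is a copy of the proof of Lemma~\ref{lem:shift_induce_comp}: one assembles these into a commuting diagram of the same shape as in that proof, verifying that squares $\boxed{A}$, $\boxed{B}$ and $\boxed{D}$ commute because they come from restrictions of the same chain map, and that square $\boxed{C}$ commutes because the horizontal inclusions are induced by identity vertex maps. This produces the desired chain map $\shiftinduced{ch}{f}{\delta}: \zb{C}_\bullet(G, t) \to \zb{C}_\bullet(H, t+\delta)$ for each $t$. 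Naturality in $t$ is identical to before, since all the transition morphisms $\zbinclinduc{ch}{s}{t}$ come from identity vertex maps.

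The composition identity $\shiftinduced{ch}{g \circ f}{\epsilon + \delta} = \shiftinduced{ch}{g}{\epsilon} \circ \shiftinduced{ch}{f}{\delta}$ follows from the functoriality of $\dFl$ on $\Tricol\Dgr$ (after verifying that $g \circ f$ is itself $(\epsilon + \delta)$-shifting and path-collapsing, both of which follow routinely from the hypotheses on $f$ and $g$), and the case $\delta = 0$ reduces to the construction of $\zbdiginduc{ch}{f}$ given in the proof of Theorem~\ref{thm:grd_functoriality_dflag}. The only real obstacle is the verification of the triangle-collapsing property above; once that is in place, the rest is diagram-chasing and categorical bookkeeping.
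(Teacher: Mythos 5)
Your proposal is correct and follows exactly the route the paper intends: the paper states this lemma without proof, relying on the preceding remark that path-collapsing is precisely what is needed to make the induced digraph maps triangle-collapsing so that Proposition~\ref{prop:dfl_functor} applies and the proof of Lemma~\ref{lem:shift_induce_comp} carries over verbatim. Your explicit verification that a directed $2$-clique in $G^t$ or $G\cup G^t$ yields paths $i\leadsto j\leadsto k$ in $G$ (since every edge of $G^t$ comes from a finite-length path in $G$) is the one substantive step the paper leaves implicit, and you have it right.
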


\begin{lemma}
A $\delta$-shifting, path-collapsing vertex map is a $\delta'$-shifting, path-collapsing vertex map for any $\delta' \geq \delta$ and
\begin{equation}
 \shiftinduced{ch}{f}{\delta'} = \shifted{\transmorph{\zb{C}(H)}{\delta' - \delta}}{\delta} \circ \shiftinduced{ch}{f}{\delta}.
\end{equation}
\end{lemma}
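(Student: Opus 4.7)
The plan is to follow the argument of Lemma~\ref{lem:shift_induce_cont} essentially verbatim, observing that the extra hypothesis of path-collapsing is preserved automatically and that the functoriality required of $\dFl$ is precisely the content of Proposition~\ref{prop:dfl_functor} together with the preceding lemma.

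First I would note that path-collapsing is a property of the vertex map $f$ itself: it refers only to the existence of a two-leg path $i\leadsto j \leadsto k$ in $G$ with $f(i)=f(k)$, and does not involve any $\delta$. Hence if $f$ is $\delta$-shifting and path-collapsing, the path-collapsing condition is trivially inherited at the weaker shift $\delta'$. For the $\delta'$-shifting property, since $G^t \subseteq G^t$ (trivially), the composite $G^t \xrightarrow{f} H^{t+\delta} \hookrightarrow H^{t+\delta'}$ is a digraph map induced by $f$ as a vertex map, where the inclusion comes from $d_H(i,j) \leq t+\delta \implies d_H(i,j)\leq t+\delta'$ and is induced by the identity on vertices; the same argument works for $G\cup G^t \to H \cup H^{t+\delta'}$. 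One checks, as in the proof of the preceding lemma, that these composites are triangle-collapsing (triangle-collapsing is inherited from $f$ since the target digraph only gains edges, not vertex identifications).

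For the chain-map identity, I would set up the two diagrams of digraph maps
\begin{equation}
\begin{tikzcd}
G^t \arrow[d, "f"'] \arrow[rd, "f"] &
                                       &
G\cup G^t \arrow[d, "f"'] \arrow[rd, "f"] &
\\
H^{t+\delta} \arrow[r, "\filtincl{t+\delta}{t+\delta'}"'] &
H^{t+\delta'} &
H \cup H^{t+\delta} \arrow[r, "\filtincl{t+\delta}{t+\delta'}"'] &
H \cup H^{t+\delta'}
\end{tikzcd}
\end{equation}
which commute on vertices (both composites equal $f$ on the underlying sets). The horizontal maps are inclusions, hence triangle-collapsing, and the diagonal and vertical maps are triangle-collapsing by the previous paragraph. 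Applying $\dFl$ then yields commuting diagrams in $\Ch$ by Proposition~\ref{prop:dfl_functor}, and restricting the degree-$1$ pieces to factor through $\dFl_1(H^{t+\delta}) \hookrightarrow \dFl_1(H\cup H^{t+\delta})$ and $\dFl_1(H^{t+\delta'}) \hookrightarrow \dFl_1(H\cup H^{t+\delta'})$ (again induced by inclusions) assembles the two commuting squares into the chain-level equality
\begin{equation}
  \shiftinduced{ch}{f}{\delta'} = \shifted{\transmorph{\zb{C}(H)}{\delta'-\delta}}{\delta} \circ \shiftinduced{ch}{f}{\delta}
\end{equation}
at each $t\geq 0$, with compatibility in $t$ inherited from naturality already established for the previous lemma.

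There is no real obstacle: the only thing to verify beyond the path-homology version is that each of the vertex maps appearing in the diagrams is triangle-collapsing so that $\dFl$ can be applied, and this is immediate because inclusions are triangle-collapsing and triangle-collapsing maps are closed under composition with inclusions on the target side.
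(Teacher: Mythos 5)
Your proposal is correct and matches the paper's approach: the paper states this lemma without proof, implicitly deferring to the argument of Lemma~\ref{lem:shift_induce_cont}, and you reproduce that argument while correctly isolating the only genuinely new checks — that path-collapsing is a property of the vertex map independent of the shift, and that every map in the commuting squares is triangle-collapsing so that $\dFl$ (functorial on $\Tricol\Dgr$ by Proposition~\ref{prop:dfl_functor}) can be applied. In particular your observation that triangle-collapsing depends only on the source digraph and the vertex map, hence is unaffected by enlarging the target from $H^{t+\delta}$ to $H^{t+\delta'}$, is exactly the point that makes the original proof go through verbatim.
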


With these lemmas, many theorems from Section~\ref{sec:stability} go through unchanged
ether because they only involve $\delta$-shifting maps which are inclusions or because they use explicit counter-examples which have the same barcodes in this new setting.
For the other results, since $\Cont\WDag\subseteq \Pathcol\Cont\WDgr$, the proofs work unchanged so long as we restrict to $G\in\WDag$.
To summarise this, we present Table~\ref{tbl:dflag_stability_result_summary}, which is a reproduction of Table~\ref{tbl:stability_result_summary} with additional annotations.
Of particular note, we emphasise that weight perturbation stability holds unrestricted but edge subdivision stability (and hence convergence under iterated medial subdivision) only holds if $G\in\WDag$.

\begin{table}[hbtp]
\begin{center}
\ars{1.2}
\tcs{0.7\tabcolsep}
\renewcommand\theadfont{\bfseries}
\newcommand*\partialthm{${}^\blacklozenge$}
\newcommand*\unrestrictedthm{${}^\checkmark$}
\newcommand*\restrictedthm{${}^\restriction$}
\begin{tabular}{ l | c c c c }
  \thead{Operation} & \thead{Locally\\Stable} & \thead{Non-locally\\Stable} & \thead{Locally\\Unstable}  & \thead{Isomorphism} \\ \hline
  Weight perturbation & Theorem~\ref{thm:pertub_stability}\unrestrictedthm & & & \\
  Edge subdivision & Theorem~\ref{thm:subdiv_stability}\restrictedthm & & & \\
  Edge collapse & Theorem~\ref{thm:collapse_stability}\partialthm\restrictedthm &
                & Theorem~\ref{thm:collapse_instability}\unrestrictedthm & \\
  Edge deletion &
  Corollary~\ref{cor:local_edge_deletion}\partialthm\unrestrictedthm &
  Theorem~\ref{thm:edge_deletion_stability}\unrestrictedthm &
  Theorem~\ref{thm:edge_deletion_instability}\unrestrictedthm &
    Theorem~\ref{cor:edge_deletion_disconnect}\partialthm\restrictedthm
  \\
  Vertex deletion & & & Corollary~\ref{cor:vertex_deletion_instability}\unrestrictedthm & Corollary~\ref{cor:isolated_vertex_deletion}\partialthm\unrestrictedthm
\end{tabular}
\caption{
  Stability and instability theorems for $\zbpershomdflagmap$, under various digraph operations.
  {$\blacklozenge$} Denotes a theorem which only applies to a subset of such operations.
  {$\checkmark$} Denotes a theorem which applies to the directed flag complex unrestricted.
  {$\restriction$} Denotes a theorem which applies to the directed flag complex after restricting to $G\in\WDag$.
}\label{tbl:dflag_stability_result_summary}
\end{center}
\end{table}

\subsection{Iterated medial subdivision}
Recall Proposition~\ref{prop:asymptotic_example}, in which we found the limiting barcode of a simple cycle graph, under iterated medial subdivision.
This limiting value does not hold in the directed flag setting.

\begin{figure}[htbp]
  \centering
  \begin{tikzpicture}[
  roundnode/.style={circle, fill=black, minimum size=4pt},
	squarenode/.style={fill=black, minimum size=4pt},
	inner sep=2pt,
	outer sep=1pt
  ]
  \node (a) at (0, 0) [roundnode, label=left:$v_0$] {};
  \node (b) at (1, 1) [roundnode, label=above:$v_1$] {};
  \node (c) at (2, 0) [roundnode, label=right:$v_2$] {};

  \node (a2) at (4, 0) [roundnode, label=left:$v_0$] {};
  \node (b2) at (5, 2) [roundnode, label=above:$v_1$] {};
  \node (c2) at (6, 0) [roundnode, label=right:$v_2$] {};

  \draw[->] (a)--(b) node[midway, above, sloped] {\tiny $2$};
  \draw[->] (b)--(c) node[midway, above, sloped] {\tiny $2$};
  \draw[->, red!50!blue] (a)--(c) node[midway, below, sloped] {\tiny $3$};

  \draw[->,red] (a2)--(b2) node[midway, above, sloped] {\tiny $5$};
  \draw[->,red] (b2)--(c2) node[midway, above, sloped] {\tiny $5$};
  \draw[->] (a2)--(c2) node[midway, below, sloped] {\tiny $2$};

  \node[] at (1, -0.5) {$G$};
  \node[] at (5, -0.5) {$H$};
\end{tikzpicture}
  \caption{Two weighted digraphs, of the form considered by Proposition~\ref{prop:asymptotic_example}, illustrating that the behaviour of $\thedescdfl{G}$ under iterated subdivision is not as symmetric.}
  \label{fig:subdivision_dflag}
\end{figure}
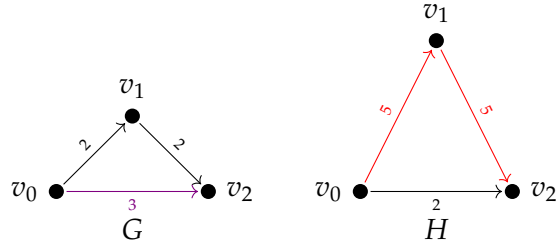

\begin{example}
Consider the two digraphs pictured in Figure~\ref{fig:subdivision_dflag}.
In $G$, the two paths have lengths $l_{G,1} = 4 > 3 = l_{G,2}$, while in $H$ the two paths have lengths $l_{H,1} = 10 > 2 = l_{H,2}$.
When using directed flag complex,
\begin{align}
  \lim_{n\to\infty}\thedescdfl{G_n} &= \ms{[0, 3)} = \ms{[0, l_{G, 2})}, \\
  \lim_{n\to\infty}\thedescdfl{H_n} &= \ms{[0, 5)} = \ms{[0, 1/2 \cdot l_{H,1})}.
\end{align}
The reason this limiting value differs from path homology is because edge $(v_0, v_2)$ must appear in $G^t$ before the sole feature can die.
\end{example}

By a similar method to Proposition~\ref{prop:asymptotic_example}, one can show the following.

\begin{prop}\label{prop:dflag_asymptotic_example}
  Suppose $G\in\WDag$ is the union two directed paths $p_1, p_2$ from a source to a sink, with lengths $l_1 \geq l_2$ respectively.
  Recall the definition of iterated medial subdivision (Definition~\ref{defin:ims}).
  Then
  \begin{equation}
    \lim_{n\to\infty}\thedescdfl{\IMS{G}{n}}
    =
    \ms{
      \left[0, \max\left(\frac{1}{2}l_1, l_2 \right) \right)
    }
  \end{equation}
\end{prop}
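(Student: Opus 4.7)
The plan is to mirror the proof of Proposition~\ref{prop:asymptotic_example}, substituting Lemma~\ref{lem:death_bound_dflag} for Lemma~\ref{lem:death_bound} and re-examining the chain-retraction argument in the directed flag setting. Write $G_n \defeq \IMS{G}{n}$ and let $c^{(n)}$ denote the simple undirected circuit traversing the subdivided $p_1$ forward then the subdivided $p_2$ backward; since $\underlying{G}$ has circuit rank one, the directed flag analogue of Corollary~\ref{cor:decreasing_curves} reduces the claim to showing $\death(\repres{c^{(n)}}) \to \max(l_1/2, l_2)$ as $n \to \infty$.

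For the upper bound, apply Lemma~\ref{lem:death_bound_dflag} to $p_1^{(n)}$ and $p_2^{(n)}$: $\death(\repres{c^{(n)}}) \leq \max(h_1^{(n)}, h_2^{(n)}, d(a, b))$. Because $G$ is a DAG, the only directed paths $a \leadsto b$ lie entirely along $p_1$ or $p_2$, so $h_i^{(n)} \to l_i/2$ and $d(a, b) = l_2$ is independent of $n$. Hence $\limsup_n \death(\repres{c^{(n)}}) \leq \max(l_1/2, l_2/2, l_2) = \max(l_1/2, l_2)$.

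For the matching lower bound, fix $t_2 < \max(l_1/2, l_2)$ and any $n$, and construct a chain retraction $q$ of the inclusion $j : C_\bullet(G_n) \hookrightarrow \zb{C}_\bullet(G_n, t_2)$ (padded by zero in degree $2$). Define $q_0 \defeq \id$, $q_2 \defeq 0$, and for each edge $(i, j) \in E(G_n \cup G_n^{t_2})$ set $q_1(ij) \defeq \repres{p_{i, j}}$, where $p_{i, j}$ is the unique directed path $i \leadsto j$ in $G_n$, with the single ambiguous edge handled by $q_1(ab) \defeq \repres{p_2^{(n)}}$. It is immediate that $q_1 \circ j_1 = \id$ on $C_1(G_n)$; the only nontrivial chain-map axiom is $q_1 \circ \zb{\bd}_2 = 0$, which reduces to checking that $q_1$ annihilates the boundary of every directed $2$-clique of $G_n^{t_2}$.

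Because $G$ is a DAG whose only inter-path edge is $(a, b)$, every edge in $G_n^{t_2}$ either lies along a single $p_i^{(n)}$ or equals $(a, b)$; consequently every directed $2$-clique in $G_n^{t_2}$ either sits entirely on one $p_i^{(n)}$---in which case $q_1$ telescopes as in Proposition~\ref{prop:asymptotic_example}---or has the form $(a, y, b)$. If $t_2 < l_2$ then $(a, b) \notin G_n^{t_2}$ and no $2$-clique of this form exists. Otherwise $l_2 \leq t_2 < l_1/2$, and the constraints $d(a, y), d(y, b) \leq t_2$ together with the DAG identity $d(a, y) + d(y, b) = l_1$ (valid when $y \in p_1^{(n)}$) force $y$ onto $p_2^{(n)}$; then $q_1(ay) + q_1(yb) = \repres{p_2^{(n)}} = q_1(ab)$ so the boundary is killed. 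Thus $j$ induces an injection on $H_1$, so $\zbinclinduc{hom}{0}{t_2}[\repres{c^{(n)}}] \neq 0$ and $\death(\repres{c^{(n)}}) > t_2$; letting $t_2 \nearrow \max(l_1/2, l_2)$ and combining with the upper bound gives the stated limit. The main obstacle is the window $l_2 \leq t_2 < l_1/2$, which only arises when $l_1 \geq 2 l_2$: here $(a, b)$ is already present in $G_n^{t_2}$, so one must carefully rule out $2$-cliques $(a, y, b)$ with $y \in p_1^{(n)}$ (which would break the chain map property of $q$), using the DAG hypothesis to pin $d(a, y) + d(y, b) = l_1$.
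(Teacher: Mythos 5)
Your proof is correct and takes exactly the route the paper intends (the paper gives no details, saying only that the result follows ``by a similar method'' to Proposition~\ref{prop:asymptotic_example}): upper bound via Lemma~\ref{lem:death_bound_dflag}, lower bound via the chain retraction $q$ with $q_1(ab)=\repres{p_2^{(n)}}$. You also correctly isolate and dispose of the one genuinely new point --- ruling out directed $2$-cliques $(a,y,b)$ with $y$ interior to $p_1^{(n)}$ in the window $l_2 \leq t_2 < \tfrac{1}{2}l_1$ using $d(a,y)+d(y,b)=l_1$ --- which is precisely where the extra $l_2$ term in the limiting death time originates.
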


The asymmetry in this limiting descriptor arises from asymmetry in the path lengths of the directed triangle motif, in contract to the long square motif.
In contrast, path homology yields a simpler interpretation; the size of the limiting feature is directly proportional to the length of the longer path.

\printbibliography%
\end{document}